\DeclareMathAlphabet{\mathbbe}{U}{bbold}{m}{n}
\setlist{}
\newtheorem{thm}{Theorem}[section]
\newtheorem{lem}[thm]{Lemma}
\newtheorem{prop}[thm]{Proposition}
\newtheorem{cor}[thm]{Corollary}
\theoremstyle{definition}
\newtheorem{defn}[thm]{Definition}
\newtheorem{ex}[thm]{Example}
\newtheorem{obs}[thm]{Observation}
\newtheorem{conv}[thm]{Convention}
\theoremstyle{remark}
\newtheorem{rmk}[thm]{Remark}
\let\c@equation\c@thm
\numberwithin{equation}{section}
\newcommand{\id}{\mathrm{id}}
\newcommand{\op}{\mathrm{op}}
\newcommand{\Ho}{\mathrm{Ho}}
\newcommand{\kto}{\rightsquigarrow}
\newcommand{\Ch}{\cat{Ch}}
\newcommand{\Tot}{\mathrm{Tot}}
\newcommand{\cat}[1]{\textup{\textsf{#1}}}% for categories
\newcommand{\fun}[1]{\textup{#1}}%for functors
\newcommand{\mr}[1]{\mathrm{#1}}% for everything else
\newcommand{\cA}{\mathcal{A}}
\newcommand{\cB}{\mathcal{B}}
\newcommand{\cC}{\mathcal{C}}
\newcommand{\cD}{\mathcal{D}}
\newcommand{\RR}{\mathbb{R}}
\newcommand{\DDelta}{\mathbbe{\Delta}}
\newcommand{\defeq}{\mathrel{:=}}
\newcommand{\eqdef}{\mathrel{=:}}
\def\makeslashed#1#2#3#4#5{#1{\mathpalette{\sla@{#2}{#3}{#4}}{#5}}}
\def\@mathlower#1#2#3{\setbox0=\hbox{$\m@th#2#3$}\lower#1\ht0\box0}
\def\mathlower#1#2{\mathpalette{\@mathlower{#1}}{#2}}
\newcommand{\simto}{\xrightarrow{\smash{\mathlower{0.6}{\simeq}}}}
\newcommand{\CR}{\mathrm{cr}}
\newcommand{\Fun}{\textup{Fun}}
  \definecolor{maroon}{rgb}{0.5,0,0}
  \definecolor{violet}{rgb}{0.3,0,0.7}
\begin{document}

\title{Directional derivatives and higher order chain rules for abelian functor calculus}
\author[Bauer]{Kristine Bauer}
\author[Johnson]{Brenda Johnson}
\author[Osborne]{Christina Osborne}
\author[Riehl]{Emily Riehl}
\author[Tebbe]{Amelia Tebbe}

\keywords{abelian categories, functor calculus, cartesian differential categories, chain rule, Fa\`{a} di Bruno formula}
\subjclass[2010]{18E10, 55P65, 18G35, 58C20 (primary), and 18C20, 18D10 (secondary)}

\address{Department of Mathematics and Statistics\\University of Calgary\\2500 University Dr.~NW\\Calgary, Alberta, Canada T2N 1N4}
\email{bauerk@ucalgary.ca}

\address{Department of Mathematics\\Union College\\Schenectady, NY 12308}
\email{johnsonb@union.edu}

\address{Department of Mathematics\\University of Virginia\\141 Cabell Drive\\Charlottesville, VA 22904}
\email{cdo5bv@virginia.edu}

\address{Department of Mathematics\\Johns Hopkins University \\ 3400 N Charles Street \\ Baltimore, MD 21218}
\email{eriehl@math.jhu.edu}

\address{Department of Mathematics\\University of Illinois Urbana-Champaign\\1409 W.~Green Street\\Urbana, IL 61801}
\email{tebbe2@illinois.edu}

\date{\today}

\begin{abstract} 
 In this paper, we consider \emph{abelian functor calculus}, the calculus of functors of abelian categories established by the second author and McCarthy.  
 We carefully construct a category of abelian categories and suitably homotopically defined functors, and show that this category, equipped with the directional derivative,  is a 
  {\it cartesian differential category} in the sense of Blute, Cockett, and Seely. This provides an abstract framework that makes certain analogies between classical and functor calculus explicit.  Inspired by Huang, Marcantognini, and Young's chain rule for higher order directional derivatives of functions, we define a higher order directional derivative for functors of abelian categories. We show that our higher order directional derivative is related to the iterated partial directional derivatives of the second author and McCarthy by a Fa\`a di Bruno style formula.  We obtain a higher order chain rule for our directional derivatives  using a feature of the cartesian differential category structure, and with this  provide a formulation for the $n$th layers of the Taylor tower of a composition of functors $F\circ G$ in terms of the derivatives and directional derivatives of $F$ and $G$, reminiscent of similar formulations for functors of spaces or spectra by Arone and Ching.  Throughout, we provide explicit chain homotopy equivalences that tighten previously established quasi-isomorphisms for properties of abelian functor calculus.

\end{abstract}

\maketitle

\setcounter{tocdepth}{1}
\tableofcontents

\section{Introduction}\label{sec:function-calculus}

\subsection{Chain Rules}
When studying the calculus of functions,  the chain rule  for the first derivative of a composition of functions is given by the familiar formula
\[ (f\circ g)'= (f'\circ g)\cdot g'.\]
By repeatedly applying this formula, one can derive the classical Fa\`{a} di Bruno formula for the $n$th derivative of the composite $f\circ g$:
\[(f\circ g)^{(n)}(x)=\sum {\frac{n!}{k_1!k_2!\ldots k_n!}}f^{(k)}(g(x))\left ({\frac{g'(x)}{1!}}\right )^{k_1}\ldots \left ({\frac{g^{(n)}(x)}{n!}}\right )^{k_n}\]
where the sum is taken over all non-negative integer solutions to 
\[
k_1+2k_2+\ldots +nk_n=n
\]
and $k=k_1+\ldots+k_n$.  
An alternative higher order chain rule can be obtained by using the {\it directional} derivative of $f$ at $x$ along $v$:
\[ \nabla f(v;x) = \lim_{t\to 0} \frac{1}{t}\left[ f(x+tv)-f(x)\right].  \]
Huang, Marcantognini, and Young \cite{HMY} define \emph{higher order directional derivatives} $\Delta_nf$, functions of $(n+1)$-variables with $\Delta_1f = \nabla f$, with which they obtain a concise formulation for the $n$th derivative of a composition
\begin{equation}\label{eq:HMY1} (f\circ g)^{(n)}(x)=\Delta_nf(g^{(n)}(x),\ldots, g'(x); g(x)),\end{equation}
when $g$ is a function of a single variable, or
\begin{equation}\label{eq:HMY2}\Delta_n(f\circ g)= \Delta_nf(\Delta_ng,\ldots, \Delta_1g; g),\end{equation} 
in the general case.

For homotopy functors (such as functors from spectra to spectra or spaces to spaces that preserve weak equivalences), Goodwillie's calculus of functors builds a \emph{Taylor tower} of ``polynomial'' functors and natural transformations:
\[ F \to \cdots \to P_nF \to P_{n-1}F \to \cdots \to P_0F \]
where each $P_nF$ is an $n$-excisive functor that approximates $F$ in a range of homotopy groups depending on the functor $F$ and the connectivity of the input \cite{G3}.  The homotopy fiber of the map $P_nF\to P_{n-1}F$ is the $n$th \emph{layer} of the tower, $D_nF$.  For homotopy functors from spectra to spectra, the layers are of the form $D_nF\simeq \partial_n F\wedge_{h\Sigma_n} X^{\wedge n}$, where $\partial_nF$ is a spectrum with an action of the the $n$th symmetric group, $\Sigma_n$.  This spectrum is called the $n$th derivative of $F$.

Several versions of  chain rules for the derivatives of functor calculus have been developed.  Notably,  Arone and  Ching \cite{AC} derived a chain rule for the derivatives $\partial_nF$ using the fact that for functors of spaces or spectra, the symmetric sequence $\{\partial_nF\}$ is a module over the operad formed by the derivatives of the identity functor of spaces.  This generalized earlier work of  Klein and  Rognes \cite{KR} that established a chain rule for first derivatives.  For functors of spectra, a chain rule for the derivatives is given by Ching \cite{C}. For functors of spaces,  Yeakel has developed an alternative method for deriving these chain rules that does not require passage to spectra \cite{yeakel-thesis}.  

\subsection{Chain Rules in Abelian Functor Calculus}

The chain rule results of the preceding paragraph use Goodwillie's original formulation of the Taylor tower.  The second author and  McCarthy \cite{JM:Deriving} defined  Taylor towers for not necessarily additive functors between abelian categories using  the classical cross effect functors of Eilenberg and Mac Lane \cite{EM}.  This approach has been generalized and applies to a wide variety of contexts \cite{BJM}, \cite{WIT1}.  In \cite{JM:Deriving}, Johnson and McCarthy defined a notion of directional derivative $\nabla F(V;X)$ for a functor $F$ valued in an abelian category and proved  the analog of Huang, Marcantognini, and Young's chain rule in degree one \cite[Proposition 5.6]{JM:Deriving}:
\[\nabla (F\circ G) \simeq \nabla F(\nabla G; G),\]
 under the hypothesis that the functor $G$ preserves the zero object.

The present work was motivated by the goal of proving an analog of the higher order directional derivative chain rule of Huang, Marcantognini, and Young for the abelian functor calculus of Johnson and McCarthy.  Achieving this goal required first dealing with the question of how to define a higher order directional derivative in this context.  To get a sense of the choices involved, consider the functor $\nabla F(V;X)$.  This is a functor of two variables, so the first choice to be made  in defining a second order directional derivative is  whether one should differentiate with respect to one of the variables (a partial derivative) 
or both variables simultaneously (a total derivative).

 For the first possibility, one notes that $\nabla F(V;X)$ is already a linear functor with respect to its first variable, so that differentiating with respect to that variable yields nothing new.  So, for the partial derivative approach, we can restrict our attention to taking the directional derivative of $\nabla F(V;X)$ with respect to $X$ to obtain a second order partial directional derivative $\nabla^2F$ that is a functor of $3$ variables.  Iterating this process yields functors $\nabla^nF$ of $(n+1)$-variables defined in \cite{JM:Deriving}, which we refer to as \emph{iterated partial directional derivatives}.
 
  If we follow the total derivative approach by treating $\nabla F$ as a functor whose domain is a product category $\cB\times \cB$, then we obtain as a second order total directional derivative a functor $\nabla^{\times 2}F = \nabla (\nabla F)$ whose source category is $\cB^{4}$.  But, this leads to some redundancy, as we end up differentiating in some directions repeatedly.  The ``correct'' approach, at least if one wants to define the analog of the Huang-Marcantognini-Young higher order directional derivatives, lies somewhere in between.  
  
To define our second order directional derivative $\Delta_2F$ of $F$, we restrict $\nabla^{\times 2}F$ along a diagonal functor $L_2 \colon \cB^3 \to \cB^4$ that takes the triple of objects $(V_2, V_1, X)$ to the ordered pair of ordered pairs $((V_2, V_1), (V_1, X))$.  With this definition we are able to prove that for a pair of composable functors $F$ and $G$
\[
\Delta_2(F\circ G)(V_2, V_1; X)\simeq \Delta_2F(\Delta_2G(V_2, V_1; X), \Delta_1 G(V_1; X); G(X)).
\]
Inductively, we  define the \emph{$n$th order directional derivative} $\Delta_nF$ and prove the desired analog of the Huang-Marcantognini-Young chain rule equation \eqref{eq:HMY2}:

{
\renewcommand{\thethm}{\ref{t:higherchainrule}}
\begin{thm}  For a composable pair $F\colon\cB\kto \cA$ and $G\colon\cC\kto \cB$ of functors of abelian categories, there is a chain homotopy equivalence between the $n$th directional derivatives
\[ \Delta_n (F\circ G)(V_n, \ldots , V_1; X) \simeq \Delta_n F(\Delta_nG(V_n, \ldots , V_1; X), \ldots , \Delta_1G(V_1; X); G(X)).\]
\end{thm}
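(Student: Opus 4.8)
The plan is to induct on $n$. The inductive step rests on two ingredients: the first-order chain rule for $\nabla$ --- the chain rule axiom of the cartesian differential category constructed earlier in the paper, which is \cite[Proposition 5.6]{JM:Deriving} --- and a recursive description of $\Delta_n$ in terms of $\Delta_{n-1}$. The base case $n=0$ (or $n=1$) is trivial (respectively, is that proposition), and the case $n=2$ proved above is also recovered by the induction. The whole argument can alternatively be organized as a formal consequence of the cartesian differential category axioms, valid in any such category: once one knows that the operations $\Delta_n$, obtained by restricting the $n$-fold iterate $\nabla^{\times n}$ along the diagonal $L_n$, are governed by the chain rule, the theorem is the specialization to the present category, with equalities of morphisms interpreted as explicit natural chain homotopy equivalences of functors.

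The first step is to record a recursion for $\Delta_n$. Writing $\mathbf{V} = (V_n, \dots, V_1)$ and $\mathbf{V}' = (V_{n-1}, \dots, V_1, X)$, there is a natural chain homotopy equivalence
\[
\Delta_n F(V_n, \dots, V_1; X) \;\simeq\; \nabla(\Delta_{n-1}F)\bigl(\mathbf{V}; \mathbf{V}'\bigr),
\]
in which $\Delta_{n-1}F$ is regarded as a functor on $\cB^n$, with $\mathbf{V}$ the direction and $\mathbf{V}'$ the basepoint. This comes from $\Delta_n F = \nabla^{\times n}F \circ L_n$ by factoring $L_n \colon \cB^{n+1} \to \cB^{2^n}$ as $(L_{n-1} \times L_{n-1}) \circ D_n$ with $D_n(V_n, \dots, V_1, X) = \bigl((V_n, \dots, V_1), (V_{n-1}, \dots, V_1, X)\bigr)$, together with the fact that for an exact functor $\varphi$ --- any coordinate projection, diagonal, or permutation of a product of abelian categories qualifies --- the chain rule gives $\nabla(H \circ \varphi) \simeq \nabla H \circ (\varphi \times \varphi)$; applied with $\varphi = L_{n-1}$ and $H = \nabla^{\times (n-1)}F$ this yields $\nabla^{\times n}F \circ (L_{n-1} \times L_{n-1}) \simeq \nabla(\Delta_{n-1}F)$. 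If $\Delta_n$ is instead defined by this recursion in the paper, this step is vacuous.

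For the inductive step, assume the chain rule for $\Delta_{n-1}$, so that $\Delta_{n-1}(F \circ G) \simeq \Delta_{n-1}F \circ \Phi$ via a natural chain homotopy equivalence, where $\Phi \colon \cC^n \to \cB^n$ sends $(W_{n-1}, \dots, W_1, Y)$ to $\bigl(\Delta_{n-1}G(W_{n-1}, \dots, W_1; Y), \dots, \Delta_1 G(W_1; Y), G(Y)\bigr)$. Applying $\nabla$ (which preserves chain homotopy equivalences) to the recursion for $F \circ G$ and then the first-order chain rule to the composite $\Delta_{n-1}F \circ \Phi$ gives
\[
\Delta_n(F \circ G)(V_n, \dots, V_1; X) \;\simeq\; \nabla(\Delta_{n-1}F)\bigl(\nabla\Phi(\mathbf{V}; \mathbf{V}'); \Phi(\mathbf{V}')\bigr).
\]
Since $\nabla$ takes a tuple of functors to the tuple of their directional derivatives, and each component of $\Phi$ is some $\Delta_j G$ precomposed with an (exact) coordinate projection $\cC^n \to \cC^{j+1}$, the corresponding component of $\nabla\Phi(\mathbf{V}; \mathbf{V}')$ is $\nabla(\Delta_j G)$ evaluated at the projected direction $(V_{j+1}, \dots, V_1)$ and basepoint $(V_j, \dots, V_1, X)$, which by the recursion for $G$ is precisely $\Delta_{j+1}G(V_{j+1}, \dots, V_1; X)$. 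Hence $\nabla\Phi(\mathbf{V}; \mathbf{V}') \simeq \bigl(\Delta_n G(V_n, \dots, V_1; X), \dots, \Delta_1 G(V_1; X)\bigr)$ while $\Phi(\mathbf{V}') = \bigl(\Delta_{n-1}G(V_{n-1}, \dots, V_1; X), \dots, \Delta_1 G(V_1; X), G(X)\bigr)$; but this is exactly the right-hand side of the recursion for $F$ evaluated at direction $\bigl(\Delta_n G(V_n, \dots, V_1; X), \dots, \Delta_1 G(V_1; X)\bigr)$ and basepoint $G(X)$, i.e.\ $\Delta_n F\bigl(\Delta_n G(V_n, \dots, V_1; X), \dots, \Delta_1 G(V_1; X); G(X)\bigr)$, completing the induction.

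The main obstacle is the homotopical bookkeeping rather than the manipulation of arguments. Pushing the inductive hypothesis through $\nabla$ and through the first-order chain rule requires the equivalence $\Delta_{n-1}(F \circ G) \simeq \Delta_{n-1}F \circ \Phi$ to be natural in all $n+1$ variables, so that it is a genuine morphism in the functor category on which $\nabla$ acts, and it requires the first-order chain rule to be available as an explicit natural chain homotopy equivalence rather than a bare quasi-isomorphism --- this is exactly the refinement of \cite{JM:Deriving} the earlier sections supply, and it is why the chain homotopies must be tracked throughout. One must also check that the hypotheses needed for the first-order chain rule are in force at each stage: the structural functors $L_{n-1}$, $D_n$, and the projections appearing in $\Phi$ are exact, and if the morphisms of the ambient category are not already taken to be reduced, the identity $G(0) \simeq 0$ propagates to $\Delta_j G(0, \dots, 0; 0) \simeq 0$ and hence $\Phi(0, \dots, 0) \simeq 0$. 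The reason everything collapses to the single clean term on the right, rather than to a Fa\`{a} di Bruno sum, is the precise self-similar form of the diagonal $D_n$, which reuses the lower direction variables instead of keeping the $2^n$ slots of $\nabla^{\times n}$ independent.
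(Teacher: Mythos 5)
Your argument is correct, and it reaches the theorem by a genuinely different (though closely related) route from the paper. You run a direct induction on $n$: the recursion $\Delta_nF = \nabla(\Delta_{n-1}F)\circ L_n$ is literally Definition \ref{def:delta_n}, so your ``first step'' is indeed vacuous, and your inductive step is exactly the combination of Theorem \ref{thm:diffcat}(v) (the first-order chain rule, available here as a chain homotopy equivalence with no reducedness hypothesis on $G$, so your worry about propagating $G(0)\simeq 0$ is moot) with parts (iii) and (iv) applied componentwise to the tuple $\Phi = K_{n-1}G$; your identification of $\nabla\Phi(\mathbf{V};\mathbf{V}')$ with $(\Delta_nG,\ldots,\Delta_1G)$ is the inductive shadow of the paper's Lemma \ref{lem:Knd}. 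The paper instead packages the entire induction into the endofunctor $T$ of the tangent category structure: it proves $T^n(F\circ G)\simeq T^nF\circ T^nG$ once and for all from Lemma \ref{lem:tangent-functor}, identifies $\Delta_n$ with $\pi_L\circ T^n(-)\circ d_n^*$ (Lemma \ref{lem:redefineDeltan}), and reduces the theorem to a small diagram chase; the combinatorial cost is the power-set indexing of the $2^n$ coordinates of $T^nF$ in Lemma \ref{lem:Tn-component}. What your approach buys is concreteness and economy --- no $\nabla^{\times n}$, no $T$, no bookkeeping over $\mathcal{P}(\mathbf{n})$ --- at the price of redoing the chain-rule manipulation at every stage of the induction; what the paper's approach buys is the conceptual point, emphasized in the introduction, that the $n$th-order chain rule is nothing but the functoriality of $T^n$, itself an immediate formal consequence of the cartesian differential category axioms. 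One small correction: the step $\nabla(H\circ\varphi)\simeq\nabla H\circ(\varphi\times\varphi)$ is not a consequence of exactness of $\varphi$ in general; it holds for the reindexing functors you actually use because $\nabla\varphi(V;X)\simeq\varphi(V)$ for such $\varphi$ by Theorem \ref{thm:diffcat}(iii) and (iv) --- this is precisely Lemma \ref{lem:nabla-and-diagonals}.
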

\addtocounter{thm}{-1}
}

Of course, this ``in-between" definition of the higher order directional derivatives raises a natural question -- how is it  related to the more obvious choices for defining higher order directional derivatives?  In the case of $\nabla^{\times n}F$, the functor $\Delta_nF$ is obtained by restricting along a diagonal functor $L_n$ as in the degree $2$ case.  The case of $\nabla^nF$ proves more interesting.  We show that $\Delta_nF$ and $\nabla^nF$ satisfy a Fa\`{a} di Bruno-type relationship:
{\renewcommand{\thethm}{\ref{thm:restatedgoal}}
\begin{thm} For a functor $F$ between abelian categories, there is a chain homotopy equivalence
\[\Delta_nF(V_n,\ldots, V_1;X) \simeq \bigoplus\frac{n!}{k_1!k_2!\cdots k_n!}\left(\frac{1}{1!}\right)^{k_1}\cdots \left(\frac{1}{n!}\right)^{k_n} \nabla^{k_1+\cdots+k_n}F(V_n^{k_n},\ldots, V_1^{k_1}; X)
\]
 where the sum is over non-negative integer solutions to the equation $k_1+2k_2+\cdots +nk_n=n$.
\end{thm}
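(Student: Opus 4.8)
The plan is to induct on $n$, using the defining relation $\Delta_n F = (\nabla^{\times n} F) \circ L_n$ together with the Cartesian differential category structure to unwind $\nabla^{\times n}$. The base case $n=1$ is just $\Delta_1 F = \nabla F$, which matches the sole solution $k_1 = 1$ of $k_1 = 1$. For the inductive step, I would first establish a recursion expressing $\Delta_{n}F$ in terms of lower-order $\Delta$'s and a single extra application of $\nabla$ in the last slot — concretely, something of the shape
\[
\Delta_n F(V_n,\dots,V_1;X) \simeq \nabla\bigl(\Delta_{n-1}F(V_{n-1},\dots,V_1;-)\bigr)(V_n; X) \,\oplus\, (\text{correction terms}),
\]
where the correction terms account for the overlap in the diagonal $L_n$ (the fact that $V_1$, etc., appear in multiple coordinates of $\nabla^{\times n}$). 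This recursion is the functor-calculus analog of differentiating the classical Fa\`a di Bruno sum: bumping $n$ by one either introduces a new factor $g^{(1)}$ (a new $k_1$) or promotes an existing $g^{(j)}$ to $g^{(j+1)}$, and the binomial/multinomial bookkeeping must reproduce the coefficients $\tfrac{n!}{k_1!\cdots k_n!}\bigl(\tfrac{1}{1!}\bigr)^{k_1}\cdots\bigl(\tfrac{1}{n!}\bigr)^{k_n}$.

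The key inputs I would assume from the earlier development are: (1) the additivity of $\nabla$ in its directional variable — $\nabla F(V\oplus V';X)$ splits up to chain homotopy equivalence into a sum of partial directional derivatives, which is exactly the source of the $\nabla^{k_1+\cdots+k_n}F$ terms with repeated $V_i$'s; (2) the Cartesian differential category axioms, in particular the compatibility of $\nabla$ with products and the symmetry/linearity axioms, which let me commute and re-index the iterated derivatives; and (3) the explicit chain homotopy equivalences relating $\nabla^n F$ (the iterated \emph{partial} directional derivative of Johnson–McCarthy) to the iterated total derivative $\nabla^{\times n}F$ restricted along the appropriate diagonals. Step (3) is essentially a combinatorial identification of which coordinates of $\nabla^{\times n}F \circ L_n$ get fed the same input, and keeping track of this is where the multinomial coefficient and the $(1/j!)^{k_j}$ factors emerge — each block of $k_j$ coordinates that receive $V_j$ contributes a symmetrization by $\Sigma_{k_j}$ (hence the $1/k_j!$, absorbed into $n!/\prod k_j!$) and each of those has $V_j$ appearing in $j$ overlapping slots of the diagonal (hence the $1/j!$ to leading order, after accounting for how $L_n$ repeats entries).

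The main obstacle I expect is \emph{not} the formal combinatorics but rather making the chain homotopy equivalences strictly compatible under iteration: one must choose the homotopies witnessing $\nabla^{\times n}F\circ L_n \simeq \bigoplus(\cdots)$ so that they glue coherently when $n$ is increased, rather than merely producing a quasi-isomorphism at each stage. This is precisely the kind of issue the abstract flagged (``explicit chain homotopy equivalences that tighten previously established quasi-isomorphisms''), so I would lean on the Cartesian differential category framework — where the relevant identities hold on the nose or via canonical homotopies — to organize the induction, and then transport the result back to the functor-calculus setting. A secondary technical point is verifying that the diagonal restriction $L_n$ interacts correctly with the direct-sum decomposition of $\nabla$ in the directional variables, i.e. that restricting along $L_n$ and then splitting off partial derivatives agrees (up to the claimed homotopy) with splitting first; establishing this compatibility lemma is, I suspect, the real content behind the clean statement.
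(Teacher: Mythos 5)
Your inductive skeleton does match the paper's: the recursion $\Delta_n = L_n^*\circ\nabla\circ\Delta_{n-1}$, apply the inductive hypothesis to $\Delta_{n-1}F$, and observe that passing from $n-1$ to $n$ either creates a new singleton block or enlarges an existing block, exactly as in the classical Fa\`a di Bruno recursion (this is the content of Lemma \ref{l:nabla1nablatwiddle} and the short deduction of Theorem \ref{thm:restatedgoal} from it). The paper first rewrites the target as a sum over \emph{partitions} $\pi$ of $\{1,\ldots,n\}$ of terms $\nabla^{|\pi|}F$ evaluated at $(V_{|S_1|},\ldots,V_{|S_k|};X)$, so that the numerical coefficient is literally the number of partitions with $k_j$ blocks of size $j$ and nothing needs to be ``symmetrized.'' Your account of where the coefficients come from (a $\Sigma_{k_j}$-symmetrization contributing $1/k_j!$, and ``$1/j!$ to leading order'' from overlapping slots) is not how they arise and would be hard to make precise; no homotopy orbits or symmetrization enter the statement at all.

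The genuine gap is in the inductive step itself: you posit ``correction terms'' and a ``compatibility lemma'' but supply no mechanism for computing them or for showing the unwanted cross terms vanish. The engine of the paper's proof is (a) the identification $\nabla^{k}F \cong D_1^{(k)}\CR_kF \oplus D_1^{(k)}\CR_{k+1}F$ of Proposition \ref{prop:5.9}, which converts $\nabla(\nabla^\pi F)$ on the diagonal into an explicit direct sum of multilinearized cross effects after expanding each slot as $\overline V_i\oplus V_i$, and (b) Lemma \ref{lem:Kristine's} and Corollary \ref{cor:Kristine's}: the simultaneous linearization of a strictly multi-reduced functor in two or more variables is contractible, so every summand containing more than one new direction variable dies. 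Without (b) you have no way to discard the cross terms produced by the diagonal restriction, and the recursion does not close up into the clean list of partition refinements. By contrast, the difficulty you flag as the main obstacle --- coherence of the chain homotopies under iteration --- is a non-issue in this framework: Lemma \ref{lem:che-comp} and the exactness of $D_1$ (Proposition \ref{p:Dnproperties}) guarantee that pointwise chain homotopy equivalences are preserved by every operation used, so the equivalences produced at each stage simply compose; no coherence between the witnessing homotopies is ever required.
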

\addtocounter{thm}{-1}
}
A more concise formulation of this result, given in \S\ref{sec:faa}, makes use of the fact that the Fa\`{a} di Bruno coefficients count the number of partitions of a set of $n$ elements.  
 Using these results, we define ordinary (non-directional) derivatives for functors of modules over a commutative ring $R$ and prove a chain rule for these derivatives which is analogous to the Huang-Marcantognini-Young rule from equation \eqref{eq:HMY1}:
 
 {\renewcommand{\thethm}{\ref{cor:faachainrule}}
 \begin{cor}
For a composable pair of functors $F\colon{\Ch\cB} \to {\Ch\cA}$ and $G\colon\mathcal{M}\mathrm{od}_R\to \Ch{\cB}$ of functors of abelian categories, the $n$th derivative \[\frac{d^n}{dR^n}(F\circ G)(X)\] is chain homotopy equivalent to
\[  \bigoplus\frac{n!}{k_1!k_2!\cdots k_n!}\left(\frac{1}{1!}\right)^{k_1}\cdots \left(\frac{1}{n!}\right)^{k_n} \nabla^{k_1+\cdots+k_n}F\left(\frac{d^{k_n}}{dR^{k_n}}G(X),\ldots, \frac{d^{k_1}}{dR^{k_1}}G(X); G(X)\right)\]
where the sum is over non-negative integer solutions to the equation $k_1+2k_2+\cdots +nk_n=n$.
\end{cor}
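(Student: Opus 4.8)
The plan is to deduce this corollary formally from the higher order chain rule (Theorem~\ref{t:higherchainrule}) and the Fa\`a di Bruno formula (Theorem~\ref{thm:restatedgoal}), together with the definition of the ordinary $n$th derivative. Recall that, for a functor $H$ with source $\mathcal{M}\mathrm{od}_R$, the derivative $\frac{d^n}{dR^n}H(X)$ is defined by evaluating the directional variables of the $n$th directional derivative $\Delta_nH$ at the canonical directions supplied by the regular module $R$ (the rank-one free module over itself); this is the $R$-linear analogue of setting $v=1$ in the classical directional derivative. In particular $\frac{d^n}{dR^n}(F\circ G)(X)$ and $\frac{d^j}{dR^j}G(X)$ are defined, since $F\circ G$ and $G$ both have source $\mathcal{M}\mathrm{od}_R$; the functor $F$ has source $\Ch\cB$, which carries no distinguished scalar to differentiate against, and this is why its contribution to the formula appears through the iterated partial directional derivatives $\nabla^kF$ rather than an ordinary derivative of $F$.

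First I would specialize the chain rule. Applying Theorem~\ref{t:higherchainrule} to the composable pair $F,G$ and restricting the resulting natural chain homotopy equivalence along $X\mapsto(R,\ldots,R,X)$ (legitimate because that equivalence is natural in all of its variables) gives
\[
\frac{d^n}{dR^n}(F\circ G)(X)\;\simeq\;\Delta_n F\bigl(\Delta_nG(R,\ldots,R;X),\,\ldots,\,\Delta_1G(R;X);\,G(X)\bigr).
\]
Since the chain rule presents the $j$th directional argument of $\Delta_nF$ as $\Delta_jG$ evaluated at the bottom $j$ of those same directions, each such argument is precisely $\frac{d^j}{dR^j}G(X)$, so the right-hand side is $\Delta_nF\bigl(\tfrac{d^n}{dR^n}G(X),\ldots,\tfrac{d}{dR}G(X);G(X)\bigr)$. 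Next I would apply Theorem~\ref{thm:restatedgoal} to the functor $F\colon\Ch\cB\to\Ch\cA$, substituting $V_j:=\tfrac{d^j}{dR^j}G(X)\in\Ch\cB$ for the $j$th direction and $G(X)$ for the base object, and then compose the two chain homotopy equivalences (a composite of chain homotopy equivalences is again one). This produces exactly the asserted equivalence: the Fa\`a di Bruno coefficients $\frac{n!}{k_1!\cdots k_n!}\left(\tfrac1{1!}\right)^{k_1}\cdots\left(\tfrac1{n!}\right)^{k_n}$ and the summation range $k_1+2k_2+\cdots+nk_n=n$ are inherited verbatim from Theorem~\ref{thm:restatedgoal}, and in each summand the $j$th derivative $\tfrac{d^j}{dR^j}G(X)$ occupies $k_j$ of the $k_1+\cdots+k_n$ directional slots of $\nabla^{k_1+\cdots+k_n}F(-\,;G(X))$.

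I do not expect a serious obstacle here: the real work has already been carried out in Theorems~\ref{t:higherchainrule} and~\ref{thm:restatedgoal}, and what remains is essentially organizational. One should confirm that the hypotheses of those theorems hold in the present situation --- $\Ch\cA$ and $\Ch\cB$ are abelian, so Theorem~\ref{thm:restatedgoal} applies to $F$, and $F,G$ must constitute an admissible composable pair for Theorem~\ref{t:higherchainrule} (in particular $G$ should be suitably pointed, which will be among the standing hypotheses on the functors under consideration), and $R$ should be commutative as assumed. One should also keep track of the bookkeeping in the two substitutions: restriction along $X\mapsto(R,\ldots,R,X)$ and then along $X\mapsto\bigl(\tfrac{d^n}{dR^n}G(X),\ldots,\tfrac{d}{dR}G(X),G(X)\bigr)$ is valid by naturality, and after these the multilinear degrees and multiplicities match so that the summand indexed by a solution $(k_1,\ldots,k_n)$ of $k_1+2k_2+\cdots+nk_n=n$ has the stated shape. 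The one point to make fully precise is the identification of $\Delta_jG$ at the canonical directions with $\tfrac{d^j}{dR^j}G$ at the level of natural transformations, so that it may be inserted into the natural equivalence of Theorem~\ref{thm:restatedgoal} without disturbing the underlying chain homotopies; this is immediate from the definition of $\tfrac{d^j}{dR^j}$ and the naturality built into Theorem~\ref{t:higherchainrule}.
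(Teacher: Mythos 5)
Your overall strategy --- specialize the higher order chain rule to canonical directions and then expand $\Delta_nF$ via the Fa\`a di Bruno formula --- is the same as the paper's, but your first step rests on a misreading of the definition of the ordinary derivative, and as written it computes the wrong left-hand side. Definition \ref{defn:derivatives} sets $\frac{d^n}{dR^n}H(X) \defeq \nabla^nH(R,\ldots,R;X)$ using the \emph{iterated partial} directional derivative $\nabla^n$, not the higher order directional derivative $\Delta_n$. These disagree at the all-$R$ directions: by Theorem \ref{thm:restatedgoal},
\[ \Delta_nH(R,\ldots,R;X)\;\simeq\;\bigoplus_{\pi}\nabla^{|\pi|}H(R,\ldots,R;X)\;=\;\bigoplus_{\pi}\frac{d^{|\pi|}}{dR^{|\pi|}}H(X), \]
the sum running over \emph{all} partitions of $\{1,\ldots,n\}$; already for $n=2$ this is $\frac{d}{dR}H(X)\oplus\frac{d^2}{dR^2}H(X)$, not $\frac{d^2}{dR^2}H(X)$. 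So restricting Theorem \ref{t:higherchainrule} along $X\mapsto(R,\ldots,R,X)$ does not produce $\frac{d^n}{dR^n}(F\circ G)(X)$ on the left, and likewise the arguments $\Delta_jG(R,\ldots,R;X)$ appearing on the right are not $\frac{d^j}{dR^j}G(X)$.

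The correct specialization, which is what the paper does via Theorem \ref{thm:HMY2}, is along $(V_n,\ldots,V_1)=(0,\ldots,0,R)$. The bridge you are missing is Lemma \ref{lem:derivatives}: $\nabla^nH(R,\ldots,R;X)\simeq\Delta_nH(0,\ldots,0,R;X)$, which holds because in the Fa\`a di Bruno expansion of $\Delta_nH(0,\ldots,0,R;X)$ every summand $\nabla^{|\pi|}H$ having a zero direction vanishes, leaving only the discrete partition. With that lemma in hand, Theorem \ref{t:higherchainrule} evaluated at $(0,\ldots,0,R)$ gives $\frac{d^n}{dR^n}(F\circ G)(X)\simeq\Delta_nF\bigl(\frac{d^n}{dR^n}G(X),\ldots,\frac{d}{dR}G(X);G(X)\bigr)$, and your remaining step --- expanding this $\Delta_nF$ by Theorem \ref{thm:restatedgoal} and counting the partitions of each type $(k_1,\ldots,k_n)$ to recover the coefficients $\frac{n!}{k_1!\cdots k_n!}\left(\frac{1}{1!}\right)^{k_1}\cdots\left(\frac{1}{n!}\right)^{k_n}$ --- is correct and is exactly how the paper concludes.
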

\addtocounter{thm}{-1} }

 \subsection {A categorical context for abelian functor calculus}  \label{s:context}
 Although our original motivation for this project was to prove higher order chain rules, we found that  a significant part of the challenge in  doing so involved placing these chain rules and the abelian functor calculus itself in an appropriate categorical context.  This was developed in two stages, using the concepts of Kleisli categories and cartesian differential categories.   
 
For a functor $F\colon\cB\rightarrow \cA$ between two abelian categories, the abelian functor calculus constructs a degree $n$ approximation $P_nF\colon\cB\rightarrow\Ch\cA$, where $\Ch\cA$ is the category of chain complexes in $\cA$ concentrated in non-negative degrees.   
For a composable pair of functors $F\colon\cB\rightarrow \cA$ and $G\colon\cC\rightarrow \cB$, this results in a pair of functors $P_nF\colon\cB\rightarrow \Ch\cA$ and $P_nG\colon\cC\rightarrow \Ch\cB$ which no longer appear to be composable.  In \cite{JM:Deriving}, the authors form the composition $P_nF\circ P_nG$ by ``prolonging" $P_nF$ to a functor $\Ch\cB \to \Ch(\Ch\cA)$, composing the prolongation with $P_nG$ and taking the total complex of the resulting double complex.  
We observe in section 3 that this process is precisely the composition in the Kleisli category associated to a particular pseudomonad $\Ch$ acting on the $2$-category of abelian categories, and use $\cat{AbCat}_{\Ch}$ to denote that Kleisli category.  
 
 In functor calculus, the use of terms such as ``calculus'' and ``derivatives'' has been justified by pointing out strong formal resemblances to  the classical constructions from analysis and undergraduate calculus.  Although these analogies are compelling, one wonders if there is  a deeper justification.  
 In the course of investigating the higher order chain rule for directional derivatives, we discovered that the properties of the directional derivative for abelian functor calculus correspond exactly to the axioms defining the cartesian differential categories of Blute, Cockett, and Seely \cite{BCS:Cartesian}.  This concept captures the notion of differentiation in a wide variety of contexts (the  category of real vector spaces and smooth maps equipped with the usual differential operator is a standard example of a cartesian differential category)     and provides an explanation for some of the formal resemblances.
 
 We prove:

{\renewcommand{\thethm}{\ref{cor:CDC}}
\begin{cor} The homotopy category
$\Ho\cat{AbCat}_{\Ch}$ is a cartesian differential category.
\end{cor}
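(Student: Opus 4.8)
The plan is to verify directly that $\Ho\cat{AbCat}_{\Ch}$ satisfies the definition of a cartesian differential category, taking the differential combinator $D$ to be the directional derivative: $D[F]\defeq\nabla F\colon\cB\times\cB\kto\cA$ for a morphism $F\colon\cB\kto\cA$. Before the seven axioms (CD.1)--(CD.7) of Blute--Cockett--Seely can even be stated one must first produce the cartesian left-additive structure. For the cartesian part I would check that the product $\cB\times\cC$ of abelian categories (again abelian) is already a categorical product in the Kleisli category $\cat{AbCat}_{\Ch}$: this reduces to the facts that $\Ch$ commutes with finite products of abelian categories up to equivalence and that the unit and multiplication of the pseudomonad $\Ch$ are compatible with these equivalences, after which one picks out the projections and pairing as morphisms of $\cat{AbCat}_{\Ch}$. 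Products then descend to the homotopy category since localization only inverts pointwise chain homotopy equivalences. For the left-additive part, each hom-collection of $\cat{AbCat}_{\Ch}$ consists of homotopy functors valued in a category of chain complexes, which is additive; pointwise sum therefore makes each hom-set a commutative monoid with unit the constant zero functor, and left-additivity --- that precomposition with an arbitrary morphism, and the projections, preserve sums --- follows because prolongation and the totalization used to define Kleisli composition are additive functors.

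With this structure in hand, the proof becomes a matter of matching each axiom to an established property of $\nabla$. First one checks that $\nabla$ is well defined on $\Ho\cat{AbCat}_{\Ch}$, i.e.\ that it carries chain homotopy equivalences to chain homotopy equivalences. Then: (CD.1) is additivity of $\nabla$ in the functor slot together with $\nabla 0\simeq 0$; (CD.2) is additivity of $\nabla F$ in the directional variable together with $\nabla F(0;X)\simeq 0$, both of which hold because $\nabla F$ is linear in that variable by construction; (CD.3) records the values of $\nabla$ on identities and on projections, each of which reproduces the appropriate projection; (CD.4) is the componentwise identity $\nabla\langle F,F'\rangle\simeq\langle\nabla F,\nabla F'\rangle$; (CD.5) is the degree-one chain rule $\nabla(F\circ G)\simeq\nabla F(\nabla G;G)$ in its Kleisli-composition form; (CD.6) is the linearity of $\nabla F$ in the direction re-expressed through the second directional derivative; and (CD.7) is the symmetry of $\nabla^2 F$ in its two directional slots, inherited from the symmetry of the cross effects.

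The main obstacle, I expect, is not the dictionary between axioms and known facts but the passage through the Kleisli category. Kleisli composition is ``prolong, compose, totalize,'' so the chain rule (CD.5) and the interchange law (CD.7) are not strict equalities of functors but are witnessed by particular chain homotopy equivalences, and one must check that these are natural and compatible with the additive and cartesian structure in order that they become genuine equalities in $\Ho\cat{AbCat}_{\Ch}$. This is exactly where the explicit chain homotopy equivalences promised in the abstract --- those tightening the earlier quasi-isomorphisms --- are needed: the Johnson--McCarthy chain rule was originally proved only up to quasi-isomorphism and under the hypothesis that $G$ preserves the zero object, whereas here it must hold up to chain homotopy equivalence for every morphism of $\cat{AbCat}_{\Ch}$, whose morphisms are already suitably reduced so that the hypothesis is automatic. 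A secondary but genuine point of care is that the product of abelian categories really is the categorical product after passing to $\cat{AbCat}_{\Ch}$ and then to its homotopy category, since the structure $2$-cells of $\Ch$ are only pseudonatural; pinning this down is what actually underlies (CD.3) and (CD.4) as well as the cartesian structure itself.
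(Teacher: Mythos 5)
Your proposal follows essentially the same route as the paper: establish the cartesian left-additive structure on $\Ho\cat{AbCat}_{\Ch}$ (products of abelian categories, pointwise direct sum making each hom-set a commutative monoid, with pre-composition and projections respecting sums), and then match the seven axioms to the seven properties of $\nabla$ that the paper collects in Theorem \ref{thm:diffcat} and proves one by one. One correction: your claim that the reducedness hypothesis on $G$ in the chain rule is ``automatic'' because morphisms of $\cat{AbCat}_{\Ch}$ are ``already suitably reduced'' is false --- morphisms are arbitrary functors $\cB\to\Ch\cA$; the paper instead removes the hypothesis by proving $D_1(F\circ G)\simeq D_1F\circ D_1G\oplus D_1\CR_2F(G(0),\CR_1G)$ for non-reduced $G$ (Proposition \ref{prop:D_1chainrule}) and then tracking the correction terms through the proof of Theorem \ref{thm:diffcat}(v), so this step genuinely requires an argument rather than being automatic.
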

 \addtocounter{thm}{-1} }
 
In this statement, $\cat{HoAbCat}_{\Ch}$ is  the  homotopy category of $\cat{AbCat}_{\Ch}$ obtained by inverting pointwise chain homotopy equivalences.  The directional derivative $\nabla$ gives this category its cartesian differential category structure.     Fitting calculus into this context led to additional insight stemming from the fact that  a cartesian differential category is also an example of a tangent category, as defined by Cockett and Cruttwell \cite{CC}.  Tangent categories are characterized by the existence of an endofunctor $T$ satisfying  essential properties of tangent bundles for manifolds. Our first proof of the chain rule of Theorem \ref{t:higherchainrule} did not make use of the tangent category structure on  $\cat{HoAbCat}_{\Ch}$, but the proof presented here, which does use the endofunctor $T$, provides a conceptual simplification. As the first-order chain rule can be seen to encode the functoriality of the derivative, the $n$th order chain rule can be extracted from the functoriality of the iterated endofunctor $T^n$, which is an easily derived consequence of our Corollary \ref{cor:CDC}.

Proving that $\cat{HoAbCat}_{\Ch}$ is a cartesian differential category required reworking many of the constructions and results for abelian functor calculus established in \cite{JM:Deriving}.  Driving these   challenges was the fact that chain rules in the abelian context do not come in the form of isomorphisms. For example, applying the definition of $\nabla$ (see Definition \ref{defn:dir-derivative} or \ref{defn:dir-derivative-JM}) to $F\circ G$ produces a  chain complex that is not isomorphic to $\nabla F(\nabla G, G)$ in general.  Instead, when $G$ is reduced ($G(0)\cong 0$), \cite{JM:Deriving} prove that there is a quasi-isomorphism between $\nabla (F\circ G)$ and $\nabla F(\nabla G, G)$.  

The axioms of a cartesian differential category require  that $\nabla(F\circ G)$ and $\nabla F(\nabla G, G)$ be isomorphic.    This suggests that  we attempt to realize  $\cat{AbCat}_{\Ch}$ as a cartesian differential category by inverting quasi-isomorphisms.  But  because the functors we consider between abelian categories are far from being exact, when we invert quasi-isomorphisms in $\cat{AbCat}_{\Ch}$, the composition of morphisms is no longer well-defined.  However, the somewhat delicately defined composition in our Kleisli category preserves chain homotopy equivalences, so we define  $\cat{HoAbCat}_{\Ch}$ to be the category that inverts these instead.  Because of this we needed to upgrade many  definitions and results  of \cite{JM:Deriving} from quasi-isomorphisms to chain homotopy equivalences.  

Revising results of \cite{JM:Deriving} to work up to chain homotopy equivalence rather than  quasi-isomorphism was a relatively straightforward process, with the notable exception of the chain rule itself.  That required proving the following proposition, which refines Lemma 5.7 of \cite{JM:Deriving}.

{\renewcommand{\thethm}{\ref{lem:5.7}}
\begin{prop} For any composable pair of functors $F \colon \cB \to \Ch\cA$ and $G \colon \cC \to \Ch\cB$ with $G$ reduced, there is a chain homotopy equivalence
\[ D_1(F \circ G) \simeq D_1F \circ D_1G.\]
\end{prop}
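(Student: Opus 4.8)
The plan is to retrace the proof of \cite[Lemma 5.7]{JM:Deriving}, but to replace each place where that argument produces only a quasi-isomorphism by an explicit chain homotopy equivalence, using the chain-homotopy-level refinements of the basic properties of cross effects and of $D_1$ established in the preceding sections. The general facts I would invoke are: $D_1$ preserves pointwise chain homotopy equivalences (it is assembled from cross effects, finite direct sums, and total complexes of first-quadrant bicomplexes, each of which does); $D_1$ carries pointwise-split fibre sequences to fibre sequences, so replacing $F$ and $G$ by their reduced parts changes neither side up to chain homotopy equivalence; and $D_1$ is homotopy idempotent, $D_1 F \simeq D_1(D_1 F)$, via an explicit comparison. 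Hence I may assume that $F\colon \cB \to \Ch\cA$ and $G\colon \cC \to \Ch\cB$ are both reduced, in which case $D_1 G = P_1 G$.

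The core is the zig-zag $D_1(F\circ G) \simeq D_1\big(F\circ D_1 G\big) \cong D_1 F \circ D_1 G$. The second step is essentially formal: $D_1 G$ is additive, hence exact, so it commutes with the cross-effect decompositions and with the (co)limits and total complexes that build $D_1$; concretely $\CR_k(F\circ D_1 G) \cong \CR_k F \circ (D_1 G)^{\times k}$ on the nose, whence $D_1(F\circ D_1 G)\cong (D_1 F)\circ D_1 G$ canonically. The first step is the substantive one. Here I would use the cross-effect expansion underlying the chain rule: writing $G(X_1\oplus\cdots) \cong D_1 G(X_1\oplus\cdots) \oplus \widetilde G(X_1\oplus\cdots)$, where $\widetilde G = \operatorname{fib}(G\to D_1 G)$ vanishes to second order, and feeding this into the cross-effect decomposition of the reduced functor $F$, one expresses $\CR_k(F\circ G)$ as a natural direct sum of a ``principal'' part built only from $D_1 G$ together with ``correction'' summands, each containing at least one composition factor of $\widetilde G$ and therefore built from $G$ to order $\ge 2$. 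Assembling the bicomplex that computes $D_1(F\circ G)$, this splits, naturally in $F$ and $G$, as the total object of the principal part — which recovers $D_1(F\circ D_1 G)$ — plus the total object of the correction part, and it remains to contract the latter.

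Producing that contracting homotopy is the main obstacle. In \cite{JM:Deriving} the correction part is disposed of by a connectivity/spectral-sequence estimate — each correction summand involves $G$ to order $\ge 2$, so its contribution is pushed off to infinity in the relevant colimit — and this only yields a quasi-isomorphism. To upgrade it I would exploit that all functors in play take values in non-negatively graded chain complexes over an abelian category, so the correction bicomplex is first-quadrant, degreewise split, and acyclic, with a splitting written down explicitly from the cross-effect retractions; on such a bicomplex one constructs an explicit degreewise contraction, and the standard total-complex bookkeeping turns it into a chain contraction of the whole correction part. The remaining work is to check that this contraction, together with the comparison maps of the first paragraph, assembles into a \emph{natural} chain homotopy equivalence, and that the chosen homotopies are compatible with the prolongation and $\Tot$ operations that define composition in $\cat{AbCat}_\Ch$, so that the equivalence is of the required form and descends to $\Ho\cat{AbCat}_\Ch$; this is routine but must be done with care, since tracking homotopies through $\Tot$ and the Kleisli composite is precisely where the chain-homotopy-equivalence argument is more delicate than the original quasi-isomorphism argument.
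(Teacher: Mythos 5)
Your overall strategy --- exhibit both sides inside a larger object and contract the difference with an explicit homotopy, using the first-quadrant/split structure --- is in the right spirit, but two of your key intermediate claims do not hold as stated, and they are exactly the points where the real work lies.

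First, the step you call ``essentially formal'' is not. $D_1G$ is \emph{linear} in the sense of Definition \ref{defn:linear}: it preserves finite direct sums only up to chain homotopy equivalence (Lemma \ref{lem:D1-linearity}(i)), not on the nose, and it is certainly not exact as a functor $\cC\kto\cB$ (exactness of $D_1$ in Proposition \ref{p:Dnproperties} is a statement about the functor categories, not about $D_1G$ itself). Consequently $\CR_k(F\circ D_1G)\cong \CR_kF\circ(D_1G)^{\times k}$ fails as an isomorphism, and $D_1(F\circ D_1G)\cong D_1F\circ D_1G$ is not ``canonical.'' Worse, the Kleisli composite $D_1F\circ D_1G$ is $\Tot\circ\Ch(D_1F)\circ D_1G$, where $\Ch(D_1F)$ is the Dold--Kan prolongation; even identifying this with the result of applying $D_1F$ degreewise requires a genuine argument (this is Lemma \ref{lem:prolong} in the paper, which itself rests on Theorem \ref{thm:bicomplex-htpy-equiv} and the contractibility of the higher cross effects of $D_1H$).

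Second, the splitting $G\cong D_1G\oplus\widetilde G$ with $\widetilde G=\operatorname{fib}(G\to D_1G)$ does not exist. For reduced $G$ the map $p_1\colon G\to P_1G= D_1G$ is the inclusion of the degree-zero row of a comonad resolution, not a split epimorphism, and $D_1G$ is a chain complex of functors that is in no sense a direct summand of $G$. So there is no natural direct sum decomposition of $\CR_k(F\circ G)$ into a ``principal part built from $D_1G$'' plus corrections, and the contraction you propose has nothing to act on. The paper avoids both problems by never decomposing $G$: it forms the tricomplex $T(F,G)_{p,q,r}=(LR)^r((LR)^pF\circ(LR)^qG)$, identifies $D_1(FG)$ with the $n=0$ column and (via the prolongation comparison) $D_1F\circ D_1G$ with the $r=0$ row of its levelwise totalization $A(F,G)$, and shows both inclusions into $\Tot(A(F,G))$ are chain homotopy equivalences by Corollary \ref{cor:chain-htpy-equiv} --- the columns contract via the unit of $L\dashv R$, and the rows contract because $C_2D_1F$ and $C_2D_1G$ are contractible. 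If you want to salvage your route, you would essentially have to reprove those same contractibility statements and the prolongation lemma, at which point you have reconstructed the paper's argument.
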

\addtocounter{thm}{-1} 
}

To prove this, we prove a technical general result that might be of independent interest (or might already be in the literature):

{\renewcommand{\thethm}{\ref{thm:bicomplex-htpy-equiv}}
\begin{thm}   Let $\iota\colon A_{\bullet, \bullet}\rightarrow B_{\bullet,\bullet}$ be a morphism of first-quadrant bicomplexes that admits a row-wise strong deformation retraction. 
Then $\iota$ induces a chain homotopy equivalence of total complexes $\Tot(A_{\bullet, \bullet})\rightarrow \Tot(B_{\bullet, \bullet})$.  \end{thm}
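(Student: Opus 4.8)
The plan is to deduce this from the basic perturbation lemma of homological algebra. Adopting the convention that $A_{p,q}$ sits in column $p$ and row $q$, with horizontal differential $d^h$ and vertical differential $d^v$, a row-wise strong deformation retraction furnishes for each $q$ a chain map $r_q\colon B_{\bullet,q}\to A_{\bullet,q}$ (with respect to $d^h$) and a degree $+1$ map $h_q\colon B_{\bullet,q}\to B_{\bullet,q}$ with $r_q\iota_q = \id$ and $d^h h_q + h_q d^h = \iota_q r_q - \id$, together with the side conditions $h_q\iota_q = 0$, $r_q h_q = 0$, $h_q^2 = 0$. (If the given row-wise deformation retraction does not come with the side conditions, one first replaces each $h_q$ by a homotopic choice that does, by a standard maneuver.) Assembling over $q$, and writing $\Tot^h(-)$ for the single complex $\bigl(\bigoplus_{p+q=n}(-)_{p,q},\, d^h\bigr)$, the data $(\iota, r, h)$ forms a strong deformation retraction of $\Tot^h(B)$ onto $\Tot^h(A)$; no signs are needed since $d^h$ is the only differential in play.

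Next I would view the full total differential on $\Tot(B)$ as a perturbation of $d^h$: with the usual Koszul signs, $D_B = d^h + \delta$ where $\delta$ acts as $(-1)^p d^v$ on the summand $B_{p,q}$, and the identity $D_B^2 = 0$ is exactly the bicomplex relation, so $\delta$ is a genuine perturbation. Since $h$ raises $p$ by one while $\delta$ lowers $q$ by one, the composite $h\delta$ strictly decreases $q$; because $B$ is first-quadrant, $h\delta$ is locally nilpotent and $(\id - h\delta)^{-1} = \sum_{n\ge 0}(h\delta)^n$ is a locally finite sum. The perturbation lemma then produces a strong deformation retraction of $(\Tot(B), D_B)$ onto the complex on the same underlying graded object as $\Tot^h(A)$ but with differential of the form $d^h + r\,\delta\,\iota + r\,\delta\,h\,\delta\,\iota + \cdots$, with perturbed section $\iota_\infty = \iota + h\,\delta\,\iota + \cdots$.

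The crucial computation is that these perturbed formulas collapse to the obvious answer. Since $\iota$ is a morphism of bicomplexes it commutes with $d^v$, so using $r\iota = \id$ one gets $r\,\delta\,\iota = (-1)^p r\iota\, d^v = (-1)^p d^v$, which is precisely the vertical part of the total differential on $A$; and every higher term contains the factor $h\,\delta\,\iota = (-1)^p h\,\iota\, d^v = 0$ by the side condition $h\iota = 0$. The same factor kills every term of $\iota_\infty$ beyond the first. Hence the perturbed complex is exactly $(\Tot(A), D_A)$, the perturbed section is exactly $\Tot(\iota)$, and $\Tot(\iota)$ is thereby exhibited as the inclusion of a strong deformation retraction; in particular it is a chain homotopy equivalence.

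I expect the main obstacle to be bookkeeping rather than anything conceptual: one must check carefully that the row-wise data genuinely assembles into honest strong deformation retraction data for $\Tot^h$ with consistent signs, and that the local nilpotence hypothesis of the perturbation lemma holds — which is precisely where the first-quadrant assumption is used. An alternative, for a reader who prefers to avoid invoking homological perturbation theory, is to filter $\Tot(B)$ by number of rows, $F_j\Tot(B) = \bigoplus_{q\le j} B_{p,q}$, note that $\iota$ restricts to a chain homotopy equivalence on each filtration quotient $F_j/F_{j-1}$ (namely the $j$th row), and then inductively extend the homotopies up the filtration, which is finite in each total degree; the side conditions are exactly what make this extension step go through cleanly. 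The perturbation-lemma argument is essentially a closed-form packaging of this induction.
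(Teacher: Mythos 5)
Your argument is correct, and it reaches the paper's conclusion by a genuinely different route: you invoke the basic perturbation lemma, treating the vertical differential as a perturbation of the horizontal one, whereas the paper writes down the retraction and homotopy on $\Tot(B)$ explicitly as lower-triangular matrices with entries $f(-es)^{i-j}$ and $s(-es)^{i-j}$ and verifies the chain-map and homotopy identities by hand (Propositions \ref{prop:generalretraction} and \ref{prop:generalchain-homotopy}, using the commutation relations of Lemma \ref{lem:relations}). The two proofs are secretly the same formulas --- the paper's matrices are precisely the terms of $f(1-\delta s)^{-1}$ and $s(1-\delta s)^{-1}$ produced by the perturbation lemma, and your observation that $s\iota=0$ together with $\iota$ commuting with the vertical differential kills all higher terms in $\iota_\infty$ and in the perturbed differential is exactly what makes the paper's $\Tot(\iota)$ a strict inclusion of a deformation retract. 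What your packaging buys is brevity, a clear conceptual reason the first-quadrant hypothesis enters (local nilpotence of $s\delta$), and a clean explanation of why the retraction and homotopy are \emph{not} required to commute with the vertical differentials --- that failure is exactly what the perturbation lemma absorbs. What the paper's hands-on version buys is self-containedness and the explicit formula for the retraction $\rho$, which is reused verbatim in Corollary \ref{cor:chain-htpy-equiv} and in the proof of Proposition \ref{lem:5.7}. Two small remarks: the paper's Definition \ref{defn:row-wise-retraction} only imposes the side condition $s\iota=0$, and that is in fact the only one your collapse computation uses, so the preliminary ``standard maneuver'' to arrange $fs=0$ and $s^2=0$ is needed only if you insist on a version of the perturbation lemma stated for fully normalized SDR data; and your Koszul-signed $\delta=(-1)^p d^v$ versus the paper's anticommuting-squares convention are interchangeable, as the paper notes in Remark \ref{rmk:commutative-bicomplexes}.
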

\addtocounter{thm}{-1} }

 We include a proof of this result in Appendix A. The proof defines an explicit chain homotopy on the total complex  that does not come from a chain homotopy of bicomplexes, as defined in \cite[5.7.3]{Weibel}, because the row-wise retraction and chain homotopies are not assumed to be natural with respect to the vertical differentials. The precise conditions are stated in Definition \ref{defn:row-wise-retraction}. As a corollary, it follows that:
 
{\renewcommand{\thethm}{\ref{cor:chain-htpy-equiv}}
 \begin{cor} Let $A_{\bullet, \bullet}$ be a first-quadrant bicomplex so that every row except the zeroth row $A_{0,\bullet}$ is contractible. Then the natural inclusion $A_{0,\bullet}\hookrightarrow\Tot(A)_\bullet$ is a chain homotopy equivalence.
\end{cor}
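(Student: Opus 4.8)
The plan is to realize the statement as a special case of Theorem \ref{thm:bicomplex-htpy-equiv}. First I would let $A^{(0)}_{\bullet,\bullet}$ be the sub-bicomplex of $A_{\bullet,\bullet}$ with $A^{(0)}_{0,\bullet}=A_{0,\bullet}$ and $A^{(0)}_{p,\bullet}=0$ for every $p\geq 1$; because $A_{\bullet,\bullet}$ lies in the first quadrant, this family of subobjects is visibly closed under both differentials, so it is a genuine sub-bicomplex whose differentials agree with those of $A_{\bullet,\bullet}$ in the zeroth row and vanish elsewhere. Since $A^{(0)}_{\bullet,\bullet}$ is concentrated in a single row, its total complex is $A_{0,\bullet}$, and the map of total complexes induced by the inclusion $\iota\colon A^{(0)}_{\bullet,\bullet}\hookrightarrow A_{\bullet,\bullet}$ is exactly the natural inclusion $A_{0,\bullet}\hookrightarrow\Tot(A)_\bullet$ appearing in the statement. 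So it suffices to check that $\iota$ meets the hypothesis of Theorem \ref{thm:bicomplex-htpy-equiv}.

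To do this I would exhibit a row-wise strong deformation retraction of $\iota$ in the sense of Definition \ref{defn:row-wise-retraction}, working one row at a time. In row $0$, the component $\iota_{0,\bullet}$ is the identity of $A_{0,\bullet}$, which is tautologically a strong deformation retraction. In each row $p\geq 1$, the component $\iota_{p,\bullet}$ is the inclusion $0\hookrightarrow A_{p,\bullet}$; a chosen contracting homotopy for the row complex $A_{p,\bullet}$ — which exists precisely because the hypothesis says this row is contractible — furnishes exactly the deformation-retraction data, the remaining identities in the definition of a strong deformation retraction holding automatically since one of the two complexes is zero. No compatibility of these row homotopies with the differential running between rows is required, which is the whole reason for invoking Theorem \ref{thm:bicomplex-htpy-equiv} rather than a statement about chain homotopies of bicomplexes. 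That theorem then produces the desired chain homotopy equivalence $A_{0,\bullet}=\Tot(A^{(0)})_\bullet\to\Tot(A)_\bullet$.

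Essentially all of the substance here is carried by Theorem \ref{thm:bicomplex-htpy-equiv}, so I do not anticipate a real obstacle; the only points needing care are bookkeeping ones — matching the notion of a contractible row to the precise data demanded by Definition \ref{defn:row-wise-retraction}, and checking that $\Tot$ of a bicomplex concentrated in one row returns that row (with the correct sign of the differential built into whichever convention for $\Tot$ is in force), so that the induced map genuinely is the stated inclusion.
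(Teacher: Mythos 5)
Your proposal is correct and takes essentially the same route as the paper: the paper presents this as an immediate specialization of Theorem \ref{thm:bicomplex-htpy-equiv}, viewing $A_{0,\bullet}$ as the sub-bicomplex concentrated in the zeroth row, and its proof merely records the resulting explicit retraction $\bigl((-es)^n,\ (-es)^{n-1},\ \ldots,\ (-es),\ 1\bigr)$ obtained by specializing Proposition \ref{prop:generalretraction}. Your verification of the row-wise strong deformation retraction hypothesis (identity data in row $0$, a contracting homotopy in each row $p\geq 1$, with $s\iota=0$ automatic because $\iota=0$ there) is exactly the bookkeeping the paper leaves implicit.
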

\addtocounter{thm}{-1} }

   The proof of Proposition 5.7 appears in Appendix B.  Combining Proposition 5.7 with some other results, we prove

{\renewcommand{\thethm}{\ref{thm:diffcat}}
\begin{thm}
For any pair of functors 
$F \colon \cB \to \Ch\cA$ and $G \colon \cC \to \Ch\cB$, there is a chain homotopy equivalence
\[\nabla (F\circ G) (V; X)\simeq \nabla F(\nabla G(V; X); G(X)).\]
\end{thm}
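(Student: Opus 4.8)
The plan is to reduce the statement to Proposition~\ref{lem:5.7} by rewriting the directional derivative as $D_1$ of a reduced cross-effect functor. Recall from Definition~\ref{defn:dir-derivative} that for a functor $H\colon\cB\to\Ch\cA$ one has $\nabla H(V;X)\simeq D_1(H^X)(V)$, where $H^X\colon\cB\to\Ch\cA$ sends $V$ to $\CR_1\bigl(H(X\oplus-)\bigr)(V)=\ker\bigl(H(X\oplus V)\to H(X)\bigr)$, the reduction of $H(X\oplus-)$ in the variable $V$. The key point I want to exploit is that $H^X$ is reduced for \emph{every} $H$ and every $X$, since $H^X(0)=\ker\bigl(H(X)\xrightarrow{\id}H(X)\bigr)=0$. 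This is precisely what should allow us to drop the reducedness hypothesis on $G$ that appears in \cite[Proposition~5.6]{JM:Deriving}.

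The main computation is a factorization of the composite cross-effect functor. I would first record the two natural direct-sum decompositions produced by the first cross effect: fixing $X$, an isomorphism $G(X\oplus V)\cong G(X)\oplus G^X(V)$ natural in $V$ under which the map induced by $V\to 0$ is the projection onto the first factor, and, fixing an object $Y$, an isomorphism $F(Y\oplus W)\cong F(Y)\oplus F^Y(W)$ natural in $W$ with the analogous property. Taking $Y=G(X)$ and $W=G^X(V)$ and composing yields a natural-in-$V$ isomorphism $(F\circ G)(X\oplus V)\cong F(G(X))\oplus F^{G(X)}\bigl(G^X(V)\bigr)$ under which the structure map defining $\CR_1$ in the variable $V$ is the projection onto the first summand; passing to kernels gives $(F\circ G)^X\cong F^{G(X)}\circ G^X$, naturally in $V$, with $G^X$ reduced. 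The only formal inputs here are that $\CR_1$ is a kernel (hence commutes with finite direct sums and annihilates constant functors) and that this bookkeeping is compatible with the prolongation and total-complex constructions used to form composites in $\cat{AbCat}_{\Ch}$.

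Granting this, the argument closes quickly. Since $G^X$ is reduced, Proposition~\ref{lem:5.7} applies to the composite $F^{G(X)}\circ G^X$, and chaining the resulting equivalences with Definition~\ref{defn:dir-derivative} gives
\[ \nabla(F\circ G)(V;X)\;\simeq\;D_1\bigl((F\circ G)^X\bigr)(V)\;\cong\;D_1\bigl(F^{G(X)}\circ G^X\bigr)(V)\;\simeq\;D_1\bigl(F^{G(X)}\bigr)\bigl(D_1(G^X)(V)\bigr). \]
Now $D_1(G^X)(V)\simeq\nabla G(V;X)$, and $F^{G(X)}$ is exactly the reduction of $F(-\oplus G(X))$ in its argument, so $D_1\bigl(F^{G(X)}\bigr)(W)\simeq\nabla F(W;G(X))$; substituting $W=D_1(G^X)(V)$ yields the asserted chain homotopy equivalence $\nabla(F\circ G)(V;X)\simeq\nabla F\bigl(\nabla G(V;X);G(X)\bigr)$. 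Every step except the appeal to Proposition~\ref{lem:5.7} is a natural isomorphism, so the only place a genuine chain homotopy equivalence (as opposed to an isomorphism) enters is the one supplied by that proposition --- which is exactly why it had to be refined from the quasi-isomorphism of \cite{JM:Deriving}.

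The step I expect to be the main obstacle is not the formal string above but making it legitimate inside $\cat{AbCat}_{\Ch}$. The functor $F$ is a priori defined only on $\cB$, yet in $\nabla F(\nabla G(V;X);G(X))$ --- and in the auxiliary functor $F^{G(X)}$ --- it is evaluated on chain complexes in $\cB$, so one must work throughout with the prolongation $\widetilde F\colon\Ch\cB\to\Ch\cA$. One then has to check that the two cross-effect decompositions, the passage to $D_1$, and, above all, the hypotheses of Proposition~\ref{lem:5.7} remain valid after prolongation: in particular, Proposition~\ref{lem:5.7} must be applied with outer functor $F^{G(X)}$, which is built from $F$ by taking a cross effect against a \emph{chain complex} $G(X)$ and hence is not itself a prolongation, so one likely needs the proof of Proposition~\ref{lem:5.7} (or a mild generalization of it, via the bicomplex techniques of Appendix~A) to go through for an arbitrary, not prolonged, outer functor composed with a reduced functor. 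Verifying this compatibility is where the ``other results'' alluded to just before the theorem do their work.
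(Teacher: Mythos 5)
Your proposal is correct, but it takes a genuinely different route from the paper's. The paper proves part (v) by brute-force cross-effect bookkeeping: it writes $\nabla(F\circ G)(V;X)\cong D_1(F\circ G)(V)\oplus D_1^V\CR_2(F\circ G)(X,V)$ via Lemma \ref{lem:directional}, explicitly decomposes $\CR_2(F\circ G)(X,V)$ into six multireduced summands by expanding $F(G(X\oplus V))$, discards the summands in which $V$ occurs twice using Corollary \ref{cor:Kristine's}, and invokes the unreduced $D_1$ chain rule (Proposition \ref{prop:D_1chainrule}); it then separately expands $\nabla F(\nabla G(V;X);G(X))$ using additivity in the direction variable and three further applications of Proposition \ref{lem:5.7}, and matches the surviving four terms on each side. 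Your argument instead isolates the single structural fact $(F\circ G)^X\cong F^{G(X)}\circ G^X$ with $G^X$ automatically strictly reduced, so that one application of Proposition \ref{lem:5.7} finishes the proof. This is shorter, makes transparent why the reducedness hypothesis on $G$ can be dropped (the point-shifted functor is always reduced --- the same mechanism the paper routes through Proposition \ref{prop:D_1chainrule}), and uses the key refined lemma only once; what it costs is exactly the compatibility checking you flag in your last paragraph --- that the factorization, the identification $D_1(F^{G(X)})(W)\simeq\nabla F(W;G(X))$, and the evaluation of the two-variable functor $\nabla F$ at a pair of chain complexes all commute with the Dold--Kan prolongation and totalization defining Kleisli composition. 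Note, though, that your worry about Proposition \ref{lem:5.7} needing a ``mild generalization'' is overcautious: as stated it applies to an arbitrary outer functor $\cB\kto\cA$, and $F^{G(X)}$ is such a functor; the remaining compatibilities are of the same nature as those the paper's own proof relies on implicitly (e.g.\ when it applies Proposition \ref{lem:5.7} to $\CR_2F(G(X),-)$ composed with $\CR_1G$).
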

\addtocounter{thm}{-1}}
This improves on Proposition 5.6  of \cite{JM:Deriving} in two ways -- by replacing the quasi-isomorphism with a chain homotopy equivalence, and by removing the condition that $G$ be a reduced functor.  

 \subsection{Organization of the paper}
 
 In section 2, we define and review properties of the cross effect functors.  These are the building blocks for the  Taylor towers of abelian functor calculus, including the linearization and directional derivative functors.    Section 3 is used to define the Kleisli  and homotopy categories in which the constructions and main results of the paper take place.  In section 4, we show how the Taylor towers of abelian functor calculus are built and establish their fundamental properties.  This treatment differs from that of \cite{JM:Deriving} in two important ways.  As discussed in section \ref{s:context}, we need the fundamental properties of the tower to hold up to chain homotopy equivalence instead of quasi-isomorphism.  In addition, we have streamlined the means by which terms in the abelian functor calculus tower are defined.  We do so  by identifying a comonad $C_n$ on the category of all functors between a fixed pair of abelian categories, and defining $P_nF$ directly as a resolution of $F$ by this comonad.  In \cite{JM:Deriving}, this approach was only used for the category of  reduced functors. An extra step involving a mapping cone was used to extend this construction to all functors. 
 
 In section 5, we start working out the main ingredients for our chain rules -- we define the linearization functor $D_1$ and derive its essential properties.  We use these to define the directional derivative for abelian functor calculus in section 6, and prove that it gives $\cat{HoAbCat}_{\Ch}$ the structure of a cartesian differential category.  In section 7, we define the iterated partial directional derivatives and the higher order directional derivatives and prove that they are related by the Fa\`a di Bruno formula of Theorem \ref{thm:restatedgoal}.  We prove the analog of the higher order chain rule of \cite{HMY} in section 8, using the tangent category structure of $\cat{HoAbCat}_{\Ch}$.  Finally, in section 9, we discuss how the directional derivatives are related to a notion of ordinary derivatives for abelian functor calculus and use the main result of section 7 to prove the Fa\`a di Bruno-style formula for these derivatives in Corollary \ref{cor:faachainrule}.

 \subsection*{Conventions}
 
This paper introduces a number of constructions on functors (e.g., the higher order directional derivatives) and investigates their behavior with respect to composition, product constructions, and so forth. A convenient context to describe these operations involves a large category whose objects are abelian categories  and whose morphisms are functors between them; the precise construction of this category, which is somewhat delicate, is given in \S\ref{sec:kleisli}. There likely exist more than a set's worth of functors between any fixed pair of non-small abelian categories, so if our constructions on functors are to be interpreted globally, they need to take place in some extension of the usual ZFC axioms for set theory: e.g., by assuming there exists a hierarchy of inaccessible cardinals. See \cite{shulman} for a friendly discussion of the myriad possible choices.

\subsection*{Acknowledgments}

The authors thank the Banff International Research Station for hosting the second Women in Topology workshop, which brought us together for a week in which many of the results in this paper were proven, and the Pacific Institute for the Mathematical Sciences, which provided travel support for us to work together in Calgary.  We also thank Maria Basterra and Kathryn Hess for their roles in organizing the Women in Topology workshop.   The second author is grateful for support from the Union College Faculty Research Fund.  The fourth author is  grateful for support from the National Science Foundation through  DMS-1551129.

Tslil Clingman suggested that chain complexes might define a monad on abelian categories and Robin Cockett pointed out that non-reduced functors may fail to preserve chain complexes. This is why the monad of \S\ref{sec:kleisli} is pseudo and not strict. Geoffrey Cruttwell explained the benefits of thinking of a cartesian differential category as a tangent category, a perspective which led to a simplified proof of Theorem \ref{t:higherchainrule}.  We thank Randy McCarthy for helpful conversations in starting this project, and for the ideas that inspired it.

\section{Cross effects for functors}\label{sec:cross-effects}

Classically, the term {\it cross effect} was used to describe the combined effects of two or more forces, or to describe the difference between the quantities $f(x+y)$ and $f(x)+f(y)$.  For \emph{reduced} functions satisfying $f(0)=0$, the condition that $f(x+y)=f(x)+f(y)$ for all $x$ and $y$ is equivalent to linearity.  Thus, the cross effect function, defined by 
\[ \CR_2f(x,y)=f(x+y)-f(x)-f(y),\] 
measures the failure of a reduced function $f$ to be linear; this failure is called the {\it deviation} by Eilenberg and Mac Lane \cite{EM}.  In this section, we will study an analogous notion for functors from a pointed category to an abelian category.  The cross effects were first extended to functors of additive categories by Eilenberg and Mac Lane \cite{EM}, and these became the fundamental building blocks of abelian functor calculus as developed by Johnson and McCarthy \cite{JM:Deriving}.  We recall the definition of the cross effect and summarize the properties that we will need for cross effect functors.

For the duration of this section, let $\cB$ be a category with a \emph{basepoint}, i.e.,~an initial object $\star$ which is also terminal, and finite coproducts, denoted by $\vee$.  Let $\cA$ be an abelian category with zero object $0$ and biproducts denoted $\oplus$.

\begin{defn}[\cite{EM}]\label{def:CR}  The \emph{$n$th cross effect} of a functor $F \colon \cB \to \cA$ 
is the $n$-variable functor $\CR_nF\colon \cB^{n} \to \cA$ defined recursively by
\[ F(X)\cong F(\star)\oplus \CR_1F(X)\]
\[ \CR_1F(X_1\vee X_2) \cong \CR_1F(X_1)\oplus \CR_1 F(X_2) \oplus \CR_2 F(X_1, X_2)\]
and in general,
\begin{multline*} \CR_{n-1}F(X_1\vee X_2, X_3,  \ldots , X_n)\cong \CR_{n-1}F(X_1, X_3, \ldots , X_n) \oplus \CR_{n-1}F(X_2, X_3, \ldots , X_n) \\ \oplus \CR_nF(X_1, X_2, \ldots, X_n).\end{multline*}
\end{defn}

Despite the asymmetry in this definition, the $n$th cross effect is symmetric in its $n$-variables \cite[Proposition 1.2]{JM:Deriving}.

\begin{rmk} In a category with a basepoint and finite coproducts, each coproduct inclusion $X \hookrightarrow X \vee Y$ is a split monomorphism; the relevance of the retraction is that split monomorphisms are preserved by any functor. In an abelian category, split monomorphisms extend to split short exact sequences. In particular, it makes sense to define the cross effects as the direct sum complements of Definition \ref{def:CR}.  Eilenberg and Mac Lane defined  cross effects  as the images of certain homomorphisms, but the direct sum complement definition is more useful for the types of computations we wish to do.  
\end{rmk}

\begin{ex}\label{ex:add-constant} Fix an object $A$ in an abelian category $\cA$ and consider the functor $F\colon\cA\to \cA$ defined by $F(X) \defeq A \oplus X$. Since $F(0) \cong A$, it follows that $\CR_1 F \cong \id$. Now
\[ X \oplus Y \oplus \CR_2F(X,Y)\cong \CR_1F(X) \oplus \CR_1F(Y) \oplus \CR_2F(X \oplus Y) \cong \CR_1F(X \oplus Y) \cong X \oplus Y\] implies that $\CR_2F(X,Y)\cong 0$, so all higher cross effects must also vanish.
\end{ex}

Definition \ref{def:CR} is \emph{functorial} in the sense that a natural transformation $F \to G$ induces a naturally-defined map $\CR_nF \to \CR_n G$. To precisely describe this functoriality, write:
\begin{itemize}
\item $\Fun(\cB,\cA)$ for the category of all functors from $\cB$ to $\cA$; 
\item $\Fun_*(\cB,\cA)$ for the category of all \emph{strictly reduced} functors from $\cB$ to $\cA$, that is, functors for which $F(\star)\cong 0$; and
\item $\Fun_*(\cB^{n},\cA)$ for the category of all \emph{strictly multi-reduced} functors from $\cB^{n}$ to $\cA$, that is, functors  for which $F(X_1, \ldots , X_n) \cong 0$ if any $X_i=\star$.
\end{itemize}
An easy exercise \cite[Proposition 1.2]{JM:Deriving} shows that $\CR_n F$ is an object in $\Fun_*(\cB^{n},\cA)$.

 \begin{lem}\label{lem:CR-idempotency} For any $n \geq 1$ and any $F \colon \cB \to \cA$, $\CR_nF \cong \CR_n\CR_1F$.
 \end{lem}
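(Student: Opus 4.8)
The plan is to induct on $n$, using two features of the recursion in Definition~\ref{def:CR}: first, that $\CR_1 F$ is already \emph{strictly} reduced, so that applying $\CR_1$ to it changes nothing; and second, that for $n\geq 2$ the recursion expresses $\CR_n$ purely in terms of $\CR_{n-1}$ by a construction that is itself functorial in its input functor.

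For the base case $n=1$, I would evaluate the splitting $F(X)\cong F(\star)\oplus \CR_1 F(X)$ at $X=\star$ to get $F(\star)\cong F(\star)\oplus \CR_1 F(\star)$, which forces $\CR_1 F(\star)\cong 0$; thus $\CR_1 F\in\Fun_*(\cB,\cA)$ (also recorded in \cite[Proposition 1.2]{JM:Deriving}). Applying the first line of Definition~\ref{def:CR} to $\CR_1 F$ in place of $F$ then gives $(\CR_1 F)(X)\cong (\CR_1 F)(\star)\oplus \CR_1(\CR_1 F)(X)\cong \CR_1(\CR_1 F)(X)$, naturally in $X$, i.e. $\CR_1\CR_1 F\cong \CR_1 F$. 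For the inductive step, assume $n\geq 2$ and $\CR_{n-1}F\cong \CR_{n-1}\CR_1 F$. Both $\CR_n F$ and $\CR_n(\CR_1 F)$ are obtained from $\CR_{n-1}F$ and $\CR_{n-1}(\CR_1 F)$ respectively by one and the same construction: from a functor $H\colon\cB^{n-1}\to\cA$ one builds the functor sending $(X_1,\ldots,X_n)$ to the direct sum complement of $H(X_1,X_3,\ldots,X_n)\oplus H(X_2,X_3,\ldots,X_n)$ inside $H(X_1\vee X_2,X_3,\ldots,X_n)$, along the canonically split coproduct inclusions $X_1\hookrightarrow X_1\vee X_2\hookleftarrow X_2$. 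Since this construction carries naturally isomorphic functors to naturally isomorphic functors, the inductive hypothesis yields $\CR_n F\cong \CR_n(\CR_1 F)$, closing the induction.

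The one genuinely delicate point — and hence the main thing to get right — is the bookkeeping of naturality: a direct sum complement is a priori only well-defined up to a non-canonical isomorphism, so to propagate an isomorphism of functors through the recursion one must use that each complement in Definition~\ref{def:CR} is the image of a \emph{canonical} idempotent (at the first stage, $F$ applied to $X\to\star\to X$; at each later stage, the evident analogue obtained by collapsing a coproduct summand to $\star$), which is natural in the object variables and functorial in the functor variable, so that ``take the $n$th-stage complement of $H$'' really is a functor between functor categories. An alternative, slightly more structural route is to use the natural splitting $F\cong c_{F(\star)}\oplus \CR_1 F$ into the constant functor at $F(\star)$ and $\CR_1 F$: an easy induction on the recursion establishes both additivity, $\CR_n(F'\oplus F'')\cong \CR_n F'\oplus \CR_n F''$, and the vanishing $\CR_n(c_A)\cong 0$ for $n\geq 1$, and these combine to give $\CR_n F\cong \CR_n(c_{F(\star)})\oplus \CR_n(\CR_1 F)\cong \CR_n\CR_1 F$.
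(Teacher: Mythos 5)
Your proof is correct and follows essentially the same route as the paper: establish $\CR_1F(\star)\cong 0$ by evaluating the defining splitting at $\star$, deduce $\CR_1F\cong\CR_1^2F$ from the first line of Definition~\ref{def:CR} applied to $\CR_1F$, and then induct through the recursion defining the higher cross effects. Your added remarks on the canonicity of the complements and the alternative additivity argument are sound elaborations of what the paper leaves implicit.
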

 \begin{proof}
The isomorphism $F(\star) \cong F(\star) \oplus \CR_1F(\star)$ implies that $\CR_1F(\star)\cong 0$. Now
\[ \CR_1F(X) \cong \CR_1F(\star) \oplus \CR_1(\CR_1F(X)) \cong \CR_1^2F(X)\] proves that $\CR_1 F \cong \CR_1^2F$. The general case now follows immediately from Definition \ref{def:CR} by induction.
\end{proof}

Importantly, the cross effect functors admit adjoints:

\begin{prop}\label{prop:coreflective} The inclusion of the category of strictly reduced functors into the category of not-necessarily reduced functors admits a right adjoint, namely the first cross effect functor:
\[
\begin{tikzcd}
\Fun_*(\cB,\cA) \arrow[r, yshift=0.5em] \arrow[r, phantom, "\bot"] & \Fun(\cB,\cA). \arrow[l, yshift=-0.5em,  "\CR_1"]
\end{tikzcd}
\] The component of the counit of this adjunction  at a functor $G \in \Fun(\cB,\cA)$ is the natural inclusion $\CR_1 G \hookrightarrow G$. 
\end{prop}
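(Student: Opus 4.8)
The plan is to show that $\CR_1 \colon \Fun(\cB,\cA) \to \Fun_*(\cB,\cA)$ is right adjoint to the inclusion $\iota$ by exhibiting a natural unit and counit satisfying the triangle identities, or equivalently by establishing a natural bijection $\Fun_*(\cB,\cA)(F, \CR_1 G) \cong \Fun(\cB,\cA)(\iota F, G)$ for $F$ strictly reduced and $G$ arbitrary. First I would fix the data: for $G \in \Fun(\cB,\cA)$, the direct sum decomposition $G(X) \cong G(\star) \oplus \CR_1 G(X)$ of Definition \ref{def:CR} equips us with a natural split monomorphism $\epsilon_G \colon \CR_1 G \hookrightarrow G$ (the stated counit) together with its retraction $G \twoheadrightarrow \CR_1 G$; the splitting is natural in $G$ because the decomposition was built from the functorial splitting of the coproduct inclusion $\star \hookrightarrow X$, which any functor preserves. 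For the unit, given $F$ strictly reduced, $F(\star) \cong 0$ forces $\CR_1 F \cong F$ (this is the $\CR_1 F(\star) \cong 0$ and $\CR_1 F(X) \cong \CR_1 F(\star) \oplus \CR_1(\CR_1 F(X))$ computation already used in the proof of Lemma \ref{lem:CR-idempotency}), so the unit $\eta_F \colon F \simto \CR_1 \iota F$ is this canonical isomorphism.

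Next I would verify the triangle identities. One identity, $\epsilon_{\iota F} \circ \iota(\eta_F) = \id_{\iota F}$, says that the composite $F \xrightarrow{\eta_F} \CR_1 F \xhookrightarrow{\epsilon} F$ is the identity; since $\eta_F$ is the inverse of the canonical map $\CR_1 F \to F$ induced by the splitting and $\epsilon_F$ \emph{is} that canonical inclusion when $F$ is strictly reduced (the $G(\star)$-summand being zero), this is immediate. The other identity, $\CR_1(\epsilon_G) \circ \eta_{\CR_1 G} = \id_{\CR_1 G}$, uses the idempotency $\CR_1^2 G \cong \CR_1 G$ from Lemma \ref{lem:CR-idempotency}: $\eta_{\CR_1 G}$ is the canonical iso $\CR_1 G \to \CR_1 \CR_1 G$, and applying $\CR_1$ to the inclusion $\epsilon_G$ and composing should recover the identity, which follows by unwinding how $\CR_1$ acts on a split monomorphism whose complement is the (reduced, hence $\CR_1$-trivial) summand $G(\star)$. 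Alternatively, and perhaps more cleanly, I would bypass the triangle identities entirely and directly check the hom-set bijection: a natural transformation $\iota F \to G$ is the same as a natural transformation $F \to G$, which by postcomposing with the retraction $G \twoheadrightarrow \CR_1 G$ and using that $F$ is reduced (so it lands in the reduced summand, i.e., the $G(\star)$-component of any map out of $F$ vanishes since $F(\star) \cong 0$ maps to... ) corresponds bijectively to a map $F \to \CR_1 G$; naturality in both variables is routine.

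The main obstacle I anticipate is being careful about the distinction between the functorial-up-to-isomorphism nature of the cross effect decomposition and the strict adjunction being claimed: Definition \ref{def:CR} only pins down $\CR_1 G$ up to isomorphism as a direct sum complement, so I must either fix a canonical choice of complement (e.g., as a literal kernel of the retraction $G \to G(\star)$, or as a chosen splitting) and check all maps are genuinely natural on the nose with respect to that choice, or phrase the whole statement up to the appropriate equivalence. The cleanest route is to define $\CR_1 G$ as the kernel of the natural retraction $G \Rightarrow \mathrm{const}_{G(\star)}$ induced by the unique map $X \to \star$; then $\CR_1$ is manifestly a functor, the inclusion $\CR_1 G \hookrightarrow G$ is manifestly natural, and for strictly reduced $F$ the retraction $F \Rightarrow \mathrm{const}_0$ is zero so $\CR_1 F = \ker(0) = F$ literally, making the unit an actual identity and the triangle identities trivial. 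I would adopt this formulation and remark that it agrees with Definition \ref{def:CR} up to the canonical isomorphism.
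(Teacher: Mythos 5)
Your proposal is correct and, in its final form, takes essentially the same route as the paper: the paper likewise realizes $\CR_1 G$ as the kernel of the natural map $G \to G(\star)$ induced by $!\colon \id \to \star$, and then deduces the unique factorization of any $\alpha\colon F \to G$ through $\CR_1 G \hookrightarrow G$ from the kernel's universal property together with the fact that $\CR_1 F \to F$ is an isomorphism when $F(\star)\cong 0$. The preliminary detour through unit/counit and triangle identities is a valid equivalent formulation but is not needed once you adopt the kernel model, exactly as you conclude.
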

\begin{proof}
The universal property of the adjunctions asserts that if $F$ is reduced and $G$ is not necessarily reduced then any natural map $\alpha \colon F \to G$ factors uniquely through the inclusion $\CR_1G \hookrightarrow G$.  Let $!$ denote the natural transformation to the basepoint object in $\cB$, $!\colon\id\to \star$, and consider the diagram:
\[
\begin{tikzcd}
\CR_1 FX \defeq \mr{ker}\ \arrow[r, tail, "\cong"] \arrow[d, dashed, "\exists!"']  & FX \arrow[d, "\alpha_X"] \arrow[r, "F!"] & F\star \arrow[d, "\alpha_\star"] \\ \CR_1 GX \defeq \mr{ker}\ \arrow[r, tail] & GX \arrow[r, "G!"'] & G\star
\end{tikzcd}
\]
The right-hand square commutes by naturality of $\alpha$. The kernels of the right-hand horizontal maps define the first cross effects. By commutativity of the right-hand square, there exists a unique map $\CR_1FX \to \CR_1GX$ so that the left-hand square commutes. But because $F\star=0$, the top left-hand map is an isomorphism, which gives us the desired unique factorization.
\end{proof}

\begin{rmk} A full subcategory is \emph{coreflective} when the inclusion admits a right adjoint.
Proposition \ref{prop:coreflective} asserts that  $\Fun_*(\cB,\cA)$ defines a coreflective subcategory of $\Fun(\cB,\cA)$.  Because the left adjoint is full and faithful the unit is necessarily an isomorphism. By abstract nonsense, the inclusion of a coreflective subcategory is necessarily \emph{comonadic}, that is, $\Fun_*(\cB,\cA)$ is the category of coalgebras for the idempotent comonad $\CR_1$ acting on $\Fun(\cB,\cA)$. This formalism gives a characterization of the reduced functors: namely $F$ is reduced if and only if the natural map $\CR_1 F \hookrightarrow F$ is an isomorphism. 
 \end{rmk}

 Johnson and McCarthy observe that when the construction of the $n$th cross effect is restricted to reduced functors, it is right adjoint to pre-composition with the diagonal functor \cite[Example 1.8]{JM:Deriving}. Proposition \ref{prop:coreflective} allows us to extend this adjunction to non-reduced functors:

\begin{cor}\label{cor:comonad} There is an adjunction 
\[
\begin{tikzcd}
\Fun(\cB,\cA) \arrow[r, yshift=-0.5em, "\CR_n"'] \arrow[r, phantom, "\bot"]  & \Fun_*(\cB^n,\cA) \arrow[l, yshift=0.5em, "\Delta^*"']
\end{tikzcd}
\]
between the $n$th cross effect functor and the functor given by precomposing with the diagonal $\Delta \colon \cB \to \cB^{n}$, inducing a comonad $C_n$ on $\Fun(\cB,\cA)$ defined by \[C_n F(X) \defeq \CR_nF(X,\ldots,X).\]
\end{cor}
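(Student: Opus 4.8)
The plan is to obtain the adjunction by splicing together two adjunctions we already have and then to read off the induced comonad. First I would recall from Proposition~\ref{prop:coreflective} the coreflection $\iota \dashv \CR_1$, where $\iota \colon \Fun_*(\cB,\cA) \hookrightarrow \Fun(\cB,\cA)$ is the inclusion and the counit at $G$ is the natural inclusion $\CR_1 G \hookrightarrow G$. Second, I would invoke the observation of Johnson and McCarthy \cite[Example 1.8]{JM:Deriving} that, on reduced functors, the $n$th cross effect is right adjoint to precomposition with the diagonal, i.e.\ there is an adjunction $\Delta^* \dashv \CR_n$ between $\Fun_*(\cB^n,\cA)$ and $\Fun_*(\cB,\cA)$. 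This uses the routine observation that $\Delta^*$ really does land in $\Fun_*(\cB,\cA)$: if $H$ is strictly multi-reduced then $(\Delta^*H)(\star) = H(\star,\dots,\star)\cong 0$, so $\Delta^*H$ is strictly reduced.

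Composing these two adjunctions yields $\iota\circ\Delta^* \dashv \CR_n\circ\CR_1$ as an adjunction between $\Fun_*(\cB^n,\cA)$ and $\Fun(\cB,\cA)$. The left adjoint $\iota\circ\Delta^*$ is simply $\Delta^*$ regarded as taking values in all functors, which I would continue to denote $\Delta^*$. For the right adjoint, Lemma~\ref{lem:CR-idempotency} supplies a natural isomorphism $\CR_n\CR_1F \cong \CR_nF$; since adjunctions transport along natural isomorphisms of either adjoint, this identifies the right adjoint with $\CR_n$ itself, producing the claimed adjunction $\Delta^* \dashv \CR_n$ between $\Fun(\cB,\cA)$ and $\Fun_*(\cB^n,\cA)$.

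Finally, any adjunction $L\dashv R$ with $L\colon\Fun_*(\cB^n,\cA)\to\Fun(\cB,\cA)$ and $R\colon\Fun(\cB,\cA)\to\Fun_*(\cB^n,\cA)$ induces a comonad $LR$ on $\Fun(\cB,\cA)$, with counit the counit $\epsilon$ of the adjunction and comultiplication $L\eta R$ for $\eta$ the unit. Applying this to $L=\Delta^*$ and $R=\CR_n$ gives the comonad $C_n\defeq\Delta^*\circ\CR_n$, and unwinding the definitions, $C_nF(X) = \Delta^*(\CR_nF)(X) = \CR_nF(X,\dots,X)$, as asserted.

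I do not anticipate a serious obstacle: the argument is bookkeeping with adjunctions, drawing on Proposition~\ref{prop:coreflective}, Lemma~\ref{lem:CR-idempotency}, and the cited observation of \cite{JM:Deriving}. The two points that merit a little care are (i) keeping the handedness straight through the composite so that $C_n$ emerges as a comonad rather than a monad, and (ii) noting that the isomorphism $\CR_n\CR_1\cong\CR_n$ of Lemma~\ref{lem:CR-idempotency} is natural in $F$ — which is immediate from its inductive construction — so that it can legitimately be used to transport the adjunction, not merely the underlying functors.
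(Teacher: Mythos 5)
Your argument is exactly the paper's: compose the coreflection $\iota \dashv \CR_1$ of Proposition \ref{prop:coreflective} with the adjunction $\Delta^* \dashv \CR_n$ of \cite[Example 1.8]{JM:Deriving} on reduced functors, then use Lemma \ref{lem:CR-idempotency} to identify the composite right adjoint $\CR_n\CR_1$ with $\CR_n$. The proposal is correct; your extra remarks on naturality and on reading off the comonad structure are sound but routine.
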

\begin{proof}
By Lemma \ref{lem:CR-idempotency}, $\CR_n \cong \CR_n\CR_1$ for $n \geq 1$. Thus, the adjunction $\Delta^* \dashv \CR_n$ is the composite of the adjunctions
\[
\begin{tikzcd}
\Fun(\cB,\cA) \arrow[r, yshift=-0.5em, "\CR_1"'] \arrow[r, phantom, "\bot"] & \Fun_*(\cB,\cA) \arrow[l, yshift=0.5em] \arrow[r, yshift=-0.5em, "\CR_n"'] \arrow[r, phantom, "\bot"] & \Fun_*(\cB^n,\cA) \arrow[l, yshift=0.5em, "\Delta^*"']
\end{tikzcd}
\]
of Proposition \ref{prop:coreflective} with the adjunction of \cite[Example 1.8]{JM:Deriving}.
\end{proof}

\begin{rmk}\label{rmk:counit} Note that by \cite[Example 1.8]{JM:Deriving}, the counit of the adjunction $\Delta^* \dashv \CR_n$ between $\Fun_*(\cB, \cA)$ and $\Fun_*(\cB^n, \cA)$ is 
\[
\begin{tikzcd}
 \CR_n G(X, \ldots, X)  \arrow[r, tail] & G(\vee_{i=1}^n X) \arrow[r, "G(+)"] & G(X)
\end{tikzcd}
\]
where $+\colon\vee_n M \to M$ is  the ``fold'' map from the coproduct.  Putting this together with Proposition \ref{prop:coreflective}, we see that the counit of the adjunction of Corollary \ref{cor:comonad} for functors which are not necessarily reduced is the map $\epsilon$ defined to be the composite:
\[
\begin{tikzcd}
 \CR_n \CR_1G(X, \ldots, X)  \arrow[r, tail] & \CR_1G(\vee_{i=1}^n X) \arrow[r, "G(+)"] & \CR_1G(X) \arrow[r, tail] & G(X).
\end{tikzcd}
\]
\end{rmk}

  As observed in \cite{JM:Deriving}, adjoint functors between abelian categories can be used to construct contractible chain complexes; for instance:
  
  \begin{lem}\label{lem:natural-contraction}
  Let $
\begin{tikzcd}
\cA \arrow[r, yshift=-0.5em, "R"'] \arrow[r, phantom, "\bot"] & \cB \arrow[l, yshift=0.5em, "L"']
\end{tikzcd}
$
define an adjunction between abelian categories inducing a comonad $C=LR$ on $\cA$ with counit $\epsilon \colon LR \Rightarrow \id$. Then for each $A \in \cA$ the chain complex in $\cB$ with differentials defined to be the alternating sums $\sum_{i \geq 0}^k (-1)^i R(LR)^{\times i}\epsilon$
\[
\begin{tikzcd}[column sep=50pt]
\cdots R(LR)^{\times 3}(A) \arrow[r, "R\epsilon-RLR\epsilon+R(LR)^{\times 2}\epsilon"] & R(LR)^{\times 2}A \arrow[r, "R\epsilon - RLR\epsilon"] \arrow[l, bend left, dashed, "s_2"] & RLRA \arrow[r, "R{\epsilon}"] \arrow[l, bend left, dashed, "s_1"] & RA \arrow[l, bend left, dashed, "s_0"]
\end{tikzcd}
\]
admits a contracting homotopy, and these splittings are natural in $\cA$.
 \end{lem}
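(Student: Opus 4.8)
The plan is to write down the contracting homotopy explicitly and to verify the chain‑homotopy identity using nothing more than the triangle identities and the naturality of the unit. Write $\eta\colon \id_\cB \Rightarrow RL$ for the unit of the adjunction, so that the triangle identities read $R\epsilon \circ \eta R = \id_R$ and $\epsilon L \circ L\eta = \id_L$, and abbreviate the complex as $C_\bullet(A)$ with $C_k(A) = R(LR)^{\times k}(A)$. Its differential $d_k = \sum_{i=0}^{k-1}(-1)^i\, R(LR)^{\times i}\epsilon \colon C_k(A) \to C_{k-1}(A)$ has as $i$th summand the counit $\epsilon\colon LR\Rightarrow\id$ whiskered by $R(LR)^{\times i}$ on the outside (equivalently, $\epsilon$ evaluated at $(LR)^{\times(k-1-i)}(A)$); that $d_{k-1}d_k = 0$ is the usual consequence of the simplicial identities among these faces, which hold because $\epsilon$ is natural. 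Conceptually $C_\bullet(A)$ is a re‑indexed bar resolution of $RA$ regarded as an algebra for the monad $RL$ on $\cB$, augmented over $RA$ in degree zero; but since the rest of the paper wants the explicit formulas I would run the direct argument.

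For the contracting homotopy take $s_k \colon C_k(A) \to C_{k+1}(A)$ to be the unit inserted at the outermost position:
\[ s_k \defeq \eta_{R(LR)^{\times k}(A)} \colon R(LR)^{\times k}(A) \longrightarrow RL\bigl(R(LR)^{\times k}(A)\bigr) = R(LR)^{\times(k+1)}(A). \]
In degree zero the identity $d_1 s_0 = \id_{RA}$ is exactly the triangle identity $R\epsilon_A \circ \eta_{RA} = \id_{RA}$. For $k \geq 1$ I would compute $d_{k+1}s_k$ one summand at a time: the $i = 0$ summand $R\epsilon_{(LR)^{\times k}(A)} \circ \eta_{R(LR)^{\times k}(A)}$ is again the identity by the triangle identity, while for $i \geq 1$ the $i$th face of $d_{k+1}$ is $RL$ applied to the $(i{-}1)$st face of $d_k$, so naturality of $\eta$ lets one slide $\eta$ across it and rewrite the composite $R(LR)^{\times i}\epsilon \circ \eta_{R(LR)^{\times k}(A)}$ as the $(i{-}1)$st face of $d_k$ followed by $s_{k-1}$. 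Re‑indexing the sum $i\mapsto i-1$ introduces an overall sign, so these terms assemble to $-\,s_{k-1}d_k$, giving $d_{k+1}s_k + s_{k-1}d_k = \id_{C_k(A)}$ as required. Naturality in $A$ is then automatic, since each $C_k$ is the functor $R(LR)^{\times k}(-)$, each $d_k$ is a natural transformation, and $s_k = \eta_{R(LR)^{\times k}(-)}$ is a whiskering of the natural transformation $\eta$.

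I do not anticipate a serious obstacle: the content is bookkeeping. The two points to watch are matching the sign convention in the stated differential with the positions at which $\epsilon$ is evaluated, and confirming that the naturality square for $\eta$ is invoked at an honest morphism of $\cB$ — which it is, because $R$ lands in $\cB$ and, after stripping one outer $RL$, the faces of $d_k$ are morphisms between objects of the form $R(LR)^{\times j}(A)$. I would arrange the write‑up so that the sign cancellation occupies a single short display and everything else reduces to the two formal identities above.
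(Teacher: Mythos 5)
Your proposal is correct and takes the same approach as the paper: the paper defines the contracting homotopy by exactly the same formula, $s_k = \eta R(LR)^{\times k}$, noting that it arises from the standard extra-degeneracy splitting of the (co)simplicial bar object associated to the adjunction. Your explicit verification via the triangle identity (for the $i=0$ face) and naturality of $\eta$ (for the remaining faces, producing $-s_{k-1}d_k$ after reindexing) is precisely the computation the paper leaves implicit.
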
 

This contracting homotopy comes from a contracting simplicial homotopy: the adjoint pair $L \dashv R$ define an augmented cosimplicial object in $\Fun(\cA,\cB)$ admitting a splitting. 

\begin{proof} Define $s_k = \eta R(LR)^{\times k}$ using the unit $\eta \colon \id \Rightarrow RL$ of the adjunction $L \dashv R$.
\end{proof}

The functor categories $\Fun(\cB,\cA)$ and $\Fun_*(\cB^n,\cA)$ are abelian, with kernels and direct sums and so forth defined objectwise in $\cA$.   In particular, these categories have short exact sequences.  A functor from one abelian category to another which preserves exact sequences is called \emph{exact}.

\begin{prop}\label{p:cr is exact}
For  each $n\geq 1$, the functors $\CR_n \colon \Fun(\cB,\cA) \to \Fun_*(\cB^n,\cA)$ and $C_n \colon \Fun(\cB,\cA) \to \Fun(\cB,\cA)$ are exact. 
\end{prop}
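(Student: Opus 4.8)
The plan is to reduce everything to two facts: exactness of a functor landing in one of these functor categories is detected objectwise, and each cross effect, evaluated at a fixed tuple of objects, is a \emph{natural} direct summand of an evaluation functor.

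First I would record the reduction. Since $C_n$ is the composite of $\CR_n \colon \Fun(\cB,\cA) \to \Fun_*(\cB^n,\cA)$ with precomposition along the diagonal, $\Delta^* \colon \Fun_*(\cB^n,\cA) \to \Fun(\cB,\cA)$ (as in Corollary \ref{cor:comonad}), and $\Delta^*$ is exact because kernels, cokernels, and direct sums in these functor categories are computed objectwise, it suffices to prove that $\CR_n$ is exact. Exactness in $\Fun_*(\cB^n,\cA)$ is likewise objectwise, so $\CR_n$ is exact as soon as the composite $\mathrm{ev}_{(X_1,\ldots,X_n)} \circ \CR_n \colon \Fun(\cB,\cA) \to \cA$ is exact for every tuple $(X_1,\ldots,X_n) \in \cB^n$, where $\mathrm{ev}_{(X_1,\ldots,X_n)}$ denotes evaluation; these evaluation functors, and their analogues on $\Fun(\cB,\cA)$, are exact for the same objectwise reason.

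Next I would handle $\CR_1$ and then induct on $n$. For a fixed $X \in \cB$, the unique maps $X \to \star$ and $\star \to X$ compose to $\id_\star$, so applying a functor $F$ and varying $F$ yields a natural transformation $\mathrm{ev}_X \Rightarrow \mathrm{ev}_\star$ of functors $\Fun(\cB,\cA) \to \cA$ with a natural section. This exhibits a natural direct sum decomposition $\mathrm{ev}_X \cong \mathrm{ev}_\star \oplus \bigl(\CR_1(-)(X)\bigr)$, with $\CR_1(-)(X)$ the kernel of the first map, exactly as in Definition \ref{def:CR}. Applying $\mathrm{ev}_X$ to a short exact sequence and using naturality of the decomposition, the resulting exact sequence splits as the direct sum of the corresponding sequences of values of $\mathrm{ev}_\star$ and of $\CR_1(-)(X)$; since a finite direct sum of sequences is exact if and only if each summand is, the summand $\CR_1(-)(X)$ is exact, and as $X$ was arbitrary, $\CR_1$ is exact. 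For the inductive step, assume $\CR_{n-1}$ is exact and fix $(X_1,\ldots,X_n)$. The recursive clause of Definition \ref{def:CR} exhibits $\CR_n F(X_1,\ldots,X_n)$ as a direct summand, naturally in $F$, of $\CR_{n-1}F(X_1\vee X_2, X_3,\ldots,X_n)$, i.e.\ of $(\mathrm{ev}_{(X_1\vee X_2, X_3,\ldots,X_n)} \circ \CR_{n-1})(F)$, which is exact by the inductive hypothesis and exactness of evaluation. Being a natural direct summand of an exact functor, $\CR_n(-)(X_1,\ldots,X_n)$ is exact, completing the induction.

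The only genuinely non-formal point is the claim that the direct sum decompositions of Definition \ref{def:CR} are natural in $F$, and I expect this to be the main (if mild) obstacle. One must check that the complement splittings can be chosen functorially in $F$: they are assembled from the images under $F$ (respectively $\CR_{n-1}F$) of the fixed coproduct inclusions $X_i \hookrightarrow X_1\vee X_2$ and their retractions in $\cB$, and the functoriality of $F \mapsto F(-)$ — respectively of $F \mapsto \CR_{n-1}F(-)$, using the functoriality of $\CR_{n-1}$ noted earlier — makes these splittings natural in $F$, as required.
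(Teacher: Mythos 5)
Your proof is correct and follows essentially the same route as the paper's: induction on $n$ using the natural split exact sequences from Definition \ref{def:CR} that define the cross effects, with the $C_n$ case reduced immediately to $\CR_n$. The only cosmetic difference is that the paper packages each step as an application of the $3\times 3$ lemma to a diagram whose columns are those split sequences, whereas you argue directly that a natural direct summand of an exact functor is exact.
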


\begin{proof}  Let $0\to F\to G\to H\to 0$ be a short exact sequence of functors.
Consider the diagram below
\[
\begin{tikzcd}
0\arrow[r]\arrow[d]& {\CR_1F}\arrow[r]\arrow[d]&\CR_1G\arrow[r]\arrow[d]&\CR_1H\arrow[r]\arrow[d]&0\arrow[d]\\
0\arrow[r]\arrow[d]&F\arrow[r]\arrow[d]&G\arrow[r]\arrow[d]&H\arrow[r]\arrow[d]&0\arrow[d]\\
0\arrow[r]&F(\star)\arrow[r]&G(\star)\arrow[r]&H(\star)\arrow[r]&0.
\end{tikzcd}
\]

Since the columns are exact, and the bottom two rows are exact, the $3\times 3$ lemma \cite[1.3.2]{Weibel} guarantees that the top row is exact.  Then assuming that $\CR_n$ is exact, we can apply the same lemma to the diagram 
\[\begin{tikzcd}[column sep=1em]
\CR_{n+1}F(A_1, A_2, B_2, \dots, B_n)\arrow[r]\arrow[d]&\CR_{n+1}G(A_1, A_2, B_2, \dots, B_n)\arrow[r]\arrow[d]&\CR_{n+1}H(A_1, A_2, B_2, \dots, B_n)\arrow[d]\\
\CR_nF(A_1\vee A_2, B_2, \dots,B_n) \arrow[r]\arrow[d]&\CR_nG(A_1\vee A_2, B_2, \dots,B_n)\arrow[r]\arrow[d]&\CR_nH(A_1\vee A_2, B_2, \dots,B_n)\arrow[d]\\{\txt{$\CR_nF(A_1,B_2,\ldots,B_n)$\\$\oplus \CR_nF(A_2, B_2,\ldots, B_n)$}}\arrow[r]&{\txt{$\CR_nG(A_1,B_2,\ldots,B_n)$\\$\oplus \CR_nG(A_2, B_2,\ldots, B_n)$}}\arrow[r]&{\txt{$\CR_nH(A_1,B_2,\ldots,B_n)$\\$\oplus \CR_nH(A_2, B_2,\ldots, B_n)$}}
\end{tikzcd}
\]
to conclude that $\CR_{n+1}$ is exact.  The proof follows by induction.  The result for $C_n$ follows immediately.  
\end{proof}

%%%%%%%%%%%%%%%%%%%%%%%%%%%%%%%%%%%%%
\section{A categorical context for abelian functor calculus}\label{sec:kleisli}
%%%%%%%%%%%%%%%%%%%%%%%%%%%%%%%%%%%%%%

\emph{Abelian functor calculus} --- also called \emph{additive} or \emph{discrete functor calculus} \cite{BJM}  --- considers arbitrary functors valued in an abelian category. The linear approximation  defined in \cite{JM:Deriving}  satisfies a universal property ``up to homotopy.'' 
For this to make sense, the target abelian category must be replaced by some sort of homotopical category in which strict universal properties (asserting that certain diagrams commute on the nose) can be replaced by weak ones (where the commutativity is up to some sort of homotopy relation).  For an abelian category $\cA$, let $\Ch\cA$ denote the category of chain complexes on $\cA$ concentrated in non-negative degrees.   In general, the linear approximation of $F \colon \cB \to \cA$ defines a functor $D_1F \colon \cB \to \Ch\cA$; a precise definition of this is given in Definition \ref{def:D1}. 
More generally, \cite{JM:Deriving} define a linear approximation $D_1F \colon \cB \to \Ch\cA$ for any functor $F \colon \cB \to \Ch\cA$ so that when this construction is applied to a functor concentrated in degree zero it recovers the construction for $F \colon \cB \to \cA$.

Now consider a composable pair of functors $G\colon\cC\to \cB$ and $F\colon\cB\to \cA$  between  abelian categories.\footnote{As in Section \ref{sec:cross-effects}, it suffices to assume that the domain of $G$ is a pointed category with finite coproducts, but when considering composite functors it is linguistically convenient to suppose that all of the categories are abelian.} Their linear approximations $D_1G \colon \cC \to \Ch\cB$ and $D_1F \colon \cB \to \Ch\cA$ are not obviously composable. The standard trick, which appears in \cite[Lemma 5.7]{JM:Deriving}, is to prolong the second functor, applying $D_1F$ degreewise to a chain complex in $\cB$ to produce a chain complex in chain complexes in $\cA$, and then convert this double complex into a chain complex in $\cA$ by means of the totalization. That is, the composite $D_1F \circ D_1G$ is defined to be the functor
\[
\begin{tikzcd}
D_1F \circ D_1G\colon  \cC \arrow[r, "D_1G"] & \Ch\cB \arrow[r, "\Ch(D_1F)"] & \Ch\Ch\cA \arrow[r, "\Tot"] & \Ch\cA.
\end{tikzcd}
\]
If $G$ is reduced, then \cite[Lemma 5.7]{JM:Deriving} (a stronger version of which appears as Proposition \ref{lem:5.7} below) proves that this composite is quasi-isomorphic  to $D_1(F \circ G)$, i.e., that $D_1$ is ``functorial up to quasi-isomorphism'' with respect to the composition structure just introduced.

To keep track of which functors should be composable and which are not (and to avoid a proliferation of $\Ch$'s) it is convenient to regard both $F$ and $D_1F$ as functors from $\cB$ to $\cA$. This sort of bookkeeping is effortlessly achieved by the categorical formalism of a Kleisli category for a monad that we now introduce.

\begin{obs}\label{obs:Ch-monad} There is a  pseudomonad\footnote{In category theory, the prefix ``pseudo'' is attached to structures which hold up to specified coherent isomorphism.  A \emph{pseudomonad} is given by the same underlying 1-categorical data as a monad, but with structure diagrams which commute only up to specified coherent natural isomorphisms.  For a more detailed definition, see \cite{marmolejo}.} $\Ch(-)$ acting on the (large) 2-category of  abelian categories, arbitrary functors between them, and natural transformations. Here we are not interested in the 2-dimensional aspects, so we instead describe the quotient monad $\Ch(-)$ acting on the 1-category $\cat{AbCat}$  of abelian categories and isomorphism classes of functors:
\begin{itemize}
\item The monad carries an abelian category $\cA$ to the category $\Ch\cA$ of non-negatively graded chain complexes in $\cA$.

\item The monad carries a functor $F \colon \cB \to \cA$ to its prolongation $\Ch F \colon \Ch\cB \to \Ch\cA$.  Because $F$ might not preserve zero maps, the definition of its prolongation is somewhat delicate. Making use of the Dold-Kan equivalence between non-negatively graded chain complexes and simplicial objects, the functor $\Ch F$ is defined to be the composite
\[
\begin{tikzcd}
\Ch F \colon \Ch\cB \arrow[r, "\simeq"] & \cB^{\DDelta^\op} \arrow[r, "F_*"] & \cA^{\DDelta^\op} \arrow[r, "\simeq"] & \Ch\cA.
\end{tikzcd}
\]
where the action of $F$ on simplicial objects is by post-composition. Note this operation is not strictly functorial: if $G \colon \cC \to \cB$ and $F\colon\cB \to\cA$ are composable functors, then $\Ch F \circ \Ch G$ and $\Ch(F\circ G)$ are naturally isomorphic but not identical. This natural isomorphism satisfies additional ``coherence'' conditions, which imply, in particular, that the composite natural isomorphisms between {$\Ch F \circ (\Ch G \circ \Ch H) = (\Ch F\circ \Ch G)\circ \Ch H$ and $\Ch(F\circ(G\circ H))= \Ch((F\circ G)\circ H)$} coincide. None of this higher coherence structure is visible in the 1-category $\cat{AbCat}$ where the monad $\Ch(-)$ acts strictly functorially on natural isomorphism classes of functors.
\item The components of the unit of the monad are the functors 
\[
\begin{tikzcd}
\cA \arrow[r, "\fun{deg}_0"] & \Ch\cA
\end{tikzcd}
\] 
that embed $\cA$ as the subcategory of chain complexes concentrated in degree zero.
\item The components of the multiplication of the monad are the functors
\[
\begin{tikzcd}
\Ch\Ch\cA \arrow[r, "\Tot"] & \Ch\cA
\end{tikzcd}
\]
that convert a double complex in $\cA$ into a chain complex in $\cA$ by forming the total complex.\footnote{Here a \emph{double complex} is a chain complex of chain complexes, whose squares commute. On account of this convention, a sign must be introduced into the definition of the differentials in the totalization.  See, for example,  Remark \ref{rmk:commutative-bicomplexes}.}
\end{itemize}
We leave to the reader the straightforward verification that these functors define the components of a monad $\Ch(-)$ on $\cat{AbCat}$.
\end{obs}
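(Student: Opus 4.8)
The plan is to check that the data $(\Ch(-),\,\fun{deg}_0,\,\Tot)$ satisfies the monad axioms in $\cat{AbCat}$: that $\cA\mapsto\Ch\cA$, $F\mapsto\Ch F$ is functorial on isomorphism classes; that $\fun{deg}_0$ and $\Tot$ are natural transformations; and that the unit and associativity identities hold. Two elementary facts do most of the work. The first is the Dold--Kan correspondence, with equivalences $\Gamma\colon\Ch\cA\to\cA^{\DDelta^\op}$ and $N\colon\cA^{\DDelta^\op}\to\Ch\cA$ and structure isomorphisms $N\Gamma\cong\id$, $\Gamma N\cong\id$. The second is that an \emph{additive} functor $T$ has prolongation $\Ch T$ naturally isomorphic to its levelwise extension, because additive functors commute, up to natural isomorphism, with both Dold--Kan functors (these being assembled from the natural splitting of a simplicial object into its normalized pieces). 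Given these, \emph{functoriality} is quick: from $\Ch F = N\circ F_*\circ\Gamma$ and the strict functoriality of post-composition on simplicial objects, $\Ch F\circ\Ch G = NF_*(\Gamma N)G_*\Gamma$ differs from $\Ch(F\circ G) = NF_*G_*\Gamma$ only by the inserted copy of $\Gamma N\cong\id$, and likewise $\Ch\,\id\cong\id$; the coherence of these comparisons is inherited from that of Dold--Kan, so on isomorphism classes they become genuine identities.

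The \emph{unit}: $\fun{deg}_0\colon\cA\to\Ch\cA$ corresponds under Dold--Kan to the constant simplicial object functor, which post-composition plainly preserves, so $\Ch F\circ\fun{deg}_0\cong\fun{deg}_0\circ F$. The left unit law holds because $\fun{deg}_0 C$, viewed as a double complex, is concentrated in a single row and totalizes to $C$; and since $\fun{deg}_0$ is exact, the additivity fact identifies $\Ch(\fun{deg}_0)$ with the levelwise $\fun{deg}_0$, which is concentrated in a single column and again totalizes to $C$, giving the right unit law. For \emph{associativity}, the additivity fact (applied to the exact functor $\Tot$) lets us replace $\Ch(\Tot_\cA)$ by the levelwise $\Tot_\cA$; then both legs $\Tot_\cA\circ\Ch(\Tot_\cA)$ and $\Tot_\cA\circ\Tot_{\Ch\cA}$ send a triple complex $T\in\Ch\Ch\Ch\cA$ to the chain complex with $n$th term $\bigoplus_{i+j+k=n}T_{i,j,k}$, and the two total differentials --- alternating sums of the three partial differentials carrying Koszul signs --- coincide, this being the standard fact that a multicomplex totalizes the same way regardless of the order in which indices are collapsed. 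Naturality of $\Tot$ with respect to the multiplication reduces, after this same replacement, to naturality with respect to prolongation, treated last.

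That naturality --- comparing $\Ch F\circ\Tot_\cB$ with $\Tot_\cA\circ\Ch\Ch F$ as functors $\Ch\Ch\cB\to\Ch\cA$ --- is the main obstacle, precisely because $\Ch F$ is not additive, so $\Ch\Ch F$ is a genuine Dold--Kan prolongation rather than a levelwise operation. I would transport the square along the iterated Dold--Kan equivalences $\Ch\Ch\cB\simeq\cB^{\DDelta^\op\times\DDelta^\op}$ and $\Ch\cA\simeq\cA^{\DDelta^\op}$: there $\Ch\Ch F$ becomes levelwise post-composition $Y\mapsto F_*Y$ of bisimplicial objects, and $\Tot$ becomes the functor $\widetilde{\Tot}$ carrying a bisimplicial object $Y$ to $\Gamma\,\Tot(N^hN^vY)$, the realization of the totalization of the double complex obtained by normalizing both simplicial directions. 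The Eilenberg--Zilber theorem supplies, via the Alexander--Whitney and shuffle maps, a natural chain homotopy equivalence $\Tot(N^hN^vY)\simeq N(\mathrm{diag}\,Y)$, hence a natural simplicial homotopy equivalence $\widetilde{\Tot}(Y)\simeq\mathrm{diag}\,Y$. Since $\mathrm{diag}$ commutes strictly with post-composition, $\mathrm{diag}(F_*Y)=F_*(\mathrm{diag}\,Y)$, and since post-composition with \emph{any} functor preserves simplicial homotopies --- a simplicial homotopy being a system of maps $h_i$ constrained by equations among composites --- chaining yields a natural simplicial homotopy equivalence $F_*\widetilde{\Tot}(Y)\simeq F_*(\mathrm{diag}\,Y)=\mathrm{diag}(F_*Y)\simeq\widetilde{\Tot}(F_*Y)$, which transports back to the desired natural comparison between $\Ch F\circ\Tot_\cB$ and $\Tot_\cA\circ\Ch\Ch F$. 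This last comparison, unlike the others, is in general a chain homotopy equivalence rather than an isomorphism; so while the remaining monad axioms hold on the nose in $\cat{AbCat}$, the naturality of $\Tot$ --- and hence the monad structure in full --- is only strict after one passes to the homotopy category obtained by inverting pointwise chain homotopy equivalences, which is the setting in which it is ultimately used.
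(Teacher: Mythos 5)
Your verification is essentially correct, and since the paper explicitly leaves this check to the reader there is no argument of the paper's to compare it with; what your write-up adds is the discovery that the check is not uniformly ``straightforward.'' The routine parts --- pseudo-functoriality of the prolongation, both unit laws, and the associativity square for $\Tot$ --- you dispatch correctly using the two facts you isolate: Dold--Kan coherence, and the fact that an additive functor's prolongation agrees with its levelwise extension because additive functors preserve the natural direct-sum decomposition $X_n\cong\bigoplus_{[n]\twoheadrightarrow[k]}N_kX$. The substantive point is your last paragraph, and you are right: the naturality of $\Tot$ against $\Ch\Ch F$ for non-additive $F$ holds only up to natural chain homotopy equivalence (Eilenberg--Zilber), not up to natural isomorphism. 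A dimension count confirms this is not an artifact of your method: for $F(X)=X\otimes X$ on $k$-vector spaces and $C$ the double complex consisting of a single copy of $k$ in bidegree $(1,1)$, the complex $\Ch F(\Tot C)$ has terms $k,k^6,k^6$ in degrees $2,3,4$, whereas $\Tot(\Ch\Ch F(C))$ has terms $k,k^4,k^4$; these are chain homotopy equivalent but not isomorphic. Consequently the Observation is overstated as written: with $\Tot$ as multiplication, $\Ch(-)$ is not a pseudomonad on the $2$-category of abelian categories and \emph{arbitrary} functors, and the resulting Kleisli composition is associative only up to chain homotopy equivalence rather than up to isomorphism. Your proposed repair --- pass to the quotient in which pointwise chain homotopy equivalences are inverted --- is the right one and is what the paper in effect relies on, since all of its results are ultimately stated in $\Ho\cat{AbCat}_{\Ch}$ and Lemma \ref{lem:che-comp} guarantees that your Eilenberg--Zilber comparisons descend to identities there. (An honest pseudomonad with underlying functor $\Ch(-)$ does exist: transport the strict $2$-monad $\cA\mapsto\cA^{\DDelta^\op}$, with unit the constant simplicial objects and multiplication the diagonal of bisimplicial objects, across the Dold--Kan equivalence; its multiplication is then only Eilenberg--Zilber-equivalent to $\Tot$, not isomorphic to it.)
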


For any category acted upon by a monad there is an associated Kleisli category (see, e.g., \cite[5.2.9]{riehl:context}), which we describe explicitly in the special case of $\Ch(-)$ acting on $\cat{AbCat}$.  

\begin{defn} There is a (large) category\footnote{It is more categorically  natural to  describe the Kleisli bicategory for the pseudomonad $\Ch(-)$ acting on the 2-category of abelian categories, functors, and natural transformations. The category $\cat{AbCat}_{\Ch}$ is then the quotient 1-category whose morphisms are isomorphism classes of parallel 1-cells. While it is somewhat inelegant to define the morphisms in a 1-category to be isomorphism class of functors, much of our work actually takes place in a further quotient of $\cat{AbCat}_{\Ch}$ where naturally chain homotopically equivalent functors are identified; see Definition \ref{defn:hoabcat}.}   $\cat{AbCat}_{\Ch}$ whose:
\begin{itemize}
\item objects are  abelian categories;
\item morphisms $\cB \kto \cA$ are natural isomorphism classes of functors $\cB \to \Ch\cA$;
\item identity morphisms $\cA \kto \cA$ are the functors $\fun{deg}_0 \colon \cA \to \Ch\cA$; and in which
\item composition of morphisms $\cC \kto \cB$ and $\cB \kto \cA$, corresponding to the pair of functors $G \colon \cC \to \Ch\cB$ and $F \colon \cB \to \Ch\cA$, is defined by
\[ 
\begin{tikzcd}
F \circ G: \cC \arrow[r, "G"] & \Ch\cB \arrow[r, "\Ch F"] & \Ch\Ch\cA \arrow[r, "\Tot"] & \Ch\cA
\end{tikzcd}
\] 
\end{itemize}
Note that $\cat{AbCat}$ defines a subcategory of $\cat{AbCat}_{\Ch}$, where a functor $F \colon \cB \to \cA$ is identified with the morphism $\cB \kto \cA$ in $\cat{AbCat}_{\Ch}$ represented by the functor 
\[ 
\begin{tikzcd}
\cB \arrow[r,"F"] & \cA \arrow[r, "\fun{deg}_0"] & \Ch\cA
\end{tikzcd}
\]
\end{defn}

Note that the results of Section \ref{sec:cross-effects} apply to the homs in the Kleisli category $\cat{AbCat}_{\Ch}$ by considering the functor categories $\Fun(\cB,\Ch\cA)$, henceforth denoted simply by $\Fun(\cB,\cA)$.  

\begin{conv}
Henceforth, we work in the Kleisli category $\cat{AbCat}_{\Ch}$ without further comment. Practically, this means that we may simply write ``$F \colon \cB \kto \cA$'' to denote what is really a functor $F \colon \cB \to \Ch\cA$. There is no ambiguity in the special case where this functor is concentrated in degree zero, i.e., is actually a functor valued in $\cA$. Composition of functors is implemented by the Kleisli construction: by prolonging the second functor and taking the totalization of the resulting double complex.   In particular, all functors whose target is an abelian category are regarded as chain-complex valued, so for instance we can safely refer to chain homotopy equivalences classes of functors or ask whether a given functor is objectwise contractible.

In everything that follows, it is always possible to relax the hypotheses on the domain category and assume only the existence of finite coproducts and a zero object, but we often refer simply to functors ``between abelian categories'' to avoid being overly pedantic with our language. Accordingly, we now write ``$\oplus$'' and ``$0$'' for the coproduct and basepoint in our domain categories.
\end{conv}

The Kleisli category $\cat{AbCat}_{\Ch}$ is the appropriate context to study composition relations involving the Johnson-McCarthy polynomial functors $P_n\colon \Fun(\cB, \cA) \to \Fun(\cB, \cA)$ and linearization functors $D_1\colon \Fun(\cB, \cA) \to \Fun(\cB, \cA)$.  However, there remains one additional technical issue that needs to be addressed. Many of the key properties of \cite{JM:Deriving} are proven up to pointwise quasi-isomorphism in the codomain $\Ch\cA$. In fact, \cite{JM:Deriving} introduce various models of the functors $P_nF$ and $D_1F$ that are only well-defined up to quasi-isomorphism. Unfortunately, quasi-isomorphisms are only preserved under composition with exact functors, while the functors we include as morphisms in $\cat{AbCat}_{\Ch}$ are far from being exact. Thus, if we take the point of view that the functor  $D_1G$ is only defined up to quasi-isomorphism, the Kleisli composite $F \circ D_1G$ will not be well-defined for arbitrary functors $F$.

However, the composition operation in $\cat{AbCat}_{\Ch}$ does respect chain homotopy equivalence of functors, and with some care we will show in Sections \ref{sec:functor-calculus} and \ref{sec:LinearApproximations} that the universal properties up to quasi-isomorphism established in \cite{JM:Deriving} in fact hold up to chain homotopy equivalence. Two functors $H,G\colon\cB \kto \cA$ are  \emph{pointwise chain homotopy equivalent} if the chain complexes $H(X)$ and $G(X)$ are chain homotopy equivalent in $\Ch\cA$ for every object $X\in \cB$, and  
 \emph{naturally chain homotopy equivalent}  if the chain complexes $H(X)$ and $G(X)$ are chain homotopy equivalent in $\Ch\cA$, naturally in $X\in \cB$.  The following lemma proves that composition defined in the Kleisli category $\cat{AbCat}_{\Ch}$ respects (natural) chain homotopy.

\begin{lem}\label{lem:che-comp} Suppose $G,H \colon \cC \kto \cB$ are (naturally) chain homotopic functors. Then for any pair of functors $F \colon \cB \kto \cA$ and $K \colon \cD \kto \cC$, the composite functors \[FGK, FHK \colon \cD \kto \cA\] are again (naturally) chain homotopic. 
\end{lem}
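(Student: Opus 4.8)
The plan is to reduce the statement to two simpler claims, handled separately: first that composition with a fixed functor on the right (precomposition with $K$) preserves chain homotopy equivalence, and second that composition with a fixed functor on the left (postcomposition with $F$, via the Kleisli prolongation) does as well. Granting these, the general case follows by applying them in turn: $G \simeq H$ gives $GK \simeq HK$ by the first claim, and then $F(GK) \simeq F(HK)$ by the second. The version with ``naturally'' is handled in parallel by carrying the naturality of the homotopies and homotopy inverses through each step.

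The first claim, that precomposition with $K$ preserves chain homotopy equivalence, is essentially formal. If $\phi \colon G(X) \to H(X)$, $\psi \colon H(X) \to G(X)$ are the (pointwise) chain homotopy equivalence data with chain homotopies $s, t$, then evaluating everything at $K(Y)$ for $Y \in \cD$ produces chain homotopy equivalence data between $GK$ and $HK$; in the natural case, naturality in $X$ restricts to naturality in $Y \in \cD$ since the maps and homotopies are natural transformations. Here one should be mildly careful that ``$FGK$'' in the statement really means the Kleisli composite, so $GK$ means $\mathrm{Tot}\circ \Ch G \circ K$; but since $K \colon \cD \to \Ch\cC$ is a genuine functor, precomposing the chain-homotopy data for $\Ch G \simeq \Ch H$ (see below) with $K$ is unproblematic.

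The second claim — postcomposition with $F$ — is where the Kleisli structure does real work, and I expect this to be the main obstacle. Here we need: if $G \simeq H \colon \cC \to \Ch\cB$ are chain homotopy equivalent, then $\mathrm{Tot}\circ \Ch F \circ G \simeq \mathrm{Tot}\circ \Ch F \circ H$. The key sub-fact is that the prolongation $\Ch F \colon \Ch\cB \to \Ch\Ch\cA$ sends a chain homotopy equivalence in $\Ch\cB$ to a chain homotopy equivalence in $\Ch\Ch\cA$. This is where one uses that $\Ch F$ is defined via Dold--Kan: under the equivalence $\Ch\cB \simeq \cB^{\DDelta^{\op}}$, a chain homotopy equivalence of complexes corresponds to a simplicial homotopy equivalence of simplicial objects (the Dold--Kan correspondence matches chain homotopies with simplicial homotopies, up to the usual normalization), simplicial homotopy equivalences are preserved by postcomposition with \emph{any} functor $F_* \colon \cB^{\DDelta^{\op}} \to \cA^{\DDelta^{\op}}$ since a simplicial homotopy is just a diagram of arrows and commuting triangles, and translating back gives a chain homotopy equivalence in $\Ch\Ch\cA$. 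Finally, $\mathrm{Tot}$ is an additive functor, hence preserves chain homotopies (an additive functor applied to the identity $hd + dh = \mathrm{id} - fg$ still yields such an identity), so it carries the chain homotopy equivalence in $\Ch\Ch\cA$ to one in $\Ch\cA$. Chaining these observations: $\mathrm{Tot}(\Ch F(G(X))) \simeq \mathrm{Tot}(\Ch F(H(X)))$ for each $X$, naturally in $X$ if the original equivalence was natural. Composing the conclusions of the two claims completes the proof; the only genuinely delicate point is keeping track of the coherence isomorphisms built into $\Ch F$ and checking that the Dold--Kan passage indeed converts chain-homotopy data to simplicial-homotopy data and back, which is standard (e.g.\ \cite{Weibel}, or the simplicial homotopy discussion around Lemma~\ref{lem:natural-contraction}).
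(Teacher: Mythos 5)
Your proposal is correct and follows essentially the same route as the paper: the reduction to postcomposition with $F$, the passage through Dold--Kan to convert the chain homotopy into a simplicial homotopy (which is preserved by any functor because it is structurally defined), and the final appeal to $\Tot$ preserving homotopies. The only point stated a bit too quickly is the last one --- additivity of $\Tot$ alone is not quite the reason, since the totalized identity involves the total differential rather than just the outer one; the paper instead observes that the homotopy in $\Ch\Ch\cA$ is a special case of a chain homotopy of bicomplexes and cites \cite[5.7.3]{Weibel}.
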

\begin{proof}
It is obvious that if $G$ and $H$ are pointwise or naturally chain homotopic, then so are the restrictions $GK$ and $HK$. The non-trivial part is to show that a  (natural) chain homotopy between $G$ and $H$ is preserved by post-composition with $F$.

We first argue that if $G,H \colon \cC \to \Ch\cB$ are pointwise chain homotopic, then the composite functors
\[
\begin{tikzcd}
\cC \arrow[r, "G"] & \Ch\cB \arrow[r, "\Ch F"] &\Ch\Ch\cA & \mathrm{and} & \cC \arrow[r, "H"] & \Ch\cB \arrow[r, "\Ch F"] &\Ch\Ch\cA 
\end{tikzcd}
\]
are pointwise chain homotopic, where we define chain homotopies in $\Ch\Ch\cA$ as in any category of chain complexes valued in an abelian category (which in this case happens to be $\Ch\cA$). This is a consequence of the Dold-Kan prolongation used to define the monad $\Ch(-)$. The Dold-Kan equivalence takes takes chain homotopies of chain complexes in $\Ch\cB$ to  simplicial homotopies of simplicial objects in $\cB^{\DDelta^\op}$ and vice-versa. Simplicial homotopies in $\cB^{\DDelta^\op}$ are structurally defined, and so preserved by post-composition with $F \colon \cB \to \Ch\cA$. Thus, we conclude that a pointwise chain homotopy between $G$ and $H$ is carried to a pointwise chain homotopy between $\Ch F \circ G$ and $\Ch F \circ H$.

To prove that the Kleisli composites $F \circ G, F \circ H \colon \cC \to \Ch\cA$ are pointwise chain homotopic, we need only argue that the totalization functor $\Tot \colon \Ch\Ch\cA \to \Ch\cA$ preserves chain homotopies. The chain homotopies we are considering in $\Ch\Ch\cA$ are a special case of the more general notion of ``chain homotopy of bicomplexes,'' so this follows from \cite[5.7.3]{Weibel}.

If $G$ and $H$ are chain homotopic by a natural chain homotopy, then the Dold-Kan prolongation will produce a natural chain homotopy between $\Ch F\circ G$ and $\Ch F\circ H$.  Similarly, the totalization functor will preserve naturality, so that $F\circ G$ and $F\circ H$ are naturally chain homotopic. 

\end{proof}

Each hom-set in the category $\cat{AbCat}_{\Ch}$ is equipped with a ``homotopy equivalence of functors'' relation defined as pointwise chain homotopy equivalence in the codomain of the functor.  We choose to use pointwise chain homotopy equivalence to define our homotopy relation rather than natural chain homotopy equivalence in order to make it simpler to establish the chain homotopies needed in what follows.  We will often omit the word ``pointwise'' when we mean ``pointwise chain homotopy''.  Lemma \ref{lem:che-comp} implies that the pointwise chain homotopy equivalence classes in each $\Fun(\cB, \cA)$ are respected by the composition operation in $\cat{AbCat}_{\Ch}$.  We denote these equivalence classes by $[ \cB, \cA]$, and we let $\Ho\cat{AbCat}_{\Ch}$ denote the category with the same objects as $\cat{AbCat}_{\Ch}$ and with hom-sets $[ \cB, \cA]$. This is the advertised categorical context for abelian functor calculus.
  
 \begin{defn}[a category for functor calculus]\label{defn:hoabcat} 
There is a (large) category $\Ho\cat{AbCat}_{\Ch}$ whose:
\begin{itemize}
\item objects are abelian categories;
\item morphisms $\cB \kto \cA$ are pointwise chain homotopy equivalence classes of functors $\cB \to \Ch\cA$;
\item identity morphisms $\cA \kto \cA$ are the functors $\fun{deg}_0 \colon \cA \to \Ch\cA$; and in which
\item composition of morphisms $\cC \kto \cB$ and $\cB \kto \cA$, corresponding to the pair of functors $G \colon \cC \to \Ch\cB$ and $F \colon \cB \to \Ch\cA$, is defined by
\[ 
\begin{tikzcd}
F \circ G\colon \cC \arrow[r, "G"] & \Ch\cB \arrow[r, "\Ch F"] & \Ch\Ch\cA \arrow[r, "\Tot"] & \Ch\cA
\end{tikzcd}
\] 
\end{itemize}
This category is an identity-on-objects quotient of $\cat{AbCat}_{\Ch}$ with hom-sets denoted by $[\cB,\cA]$ defined by taking pointwise chain homotopy equivalence classes of functors in $\Fun(\cB,\cA)$.
\end{defn}

%%%%%%%%%%%%%%%%%%%%%%%%%%%%%%%%%%%%%%%
\section{The Taylor tower in abelian functor calculus}\label{sec:functor-calculus}
%%%%%%%%%%%%%%%%%%%%%%%%%%%%%%%%%%%%%%%%%

In this section we review the Taylor tower of a functor constructed in Section 2 of \cite{JM:Deriving}.   The constructions we will provide are essentially the same as those in \cite{JM:Deriving}, though our presentation differs in two ways that are relevant to our treatment of the chain rule. First, we have chosen to emphasize functors which are not necessarily reduced.  The results of Section \ref{sec:cross-effects} make it clear that both the reduced and non-reduced cases can be treated simultaneously. Second, we refine the notion of \emph{degree $n$} functors, replacing the requirement of acyclicity (quasi-isomorphism) in \cite[Definition 2.9]{JM:Deriving} with a stronger contractibility (chain homotopy equivalence) condition. Proposition \ref{p:Pnproperties} proves that the up to quasi-isomorphism universal properties of the polynomial approximations proven in \cite[2.13]{JM:Deriving} become up to chain homotopy equivalence universal properties in the present context.

The Taylor tower will consist of a list of polynomial degree $n$ functors with natural transformations between functors of degree $n$ and degree $n-1$.  We begin this section by explaining what it means for a functor to be {\it degree $n$}.

\begin{defn} \label{def:degree-n} A functor $F \colon \cB \kto \cA$ is \emph{degree $n$} if $\CR_{n+1}F \colon \cB^{ n+1} \kto \cA$ is  \emph{contractible}, i.e., pointwise chain homotopy equivalent to zero.
\end{defn}

We write ``$\simeq$'' when there exists a pointwise chain homotopy equivalence between functors.  Note that if $\CR_kF \simeq 0$, then $\CR_\ell F \simeq 0$ for all $\ell > k$. This  follows from Lemma \ref {lem:B-is-contractible} and the fact that higher cross effects are direct summands of lower cross effects. In particular, degree $k$ functors are also degree $\ell$ for all $\ell > k$.  

Following \cite{JM:Deriving}, we now define the universal polynomial degree $n$ approximations to a functor $F\colon\cB\kto \cA$.
From any comonad acting on an abelian category and object in that category one can extract a chain complex \cite[Definition 2.4]{JM:Deriving}.
 The following definition is a combination of the two definitions of $P_nF$ for reduced and unreduced functors of \cite[Definition 2.8]{JM:Deriving} into a single definition.

\begin{defn}[\cite{JM:Deriving}] \label{def:Pn} The \emph{$n$th polynomial approximation} $P_nF \colon \cB \kto \cA$ of a functor $F \colon \cB \kto \cA$ is the functor that carries $X \in \cB$ to 
the totalization
of the chain complex of chain complexes in $\cA$
\[
\begin{tikzcd}[column sep=45pt]
{} \arrow[r, phantom, "\cdots"] & C_{n+1}^{\times 3} F(X) \arrow[r, "\epsilon-C_{n+1}\epsilon+C_{n+1}^{\times 2}\epsilon"] & C_{n+1}^{\times 2}F(X) \arrow[r, "\epsilon - C_{n+1}\epsilon"] & C_{n+1}F(X) \arrow[r, "\epsilon"] & F(X)
\end{tikzcd}
\]
defined by $(P_nF(X))_k \defeq (C_{n+1})^{\times k}F(X)$ for $k\geq 1$ with differentials defined to be the alternating sums $\sum_i (-1)^i C_{n+1}^{\times i}\epsilon$ of the counit map.
\end{defn}

\begin{rmk}\label{rmk:P0}  When $n=0$, it is easy to give an explicit computation of the chain complex $P_0F$.  By Lemma \ref{lem:CR-idempotency}, $C_1^{\times k}F \cong \CR_1F$ for all $k\geq 1$.  Thus, we can explicitly compute this functor as the chain complex of functors
\[ \begin{tikzcd}
 \cdots \CR_1F \arrow[r,"0"] & \CR_1F \arrow[r, "\id"] & \CR_1F \arrow[r,"0"] & \CR_1F \arrow[r,"\epsilon"] & F,
 \end{tikzcd}
 \]
where the differentials continue to alternate between $0$ and $\id$.  Since $F\cong \CR_1F \oplus F(0)$, we can rewrite the chain complex $P_0F$ as a direct sum of the two chain complexes
\[ \begin{tikzcd} \cdots \CR_1F \arrow[r,"0"] & \CR_1F \arrow[r,"{\id}"] & \CR_1F \arrow[r,"0"] & \CR_1F \arrow[r,"\id"] &\CR_1F\end{tikzcd}\]
and
\[ \begin{tikzcd} \cdots 0 \arrow[r] & 0 \arrow[r] & 0 \arrow[r] & 0 \arrow[r] &F(0). \end{tikzcd}
\]
Here we have used the fact that the map $\epsilon \colon \CR_1F \to F$ is the identity on the component $\CR_1F$ of $F\cong \CR_1F \oplus F(0)$.  The chain complex on the top line is contractible.  Thus, we will use the chain complex in the second line as our model for $P_0F$, and accordingly we will write $P_0F(X)\cong F(0)$.  \end{rmk}

Recalling our convention of working in the Kleisli category described in Section \ref{sec:kleisli}, the $n$th polynomial approximation construction defines a functor $P_n\colon \Fun(\cB, \cA) \to \Fun(\cB, \cA)$. Proposition \ref{p:cr is exact} implies that this functor has good homotopical properties.

\begin{prop}[{\cite[2.13]{JM:Deriving}}]\label{p:Pnproperties} For any $n \geq 0$, \begin{enumerate}
\item $P_n \colon \Fun(\cB,\cA) \to \Fun(\cB,\cA)$ is exact. 
\item $P_n$ preserves  preserves chain homotopies, chain homotopy equivalences, and contractibility.
\end{enumerate}
\end{prop}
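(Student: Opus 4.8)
The plan is to present $P_n$ as assembled from just two operations, each individually well-behaved: the iterated comonad functors $C_{n+1}^{\times k}\colon\Fun(\cB,\cA)\to\Fun(\cB,\cA)$, which are exact by Proposition \ref{p:cr is exact}, and the totalization of a first-quadrant bicomplex; the only extra input needed is that the resolution differentials $\sum_i(-1)^iC_{n+1}^{\times i}\epsilon$ of Definition \ref{def:Pn} are natural transformations. Claim (1) then reduces to standard facts about these two operations, and claim (2) reduces to those facts together with the bicomplex results of the appendix.

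For (1), I would take a short exact sequence $0\to F\to G\to H\to 0$ in $\Fun(\cB,\cA)$, i.e.\ an objectwise, degreewise short exact sequence of chain-complex-valued functors, and apply the exact functors $C_{n+1}^{\times k}$ in each resolution degree $k$. Since the horizontal differentials are natural, the results assemble into a short exact sequence of first-quadrant bicomplexes. Totalization of a first-quadrant bicomplex is a finite direct sum in each total degree, and finite direct sums are exact, so $0\to P_nF\to P_nG\to P_nH\to 0$ is exact, proving that $P_n$ is exact. (For $n=0$ one may instead invoke the model $P_0F\cong F(0)$ of Remark \ref{rmk:P0}, which is evaluation at $0$ followed by the constant-diagram embedding and hence manifestly exact.)

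For (2), an exact functor is additive, so $P_n$ preserves the zero functor and, being an endofunctor of $\Fun(\cB,\cA)$, is functorial. Given a chain homotopy $h$ between natural transformations $f,g\colon F\to G$, each additive $C_{n+1}^{\times k}$ carries $h$ to a chain homotopy between $C_{n+1}^{\times k}f$ and $C_{n+1}^{\times k}g$ in resolution degree $k$; moreover the horizontal differentials, being natural transformations, commute with $C_{n+1}^{\times k}h$, so $\{C_{n+1}^{\times k}h\}_k$ constitutes a chain homotopy of bicomplexes in the sense of \cite[5.7.3]{Weibel}. As established in the proof of Lemma \ref{lem:che-comp}, $\Tot$ preserves such chain homotopies, so we obtain a chain homotopy between $P_nf$ and $P_ng$. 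Preservation of chain homotopy equivalences and of natural contractibility is then purely formal: apply the functor $P_n$ to a homotopy inverse of $f$ together with its homotopies, and for contractibility observe that $\id_{P_nF}=P_n\id_F\simeq P_n 0=0$ whenever $\id_F\simeq 0$.

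The step needing real care — and the main obstacle — is contractibility in the pointwise sense of Definition \ref{def:degree-n}, where the homotopy $\id_F\simeq 0$ need not be natural, so the formal argument does not apply. Here I would argue that $C_{n+1}^{\times k}F(X)$ is a direct summand of $F$ evaluated at an iterated coproduct of copies of $X$ (via the cross-effect splittings of Definition \ref{def:CR}), hence contractible, so that every row of the first-quadrant bicomplex computing $P_nF(X)$ is contractible; Corollary \ref{cor:chain-htpy-equiv} then identifies the inclusion of the zeroth row $F(X)\simeq 0$ with a chain homotopy equivalence onto $P_nF(X)$, whence $P_nF(X)\simeq 0$ for every $X$. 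The delicate point is precisely that the row-wise contractions are not natural with respect to the horizontal differentials, so they cannot be patched together into a contraction of the total complex by hand; this is exactly the situation handled by Corollary \ref{cor:chain-htpy-equiv} and the bicomplex machinery of the appendix. (The analogous pointwise statement for chain homotopy equivalences follows the same way after passing to mapping cones.)
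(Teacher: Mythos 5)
Your proof is correct, and part (1) coincides with the paper's argument: apply the exactness of $C_{n+1}$ (Proposition \ref{p:cr is exact}) in each degree and observe that a degreewise short exact sequence of chain complexes is short exact. For part (2) the paper is far terser than you are: it simply declares that (2) follows from (1) because ``exact functors preserve chain homotopies and the zero chain complex,'' which, read literally, only treats homotopies and contractions that are morphisms in the abelian category $\Fun(\cB,\cA)$ --- i.e.\ natural ones. You correctly flag that contractibility in Definition \ref{def:degree-n} is a pointwise notion, that a non-natural contraction of $F(X)$ is not something the functor $P_n$ can be applied to, and you supply the missing argument: $C_{n+1}^{\times k}F(X)$ is a direct summand of $F$ evaluated at an iterated coproduct of copies of $X$, hence contractible whenever $F$ is pointwise contractible, so every row of the bicomplex for $P_nF(X)$ is contractible and Corollary \ref{cor:chain-htpy-equiv} finishes the job. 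This is exactly the style of argument the paper deploys elsewhere (e.g.\ in Proposition \ref{prop:poly-facts} and Lemma \ref{lem:xfx-contract}) but does not spell out here, so your version is a genuine strengthening of the written proof rather than a detour; the only cost is that you must invoke the appendix machinery, whereas the paper's one-liner suffices if one is content to read ``chain homotopies'' and ``contractibility'' in the natural sense.
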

\begin{proof}
For (i), let $0\rightarrow F\rightarrow G\rightarrow H\rightarrow 0$ be an exact sequence of functors.  By Proposition \ref{p:cr is exact}, 
\[
0\rightarrow P_nF\rightarrow P_nG\rightarrow P_nH\rightarrow  0
\]
is a sequence of chain complexes that is exact in each degree.  Hence, the sequence is exact. The properties enumerated in (ii) are consequences of (i): exact functors preserve chain homotopies and the zero chain complex.
\end{proof}

The functor $P_nF$ receives a natural transformation $p_n\colon F\to P_nF$ defined by inclusion into the degree zero part of the chain complex $P_nF$. The basic properties of the $n$th polynomial approximation are summarized in the following proposition, an adaptation of \cite[Lemma 2.11]{JM:Deriving}.

\begin{prop} \label{prop:poly-facts} For $F \colon \cB \kto \cA$,
\begin{enumerate}
\item\label{itm:degree} The functor $P_nF$ is degree $n$.
\item\label{itm:q-iso} If $F$ is degree $n$, then the map $p_n \colon F \to P_nF$ is a chain homotopy equivalence.
\item\label{itm:univ} The pair $(P_nF, p_n \colon F \to P_nF)$ is universal up to chain homotopy equivalence with respect to degree $n$ functors receiving natural transformations from $F$.
\end{enumerate}
\end{prop}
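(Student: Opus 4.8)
All three parts are the chain\text{-}homotopy\text{-}equivalence refinements of \cite[Lemma~2.11]{JM:Deriving}, and the plan is to re\text{-}run the argument of \cite{JM:Deriving} while feeding in the strengthened ingredients already assembled: the exactness of $\CR_{n+1}$ and $C_{n+1}$ (Proposition~\ref{p:cr is exact}); the fact that the $\CR_{n+1}$\text{-}resolution of any functor is \emph{contractible}, not merely acyclic (Lemma~\ref{lem:natural-contraction}); the collapsing result for first\text{-}quadrant bicomplexes with contractible rows (Corollary~\ref{cor:chain-htpy-equiv}); and the exactness and homotopy invariance of $P_n$ (Proposition~\ref{p:Pnproperties}). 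Throughout I unwind Definition~\ref{def:Pn}, regarding $P_nF$ as the totalization of a first\text{-}quadrant bicomplex whose $k$th ``resolution row'' is $C_{n+1}^{\times k}F$, with $F$ sitting in resolution degree $0$ (using the model of Remark~\ref{rmk:P0} when $n=0$), the horizontal differential being the alternating sum of counits.

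\textbf{Part~(\ref{itm:degree}).} I apply $\CR_{n+1}$ to this bicomplex. Since $\CR_{n+1}$ is exact, hence additive, it commutes with totalization (a finite biproduct in each total degree), so $\CR_{n+1}P_nF$ is the total complex of the bicomplex obtained by applying $\CR_{n+1}$ in each bidegree. Fixing the internal chain degree and letting the resolution parameter vary, each resulting slice is exactly the complex
\[ \cdots \longrightarrow \CR_{n+1}C_{n+1}^{\times 2}F \longrightarrow \CR_{n+1}C_{n+1}F \longrightarrow \CR_{n+1}F \]
of Lemma~\ref{lem:natural-contraction} applied to the adjunction $\Delta^* \dashv \CR_{n+1}$ of Corollary~\ref{cor:comonad} with $A=F$, hence contractible. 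So every such slice is contractible; Corollary~\ref{cor:chain-htpy-equiv}, applied with the resolution parameter as the non\text{-}exceptional direction, shows the zeroth slice includes into the totalization by a chain homotopy equivalence, and since that slice is itself contractible we conclude $\CR_{n+1}P_nF \simeq 0$, i.e.\ $P_nF$ is degree $n$.

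\textbf{Parts~(\ref{itm:q-iso})~and~(\ref{itm:univ}).} If $F$ is degree $n$ then $\CR_{n+1}F \simeq 0$, hence $C_{n+1}F \simeq 0$ (restriction along the diagonal preserves pointwise contractibility), and hence $C_{n+1}^{\times k}F \simeq 0$ for all $k\geq 1$, the exact functor $C_{n+1}$ carrying contractible functors to contractible functors (cf.\ Lemma~\ref{lem:B-is-contractible}, together with the fact that cross effects are retracts of evaluations of the functor). Thus in the bicomplex defining $P_nF$ every resolution row of positive degree is contractible while the zeroth row is $F$, so Corollary~\ref{cor:chain-htpy-equiv} shows the inclusion of the zeroth row into the totalization is a chain homotopy equivalence; this inclusion is by definition $p_n\colon F\to P_nF$. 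For part~(\ref{itm:univ}), given a degree $n$ functor $G$ and a natural transformation $\phi\colon F\to G$, let $q$ be a homotopy inverse of the chain homotopy equivalence $p_n\colon G\to P_nG$ from part~(\ref{itm:q-iso}) and set $\psi \defeq q\circ P_n\phi\colon P_nF\to G$; naturality of $p_n$ gives $P_n\phi\circ p_n = p_n\circ\phi$, so $\psi\circ p_n = q\circ p_n\circ\phi \simeq \phi$. Essential uniqueness of $\psi$ up to homotopy follows by applying $P_n$ to two such factorizations and cancelling, using that $P_n$ preserves chain homotopies (Proposition~\ref{p:Pnproperties}), that $p_n\colon P_nF\to P_nP_nF$ is a chain homotopy equivalence (by parts~(\ref{itm:degree})~and~(\ref{itm:q-iso})), and the comparison $P_n(p_n^F)\simeq p_n^{P_nF}$.

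\textbf{Main obstacle.} The proposition itself is a reorganization once the prior results are in place, so the substantive points are: (a) correctly identifying $\CR_{n+1}P_nF$ as a row\text{-}wise \emph{contractible} (not merely row\text{-}wise acyclic) bicomplex --- this is where the upgrade from quasi\text{-}isomorphism to chain homotopy equivalence is realized, and it rests on the explicit unit contractions of Lemma~\ref{lem:natural-contraction} feeding Corollary~\ref{cor:chain-htpy-equiv} (ultimately Theorem~\ref{thm:bicomplex-htpy-equiv}), rather than a spectral sequence argument; and (b) the comparison $P_n(p_n^F)\simeq p_n^{P_nF}$ of the two natural maps $P_nF\to P_nP_nF$ needed for uniqueness in part~(\ref{itm:univ}), which is the one place requiring a small additional argument --- a diagram chase with the counit of $C_{n+1}$, or an appeal to the weak universal property already established --- exactly as in \cite{JM:Deriving}.
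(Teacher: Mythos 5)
Your proof is correct and, for parts (\ref{itm:q-iso}) and (\ref{itm:univ}), follows the paper's argument essentially verbatim: Corollary \ref{cor:chain-htpy-equiv} applied to the bicomplex whose positive resolution rows $C_{n+1}^{\times k}F$ are contractible by exactness of $C_{n+1}$, and then the formal factorization through $P_n\tau$ together with the comparison of the two maps $P_nF\to P_nP_nF$ for uniqueness. The only genuine divergence is in part (\ref{itm:degree}): the paper exploits the fact that the contracting homotopy supplied by Lemma \ref{lem:natural-contraction} is \emph{natural}, hence commutes with the internal differentials of $F$, so that setting $s^h=0$ yields a contraction of the bicomplex $\CR_{n+1}P_nF$ in the sense of \cite[5.7.3]{Weibel}, which totalization preserves; you instead slice by internal chain degree and feed the resulting all-contractible rows into Corollary \ref{cor:chain-htpy-equiv} (concluding first that the zeroth slice includes by a chain homotopy equivalence, then that the total complex is contractible because that slice is too). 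Both routes are valid: yours never uses the naturality of the contraction and so leans on the heavier machinery of Theorem \ref{thm:bicomplex-htpy-equiv}, while the paper's is the more direct use of the extra structure that Lemma \ref{lem:natural-contraction} actually provides.
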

\begin{proof}
 
We start with \eqref{itm:degree}, writing $F_k$ for the $k$th degree component of $F \colon \cB \to \Ch\cA$.  By definition, $P_nF$ is the total complex of the bicomplex whose $k$th column is $P_n(F_k)$. To prove that $P_nF$ is degree $n$, we must show that the bicomplex $\CR_{n+1}P_nF$ is contractible. Using \cite[Definition 5.7.3]{Weibel}, a contraction for a bicomplex is given by horizontal contractions $s^h$ and vertical contractions $s^v$ which satisfy  $s^hd^v = d^vs^h$, $s^vd^h = d^hs^v$, and  $1=(s^hd^h+d^hs^h)+(s^vd^v+d^vs^v)$.  In particular, when the bicomplex has naturally contractible columns we see that it is contractible by setting $s^h=0$.
Lemma \ref{lem:natural-contraction}, applied to the adjunction of Corollary \ref{cor:comonad}, supplies a natural contracting homotopy for each  of the columns $\CR_{n+1}P_nF_k$. 
Since totalization preserves chain homotopy equivalence, we conclude that the chain complex $\CR_{n+1}P_nF$ is contractible.

For \eqref{itm:q-iso}, recall that $P_nF$ is the totalization of the first-quadrant bicomplex whose $k$th row is $C_{n+1}^{\times k}F.$ The map $p_n \colon F \to P_nF$ is the natural inclusion of the zeroth row into the totalization. By Corollary \ref{cor:chain-htpy-equiv}, to prove that this is a chain homotopy equivalence, it suffices to show that each chain complex $C_{n+1}^{\times k}F$ is contractible for $k \geq 1$. By exactness of $C_{n+1}$, it suffices to prove that the chain complex $C_{n+1}F$ is contractible. But the hypothesis that $F$ is degree $n$ tells us immediately that $C_{n+1}F(X) := \CR_{n+1}F(X,\ldots, X)$ is contractible. 

Part \eqref{itm:univ} follows formally. Let $\tau \colon F \to G$ be a natural transformation from $F$ to a degree $n$ functor $G$. The natural map $p_n$ provides a commutative diagram
\[ 
\begin{tikzcd}
F \arrow[r, "\tau"] \arrow[d, "p_n"'] & G \arrow[d, "p_n", "\simeq"'] \\ P_n F \arrow[r, "P_n\tau"'] & P_nG
\end{tikzcd}
\]
where the right hand map is a chain homotopy equivalence by part \eqref{itm:q-iso}.  This shows that $\tau$ factors through $p_n \colon F \to P_nF$ up to pointwise chain homotopy equivalence. For uniqueness, consider another factorization 
\[ 
\begin{tikzcd}
F \arrow[rr, bend left, "\tau"] \arrow[r, "p_n"'] \arrow[d, "p_n"'] & P_nF \arrow[r, "\sigma"'] \arrow[d, "p_nP_n"', "\simeq"] & G \arrow[d, "p_n", "\simeq"'] \\ P_n F \arrow[rr, bend right, "P_n\tau"'] \arrow[r, "P_np_n"] & P_nP_n F \arrow[r, "P_n\sigma"] & P_nG
\end{tikzcd}
\]
The maps $p_nP_n, P_np_n \colon P_nF \to P_nP_nF$ are not identical, but do agree up to a natural automorphism of $P_nP_nF$. In particular, because $P_nF$ is degree $n$ by \eqref{itm:degree}, $p_nP_n$ is a chain homotopy equivalence by \eqref{itm:q-iso}, and thus so is $P_np_n$. In this way, we see that $\sigma$ is determined up to chain homotopy equivalence by $P_n\tau$, and so is unique up to chain homotopy equivalence.
\end{proof}

Since $P_{n-1}F$ is also degree $n$, the universal property of Proposition \ref{prop:poly-facts}\eqref{itm:univ} provides a factorization
\[
\begin{tikzcd}
F \arrow[r, "p_n"] \arrow[dr, "p_{n-1}"'] & P_nF \arrow[d, dashed, "q_n"] \\ & P_{n-1}F
\end{tikzcd}
\]
The resulting tower of functors:
\[
\begin{tikzcd}
& & F \arrow[dl, "p_{n+1}" description]  \arrow[dll]  \arrow[d, "p_n"] \arrow[dr, "p_{n-1}" description] \arrow[drrr, "p_0"] \\
\cdots \arrow[r] & P_{n+1}F \arrow[r, "q_{n+1}"'] & P_nF \arrow[r, "q_n"'] & P_{n-1}F \arrow[r] & \cdots \arrow[r, "q_1"'] & P_0F
\end{tikzcd}
\]
is called the \emph{algebraic  Taylor tower} of $F$.

\begin{rmk}\label{rmk:rho}
Explicitly, the map $q_n \colon P_nF \to P_{n-1}F$ is induced by a natural transformation of comonads $\rho_n \colon C_{n+1} \to C_{n}$ given on components by the composite
\[ C_{n+1}F(X) = \CR_{n+1}F(X,\ldots, X) \hookrightarrow \CR_{n}F(X \oplus X,X,\ldots X) \xrightarrow{\CR_nF(\mr{fold})} \CR_nF(X,\ldots,X)=C_{n}F(X)\]
of the canonical direct summand inclusion followed by the image of the fold map $X\oplus X \to X$ \cite[p.~770]{JM:Deriving}.
\end{rmk}

\begin{ex}\label{ex:add-constant2}  Recall the functor $F\colon\cA \to \cA$ defined by $F(X)=A\oplus X$ from Example \ref{ex:add-constant}.
Since $\CR_1F(X)=X=\id(X)$ and $\CR_nF\cong 0$ for $n\geq 2$, it follows immediately from Definition \ref{def:Pn}  that
\[ P_nF(X) = \left( \cdots \to 0 \to 0 \to F(X) \right) \]
for all $n\geq 1$.  When $n=0$, it follows immediately from Remark \ref{rmk:P0} that $P_0F(X) \cong A$.  Indeed, since the chain homotopy $P_0F(X)\simeq F(0)$ is given by the chain map which projects $F(X)$  onto $F(0)$  in degree 0, one sees that the map $q_1\colon P_1F(X) \to P_0F(X)$, or $q_1:F(X) \to A$,  is also this projection map.
\end{ex}

%%%%%%%%%%%%%%%%%%%%%%%%%%%%%%%%%%%
\section{Linear approximations} \label{sec:LinearApproximations}
%%%%%%%%%%%%%%%%%%%%%%%%%%%%%%%%%%%%

In this section we consider the linear approximation to a functor, or the homotopy fiber of the map $q_1:P_1F \to P_0F$.  The linearization of a functor will be an important ingredient in defining the directional derivative and the two types of higher order derivatives considered in Sections \ref{sec:faa} and \ref{sec:chain-rule}.  Since the linearization of a functor $F$ with codomain $\cB$ is a functor $D_1F$ with codomain $\Ch\cB$, the Kleisli category conventions of Section \ref{sec:kleisli} will be particularly useful here, especially when we consider linearization together with composition of functors.  In order to maximize the benefits of this structure, we make two main changes in our presentation of the linear approximation from the presentation in \cite{JM:Deriving}.  

First, we insist that the linear approximation $D_1F$ be a morphism in $\cat{AbCat}_{\Ch}$.  In particular, this means we insist that $D_1F$ take values in {\it non-negatively graded} chain complexes.    In \cite[2.14.7]{JM:Deriving}, $D_1F$ (and indeed all of the homotopy fibers of the maps $q_n:P_nF\to P_{n-1}F$) are defined using a particular model for the homotopy fiber given by taking the mapping cone of $q_n$ shifted down one degree.  This shift means that the resulting chain complex will have a non-zero object in degree $-1$.  In Definition \ref{def:D1} we give an alternate model of $D_1F$ which is chain homotopic to the model in \cite{JM:Deriving}, but which is concentrated in non-negative degrees.

Second, since composition by prolongation does not always preserve quasi-iso\-morph\-isms, we have chosen to work up to the stronger notion of chain homotopy equivalence.  In particular, we have provided explicit chain homotopy equivalences for the $D_1$ chain rule in Proposition \ref{lem:5.7}, which strengthens \cite[Lemma 5.7]{JM:Deriving}.  This chain rule is strengthened further in Proposition \ref{prop:D_1chainrule}, where we present the analogue of the $D_1$ chain rule for functors which are not necessarily reduced.  Indeed, all of the properties of the linear approximation $D_1F$ are developed up to chain homotopy equivalence, and for functors which need not be reduced.  These properties occupy the majority of this section.

\begin{defn} \label{def:D1} The \emph{ linearization} of $F\colon \cB \kto \cA$ is the functor $D_1F\colon \cB\kto \cA$ given as the totalization of the explicit chain complex of chain complexes $(D_1F_*, \partial_*)$ where:
\[ (D_1F)_k \defeq \begin{cases}  
 C_2^{\times k}F& k\geq 1\\
\CR_1F & k=0 \\
0 & \text{otherwise}.\end{cases}\] 
The chain differential $\partial_1\colon (D_1F)_1 \to (D_1F)_0$ is given by the map $\rho_1$ of \ref{rmk:rho}, and the chain differential $\partial_k \colon (D_1F)_k \to (D_1F)_{k-1}$ is given by $\sum_{i=0}^{k-1} (-1)^i C_2^{\times i}\epsilon$ 
when $k\geq 1$.  Note that in the case $n=1$, the map $\rho_1$ is precisely the counit $\epsilon$ for the component $\CR_1F$ (see Remark \ref{rmk:counit}).  Since $C_2F \cong C_2 \CR_1F$ by Lemma \ref{lem:CR-idempotency}, the fact that $\partial_1\circ \partial_2=0$ follows immediately by construction.
\end{defn}

\begin{rmk}\label{rmk:D1} $\quad$
\begin{enumerate}
\item 
Using the model of $P_0F$ described in Remark \ref{rmk:P0}, the map $q_0 \colon P_1F \to P_0F$ is surjective, so the homotopy fiber $D_1F$ is simply the kernel of $q_0$.
 There is an inclusion map from  $D_1F$ into the homotopy fiber of $q_1$, and this inclusion is a chain homotopy equivalence.  Thus, this model is chain homotopy equivalent to the one defined in \cite{JM:Deriving}.  
\item Viewed another way, we see that Definition \ref{def:D1} is exactly the same chain complex as $P_1 (\CR_1 F)$ by Lemma \ref{lem:CR-idempotency}. The latter is the degree 1 approximation to the reduction of $F$.  
\end{enumerate}
These two observations imply that, up to chain homotopy equivalence, the two processes of reducing a functor and taking its degree 1 approximation produce the same result when applied in either order.
\end{rmk}

\begin{ex} \label{ex:linearization-of-adding-constant} Returning to the functor $F(X)=A \oplus X$ from Examples \ref{ex:add-constant} and \ref{ex:add-constant2}, since $\CR_2F\cong 0$ and $\CR_1F\cong \id$, the linearization $D_1F$ is given by the chain complex which is the functor $\id$ concentrated in degree 0.  Thus, the linearization of $F$ is $\id$.
\end{ex}

We now turn our attention to the properties of $D_1$.  First, we see that the linearization construction defines an exact functor:

\begin{prop}\label{p:Dnproperties} $\quad$
 \begin{enumerate}
\item $D_1 \colon \Fun(\cB,\cA) \to \Fun(\cB,\cA)$ is exact. 
\item $D_1$ preserves chain homotopies, chain homotopy equivalences, and contractibility.
\end{enumerate}
\end{prop}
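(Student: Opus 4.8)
The plan is to deduce both statements from the exactness assertion (i), following the template already used for Proposition \ref{p:Pnproperties}. First I would record that, by Definition \ref{def:D1}, the assignment $F \mapsto D_1F$ is manifestly functorial: it is the totalization of the chain complex of functors whose term in homological degree $k \geq 1$ is $C_2^{\times k}F$, whose term in degree $0$ is $\CR_1F$, and whose differentials are $\rho_1$ (Remark \ref{rmk:rho}) and the alternating sums $\sum_{i=0}^{k-1}(-1)^i C_2^{\times i}\epsilon$. The only routine check here is that each $\partial_k$ is natural in $F$ — which it is, being built from the natural transformations $\rho_1$ and $\epsilon$ — so that $D_1 \colon \Fun(\cB,\cA)\to\Fun(\cB,\cA)$ is well defined.

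For (i), given a short exact sequence $0\to F\to G\to H\to 0$ of functors $\cB \kto \cA$, I would apply $D_1$ termwise. By Proposition \ref{p:cr is exact} — applied with $n=1$ and $n=2$, which is legitimate since $\Ch\cA$ is abelian and that proposition therefore holds verbatim for functors valued in $\Ch\cA$ — each of $\CR_1$ and $C_2^{\times k}$ is exact, so
\[ 0\to D_1F \to D_1G \to D_1H \to 0 \]
is a sequence of chain complexes of chain complexes exact in every bidegree. Totalization preserves this: since the domain categories have a zero object and finite coproducts and $\Ch\cA$ is concentrated in non-negative degrees, each total degree of $D_1(-)$ is a \emph{finite} direct sum of the terms above, and a finite direct sum of short exact sequences is short exact. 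Hence $\Tot$ of the displayed sequence is exact in each degree, so $D_1$ is exact.

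Part (ii) is then formal, exactly as in the proof of Proposition \ref{p:Pnproperties}(ii): an exact functor between abelian categories is in particular additive, hence preserves the identities defining a chain homotopy; it sends the zero complex to the zero complex, hence preserves contractibility; and a chain map is a chain homotopy equivalence precisely when it admits a homotopy inverse witnessed by two such chain homotopies, all of which $D_1$ carries along. I do not expect a genuine obstacle here — the argument is routine. The only points meriting a moment's care are the naturality of the differentials $\partial_k$ (so that $D_1$ is a functor at all) and the finiteness observation that lets totalization preserve exactness; the latter would fail for an unbounded bicomplex but holds here because both the homological grading of the $D_1$-complex and the internal grading of $\Ch\cA$ are bounded below.
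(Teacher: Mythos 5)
Your proposal is correct and is essentially the paper's argument: the paper simply shortcuts it by citing the isomorphism $D_1F \cong P_1(\CR_1F)$ from Remark \ref{rmk:D1} and then invoking Proposition \ref{p:Pnproperties}, whose proof is exactly the degreewise-exactness-plus-totalization argument you spell out. Your inlined version (exactness of $\CR_1$ and $C_2^{\times k}$ via Proposition \ref{p:cr is exact}, finiteness of each total degree) is a valid and slightly more self-contained rendering of the same proof.
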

\begin{proof}
Since $D_1F$ is isomorphic to $P_1(\CR_1F)$, (i) is a direct consequence of Proposition \ref{p:Pnproperties}, and (ii) follows from exactness.
\end{proof}

Next, we would like to justify the use of the term \emph{linearization} for $D_1$.  We start by defining what it means for a functor to be linear.  Recall that a functor is degree one if $cr_2F$ is contractible, as in Definition \ref{def:degree-n}.

\begin{defn} \label{defn:linear} A functor $F \colon \cB \to \cA$ is \emph{linear} if it is degree one and also reduced, meaning that $F(0)$ is contractible. Equivalently, $F$ is linear if it preserves finite direct sums up to  chain homotopy equivalence.
\end{defn}

We will start with reduced functors.  If $F$ is reduced, $D_1F$ is exactly the analog of the linear approximation of a function for functors.  That is, a function $f:{\mathbb R}\to {\mathbb R}$ whose graph passes through the origin is linear if $f(x+y)=f(x)+f(y)$.

\begin{lem} \label{lem:D1-linearity} $\quad$
\begin{enumerate}
\item For any $F \colon \cB \kto \cA$, the functor $D_1F \colon \cB \kto \cA$ is strictly reduced,  and for any $X, Y \in \cB$, the natural map
\[ D_1F(X) \oplus D_1F(Y) \simto D_1F(X \oplus Y)\] is a chain homotopy equivalence.  In particular, $D_1F$ is linear.
\item  The functor $D_1 \colon [\cB,\cA] \to [\cB,\cA]$ is linear in the sense that $D_10 \cong 0$ and for any pair of functors $F, G \in [\cB,\cA]$, \[D_1F \oplus D_1G \cong D_1(F \oplus G).\]
\end{enumerate}

\end{lem}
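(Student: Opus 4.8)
The plan is to prove part (i) directly from the definition of $D_1F$ as a totalization, exploiting the fact (Remark \ref{rmk:D1}(ii)) that $D_1F$ is literally the same chain complex as $P_1(\CR_1F)$, and then deduce part (ii) by a clean formal argument about exactness together with the coproduct-preservation of cross effects. First I would record that $D_1F = P_1(\CR_1 F)$, so in degree zero $D_1F(0) = \CR_1 F(0) \cong 0$ and in positive degrees $(D_1F)_k(0) = C_2^{\times k} F(0) = \CR_2(\cdots)(\ldots, 0, \ldots) \cong 0$ by the strict multi-reducedness of cross effects recalled just before Lemma \ref{lem:CR-idempotency}; hence $D_1F$ is \emph{strictly} reduced, with no homotopy needed. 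This handles the first clause of (i) on the nose.

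For the chain homotopy equivalence $D_1F(X) \oplus D_1F(Y) \simto D_1F(X\oplus Y)$, the key observation is that $F$ is degree $1$ means precisely that $C_2 F = \CR_2 F(-,-)|_{\mathrm{diag}}$ is contractible, but more is available: I would instead work with the \emph{bivariate} functor $\CR_1 F(X \oplus Y)$ and use the defining splitting of the second cross effect,
\[ \CR_1 F(X \oplus Y) \cong \CR_1 F(X) \oplus \CR_1 F(Y) \oplus \CR_2 F(X,Y), \]
which is a natural isomorphism of chain complexes. Applying the exact functor $P_1$ (Proposition \ref{p:Pnproperties}(i)) and using that $P_1$ commutes with finite direct sums, this gives a natural isomorphism $D_1F(X\oplus Y) \cong D_1F(X) \oplus D_1F(Y) \oplus P_1(\CR_2 F(X,Y))$, under which the natural map in question is the summand inclusion. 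So it suffices to show $P_1(\CR_2 F(X,Y))$ — equivalently $D_1$ applied to the two-variable functor $\CR_2 F$, evaluated at a point — is contractible. But for fixed $Y$, the functor $X \mapsto \CR_2 F(X, Y)$ is strictly reduced, and by Lemma \ref{lem:CR-idempotency} and the recursive definition of cross effects its second cross effect in the variable $X$ is a summand of $\CR_3 F$, which is contractible because degree $1 \Rightarrow \CR_{k}F \simeq 0$ for $k \ge 2$ (the remark after Definition \ref{def:degree-n}); here I use that $D_1$, being $P_1$ of the reduction, kills any functor whose relevant cross effect is contractible, via the contracting homotopy of Lemma \ref{lem:natural-contraction} applied to the comonad $C_2$ together with Proposition \ref{p:Dnproperties}(ii). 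This yields the equivalence; linearity of $D_1F$ then follows from Definition \ref{defn:linear} since $D_1F$ is strictly reduced and degree one (its second cross effect $\CR_2 D_1 F$ is $\CR_2 P_1 \CR_1 F$, contractible because $P_1 G$ is always degree $1$ by Proposition \ref{prop:poly-facts}\eqref{itm:degree}).

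For part (ii): $D_1 0 \cong 0$ is immediate since every term $C_2^{\times k} 0$ and $\CR_1 0$ vanishes. For the additivity $D_1(F \oplus G) \cong D_1 F \oplus D_1 G$, I would invoke exactness of $D_1$ (Proposition \ref{p:Dnproperties}(i)): the split short exact sequence $0 \to F \to F \oplus G \to G \to 0$ in $\Fun(\cB,\cA)$ is carried by the exact functor $D_1$ to a (necessarily split, since everything is split and natural) short exact sequence $0 \to D_1 F \to D_1(F\oplus G) \to D_1 G \to 0$, giving the claimed natural isomorphism; alternatively one sees it termwise from $\CR_1(F\oplus G) \cong \CR_1 F \oplus \CR_1 G$ and $C_2^{\times k}(F \oplus G) \cong C_2^{\times k}F \oplus C_2^{\times k}G$, using exactness of $\CR_1$ and $C_2$ (Proposition \ref{p:cr is exact}). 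The main obstacle is bookkeeping in part (i): making sure the natural isomorphism coming from the cross-effect splitting is genuinely natural in both variables after applying $P_1$, and correctly identifying the residual summand $P_1(\CR_2 F(X,Y))$ as one to which the contractibility machinery applies — i.e., checking that $\CR_2 F$ being a summand of restrictions of $\CR_3 F$ really does force its $D_1$ to be pointwise contractible. Everything else is routine application of the exactness and homotopy-invariance already established.
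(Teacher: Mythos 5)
Your argument for strict reducedness and for part (ii) is fine and matches the paper's (exactness of $D_1$, or the termwise computation with $\CR_1$ and $C_2$). The gap is in the main step of part (i). You decompose $\CR_1F(X\oplus Y)\cong\CR_1F(X)\oplus\CR_1F(Y)\oplus\CR_2F(X,Y)$ and then ``apply the exact functor $P_1$'' to conclude $D_1F(X\oplus Y)\cong D_1F(X)\oplus D_1F(Y)\oplus P_1(\CR_2F(X,Y))$. But $P_1$ is a functor on $\Fun(\cB,\cA)$, not an operation you can apply to an object-level splitting: $D_1F(X\oplus Y)$ is the functor $P_1(\CR_1F)$ evaluated at the object $X\oplus Y$, and its degree-$k$ term $C_2^{\times k}F(X\oplus Y)$ splits into many cross-effect summands (already in degree one you get $\CR_2F(X,Y)$, $\CR_2F(Y,X)$, and terms involving $\CR_3F$, $\CR_4F$), not the three pieces your formula suggests. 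The complement of $D_1F(X)\oplus D_1F(Y)$ inside $D_1F(X\oplus Y)$ is, by Definition~\ref{def:CR} applied to the strictly reduced functor $D_1F$ itself, the complex $\CR_2(D_1F)(X,Y)$ --- not ``$P_1(\CR_2F(X,Y))$,'' which you would first have to define and then nontrivially identify with it. Compounding this, your contractibility argument for the residual summand invokes ``degree $1\Rightarrow\CR_kF\simeq 0$ for $k\geq 2$'' applied to $F$; but $F$ is an arbitrary functor in this lemma, not assumed degree one, so $\CR_3F$ need not be contractible.

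The repair is short, and you in fact state the key fact in your closing parenthesis but deploy it only to conclude linearity at the end rather than to prove the equivalence: the complement is $\CR_2(D_1F)(X,Y)\cong\CR_2(P_1\CR_1F)(X,Y)$, and this is contractible because $P_1G$ is degree one for \emph{every} $G$ by Proposition~\ref{prop:poly-facts}\eqref{itm:degree} --- the contractibility comes from the $P_1$ construction itself, with no hypothesis on $F$. Combined with Lemma~\ref{lem:B-is-contractible}, the summand inclusion $D_1F(X)\oplus D_1F(Y)\to D_1F(X\oplus Y)$ is then a chain homotopy equivalence. That is the paper's argument: derive the equivalence from ``$D_1F$ is degree one,'' rather than trying to establish the equivalence first and read off degree one afterwards.
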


\begin{proof}  First, note that $D_1F$ is strictly reduced because the functors $C_2F$ and $\CR_1F$ are both strictly reduced functors.  

To prove the second part of (i), note that the map 
\[ i\colon D_1F(X) \oplus D_1F(Y) \to D_1F(X\oplus Y)\]
is the inclusion of a direct summand with complement $\CR_2 D_1F(X, Y)$.  But $D_1F\cong P_1 (\CR_1F)$ by Remark \ref{rmk:D1}, and $\CR_2 P_1 (\CR_1F) (X,Y)$ is chain contractible by Proposition \ref{prop:poly-facts}(i). It follows that the map $i$ is a chain homotopy equivalence.

Part (ii) is a consequence of Proposition \ref{p:Dnproperties}: any exact functor preserves finite direct sums up to isomorphism.
\end{proof}

Johnson and McCarthy prove that $D_1$ is functorial up to quasi-isomorphism whenever the first of a composable pair of functors is reduced. The proof of \cite[Lemma 5.7]{JM:Deriving} proceeds by showing that both $D_1(F\circ G)$ and $D_1F\circ D_1G$ are quasi-isomorphic to a third chain complex, coming from a tricomplex containing each of the other two.   The next proposition, whose technical proof is deferred to  Appendix \ref{a:AppB}, shows that in fact $D_1(F\circ G)$ is chain homotopy equivalent to $D_1F \circ D_1G$.

\begin{prop}\label{lem:5.7} For any composable pair of functors $F \colon \cB \kto \cA$ and $G \colon \cC \kto \cB$ with $G$ reduced, there is a chain homotopy equivalence
\[ D_1(F \circ G) \simeq D_1F \circ D_1G.\]
\end{prop}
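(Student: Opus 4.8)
Our strategy is to revisit the tricomplex argument behind the proof of \cite[Lemma 5.7]{JM:Deriving} and to upgrade each of its two comparison quasi-isomorphisms to a chain homotopy equivalence by invoking the machinery of Appendix~\ref{a:AppB}, namely Theorem~\ref{thm:bicomplex-htpy-equiv} (and, for the cleanest collapses, its Corollary~\ref{cor:chain-htpy-equiv}). Unwinding the Kleisli conventions, $F\circ G$ is the totalization of the double complex obtained by prolonging $F$ and applying it degreewise to $G(X)$, so that $D_1(F\circ G)=P_1\!\bigl(\CR_1(F\circ G)\bigr)$ and $D_1F\circ D_1G=\Tot\,\Ch(D_1F)\bigl(D_1G(X)\bigr)$ are each totalizations of explicit multicomplexes assembled from iterated cross effects and $C_2$-resolutions of $F$ and of $G$ (Definition~\ref{def:D1}); the point is to compare these two multicomplexes. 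As a harmless preliminary reduction we may assume $F$ is reduced: composition in $\cat{AbCat}_{\Ch}$ is additive in its outer variable and $D_1$ is additive (Lemma~\ref{lem:D1-linearity}(ii)), while $D_1F=P_1(\CR_1F)$ depends on $F$ only through $\CR_1F$, so replacing $F$ by $\CR_1F$ changes neither side up to chain homotopy equivalence (the constant summand $F(0)$ contributes a constant functor to $F\circ G$, which $D_1$ annihilates).

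Following \cite[Lemma 5.7]{JM:Deriving}, we would form a first-octant tricomplex $\cE_{\bullet,\bullet,\bullet}$ whose three grading directions record, respectively, the simplicial degree of the $C_2$-resolution used to build $D_1$ of the composite and the two internal chain directions introduced by prolonging $D_1F$ and by forming $D_1G$. Collapsing two of these three directions one way exhibits $\Tot\cE$ as related to $D_1(F\circ G)$ by a comparison map; collapsing them the other way does the same for $D_1F\circ D_1G$. The original proof shows each collapse is a quasi-isomorphism because all but one plane of $\cE$ in the collapsed directions is acyclic; we instead observe that these planes are \emph{naturally} contractible. The relevant contractions are the canonical ones produced by Lemma~\ref{lem:natural-contraction}, applied to the adjunction of Corollary~\ref{cor:comonad} underlying $C_2$, and they survive the prolongations and totalizations because those operations preserve (natural) chain homotopies, by the Dold--Kan argument of Lemma~\ref{lem:che-comp}. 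Feeding these contractions into Theorem~\ref{thm:bicomplex-htpy-equiv}, applied one grading direction at a time, shows that both collapse maps are chain homotopy equivalences on total complexes; composing them yields $D_1(F\circ G)\simeq D_1F\circ D_1G$, and Lemma~\ref{lem:che-comp} guarantees that this equivalence is compatible with composition in $\cat{AbCat}_{\Ch}$ so that the statement is meaningful there.

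The hypothesis that $G$ is reduced enters exactly in the verification that these planes are contractible rather than merely acyclic: it is what allows the iterated cross effects of $F\circ G$ to be decomposed via the iterated cross effects of $F$ applied to those of $G$ with no surviving constant summand --- concretely, $\CR_1(F\circ G)\simeq\CR_1F\circ G$ because $G(0)$ is contractible, and similarly for the higher pieces of the multicomplex. Without reducedness the planes are only acyclic and the conclusion degrades to the quasi-isomorphism of \cite{JM:Deriving}.

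The main obstacle --- and the reason the proof is deferred to Appendix~\ref{a:AppB} --- is that the contractions supplied by Lemma~\ref{lem:natural-contraction} are natural, and commute with the differentials, only in the direction that survives a given collapse; they do not commute with the differentials of the directions being collapsed. Hence one cannot simply appeal to the standard fact \cite[5.7.3]{Weibel} that a chain homotopy of bicomplexes totalizes, and must instead build an explicit contracting homotopy on $\Tot\cE$. This is precisely what Theorem~\ref{thm:bicomplex-htpy-equiv} provides, under the row-wise strong deformation retraction hypotheses of Definition~\ref{defn:row-wise-retraction}; the remaining (and genuinely technical) work is to check that the collapse maps of $\cE_{\bullet,\bullet,\bullet}$ satisfy those hypotheses and to iterate the theorem across all three grading directions.
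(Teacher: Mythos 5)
Your proposal takes essentially the same route as the paper: reduce to the case where $F$ is also reduced, form the Johnson--McCarthy tricomplex $(LR)^r\bigl((LR)^pF\circ(LR)^qG\bigr)$, and upgrade both collapse maps to chain homotopy equivalences via Theorem \ref{thm:bicomplex-htpy-equiv} and Corollary \ref{cor:chain-htpy-equiv}, with the column contractions supplied by the comonad adjunction and the row contractions by the degree-one property of $D_1F$ and $D_1G$. The one step you gloss over is that the Kleisli composite $D_1F\circ D_1G$ is defined via the Dold--Kan prolongation $\Ch(D_1F)$ rather than by applying $D_1F$ degreewise, so identifying the $r=0$ face of the tricomplex with the actual composite requires a separate comparison (the paper's Lemma \ref{lem:prolong}), which itself rests on the contractibility of $\CR_n D_1H$ for $n\geq 2$ and a further application of Theorem \ref{thm:bicomplex-htpy-equiv}.
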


Our next task is to extend Proposition \ref{lem:5.7} to  not necessarily reduced functors $G \colon \cC\kto\cB$ and $F \colon\cB\kto\cA$. The proof follows easily from the next two lemmas. The first of these indicates that the linearization of $F$ is the same as the linearization of its reduced component, $\CR_1F$.  This is analogous to the statement that the linearizations of the functions $f(x)$ and $f(x)-f(0)$ have the same slope.  The second function, $f(x)-f(0)$, is reduced in the sense that its graph goes through the origin.  This analogy explains precisely how to linearize unreduced functors.

\begin{lem}\label{lem:reduced-vs-unreduced}
For any $F \colon \cB \kto\cA$, the natural map $D_1\CR_1F \to D_1F$ is an iso\-mor\-phism.
\end{lem}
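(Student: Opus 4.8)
The plan is to recognize the natural map $D_1\CR_1 F \to D_1 F$ as the image under $D_1$ of the counit $\epsilon_F \colon \CR_1 F \to F$ of the coreflection of Proposition~\ref{prop:coreflective}, and then to check that the resulting chain map is an isomorphism in every degree. By Proposition~\ref{prop:coreflective} (together with Definition~\ref{def:CR}), $\epsilon_F$ is a split monomorphism whose cokernel is the constant functor $\underline{F(0)}$ at the chain complex $F(0)\in\Ch\cA$; equivalently there is a short exact sequence of functors
\[ 0 \to \CR_1 F \xrightarrow{\ \epsilon_F\ } F \to \underline{F(0)} \to 0. \]

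Next I would unwind Definition~\ref{def:D1}: the functor $D_1$ is built degreewise from $\CR_1$ in degree $0$ and from the iterates $C_2^{\times k}$ in degree $k\geq 1$, all of which are exact by Proposition~\ref{p:cr is exact} (exactness being closed under composition). The one observation that makes the argument go is that every one of these functors annihilates constant functors: $\CR_1\underline{A}$ is the kernel of the identity map $A\to A$, hence $0$, while $\CR_2\underline{A}(X_1,X_2)$ is a direct summand of $\CR_1\underline{A}(X_1\oplus X_2)=0$ by Definition~\ref{def:CR}, so $C_2\underline{A}=0$ and therefore $C_2^{\times k}\underline{A}=0$ for all $k\geq 1$. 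Applying the exact functors $\CR_1$ and $C_2^{\times k}$ to the short exact sequence above and using that they kill $\underline{F(0)}$ shows that $\CR_1\epsilon_F$ and each $C_2^{\times k}\epsilon_F$ is both monic and epic, hence an isomorphism.

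To conclude, I would note that since the differentials of $D_1$ are assembled from the natural transformations $\rho_1$ and $\epsilon$, the chain map $D_1\epsilon_F$ is precisely the map whose degree-$0$ component is $\CR_1\epsilon_F$ and whose degree-$k$ component for $k\geq 1$ is $C_2^{\times k}\epsilon_F$; being an isomorphism in each degree, it is an isomorphism of chain complexes (and its totalization, in the Kleisli setting, remains an isomorphism). I do not expect any real obstacle here: the only mildly delicate points are the identification of the natural map with $D_1\epsilon_F$ and the small computation that $\CR_1$ and $C_2$ destroy constant functors. An alternative, slightly more structural route would combine the identification $D_1 F\cong P_1(\CR_1 F)$ of Remark~\ref{rmk:D1} with the idempotency $\CR_1\CR_1 F\cong \CR_1 F$ of Lemma~\ref{lem:CR-idempotency}, but one would then need to check these isomorphisms are natural and compatible with $\epsilon$, which seems no simpler.
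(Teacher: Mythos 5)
Your proof is correct and rests on the same underlying fact as the paper's: that $\CR_1$ and $C_2^{\times k}$ are insensitive to the constant summand $F(0)$, so the degreewise components of $D_1\epsilon_F$ are isomorphisms. The paper simply cites the idempotency isomorphisms $\CR_1\CR_1 F\cong \CR_1 F$ and $C_2\CR_1 F\cong C_2 F$ of Lemma \ref{lem:CR-idempotency} and Corollary \ref{cor:comonad}, whereas you rederive them via the split short exact sequence and exactness of the cross effects; this is a mild repackaging rather than a genuinely different route.
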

\begin{proof}
 By Lemma \ref{lem:CR-idempotency} and Corollary \ref{cor:comonad},  $C_2\CR_1F\cong C_2F$ and $\CR_1 \CR_1 F \cong \CR_1F$.  The result now follows from the definition of $D_1F$.
\end{proof}

\begin{lem} \label{lem:CR_1chainrule} Let $F\colon\cB\kto \cA$ and $G\colon\cC\kto \cB$ be  composable functors.
Then 
\[\CR_1(F\circ G)(X) \cong (\CR_1F \circ \CR_1G)(X) \oplus \CR_2F(G(0), \CR_1G(X)).\]
\end{lem}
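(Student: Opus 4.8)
The plan is to unwind the recursive definition of the cross effect (Definition \ref{def:CR}) applied to the composite $F \circ G$, tracking how the first cross effect interacts with a coproduct in the source. The starting point is that $\CR_1(F \circ G)(X)$ is, by definition, the direct summand complement of $(F \circ G)(0)$ inside $(F \circ G)(X)$; since composition in $\cat{AbCat}_{\Ch}$ is the Kleisli composite (prolong, then totalize), I should first check that this formalism behaves well, but in fact the decomposition I want is visible already at the level of the underlying functors: $\CR_1(F\circ G)(X)$ is the complement of $F(G(0))$ in $F(G(X))$, after prolongation and totalization — and these operations preserve direct sum decompositions. So the real content is an identity about $F$ evaluated at $G(X)$ versus $G(0)$.

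The key step is to write $G(X) \cong G(0) \oplus \CR_1 G(X)$ and feed this into the cross effect of $F$. By the defining coproduct-splitting relation for $\CR_1 F$ applied to the decomposition $G(X) \cong G(0) \vee \CR_1 G(X)$ (using that in an abelian category the biproduct is simultaneously product and coproduct), we get
\[
\CR_1 F\bigl(G(0) \oplus \CR_1 G(X)\bigr) \cong \CR_1 F(G(0)) \oplus \CR_1 F(\CR_1 G(X)) \oplus \CR_2 F\bigl(G(0), \CR_1 G(X)\bigr).
\]
Now I combine this with the identification of the pieces. First, $\CR_1 F(G(0))$: since $G$ is applied at the basepoint, this is the reduced part of $(F\circ G)$ at $0$, which must be absorbed — indeed $F(G(0)) \cong F(\star) \oplus \CR_1 F(G(0))$, and subtracting $F(G(0))$ from $F(G(X))$ to form $\CR_1(F\circ G)(X)$ removes exactly the $F(\star)$ and $\CR_1 F(G(0))$ summands. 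Second, $\CR_1 F(\CR_1 G(X))$ is, by Lemma \ref{lem:CR-idempotency} (idempotency, $\CR_1 \cong \CR_1^2$) combined with the fact that $\CR_1 G$ is strictly reduced, exactly $(\CR_1 F \circ \CR_1 G)(X)$. Assembling these gives the claimed formula
\[
\CR_1(F\circ G)(X) \cong (\CR_1 F \circ \CR_1 G)(X) \oplus \CR_2 F\bigl(G(0), \CR_1 G(X)\bigr).
\]

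The step I expect to be the main obstacle is the bookkeeping needed to justify that these direct sum decompositions of functors are compatible with the Kleisli composition — that is, that prolonging $F$ along Dold-Kan and totalizing does not disturb the splitting $F(G(X)) \cong F(\star) \oplus \CR_1 F(G(0)) \oplus \cdots$. The cleanest route is to observe that each coproduct inclusion $G(0) \hookrightarrow G(X)$ (and its retraction, coming from the splitting $G(X)\cong G(0)\oplus \CR_1 G(X)$) is a split monomorphism, hence preserved by the prolongation $\Ch F$ (which is built from postcomposition with $F$ on simplicial objects, a structural operation), and split short exact sequences of chain complexes totalize to split short exact sequences; so the decomposition survives. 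One should also double-check the naturality in $X$, but every map in sight is induced by coproduct inclusions and fold maps, so naturality is automatic. With that in hand the argument is a direct application of the recursion in Definition \ref{def:CR} together with Lemma \ref{lem:CR-idempotency}.
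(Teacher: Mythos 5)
Your proposal is correct and follows essentially the same route as the paper: decompose $G(X)\cong G(0)\oplus \CR_1G(X)$, expand $F$ of that biproduct via the defining cross-effect splitting, and cancel $F(G(0))\cong F(0)\oplus\CR_1F(G(0))$ to isolate the claimed complement. (The appeal to Lemma \ref{lem:CR-idempotency} is unnecessary — $\CR_1F(\CR_1G(X))$ is $(\CR_1F\circ\CR_1G)(X)$ by definition — but this does not affect correctness.)
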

\begin{proof}
Note that
\begin{align*} \CR_1(F \circ G)(X) \oplus  F(G(0)) &\cong F(G(X)) \cong F(G(0) \oplus \CR_1G(X))\\ &\cong F(0) \oplus \CR_1F(G(0) \oplus \CR_1G(X)) \\ &\cong F(0) \oplus \CR_1F(G(0)) \oplus \CR_1F(\CR_1G(X)) \oplus \CR_2F(G(0), \CR_1G(X)).\end{align*}
On the other hand,  $F(G(0)) \cong F(0) \oplus \CR_1F(G(0))$.  Taking complements we conclude the claimed isomorphism.
\end{proof}

\begin{prop} \label{prop:D_1chainrule} If $F\colon\cB\kto \cA$ and $G\colon\cC\kto \cB$ are composable functors,  then there is a chain homotopy equivalence
\[ D_1(F\circ G)\simeq D_1F\circ D_1G \oplus D_1 \CR_2F(G(0), \CR_1G).\]
\end{prop}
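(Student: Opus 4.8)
The plan is to reduce the non-reduced chain rule of Proposition~\ref{prop:D_1chainrule} to the reduced case already established in Proposition~\ref{lem:5.7}, using the two lemmas that immediately precede the statement. The key observation is that by Lemma~\ref{lem:reduced-vs-unreduced}, $D_1(F\circ G)$ is isomorphic to $D_1\CR_1(F\circ G)$, so it suffices to understand $\CR_1(F\circ G)$ up to the manipulations that $D_1$ respects. Lemma~\ref{lem:CR_1chainrule} gives us precisely such an understanding: it exhibits a natural isomorphism
\[
\CR_1(F\circ G)(X) \cong (\CR_1F\circ \CR_1G)(X) \oplus \CR_2F(G(0), \CR_1G(X)).
\]

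First I would apply $D_1$ to this isomorphism. Since $D_1$ is exact by Proposition~\ref{p:Dnproperties}(i), it preserves finite direct sums up to isomorphism (this is also recorded in Lemma~\ref{lem:D1-linearity}(ii)), so we obtain
\[
D_1\CR_1(F\circ G) \cong D_1(\CR_1F\circ \CR_1G) \oplus D_1\CR_2F(G(0), \CR_1G).
\]
Combining with Lemma~\ref{lem:reduced-vs-unreduced} on the left-hand side gives $D_1(F\circ G) \cong D_1(\CR_1F\circ \CR_1G) \oplus D_1\CR_2F(G(0),\CR_1G)$. Next I would identify the first summand with $D_1F\circ D_1G$. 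The functor $\CR_1G$ is reduced (this is the standard fact, recorded after Definition~\ref{def:CR}, that $\CR_1G$ lands in $\Fun_*$), so Proposition~\ref{lem:5.7} applies to the composable pair $\CR_1F$ and $\CR_1G$ and yields a chain homotopy equivalence $D_1(\CR_1F\circ \CR_1G) \simeq D_1\CR_1F\circ D_1\CR_1G$. Two more applications of Lemma~\ref{lem:reduced-vs-unreduced} replace $D_1\CR_1F$ by $D_1F$ and $D_1\CR_1G$ by $D_1G$, giving $D_1(\CR_1F\circ\CR_1G)\simeq D_1F\circ D_1G$. Assembling the pieces produces the claimed equivalence
\[
D_1(F\circ G) \simeq D_1F\circ D_1G \oplus D_1\CR_2F(G(0),\CR_1G).
\]

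The one point requiring a little care—and the step I expect to be the main obstacle—is verifying that the chain homotopy equivalence from Proposition~\ref{lem:5.7}, when combined in a direct sum with the identity on the second summand $D_1\CR_2F(G(0),\CR_1G)$, is still a chain homotopy equivalence compatible with the Kleisli-category composition structure; this is where one must remember that composition in $\cat{AbCat}_{\Ch}$ respects pointwise chain homotopy equivalence (Lemma~\ref{lem:che-comp}), so that plugging the homotopy-equivalent functor $D_1G$ into the first slot and post-composing with $D_1F$ is a legitimate operation. Once that bookkeeping is in place, everything else is a formal consequence of exactness of $D_1$ and the two preceding lemmas, and no further computation with the explicit chain complexes of Definition~\ref{def:D1} is needed.
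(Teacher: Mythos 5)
Your proposal is correct and follows essentially the same route as the paper's own proof: reduce via Lemma \ref{lem:reduced-vs-unreduced} and Lemma \ref{lem:CR_1chainrule}, distribute $D_1$ over the direct sum by linearity, apply Proposition \ref{lem:5.7} to the reduced pair $\CR_1F$, $\CR_1G$, and convert back with Lemma \ref{lem:reduced-vs-unreduced}. The extra bookkeeping remark about Lemma \ref{lem:che-comp} is sound but not something the paper dwells on.
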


Note that when $G$ is reduced, $\CR_2F(G(0), G(X)) = \CR_2F(0, G(X))\simeq 0$, so the second term vanishes.  Thus Proposition \ref{prop:D_1chainrule} reduces to Proposition \ref{lem:5.7} in this case.

\begin{proof} Taking the linearization of $F\circ G$, or equivalently, the linearization of $\CR_1 (F\circ G)$, we have
\begin{align*} D_1(F\circ G)&\cong D_1\left( \CR_1(F\circ G)\right) \cong D_1\left((\CR_1F \circ \CR_1G) \oplus \CR_2F(G(0), \CR_1G)\right),\\
\intertext{by Lemmas \ref{lem:reduced-vs-unreduced} and \ref{lem:CR_1chainrule}. Using the linearity of $D_1$ established in Lemma \ref{lem:D1-linearity}, this is isomorphic  to}
&\cong D_1(\CR_1F \circ \CR_1G) \oplus D_1\CR_2F(G(0), \CR_1G).\\
\intertext{Since $\CR_1G$ is reduced, we can apply Proposition \ref{lem:5.7} to the first summand to obtain a chain homotopy equivalence between the last term and}
&\simeq D_1(\CR_1F) \circ D_1(\CR_1G) \oplus D_1\CR_2F(G(0), \CR_1G).\
\intertext{Applying the isomorphisms $D_1(\CR_1F)\cong D_1F$ and $D_1(\CR_1G)\cong D_1G$ of Lemma \ref{lem:reduced-vs-unreduced}, we obtain:}
&\cong D_1F \circ D_1G \oplus D_1\CR_2F(G(0),\CR_1G). \qedhere\end{align*}
\end{proof}

We finish this section with a few observations about linearizations of functors of more than one variable.  When $F\colon \cB^{ n} \kto \cA$ is a functor of $n$ variables, it is possible to linearize $F$ with respect to a subset of the variables.  

\begin{conv}\label{lem:multilinearization} Given a functor $F\colon \cB^{ n} \kto \cA$, let $F_i\colon \cB \kto \cA$ be the functor defined by 
\[ F_i(Y) \defeq F(X_1, \ldots , X_{i-1}, Y , X_{i+1}, \ldots , X_n)\]
where $X_1, \ldots , X_{i-1}, X_{i+1}, \ldots , X_n$ are fixed objects of $\cB$.  Write $D_1^iF(X_1, \ldots , X_n)$ for $D_1 F_i(X_i)$.
\end{conv}

When linearizing $F$ with respect to two or more variables, we have more than one option for how to proceed.  
\begin{itemize}
\item We can linearize $F$ with respect to two or more variables {\emph{simultaneously}}.  For $i<j$, the linearization of $F$ with respect to its $i$ and $j$ variables simultaneously is the linearization of the functor $F_{i\times j} \colon \cB \times\cB\kto \cA$ defined by
\[ F_{i\times j}(Y,Z) \defeq F(X_1, \ldots , X_{i-1}, Y , X_{i+1}, \ldots , X_{j-1}, Z , X_{j+1} , \ldots , X_n)\]
where the $X_k$'s are fixed objects in $\cB$.  We denote this simultaneous linearization by $D_1^{i\times j} F$.  Similarly, we write $D_1^{i_1\times \cdots \times i_k}F$ for the linearization with respect to $k$ variables simultaneously.
\item We can linearize $F$ with respect to two or more variables {\emph{sequentially}}.  For $i<j$, the linearization of $F$ with respect to its $i$ and $j$ variables sequentially is the linearization of the functor $D_1^i F$ with respect to the $j$th variables.  That is, 
\[ D_1^j (D_1^i F) \defeq D_1 (D_1^iF)_j\]
where $(D_1^iF)_j$ is the functor obtained from $D_1^iF$ by holding all but the $j$th variable constant, as in \ref{lem:multilinearization}.  The special case in which $F \colon \cB^{ n} \kto \cA$ has been linearized with respect to each of its $n$ variables sequentially is denoted $D_1^{(n)}F$.  That is, 
\[ D_1^{(n)}F \defeq D_1^n \cdots D_1^1 F.\]
\end{itemize}

Since the variables of a given multilinear functor are not always ordered, it is possible for confusion to arise when we are multilinearizing a functor either simultaneously or sequentially.  To disambiguate, we will use the name of the variable in this case.  For example, the notation
\[ D_1^{X\times Y} \CR_3 F(Z, Y, X)\]
indicates that we should simultaneously linearize the third cross effect of $F$ with respect to the variables $X$ and $Y$, which occur in the second and third slots of $\CR_3F$, respectively.  We could equally well use the notation $D_1^{2\times 3}\CR_3F$ to indicate the same multilinearization, and context will determine which one is more practical.  

The simultaneous linearization of functors of several variables is often a trivialization of the functor, as in the next lemmas.  
\begin{lem}\label{lem:Kristine's}  Suppose that $H\colon\cB^{n} \kto \cA$ is strictly multi-reduced.  Then for any $1\leq i< j \leq n$, 
$D_1^{i\times j} H(X_1, \ldots , X_n)$ is  contractible.  
\end{lem}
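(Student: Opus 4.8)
The plan is to recognize the simultaneous linearization $D_1^{i\times j}H$ as an ordinary (single-variable) linearization of the two-variable functor $F_{i\times j}\colon \cB\times\cB\kto\cA$ of the preceding itemized definition, and then to exploit the linearity of $D_1$ recorded in Lemma~\ref{lem:D1-linearity}. Throughout, $\cB\times\cB$ is to be regarded as a category with finite coproducts and a zero object, both computed componentwise, so that all of the machinery of Sections~\ref{sec:cross-effects}--\ref{sec:LinearApproximations} applies to functors out of $\cB\times\cB$; in particular $(Y,0)\oplus(0,Z)=(Y,Z)$.

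First I would record the only consequence of strict multi-reducedness that is needed: for all $Y,Z\in\cB$ we have
\[ F_{i\times j}(Y, 0) = H(X_1, \ldots, Y, \ldots, 0, \ldots, X_n) \cong 0 \qquad \text{and} \qquad F_{i\times j}(0, Z) \cong 0, \]
since in each case one of the $n$ slots of $H$ is the basepoint. In other words, the restrictions of $F_{i\times j}$ to the two coordinate axes of $\cB\times\cB$ are isomorphic to the zero functor, and hence so are their linearizations, because $D_1$ is exact and in particular carries the zero functor to the zero functor (Proposition~\ref{p:Dnproperties}).

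Next, I would apply Lemma~\ref{lem:D1-linearity}(i) to $F_{i\times j}$, viewed as a functor on $\cB\times\cB$, with the two arguments $(Y,0)$ and $(0,Z)$. Since $(Y,0)\oplus(0,Z)=(Y,Z)$, this yields a chain homotopy equivalence
\[ D_1(F_{i\times j})(Y,0) \oplus D_1(F_{i\times j})(0,Z) \simto D_1(F_{i\times j})(Y,Z). \]
By the previous paragraph the left-hand side is isomorphic to $0$, so $D_1^{i\times j}H(X_1,\ldots,X_n)=D_1(F_{i\times j})(X_i,X_j)$ is chain homotopy equivalent to $0$ for every choice of the objects $X_1,\ldots,X_n$; that is, it is contractible.

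I expect no serious obstacle. The one point requiring care is bookkeeping: confirming that ``simultaneous linearization in two variables'' is, by definition, the single-variable linearization of a functor out of the product category, that coproducts and the basepoint in $\cB\times\cB$ are componentwise, and hence that Lemma~\ref{lem:D1-linearity} applies verbatim there. Once this is set up, the proof is essentially a one-line application of the linearity of $D_1$.
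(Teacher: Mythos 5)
Your proof is correct and is essentially the paper's own argument: both apply the linearity of $D_1^{i\times j}$ from Lemma \ref{lem:D1-linearity}(i) to the decomposition $(X_i,X_j)\cong(X_i,0)\oplus(0,X_j)$ and observe that the two resulting summands vanish because $H$ is strictly multi-reduced. (One small point of bookkeeping: what is needed is that the \emph{restriction of the linearization} $D_1^{i\times j}H(X_i,0)$ vanishes, not the linearization of the restricted functor; this holds because every term $C_2^{\times k}F_{i\times j}(X_i,0)$ and $\CR_1 F_{i\times j}(X_i,0)$ in the defining complex is a direct summand of a value of $H$ with $0$ in the $j$th slot — the paper asserts this vanishing with the same brevity.)
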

\begin{proof} For simplicity of notation, we prove the lemma for a functor $H(X,Y)$ of two variables.   Since $D_1^{1\times 2}H(X,Y)$ is linear simultaneously in the variables $(X,Y)$,  the inclusion \[D_1^{1 \times 2}H((X_1,Y_1)\oplus (X_2, Y_2)) \to D_1^{1 \times 2}H(X_1,Y_1)\oplus D_1^{1 \times 2}H(X_2,Y_2)\] is a chain homotopy equivalence by Lemma \ref{lem:D1-linearity}.  Note that $(X,Y) \cong (X,0) \oplus (0,Y)$, so as a special case $D_1^{1 \times 2}H(X,Y) $ is chain homotopic to $D_1^{1 \times 2}H(X, 0) \oplus D_1^{1 \times 2}H(0, Y)$, which is  zero because $H$ is strictly multi-reduced.  Thus $D_1^{1\times 2}H(X,Y)$ must be  contractible as well.

\end{proof}

Lemma \ref{lem:Kristine's} is often applied in the following form:

\begin{cor}\label{cor:Kristine's} Suppose $F \colon \cB \kto \cA$ factors as
\[ 
\begin{tikzcd}
\cB \arrow[rr, squiggly,  "F"] \arrow[dr, "\Delta"'] & & \cA \\
& \cB^n \arrow[ur, squiggly, "H"'] 
\end{tikzcd}
\] where $\Delta \colon \cB \to \cB^{ n}$ is the diagonal functor and $H$ is strictly multi-reduced. Then $D_1F$  is contractible.
\end{cor}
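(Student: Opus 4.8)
The plan is to reduce the claim to the contractibility of the simultaneous linearization $D_1H$ of $H$ (regarded as a single-variable functor on the abelian category $\cB^n$), and then to prove that contractibility by the same argument already used for Lemma~\ref{lem:Kristine's}.

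First I would note that the diagonal $\Delta\colon\cB\to\cB^n$ is reduced, since $\Delta(0)=(0,\dots,0)$ is the zero object of $\cB^n$; here one should assume $n\geq 2$, as the statement fails when $n=1$ (take $F=\mathrm{id}$). Hence Proposition~\ref{lem:5.7} applies to $F=H\circ\Delta$ and supplies a chain homotopy equivalence $D_1F=D_1(H\circ\Delta)\simeq D_1H\circ D_1\Delta$. It therefore suffices to show $D_1H\simeq 0$: once that is known, Lemma~\ref{lem:che-comp}, which says that Kleisli composition respects chain homotopy equivalence of functors, gives $D_1H\circ D_1\Delta\simeq 0\circ D_1\Delta=0$, so $D_1F$ is pointwise contractible. (If one prefers more concrete bookkeeping, one can also observe that $\Delta$ is linear, so that $D_1\Delta\simeq\Delta$ and $D_1F(X)\simeq D_1H(X,\dots,X)$, but this is not needed.)

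Next I would prove $D_1H$ is contractible. In the language of Convention~\ref{lem:multilinearization}, $D_1H$ is the simultaneous linearization $D_1^{1\times\cdots\times n}H$. By Lemma~\ref{lem:D1-linearity} the functor $D_1H$ is linear in its single $\cB^n$-valued variable, so the splitting $(X_1,\dots,X_n)\cong\bigoplus_{i=1}^n(0,\dots,0,X_i,0,\dots,0)$ in $\cB^n$ yields $D_1H(X_1,\dots,X_n)\simeq\bigoplus_{i=1}^n D_1H(0,\dots,0,X_i,0,\dots,0)$. For fixed $i$, every iterated coproduct of the tuple $(0,\dots,X_i,\dots,0)$ with itself again has a zero entry in each slot other than the $i$th (this is where $n\geq 2$ enters), so $H$, being strictly multi-reduced, vanishes on all such tuples; since $D_1H$ is assembled entirely from cross effects of $H$ evaluated on such iterated coproducts, each summand $D_1H(0,\dots,X_i,\dots,0)$ is the zero complex. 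This is verbatim the proof of Lemma~\ref{lem:Kristine's}, run with $n$ variables instead of two. Hence $D_1H\simeq 0$, and the corollary follows.

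The main obstacle — really the only subtlety — is that Lemma~\ref{lem:Kristine's} is stated only for the simultaneous linearization in a \emph{pair} of variables, whereas the argument needs it for the simultaneous linearization in \emph{all} $n$ variables at once. Nothing in the proof of Lemma~\ref{lem:Kristine's} depends on the number being two, so the honest fix is to record the $n$-variable version explicitly (or, equivalently, to write out the splitting argument above). A secondary point to state carefully is the identification of ``$D_1$ applied to $H\colon\cB^n\to\Ch\cA$'' with $D_1^{1\times\cdots\times n}H$, together with the observation that strict multi-reducedness of $H$ forces $H$, and hence every cross effect entering $D_1H$, to vanish on any tuple with a zero coordinate.
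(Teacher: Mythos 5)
Your proof is correct and follows essentially the same route as the paper: apply Proposition~\ref{lem:5.7} to the reduced functor $\Delta$ to get $D_1F\simeq D_1H\circ D_1\Delta$, then kill $D_1H$ by the splitting argument of Lemma~\ref{lem:Kristine's}. Your observation that Lemma~\ref{lem:Kristine's} is literally stated only for a pair of variables, so that the all-variables simultaneous linearization needs the (identical) $n$-variable argument written out, is a fair point of care that the paper's proof glosses over, as is the implicit hypothesis $n\geq 2$.
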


\begin{proof}
The diagonal functor is strictly reduced so by Proposition \ref{lem:5.7}, $D_1 F $ is chain homotopic to $ D_1 H \circ D_1 \Delta$, but here $D_1 H$ is a simultaneous linearization to which Lemma \ref{lem:Kristine's} applies, implying that $D_1H$ is  contractible. Hence we have a chain contraction $D_1F \simeq 0$.  
\end{proof}

\begin{ex}  The simultaneous linearization of the cross-effects functor $\CR_nF(X_1, \ldots , X_n)$ is contractible by Lemma \ref{lem:Kristine's}.  However, the sequential multilinearization 
\[ D_1^{(n)} \CR_nF(X_1, \ldots , X_n)\] 
is generally  not contractible.  If $D_1^{(n)} \CR_nF(X_1, \ldots , X_n)$ were contractible, then its homotopy orbits $\left( D_1^{(n)} \CR_nF(X_1, \ldots , X_n)\right)_{h\Sigma_n}$  would be acyclic since  homotopy orbits preserve quasi-isomorphisms.  However,  $\left( D_1^{(n)} \CR_nF(X, \ldots , X)\right)_{h\Sigma_n}$ is quasi-isomorphic to  the $n$th layer of the Taylor tower of $F$ by \cite[Proposition 3.9]{JM:Deriving}.  The $n$th layer of the Taylor tower of $F$, $D_nF$, is the homotopy fiber of $q_n:P_nF \to P_{n-1}F$. In many cases  the $n$th  layers of Taylor towers  are known to be non-trivial (see, e.g., \cite[Section 6]{JM:Deriving} and \cite{JM:Sym}).

\end{ex}

When linearizing sequentially, it does not matter in which order we linearize the variables.

\begin{lem}\label{lem:D1-commutativity}  For any $F\colon\cB^{ n} \kto \cA$, there is an isomorphism
\[ D_1^i D_1^j F \cong D_1^j D_1^i F.\]
\end{lem}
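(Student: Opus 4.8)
The plan is to realize both $D_1^iD_1^jF$ and $D_1^jD_1^iF$ as totalizations of one and the same triple complex, obtained by performing the linearization construction of Definition~\ref{def:D1} in the $i$th and $j$th variables simultaneously. First I would discard the inert data: the $n-2$ variables other than the $i$th and $j$th remain fixed throughout both constructions, so it suffices to treat a functor $F$ of two variables and produce an isomorphism $D_1^1D_1^2F\cong D_1^2D_1^1F$.

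Next I would unwind the definitions. By Definition~\ref{def:D1}, $D_1^2F$ is the totalization of the double complex whose $k$th column for $k\geq 1$ is the functor obtained by applying $C_2^{\times k}$ in the second variable, whose $0$th column is obtained by applying $\CR_1$ in the second variable, and whose differentials are the alternating sums of counit maps prescribed there (with $\rho_1$ of Remark~\ref{rmk:rho} in degree one). Applying $D_1^1$ then introduces a further grading, built in exactly the same way but in the first variable, so that $D_1^1D_1^2F$ is the totalization of a triple complex whose axes are the internal chain degree of $F$ and the two linearization degrees, and whose $(k_1,k_2)$-term is the appropriate iterate of $C_2$ (or $\CR_1$, when $k_1=0$) in the first variable applied to the appropriate iterate of $C_2$ (or $\CR_1$, when $k_2=0$) in the second variable applied to $F$. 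Likewise $D_1^2D_1^1F$ is the totalization of the triple complex obtained by exchanging the roles of the two linearization gradings. Now the constructions $C_2$ and $\CR_1$ performed in the first variable commute, up to natural isomorphism, with those performed in the second variable --- this is immediate from the recursive Definition~\ref{def:CR} and the naturality of the defining direct-sum splittings, since the two families of operations act on disjoint coordinates and on independent coproduct decompositions --- and these natural isomorphisms are compatible with the (natural) maps $\epsilon$ and $\rho_1$ serving as differentials. Hence the two triple complexes are isomorphic via the ``flip'' that transposes the two linearization axes, and since iterated totalization is associative and, up to the standard sign-permuting isomorphism, symmetric in its axes, this yields the desired isomorphism $D_1^1D_1^2F\cong D_1^2D_1^1F$ (indeed, one natural in $F$).

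Conceptually, the same argument can be packaged as follows: currying the second variable identifies our functor category with functors from $\cB$ into the abelian category $\Fun(\cB,\Ch\cA)$, under which $D_1^1$ becomes the linearization endofunctor on the outer variable and $D_1^2$ becomes postcomposition with the linearization endofunctor $D_1$ on $\Fun(\cB,\Ch\cA)$, which is exact by Proposition~\ref{p:Dnproperties}; the claim then follows from the general principle that $D_1(T\circ F)\cong T\circ D_1F$ for $T$ exact, since an exact (hence additive) functor preserves the split short exact sequences defining the cross effects and also preserves finite biproducts and kernels, so it commutes with each term of the $D_1$-complex and with totalization. I expect the only genuine work to be the bookkeeping behind the commutation of the two families of cross-effect constructions and its compatibility with the differentials of Definition~\ref{def:D1}; the passage from that commutation to the statement is purely formal.
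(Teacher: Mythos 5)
Your argument is correct, but it takes a genuinely different route from the paper's. The paper's own proof identifies the same crux you do --- the commutation $\CR_n(\CR_m F_i)_j \cong \CR_m(\CR_n F_j)_i$ of cross effects taken in disjoint variables --- but calls that verification ``somewhat lengthy, though straightforward'' and declines to carry it out, instead arguing abstractly that $P_1^iP_1^j(\CR_1^i\CR_1^jF)$ and $P_1^jP_1^i(\CR_1^j\CR_1^iF)$ are both reduced and linear in the $i$th and $j$th variables and satisfy the same universal property. You commit to the direct computation: commute the $\CR_1$'s and $C_2$'s acting on disjoint coordinates, check compatibility with the counit differentials $\epsilon$ and $\rho_1$, and transpose the two linearization axes of the resulting triple complex. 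What your route buys is exactly the conclusion as stated --- a genuine isomorphism, natural in $F$ --- whereas the universal property of Proposition \ref{prop:poly-facts}(iii) holds only up to chain homotopy equivalence, so the paper's argument, read literally, delivers only $D_1^iD_1^jF\simeq D_1^jD_1^iF$. The cost is the deferred bookkeeping (which is precisely the ``somewhat lengthy'' part the paper avoids); calling it ``immediate'' undersells it, but your closing reformulation via exactness of $D_1$ (Proposition \ref{p:Dnproperties}) --- an exact, hence additive, functor preserves the split exact sequences defining the cross effects, preserves biproducts, and commutes with totalization --- is a clean way to discharge it and is arguably tidier than either argument the paper sketches. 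The one point to watch is the sign-permuting isomorphism when exchanging totalization axes; since only an isomorphism rather than an equality is claimed, this is harmless, but it should be said.
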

\begin{proof}
By using the definition of the cross effects, it is possible to show that $\CR_n (\CR_m F_i)_j \cong \CR_m (\CR_n F_j)_i$, where $F_i$ denotes the functor defined in Convention \ref{lem:multilinearization}.  This is a somewhat lengthy, though straightforward, argument.  Instead we simply note that both functors are reduced and linear in the $i$th and $j$th variables, respectively and indeed that $P_1^i P_1^j (\CR_1^i \CR_1^j F)$ and $P_1^j P_1^i (\CR_1^j \CR_1^i F)$ satisfy the same universal property. 
\end{proof}

Finally, it will be useful to record how linearization behaves with respect to products and projections.  The remaining results in this section describe this behavior.  

\begin{lem}\label{lem:product}
Let $\pi \colon \cB \times \cB \to \cB$ be the functor $\pi:(X,Y) \mapsto X$. Then $D_1^{X \times Y} \pi \cong \pi$.
\end{lem}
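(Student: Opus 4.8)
The plan is to compute $D_1^{X\times Y}\pi$ directly from Definition \ref{def:D1}, using the observation that the simultaneous linearization $D_1^{X\times Y}$ is simply the linearization functor $D_1$ applied with the product category $\cB\times\cB$ playing the role of the domain; so the relevant cross effects are those taken with respect to the coproduct of $\cB\times\cB$, not any partial cross effects of $\pi$ in one of its variables.

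First I would check that $\pi$ is strictly reduced: its value at the basepoint is $\pi(0,0)=0$, so by Definition \ref{def:CR} the canonical inclusion $\CR_1\pi\hookrightarrow\pi$ is an isomorphism. Next I would compute $\CR_2\pi$. The coproduct in $\cB\times\cB$ is formed coordinatewise and, this being an abelian category, agrees with the biproduct; hence
\[\CR_1\pi\bigl((X_1,Y_1)\vee(X_2,Y_2)\bigr)=\pi(X_1\oplus X_2,\,Y_1\oplus Y_2)=X_1\oplus X_2\cong\CR_1\pi(X_1,Y_1)\oplus\CR_1\pi(X_2,Y_2),\]
so the direct-sum complement that defines $\CR_2\pi$ in Definition \ref{def:CR} vanishes. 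Consequently $C_2\pi(X,Y)=\CR_2\pi\bigl((X,Y),(X,Y)\bigr)=0$, and since $C_2$ preserves the zero functor we get $C_2^{\times k}\pi=0$ for every $k\geq 1$. (As $\pi$ is a morphism of the Kleisli category via $\fun{deg}_0$, all of these functors are concentrated in chain degree zero, cross effects being computed degreewise.)

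Plugging this into Definition \ref{def:D1}, the chain complex of chain complexes $(D_1\pi)_*$ has $(D_1\pi)_0=\CR_1\pi$ and $(D_1\pi)_k=C_2^{\times k}\pi=0$ for all $k\geq 1$, so its totalization is just $\CR_1\pi\cong\pi$; hence $D_1^{X\times Y}\pi\cong\pi$. I do not anticipate a real obstacle here — the only subtlety is the one flagged at the outset, namely keeping track of the fact that $D_1^{X\times Y}$ means linearizing $\pi$ as a functor out of the single abelian category $\cB\times\cB$, rather than a partial linearization. One could instead argue abstractly that $\pi$ is already linear in the sense of Definition \ref{defn:linear} and invoke that $D_1$ fixes linear functors, but the computation above has the advantage of producing an honest natural isomorphism rather than merely a chain homotopy equivalence, which is what the statement asserts.
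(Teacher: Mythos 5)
Your proposal is correct and is essentially the paper's argument: the paper's proof is the one-line observation that $\pi$ is strictly linear and hence isomorphic to its linearization, and your explicit computation (strict reducedness, vanishing of $\CR_2\pi$, hence collapse of the complex in Definition \ref{def:D1} to $\CR_1\pi\cong\pi$) is precisely the unpacking of that claim. You also correctly identify the one subtle point, namely that $D_1^{X\times Y}$ means linearizing $\pi$ as a functor out of the single abelian category $\cB\times\cB$.
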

\begin{proof}
The functor $\pi \colon \cB \times\cB \to \cB$ is strictly linear so is isomorphic to its  linearization.
\end{proof}

\begin{cor}\label{cor:product} For any $F \colon \cB \kto\cA$, let $\tilde{F}\colon \cB\times \cB \kto \cA$ be the functor defined by 
\[ \tilde{F}(X,Y)\defeq F(X).\]
Then
$D_1^{X \times Y} \tilde{F}(X) \cong D_1^XF(X)$.
\end{cor}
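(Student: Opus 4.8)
The plan is to prove the sharper statement that the \emph{isomorphism} holds by showing that precomposition with the projection commutes, on the nose, with every construction used to assemble $D_1$. Write $\pi\colon\cB\times\cB\to\cB$ for the functor of Lemma \ref{lem:product} and let $\pi^*\colon\Fun(\cB,\cA)\to\Fun(\cB\times\cB,\cA)$ denote precomposition with $\pi$, so that $\tilde F=\pi^*F$. The first step is to observe that, since $\pi$ preserves finite coproducts and sends the basepoint of $\cB\times\cB$ to the basepoint of $\cB$, an easy induction through the recursive Definition \ref{def:CR} yields natural isomorphisms $\CR_n(\pi^*G)\bigl((X_1,Y_1),\ldots,(X_n,Y_n)\bigr)\cong\CR_nG(X_1,\ldots,X_n)$ for every $G$. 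Evaluating along the diagonal gives a natural isomorphism $C_n\circ\pi^*\cong\pi^*\circ C_n$ of comonad constructions, and iterating this gives $C_n^{\times k}\circ\pi^*\cong\pi^*\circ C_n^{\times k}$ for all $k$; in particular each term of the chain complex of chain complexes defining $D_1(\pi^*F)$ is identified with $\pi^*$ applied to the corresponding term of the complex defining $D_1F$.

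The second step is to check that these identifications intertwine the differentials. By Definition \ref{def:D1} the differentials of $D_1F$ are the alternating sums $\sum_i(-1)^iC_2^{\times i}\epsilon$ together with the map $\rho_1$ of Remark \ref{rmk:rho}, and by Remarks \ref{rmk:counit} and \ref{rmk:rho} both $\epsilon$ and $\rho_1$ are assembled out of direct-summand inclusions and images of fold maps. Since $\pi$ preserves direct sums and fold maps, applying $\pi^*$ to $\epsilon_G$ (respectively to $\rho_1$ for $G$) agrees, under the isomorphisms of the first step, with $\epsilon_{\pi^*G}$ (respectively with $\rho_1$ for $\pi^*G$). Hence the explicit complex $(D_1(\pi^*F)_*,\partial_*)$ is isomorphic, term by term and compatibly with all differentials, to $\pi^*$ applied to $(D_1F_*,\partial_*)$. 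Applying $\Tot$ we obtain $D_1\tilde F=D_1(\pi^*F)\cong\pi^*(D_1F)=D_1F\circ\pi$, which by construction does not depend on $Y$ and carries $(X,Y)$ to $D_1F(X)=D_1^XF(X)$, as claimed.

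If one is content with a chain homotopy equivalence rather than an isomorphism, there is a shorter route: because $\pi$ is strictly linear, hence strictly reduced, Proposition \ref{lem:5.7} gives $D_1(F\circ\pi)\simeq D_1F\circ D_1\pi$, and Lemma \ref{lem:product} identifies $D_1\pi$ with $\pi$. I expect the only point requiring any care in the isomorphism-level argument to be the second step --- verifying that the coherence isomorphisms $C_n\circ\pi^*\cong\pi^*\circ C_n$ really do carry $\epsilon_{\pi^*G}$ and the map $\rho_1$ for $\pi^*G$ to $\pi^*$ of the corresponding maps for $G$ --- but this is forced by the naturality of the cross-effect construction (Definition \ref{def:CR} is functorial) together with the fact that $\pi$ preserves precisely the structure (coproducts, basepoint, fold maps) out of which $\epsilon$ and $\rho_1$ are built, so it should be a routine verification rather than a genuine obstacle.
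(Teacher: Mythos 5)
Your proposal is correct and follows essentially the same route as the paper: the paper's proof likewise computes directly that $\CR_1\tilde{F}(X,Y)\cong\CR_1F(X)$ and $\CR_2\tilde{F}((X_1,Y_1),(X_2,Y_2))\cong\CR_2F(X_1,X_2)$ and then concludes by the definition of $D_1$, and the remark following the corollary makes exactly your observation that the detour through Proposition \ref{lem:5.7} and Lemma \ref{lem:product} would yield only a chain homotopy equivalence. Your second step, checking that the identifications intertwine $\epsilon$ and $\rho_1$, is a point the paper leaves implicit, and your verification of it is sound.
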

\begin{proof}
By direct computation, 
\[ F(X) \cong \tilde{F}(X,Y) \cong \tilde{F}(0,0) \oplus \CR_1\tilde{F}(X,Y) \cong F(0) \oplus \CR_1\tilde{F}(X,Y),\]
from which we conclude that  $\CR_1\tilde{F}(X,Y) \cong \CR_1F(X)$. Similarly,
\begin{align*} \CR_1\tilde{F}((X_1, Y_1)\oplus (X_2, Y_2)) & \cong \CR_1\tilde{F}(X_1, Y_1) \oplus \CR_1\tilde{F}(X_2, Y_2) \oplus \CR_2 \tilde{F}((X_1, Y_1),(X_2, Y_2))\\
&= \CR_1F(X_1)\oplus \CR_1F(X_2) \oplus \CR_2 \tilde{F}((X_1, Y_1),(X_2, Y_2)).
\end{align*}
Since $\CR_1\tilde{F}((X_1, Y_1)\oplus (X_2, Y_2))\cong \CR_1F(X_1\oplus X_2)$, we conclude from the definition of $\CR_2F$ that $\CR_2F(X_1, X_2)\cong \CR_2 \tilde{F}((X_1, Y_1),(X_2, Y_2))$.  By definition of $D_1$, we now conclude that $D_1\tilde{F}(X,Y) \cong D_1 F(X)$.
\end{proof}

Note that $\tilde{F}$ is a composition of the functor $F$ and the reduced functor $\pi$;  thus it is possible to obtain Corollary \ref{cor:product} as an immediate consequence of Lemma \ref{lem:product} and Proposition \ref{lem:5.7}.  However, this result would only allow us to conclude that $D_1\tilde{F}$ and $D_1F$ are chain homotopy equivalent.  The direct proof  provides a stronger result.

\begin{lem}\label{lem:other-product}
For a functor $\langle F,G \rangle \colon \cC \kto \cA \times \cB$ with components $F\colon \cC \kto \cA$ and $G \colon \cC \kto \cB$, 
\[ D_1\langle F, G \rangle \cong \langle D_1F, D_1G \rangle.\]
\end{lem}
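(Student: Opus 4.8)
The plan is to exploit the fact that the product abelian category $\cA\times\cB$ has all of its exactness-theoretic structure computed coordinatewise, so that every construction entering the definition of $D_1$ is also computed coordinatewise, whence the isomorphism is obtained by simply ``splitting off'' the two factors.

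First I would record the elementary observation that $\cA\times\cB$ is an abelian category whose zero object, biproducts, kernels, cokernels, and split short exact sequences are all formed coordinatewise; consequently $\Ch(\cA\times\cB)$ is canonically identified with $\Ch\cA\times\Ch\cB$, and under the Kleisli conventions of Section~\ref{sec:kleisli} a functor $\cC\kto\cA\times\cB$ is literally a pair consisting of a functor $\cC\kto\cA$ and a functor $\cC\kto\cB$. In particular the functor under consideration is $\langle F,G\rangle$ with $F\colon\cC\to\Ch\cA$ and $G\colon\cC\to\Ch\cB$.

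Next I would verify, by induction on $n$ using Definition~\ref{def:CR}, that $\CR_n\langle F,G\rangle\cong\langle\CR_nF,\CR_nG\rangle$. Each step of that recursion defines a cross effect as the direct-sum complement of a canonical split inclusion; since split short exact sequences in $\cA\times\cB$ are exactly pairs of split short exact sequences, these complements are formed coordinatewise, and the identification is compatible with the split inclusions themselves. The same reasoning applies to the comonad $C_2=\CR_2\circ\Delta^*$ of Corollary~\ref{cor:comonad}, to its counit $\epsilon$, and to the natural transformation $\rho_1$ of Remark~\ref{rmk:rho}: all of these are assembled from coproduct inclusions, fold maps, and the canonical split inclusions of cross effects, none of which ``see'' the two coordinates. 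Hence for every $k\ge 1$ there is an isomorphism $C_2^{\times k}\langle F,G\rangle\cong\langle C_2^{\times k}F,\,C_2^{\times k}G\rangle$, and these isomorphisms intertwine the differentials $\partial_k=\sum_i(-1)^iC_2^{\times i}\epsilon$ (and $\partial_1=\rho_1$) appearing in Definition~\ref{def:D1}.

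Putting this together, the bicomplex of chain complexes whose totalization is $D_1\langle F,G\rangle$ is, coordinatewise, the pair of bicomplexes whose totalizations are $D_1F$ and $D_1G$. Since totalization of a bicomplex valued in $\cA\times\cB$ is again computed coordinatewise --- the total differential being an alternating sum of coordinatewise maps --- we obtain $D_1\langle F,G\rangle\cong\langle D_1F,D_1G\rangle$, as claimed. There is no serious obstacle here; the only point that requires care is confirming that the cross-effect complements and the structure maps $\epsilon$ and $\rho_1$ are genuinely coordinatewise, and this is immediate once one notes that every map used to build them is canonical and exists separately in each factor. (Alternatively, one could deduce a chain homotopy equivalence from Lemma~\ref{lem:product} and Proposition~\ref{lem:5.7} applied to the two projections, but, as with Corollary~\ref{cor:product}, the direct argument yields the stronger statement of an isomorphism.)
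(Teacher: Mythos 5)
Your proposal is correct and follows the same route as the paper: the paper's own proof simply observes that direct sums in $\cA\times\cB$ are formed coordinatewise, so that cross effects (and hence the whole comonad resolution defining $D_1$) are computed coordinatewise. Your write-up is just a fully expanded version of that two-sentence argument.
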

\begin{proof}
In the product abelian category $\cA \times \cB$, direct sums are defined coordinate-wise:
\[ (A_1,B_1) \oplus (A_2,B_2) \cong (A_1 \oplus A_2, B_1 \oplus B_2).\] 
This implies that the cross effects are defined coordinate-wise, and the result follows.
\end{proof}

%%%%%%%%%%%%%%%%%%%%%%%%%%%%%%%%%%%%%%%%%%%%%%%%%%%%
\section{The first directional derivative}\label{sec:first-directional-derivative}
%%%%%%%%%%%%%%%%%%%%%%%%%%%%%%%%%%%%%%%%%%%%%%%%%%%%%%%%%

Applying the results of the previous section, we will now define the directional derivative for a functor  in analogy with the directional derivative in function calculus. We will show that our definition, which is new, recovers the directional derivatives defined in \cite{JM:Deriving}. We review, generalize, and prove new properties of this functor, in analogy with the directional derivative from functional calculus except that equations  are replaced with chain homotopy equivalences.  These results can be summarized in a single theorem:  the directional derivative satisfies the axioms for a categorical derivative in the sense of \cite{BCS:Cartesian}, equipping  $\Ho\cat{AbCat}_{\Ch}$ with the structure of a cartesian differential category.  In particular, we show that the directional derivative satisfies the chain rule.
The properties which we need in order to establish that $\Ho\cat{AbCat}$ is a differential category are precisely the properties we will need in order to obtain a higher order chain rule analogous to \cite[Theorem 3]{HMY}, which we accomplish in Section \ref{sec:chain-rule}.

The directional derivative of a differentiable function $f \colon \RR^n \to \RR^m$ (or more generally, a function  between Banach spaces) at the point $x\in \RR^n$ in the direction $v\in \RR^n$ measures how the value of $f$ at $x$ changes while translating along the infinitesimal vector from $x$ in the direction $v$. One way to make this idea precise is to define $\nabla f(v;x)$ to be the derivative of the composite function, substituting the affine linear function $t \mapsto x + tv$ into the argument of $f$, evaluated at $t=0$:
\[\nabla f(v;x) = \frac{\partial}{\partial t} f(x+tv)\big\vert_{t=0}.\]
This construction motivates the following definition for functors of abelian categories.  

\begin{defn}\label{defn:dir-derivative} For $F \colon \cB\kto\cA$ and $X,V \in \cB$, define a functor $\nabla F\colon \cB\times \cB \kto \cA$ by
\[ \nabla F(V;X) \defeq D_1 F(X \oplus -)(V).\]
\end{defn}

Alternatively, \cite{JM:Deriving} define the directional derivative for a functor $F \colon \cB \kto\cA$ from a category with finite coproducts and basepoint to an abelian category
by using a sequence of analogies with the formula
\[ \nabla f(v;x)= \lim_{t\to 0} \frac{1}{t}\left[ f(x+tv)-f(x)\right]\] from the calculus of functions of a real variable. Let $X\in \cB$ be the `point', corresponding to the point $x\in \RR^n$ at which we will evaluate our derivative.  Let $V\in \cB$ denote the `direction'.  Sums in $\cB$ are coproducts, which we will denote $\oplus$.  The difference operation in the expression $f(x+tv)-f(x)$ becomes a kernel operation in our analogy, 
\[ \ker\left( F(X\oplus V) \to F(X) \right)\]
where the map $F(X\oplus V)\to F(X)$ in $\cA$ is obtained by applying $F$ to the map $X\oplus V\to X$ in $\cB$ which sends the $V$ component to the basepoint.  The linearization $D_1$ (in the variable $V$) produces the degree 1, reduced component of the functor, and hence corresponds to taking the limit as $t$ goes to $0$ in the expression for the directional derivative of $f$.

\begin{defn}\label{defn:dir-derivative-JM} Let $F\colon\cB\kto \cA$ be a functor of abelian categories, and let $X$ and $V$ be objects in $\cB$.  The directional derivative of $F$ at $X$ in the direction $V$ is the bifunctor defined by
\[ \nabla F(V; X) \defeq  D_1^V\left( \ker (F(X\oplus V)\to F(X) )\right).\]
\end{defn}

The following lemma proves that our two definitions agree.
\begin{lem}\label{lem:directional} For a functor $F \colon \cB\kto\cA$ between abelian categories and any pair of objects $X,V \in \cB$,  there is an isomorphism
\[ D_1^V(\ker(F(X \oplus V) \to F(X)))\cong  D_1F(V) \oplus D_1^V\CR_2F(X,V)\]
and 
a chain homotopy equivalence
\[ D_1 F(X \oplus -)(V) \simeq D_1F(V) \oplus D_1^V\CR_2F(X,V).\] 

\end{lem}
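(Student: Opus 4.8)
The plan is to prove both formulas at once by analyzing the functor $F(X\oplus V)$ as a functor of the variable $V$, with $X$ held fixed. First I would observe that, as a functor of $V$, the assignment $V\mapsto F(X\oplus V)$ decomposes into its value at $V=0$ — which is $F(X)$ — plus its reduction. Concretely, writing $F_X(V)\defeq F(X\oplus V)$, we have the canonical splitting $F_X(V)\cong F_X(0)\oplus\CR_1 F_X(V)\cong F(X)\oplus \CR_1 F_X(V)$, and the map $F(X\oplus V)\to F(X)$ induced by $V\to 0$ is precisely the projection onto the first summand. Hence its kernel is exactly $\CR_1 F_X(V)$, the reduction of $F_X$. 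This immediately identifies $D_1^V(\ker(F(X\oplus V)\to F(X)))\cong D_1^V(\CR_1 F_X)$, and by Lemma~\ref{lem:reduced-vs-unreduced} (that $D_1\CR_1 H\cong D_1 H$) this equals $D_1^V F_X(V) = D_1 F(X\oplus -)(V)$. So the isomorphism statement and the chain homotopy equivalence statement are two instances of the single claim about $D_1^V F(X\oplus -)(V)$; the first uses the honest isomorphism $D_1\CR_1 H\cong D_1 H$ and the second follows a fortiori.

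Next I would compute the first cross effect $\CR_1 F_X(V)$ in terms of cross effects of $F$. Since $X\oplus V$ is already a coproduct, applying the defining recursion of Definition~\ref{def:CR} for $\CR_1 F$ at the coproduct $X\vee V$ gives
\[ \CR_1 F(X\oplus V)\cong \CR_1 F(X)\oplus \CR_1 F(V)\oplus \CR_2 F(X,V).\]
Subtracting off the $V=0$ part, i.e. taking the reduction in the variable $V$, kills the constant summand $\CR_1 F(X)$ (which does not depend on $V$, hence is annihilated by $\CR_1^V$) and leaves
\[ \CR_1^V\big(\CR_1 F(X\oplus V)\big)\cong \CR_1 F(V)\oplus \CR_2 F(X,V)\]
as functors of $V$ — here one must check that $\CR_1^V$ of a constant functor is zero (immediate from the definition, since $\CR_1 G(\star)\cong 0$ for the first cross effect of any functor $G$, and a constant functor has $G(V)\cong G(\star)$) and that $\CR_1^V$ commutes with the direct sum, which holds because $\CR_1$ is additive (Proposition~\ref{p:cr is exact}).

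Finally I would apply $D_1^V$, using that $D_1$ is additive (Lemma~\ref{lem:D1-linearity}(ii), equivalently Proposition~\ref{p:Dnproperties}) to split the direct sum, obtaining
\[ D_1^V F(X\oplus -)(V)\cong D_1^V\big(\CR_1 F(V)\oplus \CR_2 F(X,V)\big)\cong D_1^V\CR_1 F(V)\oplus D_1^V\CR_2 F(X,V)\cong D_1 F(V)\oplus D_1^V\CR_2 F(X,V),\]
where the last step again invokes Lemma~\ref{lem:reduced-vs-unreduced}. This gives the first (isomorphism) statement; combining with the identification of the kernel above yields the second as a chain homotopy equivalence (in fact an isomorphism, though only the weaker conclusion is claimed). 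The main obstacle I anticipate is purely bookkeeping: keeping straight which variable each instance of $\CR_1$ or $D_1$ acts on — in particular that $D_1^V$ and $\CR_1^V$ operate on the $V$-slot while $X$ is a passive parameter — and verifying that the relevant additive operations genuinely commute with holding a variable fixed, which is where Convention~\ref{lem:multilinearization} and the exactness results of Section~\ref{sec:cross-effects} do the real work. No deep input is needed beyond the already-established linearity and cross-effect identities.
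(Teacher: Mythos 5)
Your proof is correct. For the first formula --- identifying $\ker(F(X\oplus V)\to F(X))$ with $\CR_1F(V)\oplus\CR_2F(X,V)$ via the cross-effect decomposition of $F(X\oplus V)$ and then applying additivity of $D_1$ together with $D_1\CR_1F\cong D_1F$ --- you follow essentially the same route as the paper. For the second formula the two arguments genuinely diverge. The paper treats $D_1F(X\oplus-)$ as the linearization of the Kleisli composite $F\circ(X\oplus-)$ and invokes the chain rule of Proposition \ref{prop:D_1chainrule} (whose proof ultimately rests on Proposition \ref{lem:5.7} and the bicomplex machinery of the appendices), which is why it records only a chain homotopy equivalence. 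You instead observe that, since $X\oplus-$ is valued in complexes concentrated in degree zero, the Kleisli composite is naturally isomorphic to the honest one-variable functor $F_X\colon V\mapsto F(X\oplus V)$, whose reduction $\CR_1F_X$ is exactly the kernel appearing in the first formula; Lemma \ref{lem:reduced-vs-unreduced} then gives $D_1F(X\oplus-)(V)\cong D_1^V(\ker(F(X\oplus V)\to F(X)))$ outright, so the second formula follows from the first --- and in fact as an isomorphism, strictly stronger than the stated chain homotopy equivalence. Your route is more elementary and self-contained; what the paper's route buys is a consistency check of Definition \ref{defn:dir-derivative} against the $D_1$ chain rule, the tool used repeatedly afterwards. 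The one point worth making explicit in your write-up is the unit law of the Kleisli composition (prolonging $F$ and totalizing, when the inner functor lands in degree zero, recovers ordinary composition up to natural isomorphism), which is what justifies reading $D_1F(X\oplus-)$ as $D_1^VF_X$.
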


\begin{proof} 

For the first isomorphism, recall that by the definition of the cross effects, \[F(X\oplus V)\cong F(0)\oplus \CR_1F(X)\oplus \CR_1F(V)\oplus \CR_2F(X, V) \cong F(X)\oplus \CR_1F(V)\oplus \CR_2F(X, V).\]  The direct summand inclusion $F(X) \to F(X\oplus V)$ is a section to the projection $F(X\oplus V)\to F(X)$, so the kernel $\ker\left( F(X\oplus V)\to F(X) \right)$ is isomorphic to $\CR_1F(V)\oplus \CR_2F(X, V)$. By Lemma \ref{lem:D1-linearity}(ii), $D_1^V$ is linear, so
\begin{align*}  D_1^V( \ker (F(X\oplus V)\to F(X) )) &\cong D_1^V(\CR_1F(V)\oplus \CR_2F(X, V)) \\ &\cong D_1^V\CR_1F(V) \oplus D_1^V\CR_2F(X,V) \\&\cong D_1^VF(V) \oplus D_1^V\CR_2F(X,V),\end{align*} with the last simplification by Lemma \ref{lem:reduced-vs-unreduced}.

For the chain homotopy equivalence, apply Proposition \ref{prop:D_1chainrule} to $F$ and $X \oplus - \colon \cB \to \cB$ to obtain a chain homotopy equivalence
\begin{align*} D_1 F(X \oplus -)(V) &\simeq (D_1F)\circ(D_1(X \oplus -))(V) \oplus D_1^V\CR_2F(X, \CR_1(X \oplus V)) \\
\intertext{We simplify the cross effect term on the right using Lemma 
\ref{lem:reduced-vs-unreduced}:}
 &\simeq (D_1F)\circ(D_1(X \oplus -))(V) \oplus D_1^2\CR_2F(X, D_1^V\CR_1(X \oplus V)) \\
&\cong (D_1F)\circ(D_1(X \oplus -))(V) \oplus D_1^2\CR_2F(X, D_1^V(X \oplus V)). \\
 \intertext{Example \ref{ex:linearization-of-adding-constant} computes that  $D_1 (X \oplus -)$ is the identity functor, so this simplifies to:} 
 &\cong D_1F(V) \oplus D_1^V\CR_2F(X, V). \qedhere
\end{align*}
\end{proof}

In the proofs that follow it will be useful to have both Definitions \ref{defn:dir-derivative} and \ref{defn:dir-derivative-JM} and the third description of Lemma \ref{lem:directional} to characterize  $\nabla F(V;X)$.  One important observation about the directional derivative is that it preserves chain homotopy equivalences between functors.

\begin{obs}If $F$ and $G$ are pointwise chain homotopic, then $\nabla F \simeq \nabla G$. This observation follows immediately from the fact that $\nabla F (V; X) = D_1F(X\oplus - )(V)$, since $D_1F \simeq D_1G$ whenever $F\simeq G$.  Thus, $\nabla$ is  an operation on chain homotopy equivalence classes of functors from $\cB$ to $\cA$.  \end{obs}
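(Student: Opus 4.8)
The plan is to deduce this directly from the already-established homotopical properties of the linearization functor $D_1$ together with the formula $\nabla F(V;X) = D_1 F(X \oplus -)(V)$ of Definition \ref{defn:dir-derivative}. Since the homotopy relation on the hom-sets of $\cat{AbCat}_{\Ch}$ is \emph{pointwise} chain homotopy equivalence, it suffices to fix an object $X \in \cB$ and to compare the single-variable functors $D_1 F(X \oplus -)$ and $D_1 G(X \oplus -)$ obtained by holding $X$ inert.

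First I would observe that precomposition with the fixed functor $X \oplus - \colon \cB \to \cB$ preserves pointwise chain homotopy equivalence: if $F \simeq G$ pointwise, then for every $V \in \cB$ the chain complexes $F(X \oplus V)$ and $G(X \oplus V)$ are chain homotopy equivalent in $\Ch\cA$, so the functors $F(X \oplus -)$ and $G(X \oplus -)$ are pointwise chain homotopy equivalent (this is the trivial half of Lemma \ref{lem:che-comp}). Next I would invoke Proposition \ref{p:Dnproperties}(ii), which asserts that $D_1$ preserves chain homotopy equivalences, to conclude $D_1 F(X \oplus -) \simeq D_1 G(X \oplus -)$. Evaluating at an arbitrary $V \in \cB$ gives $\nabla F(V;X) \simeq \nabla G(V;X)$, and since $X$ and $V$ were arbitrary this is precisely the assertion that $\nabla F \simeq \nabla G$ as functors $\cB \times \cB \kto \cA$. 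The closing remark — that $\nabla$ thereby descends to chain homotopy equivalence classes — is an immediate restatement.

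There is essentially no obstacle here: the substantive content has already been packaged into Proposition \ref{p:Dnproperties}. The only point worth flagging is that working with pointwise rather than natural chain homotopy equivalence is exactly what keeps this routine — one never has to track how the contracting homotopies vary with the parameter $X$, so treating $X$ as inert and applying the single-variable statement for $D_1$ suffices.
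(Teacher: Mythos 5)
Your argument is correct and matches the paper's own justification, which likewise reduces the claim to the identity $\nabla F(V;X) = D_1F(X\oplus -)(V)$ together with the fact that $D_1$ preserves chain homotopy equivalences (Proposition \ref{p:Dnproperties}(ii)). The only difference is that you make explicit the (correct) intermediate step that precomposition with the fixed functor $X \oplus -$ preserves pointwise chain homotopy equivalence, which the paper leaves tacit.
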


The properties of the directional derivative of a functor are reminiscent of the properties of the directional derivative of a function  in classical calculus of  several real variables.  The analogies between the classical notion of differentiation and this more categorical approach has been axiomatized in the notion of a \emph{cartesian differential category}, introduced in \cite{BCS:Cartesian} though presented here in the equivalent form of \cite[Proposition 4.2]{CC}. 
The following theorem is a collection of seven lemmas which will be very useful to us in proving the higher order chain rule and analyzing the higher order directional derivatives in the next two sections.   Collectively, these properties tell us that the directional derivative $\nabla$ equips the category $\Ho\cat{AbCat}_{\Ch}$ with a cartesian differential structure.

\begin{thm} \label{thm:diffcat} The category $\cat{AbCat}_{\Ch}$, together with the directional derivative $\nabla$, satisfies the following seven properties:
\begin{enumerate}
\item For  functors $F,G \colon \cB \kto \cA$, \[\nabla(F\oplus G)(V; X) \cong\nabla F(V;X)\oplus \nabla G(V; X).\]
\item For any $F \colon \cB\kto\cA$, $\nabla F$ is linear in the direction variable: i.e., there is a chain homotopy equivalence  \[\nabla F(V\oplus W; X) \simeq \nabla F(V; X)\oplus \nabla F(W; X)\] and an isomorphism \[  \nabla F(0; X) \cong 0.\]
\item The directional derivative of the identity functor $\id \colon \cA \kto \cA$ is given by projection onto the direction variable: \[\nabla \id(V; X) \cong V.\]
\item For a functor $\langle F,G \rangle \colon \cC \kto \cA \times \cB$ with components $F\colon \cC \kto \cA$ and $G \colon \cC \kto \cB$,  \[\nabla \langle F,G\rangle (V; X)\cong \langle\nabla F(V;X), \nabla G(V; X)\rangle.\]
\item Whenever $G \colon \cC \kto \cB$ and $F \colon \cB \kto \cA$ are composable, there is a chain homotopy equivalence
\[\nabla (F\circ G) (V; X)\simeq \nabla F(\nabla G(V; X); G(X)).\]
\item For any $F \colon \cB \kto \cA$,  there is an isomorphism 
\[\nabla (\nabla F)((Z; 0); (0; X))\cong \nabla F(Z; X).\]
\item For any $F \colon \cB \kto \cA$, there is a chain homotopy equivalence
\[ \nabla (\nabla F)((Z;W); (V; X))\simeq \nabla(\nabla F)((Z; V); (W; X)).\]
\end{enumerate}
\end{thm}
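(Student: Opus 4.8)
The plan is to verify the seven properties in turn, leaning throughout on the defining formula $\nabla F(V;X) = D_1F(X\oplus -)(V)$ and the structural facts about $D_1$ proved in Section~\ref{sec:LinearApproximations}. Properties (i), (iii), and (iv) are formal. For (i), observe that $(F\oplus G)(X\oplus -)\cong F(X\oplus -)\oplus G(X\oplus -)$ and invoke exactness of $D_1$ (Proposition~\ref{p:Dnproperties}, Lemma~\ref{lem:D1-linearity}(ii)). For (iii), $\nabla\id(V;X) = D_1(X\oplus -)(V)$, and Example~\ref{ex:linearization-of-adding-constant} identifies $D_1(X\oplus -)$ with $\id$. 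For (iv), the coproduct in $\cA\times\cB$ is coordinatewise, so $\langle F,G\rangle(X\oplus -) = \langle F(X\oplus -), G(X\oplus-)\rangle$, and Lemma~\ref{lem:other-product} supplies $D_1\langle-,-\rangle\cong\langle D_1-, D_1-\rangle$. Property (ii) is precisely Lemma~\ref{lem:D1-linearity}(i) applied to the functor $F(X\oplus -)\colon\cB\kto\cA$: the linearization of any functor is strictly reduced, which gives the isomorphism $\nabla F(0;X)\cong 0$, and it preserves finite coproducts up to chain homotopy equivalence.

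The substantive case is the chain rule (v). Setting $H\defeq G(X\oplus -)\colon\cC\kto\cB$, the Kleisli composite $(F\circ G)(X\oplus -)$ is literally $F\circ H$, so $\nabla(F\circ G)(V;X) = D_1(F\circ H)(V)$. One then applies Proposition~\ref{prop:D_1chainrule} to $F$ and $H$:
\[ D_1(F\circ H)\simeq D_1F\circ D_1H\ \oplus\ D_1\CR_2F\bigl(H(0),\CR_1H\bigr).\]
Here $H(0) = G(X)$, and $D_1H(V) = \nabla G(V;X)$ by definition, so the first summand, evaluated at $V$, is $D_1F(\nabla G(V;X))$. For the second summand, $\CR_1H$ is strictly reduced, so $D_1\CR_1H\cong D_1H$ by Lemma~\ref{lem:reduced-vs-unreduced}; applying Proposition~\ref{lem:5.7} to the composite of the reduced functor $\CR_1H$ with $\CR_2F(G(X),-)$ identifies $D_1\CR_2F(H(0),\CR_1H)$, evaluated at $V$, with $D_1^W\CR_2F(G(X),W)$ precomposed (in the Kleisli sense) with the chain complex $\nabla G(V;X)$. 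Comparing the resulting direct sum with the third description of $\nabla F(W;Y)$ in Lemma~\ref{lem:directional} --- namely $D_1F(W)\oplus D_1^W\CR_2F(Y,W)$ --- at $Y = G(X)$ and $W = \nabla G(V;X)$ yields $\nabla(F\circ G)(V;X)\simeq\nabla F(\nabla G(V;X);G(X))$. This is Theorem~\ref{thm:diffcat} as stated in \S\ref{s:context}, and the deferred Proposition~\ref{lem:5.7} enters only here.

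For the two second-order identities (vi) and (vii), the strategy is to write down a single chain-homotopy-equivalence formula for $\nabla\nabla F$ on a general argument $((Z;W);(V;X))$ and extract both statements from it. Since the coproduct in $\cB\times\cB$ is coordinatewise, $\nabla\nabla F((Z;W);(V;X)) = D_1\bigl[(\nabla F)((V;X)\oplus -)\bigr]((Z;W))$, where $D_1$ now acts on a functor out of the single abelian category $\cB\times\cB$, hence as a simultaneous linearization. Substituting the Lemma~\ref{lem:directional} splitting $\nabla F(V';X')\simeq D_1F(V')\oplus D_1^{V'}\CR_2F(X',V')$ and distributing $D_1$ over the direct sum, the first term factors through the projection $\pi_1\colon\cB\times\cB\to\cB$ --- Corollary~\ref{cor:product} evaluates its contribution, down to the idempotency $D_1D_1F\simeq D_1F$ of $D_1$ on degree-one reduced functors --- while the second term is strictly multi-reduced, so after expanding all cross effects via Definition~\ref{def:CR}, every ``mixed'' summand (one whose dependence on the two simultaneously-linearized coordinates is multi-reduced) is contractible by Lemma~\ref{lem:Kristine's}/Corollary~\ref{cor:Kristine's} and drops out. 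The surviving contributions collapse, upon specializing $V=0$ and $W=0$, to $D_1F(Z)\oplus D_1^Z\CR_2F(X,Z)\simeq\nabla F(Z;X)$, giving (vi); read with the roles of $W$ and $V$ interchanged --- they appear only through terms of the shape $D_1^ZD_1^{(\cdot)}\CR_2F$ and $D_1^ZD_1^{(\cdot)}\CR_3F(\cdot,\cdot,\cdot)$ --- the formula is symmetric, giving (vii), with the symmetry supplied by the symmetry of cross effects \cite[Proposition~1.2]{JM:Deriving} together with the commutativity of sequential linearization (Lemma~\ref{lem:D1-commutativity}).

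The main obstacle is twofold. The one genuinely hard ingredient is Proposition~\ref{lem:5.7}, the $D_1$-chain rule up to chain homotopy equivalence, whose proof is deferred to Appendix~\ref{a:AppB}; granting it, part (v) is bookkeeping with Proposition~\ref{prop:D_1chainrule} and Lemma~\ref{lem:directional}. The more delicate organizational obstacle is in (vi)--(vii): arranging the combinatorial expansion of $\nabla\nabla F$ so that exactly the diagonal/mixed terms are recognized as simultaneous linearizations of strictly multi-reduced functors, and --- since (vi) asserts an honest isomorphism rather than a chain homotopy equivalence --- being careful to argue at the isomorphism level wherever possible (working with $\CR_1$ and the strict isomorphism in Lemma~\ref{lem:directional} rather than its chain homotopy equivalence, and with the direct proof of Corollary~\ref{cor:product} rather than a composite of chain rules).
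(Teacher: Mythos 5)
Your handling of (i)--(iv), (vi), and (vii) is essentially the paper's: (i)--(iv) are the same formal reductions to Lemmas \ref{lem:D1-linearity} and \ref{lem:other-product}, and for (vi)--(vii) you, like the paper (via Lemmas \ref{lem:nabla-cr2} and \ref{lem:nablanabla}), derive one general formula for $\nabla(\nabla F)((Z;W);(V;X))$ by expanding cross effects, discarding the simultaneously-linearized multi-reduced terms with Lemma \ref{lem:Kristine's}/Corollary \ref{cor:Kristine's}, and then reading off (vi) by specialization and (vii) from the manifest $V\leftrightarrow W$ symmetry. Where you genuinely diverge is (v), and your route is more economical. The paper expands $F(G(X\oplus V))$ into an eight-term cross-effect decomposition to compute $\CR_2(F\circ G)(X,V)$ explicitly, applies $D_1^V$ and Corollary \ref{cor:Kristine's} to kill the summands in which $V$ occurs twice, and then separately expands $\nabla F(\nabla G(V;X);G(X))$ and matches the four surviving terms. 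You instead note that $(F\circ G)(X\oplus -)$ is literally the Kleisli composite $F\circ H$ with $H\defeq G(X\oplus -)$, so one application of the unreduced chain rule (Proposition \ref{prop:D_1chainrule}), with Proposition \ref{lem:5.7} absorbing the correction term $D_1\CR_2F(G(X),\CR_1H)$ and Lemma \ref{lem:directional} splitting the right-hand side, finishes the argument; this is the same trick the paper itself uses inside the proof of Lemma \ref{lem:directional}, just applied to $G(X\oplus-)$ rather than $X\oplus -$. What your version buys is brevity; what it forgoes is the explicit formula for $\CR_2(F\circ G)$ that the paper's computation yields as a by-product. It also relies on tacitly commuting linearization in the $W$-variable with Kleisli substitution of the chain complex $G(X)$ in the other slot, but the paper's own proof makes exactly the same silent move, so this is not a gap relative to its standard of rigor. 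Finally, your caution about (vi) requiring an honest isomorphism is well placed: both your argument and the paper's pass through $D_1D_1F\simeq D_1F$, which is only a chain homotopy equivalence, so neither quite delivers more than $\simeq$ without an extra remark.
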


Before proving Theorem \ref{thm:diffcat}, we summarize its statements. A \emph{cartesian differential category} is a category satisfying the following axioms. Firstly, it is \emph{left additive}, meaning that each hom-set is a commutative monoid and pre-comp\-osit\-ion with any map is a monoid homomorphism. A morphism in a left-additive category  is then called \emph{additive} if post-composing with it is also a monoid homomorphism. Secondly, the category has finite products so that the diagonal and product-projection maps are additive, and moreover the product of two additive maps is again additive. Finally, a cartesian differential category is equipped with an operation $\nabla$ that takes a morphism $f \colon b \to a$ and produces a morphism $\nabla f \colon b \times b \to a$ satisfying the properties (i)-(vii) of Theorem \ref{thm:diffcat}. See \cite{BCS:Cartesian}, \cite{CC}. 

The usual cartesian product of two categories that are abelian is again abelian (see, for example, \cite[Exercise VIII.3.2]{MacLane}). 
One can verify that this construction defines a product in the  category $\Ho\cat{AbCat}_{\Ch}$. 
The hom-sets $[\cB,\cA]$ in this category are (large) commutative monoids, with addition of chain homotopy equivalence classes of functors defined pointwise using the direct sum in $\cA$. Pre-composition manifestly respects this structure, as does post-composition with diagonal and product-projection maps. It is similarly easy to verify that the product of two additive maps is additive.
Thus Theorem \ref{thm:diffcat} can be summarized by the following corollary:

\begin{cor}\label{cor:CDC} The homotopy category
$\Ho\cat{AbCat}_{\Ch}$ is a (large) cartesian differential category.
\end{cor}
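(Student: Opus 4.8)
The plan is to assemble the cartesian differential structure from Theorem~\ref{thm:diffcat}, the substance of the argument having already been carried out there. Recall from \cite{BCS:Cartesian}, \cite{CC} that to exhibit $\Ho\cat{AbCat}_{\Ch}$ as a cartesian differential category we must produce a \emph{left-additive} structure (each hom-set a commutative monoid, with pre-composition a monoid homomorphism), finite products for which the diagonals, the product projections, and the pairings of additive maps are additive, and a differential operator $\nabla$ satisfying axioms (i)--(vii). The seven axioms are literally the statements of Theorem~\ref{thm:diffcat}. They descend to $\Ho\cat{AbCat}_{\Ch}$ because the morphisms of that category are, by definition, pointwise chain-homotopy-equivalence classes of functors: the operation $\nabla$ respects this equivalence relation (as recorded in the observation following Definition~\ref{defn:dir-derivative-JM}), so it induces an operation on hom-sets, and every pointwise chain homotopy equivalence appearing in Theorem~\ref{thm:diffcat} --- in particular those in axioms (ii), (v), and (vii) --- collapses to an equality of morphisms in $\Ho\cat{AbCat}_{\Ch}$.

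For the left-additive structure, I would first observe that each hom-set $[\cB,\cA]$ is a (large) commutative monoid under the pointwise direct sum in $\cA$, with unit the zero functor; this is well-defined on pointwise chain-homotopy classes since $\oplus$ preserves chain homotopy equivalences. Next one checks that pre-composition with a morphism $K$ is a monoid homomorphism, i.e.\ $(G\oplus H)\circ K\cong(G\circ K)\oplus(H\circ K)$ and $0\circ K\cong 0$, where the Kleisli composite is formed by prolonging the left-hand functor, post-composing with $K$, and totalizing. The only point requiring attention is that the Dold--Kan prolongation and the totalization functor $\Tot$ both preserve finite direct sums --- each being built from additive functors together with (co)limits --- so the pointwise identity $(G\oplus H)(X)=G(X)\oplus H(X)$ propagates through the construction of the composite, much as in Lemma~\ref{lem:che-comp}.

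For the cartesian structure, the product of two abelian categories is again abelian \cite[Exercise VIII.3.2]{MacLane}, and since $\Ch(\cA\times\cB)\cong\Ch\cA\times\Ch\cB$ a functor $\cC\kto\cA\times\cB$ is precisely a pair of functors $\cC\kto\cA$, $\cC\kto\cB$, compatibly with pointwise chain homotopy equivalence and with Kleisli composition; this exhibits $\cA\times\cB$ as a product in $\Ho\cat{AbCat}_{\Ch}$, with the zero category terminal. The diagonal $\cA\kto\cA\times\cA$ and the projections are degreewise-zero extensions of exact functors of abelian categories, hence prolong to direct-sum-preserving functors, so post-composition with them is additive; pairings of morphisms are computed coordinatewise as in Lemma~\ref{lem:other-product}, whence the pairing of two additive maps is again additive. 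Invoking Theorem~\ref{thm:diffcat} for axioms (i)--(vii) then completes the verification. I do not expect a serious obstacle: essentially no new mathematics is needed, the content being Theorem~\ref{thm:diffcat} and the remainder bookkeeping. The one genuine subtlety is that the delicately defined Kleisli composition must be seen to respect the additive and product structures, and this reduces entirely to the fact that Dold--Kan prolongation and totalization are additive constructions; the size issues raised by the ``large'' hom-sets are handled by the set-theoretic conventions fixed in the introduction.
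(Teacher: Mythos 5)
Your proposal is correct and follows essentially the same route as the paper: the paper likewise observes that the product of abelian categories gives products in $\Ho\cat{AbCat}_{\Ch}$, that the hom-sets $[\cB,\cA]$ are commutative monoids under pointwise direct sum respected by pre-composition and by post-composition with diagonals and projections, and then cites Theorem~\ref{thm:diffcat} for axioms (i)--(vii), which become equalities in the quotient by pointwise chain homotopy equivalence. Your additional remarks on why the Kleisli composition (Dold--Kan prolongation followed by totalization) distributes over $\oplus$ merely fill in bookkeeping the paper declares routine.
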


The proof of Theorem \ref{thm:diffcat} will occupy the remainder of this section.

\begin{proof}[Proof of Theorem \ref{thm:diffcat}(i)]
For functors $F,G \colon \cB \kto \cA$, 
\[ \ker \left((F\oplus G)(X\oplus V) \to  (F\oplus G)(X)\right) \cong \ker \left(F(X\oplus V)\to F(X)\right) \oplus \ker \left(G(X\oplus V) \to G(X)\right).\] The isomorphism 
\[\nabla(F\oplus G)(V; X) \cong \nabla F(V;X)\oplus \nabla G(V; X)\] now follows from linearity of $D_1$ in the form of Lemma \ref{lem:D1-linearity}(ii).
\end{proof}

\begin{proof}[Proof of Theorem \ref{thm:diffcat}(ii)]
The chain homotopy equivalence \[\nabla F(V\oplus W; X) \simeq \nabla F(V; X)\oplus \nabla F(W; X)\] and the isomorphism \[\nabla F(0; X) \simeq 0\] are immediate from either definition of $\nabla F$ and Lemma \ref{lem:D1-linearity}(i).
\end{proof}

\begin{proof}[Proof of Theorem \ref{thm:diffcat}(iii)]
Evidently $\ker\left( X\oplus V \to X \right) \cong V$. The linearization of the identity functor is the identity, so by Definition \ref{defn:dir-derivative-JM} 
\[\nabla \id(V; X) = D_1^V\ker\left( X \oplus V \to X \right) \cong D_1^V\id V \simeq V.\qedhere\]
\end{proof}

\begin{proof}[Proof of Theorem \ref{thm:diffcat}(iv)]
Consider a functor $\langle F,G \rangle \colon \cC \kto \cA \times \cB$. By Definition \ref{defn:dir-derivative-JM}
\begin{align*} \nabla\langle F, G\rangle (V; X) &\defeq D_1^V\ker \left( \langle F, G \rangle (X\oplus V) \to \langle F, G \rangle (X) \right) \\
&\cong D_1^V \ker  \left( \langle F(X\oplus V), G(X\oplus V) \rangle \to \langle F(X), G(X) \rangle \right)\\ \intertext{Since limits in product categories are computed coordinate-wise:}
&\cong D_1^V\langle \ker  \left( F(X\oplus V) \to F(X)\right), \ker \left( G(X\oplus V) \to G(X) \right) \rangle\\
\intertext{By Lemma \ref{lem:other-product}:}
&\cong \langle D_1^V \ker  \left( F(X\oplus V) \to F(X)\right),D_1^V \ker \left( G(X\oplus V) \to G(X) \right) \rangle\\
&=: \langle \nabla F(V; X), \nabla G(V; X)\rangle. \qedhere\\
\end{align*}
\end{proof}

\begin{proof}[Proof of Theorem \ref{thm:diffcat}(v)]
Whenever $G \colon \cC \kto \cB$ and $F \colon \cB \kto \cA$ are composable and $G$ is reduced, \cite[Proposition 5.6]{JM:Deriving} proves a  quasi-isomorphism
\[\nabla (F\circ G) (V; X)\simeq \nabla F(\nabla G(V; X); G(X)).\]
We will upgrade this to a chain homotopy equivalence and drop the hypotheses on $G$.

To make use of Lemma \ref{lem:directional} for $\nabla(F \circ G)$,
we first compute the second cross effect of $F \circ G$, which is defined by 
\begin{align}(F\circ G)(X \oplus V) &\cong(F\circ G)(0) \oplus \CR_1(F\circ G)(X) \oplus \CR_1(F\circ G)(V) \oplus \CR_2(F\circ G)(X,V)\notag \\ &\cong(F\circ G)(X) \oplus \CR_1(F\circ G)(V)  \oplus \CR_2(F\circ G)(X,V).\label{eq:one-side}\end{align} This in turn is isomorphic to
\begin{align*} F(G(X \oplus V)) &\cong F(G(X) \oplus \CR_1G(V)  \oplus \CR_2G(X,V)) \\
&\cong  F(0) \oplus \CR_1F(G(X))\oplus \CR_1 F(\CR_1G(V))  \oplus \CR_1 F(\CR_2G(X,V)) \\ &\quad\oplus \CR_2F(G(X), \CR_1G(V))  \oplus \CR_2F(G(X), \CR_2G(X,V))  \\ &\quad \oplus \CR_2F(\CR_1G(V), \CR_2G(X,V))\oplus \CR_3F(G(X), \CR_1G(V), \CR_2G(X,V)) \\ 
\intertext{The first two terms sum to $F(G(X))$, so we have identified that piece of \eqref{eq:one-side}. Lemma \ref{lem:CR_1chainrule} tells us that $\CR_1(F \circ G)(V) \cong \CR_1F(\CR_1G(V)) \oplus
\CR_2F(G(0), \CR_1G(V))$. To separate this piece, we substitute $G(X)\cong G(0) \oplus \CR_1G(X)$ into the fifth summand and then re-express that second cross effect as a sum of three terms:}
&\cong  F(G(X))\oplus \CR_1 F(\CR_1G(V))  \oplus \CR_1 F(\CR_2G(X,V)) \\ &\quad\oplus \CR_2F(G(0), \CR_1G(V)) \oplus \CR_2F(\CR_1G(X), \CR_1G(V))\\ &\quad\oplus \CR_3(G(0),\CR_1G(X),\CR_1G(V)) \oplus \CR_2F(G(X), \CR_2G(X,V))  \\ &\quad \oplus \CR_2F(\CR_1G(V), \CR_2G(X,V))\oplus \CR_3F(G(X), \CR_1G(V), \CR_2G(X,V))
\end{align*}
Taking complements to cancel these  terms in our two formulas for $F(G(X \oplus V))$, we conclude that
\begin{align*}
\CR_2(F\circ G)(X,V) &\cong  \CR_1 F(\CR_2G(X,V)) \oplus \CR_2F(\CR_1G(X), \CR_1G(V))\\ &\quad\oplus \CR_3(G(0),\CR_1G(X),\CR_1G(V)) \oplus \CR_2F(G(X), \CR_2G(X,V))  \\ &\quad \oplus \CR_2F(\CR_1G(V), \CR_2G(X,V))\oplus \CR_3F(G(X), \CR_1G(V), \CR_2G(X,V))\end{align*}
Now each of these terms is multi-reduced, so upon applying $D_1^V$,  Corollary \ref{cor:Kristine's} implies that any term in which $V$ appears more than once will vanish. Hence there is a chain homotopy equivalence:
\begin{align*}
D_1^V\CR_2(F\circ G)(X,V) &\simeq D_1^V\CR_1 F(\CR_2G(X,V)) \oplus D_1^V\CR_2F(\CR_1G(X), \CR_1G(V))\\ &\quad\oplus D_1^V\CR_3(G(0),\CR_1G(X),\CR_1G(V)) \oplus D_1^V\CR_2F(G(X), \CR_2G(X,V))  \end{align*}
By Proposition \ref{prop:D_1chainrule} and this computation, this proves that there is a chain homotopy equivalence
\begin{align*} 
\nabla (F \circ G)(V;X) &\simeq D_1(F \circ G)(V) \oplus D_1^V \CR_2(F \circ G)(V,X)\\
&\simeq D_1F(D_1G(V)) \oplus D_1^V\CR_2F(G(0),\CR_1G(V)) \\ &\oplus D_1^V\CR_1 F(\CR_2G(X,V)) \oplus D_1^V\CR_2F(\CR_1G(X), \CR_1G(V))\\ &\quad\oplus D_1^V\CR_3(G(0),\CR_1G(X),\CR_1G(V)) \oplus D_1^V\CR_2F(G(X), \CR_2G(X,V))  \\
\intertext{The second, fourth, and fifth terms sum to $D_1^V\CR_2F(G(X),\CR_1G(V))$, so this simplifies to:}
&\cong D_1F(D_1G(V)) \oplus D_1^V\CR_2F(G(X),\CR_1G(V)) \\ &\quad\oplus
D_1^V\CR_1 F(\CR_2G(X,V)) \oplus  D_1^V\CR_2F(G(X), \CR_2G(X,V)) 
\end{align*}

Now let's work out the other side:
\begin{align*}
\nabla F(\nabla G(V;X); G(X)) &\cong \nabla F(D_1G(V) \oplus D_1^V\CR_2G(X,V);G(X)) \\ 
\intertext{By Theorem \ref{thm:diffcat}(ii):}
&\cong \nabla F(D_1G(V); G(X)) \oplus \nabla F(D_1^V\CR_2G(X,V);G(X)) \\ 
&\cong D_1F(D_1G(V)) \oplus D_1^2\CR_2F(G(X), D_1G(V)) \\
	&\quad\oplus D_1F(D_1^V\CR_2G(X,V)) \oplus D_1^2\CR_2F(G(X), D_1^V\CR_2G(X,V)) \\ 
\intertext{Substituting $D_1G\cong D_1\CR_1G$ in the second term and $D_1F \cong D_1\CR_1F$ in the third term, we can apply Proposition \ref{lem:5.7} to each of the last three terms to obtain a chain homotopy equivalence:}
&\simeq D_1F(D_1G(V)) \oplus D_1^V\CR_2F(G(X), \CR_1G(V)) \\&\oplus D_1^V\CR_1F(\CR_2G(X,V)) \oplus D_1^V\CR_2F(G(X), \CR_2G(X,V))\end{align*}
 which agrees with the formula for $\nabla (F \circ G)(V;X)$ above.

 \end{proof}

To prove the remaining statements Theorem \ref{thm:diffcat}(vi) and (vii) we will provide a general formula for $\nabla (\nabla F)$ in terms of the cross-effects of $F$ that will prove important for our  investigation of higher order chain rules in the next section. Our formula will make use of the following characterization of  $\CR_2\nabla F$.

\begin{lem}\label{lem:nabla-cr2} For any $F \colon \cB \kto\cA$ and objects $Z,W,V,X \in \cB$
\begin{align*} \CR_2\nabla F((Z;W)&,(V;X)) \\ &\cong D_1^Z\CR_2F(Z,X) \oplus D_1^Z\CR_3F(Z,W,X) \oplus D_1^V\CR_2F(V,W) \oplus D_1^V\CR_3F(V,W,X).\end{align*}
\end{lem}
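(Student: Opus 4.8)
The strategy is to compute $\CR_2\nabla F$ directly from the explicit decomposition of $\nabla F$ recorded in Lemma~\ref{lem:directional}, namely the isomorphism (or chain homotopy equivalence, which is all we need) $\nabla F(V;X)\cong D_1F(V)\oplus D_1^V\CR_2F(X,V)$, in which the first summand depends only on the direction variable. Since $\CR_2$ is exact by Proposition~\ref{p:cr is exact}, hence additive and homotopy-preserving, it suffices to compute the second cross effect of each of these two functors $\cB\times\cB\kto\cA$ separately and add. For the first summand, evaluating $(V;X)\mapsto D_1F(V)$ at $(Z;W)\oplus(V;X)=(Z\oplus V;W\oplus X)$ yields $D_1F(Z\oplus V)$; since $D_1F$ is linear, i.e.\ degree $1$ (Lemma~\ref{lem:D1-linearity}(i), Definition~\ref{def:degree-n}), this is isomorphic to $D_1F(Z)\oplus D_1F(V)\oplus\CR_2(D_1F)(Z,V)$ with $\CR_2(D_1F)$ contractible. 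Peeling off the values $D_1F(Z)$ and $D_1F(V)$ at $(Z;W)$ and $(V;X)$, the second cross effect of this summand is the contractible complex $\CR_2(D_1F)(Z,V)$, which we discard.

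The main computation concerns the second summand $D_1^V\CR_2F(X,V)$. Evaluated at $(Z\oplus V;W\oplus X)$ it becomes $D_1^{Z\oplus V}\CR_2F(W\oplus X,Z\oplus V)$, and the key bookkeeping discipline is to expand only in the non-direction variable before linearizing. By the recursive Definition~\ref{def:CR} of the cross effects (applied in the first slot), $\CR_2F(W\oplus X,Y)\cong\CR_2F(W,Y)\oplus\CR_2F(X,Y)\oplus\CR_3F(W,X,Y)$ as functors of $Y$; applying the exact functor $D_1$ in $Y$ (Proposition~\ref{p:Dnproperties}(i)) and evaluating at $Y=Z\oplus V$, linearity of $D_1$ splits each of these three pieces as $D_1(-)(Z)\oplus D_1(-)(V)$ plus a contractible $\CR_2D_1(-)(Z,V)$ term. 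Collecting everything, $\nabla F(Z\oplus V;W\oplus X)$ is isomorphic, up to contractible summands, to the direct sum of $D_1F(Z)$, $D_1F(V)$, $D_1^Z\CR_2F(W,Z)$, $D_1^V\CR_2F(W,V)$, $D_1^Z\CR_2F(X,Z)$, $D_1^V\CR_2F(X,V)$, $D_1^Z\CR_3F(W,X,Z)$, and $D_1^V\CR_3F(W,X,V)$.

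It remains to identify $\CR_2\nabla F\bigl((Z;W),(V;X)\bigr)$ as the direct-sum complement of $\CR_1\nabla F(Z;W)\oplus\CR_1\nabla F(V;X)$ inside this decomposition. Because $D_1F$ is strictly reduced, so is $\nabla F$, whence $\CR_1\nabla F\cong\nabla F$; and Lemma~\ref{lem:directional} gives $\nabla F(Z;W)\cong D_1F(Z)\oplus D_1^Z\CR_2F(W,Z)$ and $\nabla F(V;X)\cong D_1F(V)\oplus D_1^V\CR_2F(X,V)$, which cancel exactly four of the eight summands above. The remaining four, $D_1^Z\CR_2F(X,Z)\oplus D_1^Z\CR_3F(W,X,Z)\oplus D_1^V\CR_2F(W,V)\oplus D_1^V\CR_3F(W,X,V)$, rewrite as the claimed formula after permuting the arguments of $\CR_2F$ and $\CR_3F$ using their symmetry. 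I expect the only real difficulty to be organizational: one must resist expanding $\CR_2F(W\oplus X,Z\oplus V)$ simultaneously in both variables, since that introduces spurious terms $\CR_3F(\,\cdot\,,Z,V)$ and $\CR_4F(W,X,Z,V)$ coupled in the direction variable whose linearization is more delicate, whereas expanding in $W\oplus X$ first avoids this entirely. One should also note that the decomposition genuinely produces contractible summands (the $\CR_2D_1(-)$ terms), so the identification is really a pointwise chain homotopy equivalence, which is harmless for every subsequent use of the lemma.
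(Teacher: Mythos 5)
Your proof is correct and follows essentially the same route as the paper: both identify $\CR_2\nabla F$ as the direct-sum complement of $\nabla F(Z;W)\oplus\nabla F(V;X)$ inside $\nabla F(Z\oplus V;W\oplus X)$, expand via Lemma \ref{lem:directional} and the recursive definition of the cross effects, and cancel — you merely expand the point variable before splitting the direction variable, whereas the paper splits the direction variable first using Theorem \ref{thm:diffcat}(ii). Your closing caveat that the identification is really only a pointwise chain homotopy equivalence (because of the discarded contractible $\CR_2 D_1$ summands) applies equally to the paper's own argument, which invokes Theorem \ref{thm:diffcat}(ii) — itself only a chain homotopy equivalence — so this is not a gap relative to the published proof.
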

\begin{proof}
Note that $\nabla F(0;0) \cong 0$, which implies that $\CR_1\nabla F \cong \nabla F$. Thus $\CR_2F$ is defined as the direct sum complement:
\begin{align*}
\nabla F(Z \oplus V; W \oplus X) &\cong \nabla F(Z;W) \oplus \nabla F(V;X) \oplus \CR_2\nabla F((Z;W),(V;X)). \\ 
\intertext{By Theorem \ref{thm:diffcat}(ii)}
\nabla F(Z \oplus V; W \oplus X) 
&\cong \nabla F(Z; W \oplus X) \oplus \nabla F(V;W \oplus X) \\
&= D_1^ZF(Z) \oplus D_1^Z\CR_2F(Z, W\oplus X) \oplus D_1^VF(V) \oplus D_1^V\CR_2F(V,W \oplus X) \\  
\intertext{By linearity of $D_1$ (Lemma \ref{lem:reduced-vs-unreduced}) and the formula defining the third cross effects (Definition \ref{def:CR}):}  
&\cong D_1^ZF(Z) \oplus D_1^Z\CR_2F(Z,W) \oplus D_1^Z\CR_2F(Z,X) \oplus D_1^Z\CR_3F(Z,W,X) \\ 
	&\quad\oplus D_1^VF(V) \oplus D_1^V\CR_2F(V,X) \oplus D_1^V\CR_2F(V,W) \oplus D_1^V\CR_3F(V,W,X) \\ 
&\cong \nabla F(Z;W) \oplus \nabla F(V;X) \oplus D_1^Z\CR_2F(Z,X) \oplus \\ 
	&\quad\oplus D_1^Z\CR_3F(Z,W,X) \oplus D_1^V\CR_2F(V,W) \oplus D_1^V\CR_3F(V,W,X)
\end{align*}
and the claim follows by cancelling the isomorphic complements.
\end{proof} 

With Lemma \ref{lem:nabla-cr2} in hand, we can now compute $\nabla(\nabla F)((Z; W); (V; X))$.  The next lemma shows that $ \nabla(\nabla F)((Z;W);(V;X))$ is chain homotopy equivalent to $\nabla F(Z;X)$ plus an error term, which will be interpreted in the next section; see Proposition \ref{prop:n=2nablas}.

\begin{lem} \label{lem:nablanabla} For any  $F: \cB\kto \cA$, there is a chain homotopy equivalence
 \begin{align*} \nabla(\nabla F)((Z;W)&;(V;X))   \\ &\simeq D_1^ZF(Z)  \oplus D_1^Z\CR_2F(Z,X)  \oplus D_1^WD_1^V\CR_2F(V,W) \oplus D_1^WD_1^V\CR_3F(V,W,X).\end{align*}
 
\end{lem}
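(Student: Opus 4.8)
\textbf{Proof plan for Lemma \ref{lem:nablanabla}.}

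The plan is to compute $\nabla(\nabla F)((Z;W);(V;X))$ directly by unwinding the definition $\nabla H(V';X') \defeq D_1 H(X' \oplus -)(V')$, or equivalently the third description in Lemma \ref{lem:directional}, applied to the bifunctor $H = \nabla F \colon \cB \times \cB \kto \cA$. By Lemma \ref{lem:directional} (with $H$ playing the role of $F$ and the variables $(V;X)$ playing the role of the direction and $(W;X)$--wait, carefully: the ``point'' of the outer $\nabla$ is $(V;X)$ and the ``direction'' is $(Z;W)$), there is a chain homotopy equivalence
\[ \nabla(\nabla F)((Z;W);(V;X)) \simeq D_1^{(Z;W)}(\nabla F)(Z;W) \oplus D_1^{(Z;W)}\CR_2(\nabla F)\big((Z;W),(V;X)\big).\]
Here the first term is the linearization of the bifunctor $\nabla F$ with respect to \emph{both} of its variables simultaneously, evaluated at $(Z;W)$, and the second term is the simultaneous linearization of its second cross effect. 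I would handle these two summands separately.

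For the first summand, I would use that $\nabla F$ is already linear in its first ``direction'' variable (Theorem \ref{thm:diffcat}(ii)), so linearizing simultaneously in $(Z;W)$ amounts to further linearizing in the second variable $W$: concretely $\nabla F(Z;W) = D_1^ZF(Z) \oplus D_1^Z\CR_2F(Z,W)$ by Lemma \ref{lem:directional}, and applying $D_1^W$ simultaneously with $D_1^Z$ one finds $D_1^W D_1^Z F(Z) \simeq D_1^Z F(Z)$ (as $D_1^ZF(Z)$ does not depend on $W$, though one must be slightly careful: simultaneous linearization of a functor independent of $W$ in the pair $(Z,W)$ recovers the functor — compare Corollary \ref{cor:product} and Lemma \ref{lem:product}) while $D_1^W D_1^Z \CR_2F(Z,W)$ is the simultaneous linearization of something multi-reduced in $(Z,W)$, which Lemma \ref{lem:Kristine's} shows is \emph{contractible}. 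So the first summand contributes only $D_1^ZF(Z) \oplus D_1^Z\CR_2F(Z,X)$ — wait, it contributes $D_1^ZF(Z)$, and the $D_1^Z\CR_2F(Z,X)$ term must come from the second summand. This is exactly where I expect to need care: the separating of which terms survive the simultaneous versus sequential linearizations, and tracking which variables remain.

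For the second summand, I would plug in the explicit formula for $\CR_2(\nabla F)$ from Lemma \ref{lem:nabla-cr2}:
\[ \CR_2\nabla F((Z;W),(V;X)) \cong D_1^Z\CR_2F(Z,X) \oplus D_1^Z\CR_3F(Z,W,X) \oplus D_1^V\CR_2F(V,W) \oplus D_1^V\CR_3F(V,W,X),\]
then apply $D_1^{(Z;W)}$, i.e.\ linearize simultaneously in $Z$ and $W$. The first term $D_1^Z\CR_2F(Z,X)$ is already reduced and linear in $Z$ and is independent of $W$, so simultaneous linearization in $(Z,W)$ leaves it unchanged, giving $D_1^Z\CR_2F(Z,X)$. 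The second term $D_1^Z\CR_3F(Z,W,X)$ is reduced in $Z$ and in $W$ separately (cross effects are multi-reduced), so $D_1^{Z\times W}$ applied to it is contractible by Lemma \ref{lem:Kristine's}. The fourth term $D_1^V\CR_3F(V,W,X)$ is independent of $Z$ and reduced in $W$; linearizing simultaneously in $(Z,W)$ — but $Z$ doesn't appear — actually I need to linearize this in $W$ alone (since $Z$ is absent, simultaneous linearization in $(Z,W)$ reduces to $D_1^W$, and one checks this using Lemma \ref{lem:Kristine's}/Corollary \ref{cor:product}-style arguments), yielding $D_1^WD_1^V\CR_3F(V,W,X)$. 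Similarly the third term becomes $D_1^WD_1^V\CR_2F(V,W)$. Assembling: $D_1^ZF(Z) \oplus D_1^Z\CR_2F(Z,X) \oplus D_1^WD_1^V\CR_2F(V,W) \oplus D_1^WD_1^V\CR_3F(V,W,X)$, as claimed.

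\textbf{Main obstacle.} The delicate point is the precise behavior of \emph{simultaneous} linearization $D_1^{(Z;W)}$ on the direct summands of $\CR_2\nabla F$, specifically distinguishing (a) a summand in which both $Z$ and $W$ occur (multi-reduced, hence killed by Lemma \ref{lem:Kristine's}), (b) a summand independent of $W$ but linear-reduced in $Z$ (where $D_1^{Z\times W}$ acts as the identity, which needs a Lemma \ref{lem:product}/Corollary \ref{cor:product}-type argument, since $D_1^{Z\times W}$ of something independent of $W$ is a linearization of a ``$\tilde F$''-type functor), and (c) a summand independent of $Z$ (where $D_1^{Z\times W}$ collapses to $D_1^W$, again by a product argument). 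I would state a small preliminary lemma or remark isolating these three cases — essentially: for $H(Z,W)$ of the form $H_0(Z)$ with $H_0$ reduced and linear, $D_1^{Z\times W}H \cong H$, and for $H(Z,W)$ of the form $H_1(W)$, $D_1^{Z\times W}H \simeq D_1^W H_1$ — so that the bookkeeping in the main computation is clean. Everything else is a routine (if lengthy) application of linearity of $D_1$, Lemma \ref{lem:reduced-vs-unreduced}, and the definition of cross effects.
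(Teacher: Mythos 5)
Your proposal is correct and follows essentially the same route as the paper's proof: decompose $\nabla(\nabla F)$ via Lemma \ref{lem:directional}, substitute Lemma \ref{lem:nabla-cr2} for the cross-effect term, kill the summands that are multi-reduced in both $Z$ and $W$ using Lemma \ref{lem:Kristine's}, and collapse the simultaneous linearization on the surviving terms via Corollary \ref{cor:product}. The "preliminary lemma" you flag as the main obstacle is exactly Corollary \ref{cor:product}, which the paper invokes for precisely this purpose.
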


\begin{proof}
By Lemma \ref{lem:directional}
\begin{align*} 
\nabla(\nabla F)((Z;W)&;(V;X)) \\ &\cong D_1^{Z \times W}\nabla F(Z;W) \oplus D_1^{Z \times W}\CR_2\nabla F((Z;W);(V;X)) \\ 
\intertext{By linearity of $D_1$ and Lemma \ref{lem:nabla-cr2}} 
&\cong D_1^{Z \times W}( D_1^ZF(Z) \oplus D_1^Z\CR_2F(Z,W) \oplus D_1^Z\CR_2F(Z,X) \\ &\quad \oplus D_1^Z\CR_3F(Z,W,X) \oplus D_1^V\CR_2F(V,W) \oplus D_1^V\CR_3F(V,W,X)) 
\end{align*}
The functors $D_1^Z\CR_2F(Z,W)$ and $D_1^Z\CR_3F(Z,W,X)$ are strictly multi-reduced in the $Z$ and $W$ variables, so after distributing the linearization $D_1^{Z \times W}$ over the direct sum, Lemma \ref{lem:Kristine's} implies that these two terms are contractible. Thus, the previous term is chain homotopy equivalent to: 
\[  D_1^{Z \times W}( D_1^ZF(Z)  \oplus D_1^Z\CR_2F(Z,X)  \oplus D_1^V\CR_2F(V,W) \oplus D_1^V\CR_3F(V,W,X))\]
and by Corollary \ref{cor:product} this is isomorphic to 
\[ D_1^ZF(Z)  \oplus D_1^Z\CR_2F(Z,X)  \oplus D_1^WD_1^V\CR_2F(V,W) \oplus D_1^WD_1^V\CR_3F(V,W,X)
.\qedhere\]
\end{proof}

The proofs of Theorem \ref{thm:diffcat} (vi) and (vii) follow immediately from Lemma \ref{lem:nablanabla}.  However, using Lemma \ref{lem:nablanabla} will only provide a chain homotopy equivalence between the desired term.  When $V=W=0$, we can strengthen this to an isomorphism by reexamining the proof of Lemma \ref{lem:nablanabla}.

\begin{proof}[Proof of Theorem \ref{thm:diffcat}(vi)]
From the proof of  Lemma \ref{lem:nablanabla}, recall that 
  \begin{align*} \nabla(\nabla F)((Z;W)&;(V;X))   \\ 
  &\cong D_1^{Z \times W}( D_1^ZF(Z) \oplus D_1^Z\CR_2F(Z,W) \oplus D_1^Z\CR_2F(Z,X) \\ &\quad \oplus D_1^Z\CR_3F(Z,W,X) \oplus D_1^V\CR_2F(V,W) \oplus D_1^V\CR_3F(V,W,X)).\end{align*}
  When $V=W=0$, Lemma \ref{lem:D1-linearity}(i) implies that the second term and the last three terms are 0, so
 \[\nabla (\nabla F)((Z; 0); (V; X))\simeq D_1^{Z}F(Z)  \oplus D_1^Z\CR_2F(Z,X) \cong \nabla F(Z; X).\qedhere\]
\end{proof}

The proof of \ref{thm:diffcat}(vii) is now an immediate consequence of Lemma \ref{lem:nablanabla}.

\begin{proof}[Proof of Theorem \ref{thm:diffcat}(vii)]
By inspecting the formula of Lemma \ref{lem:nablanabla}, Lemma \ref{lem:D1-commutativity} and the symmetry of the cross effects implies that there is a chain homotopy equivalence
 \[ \nabla (\nabla F)((Z;W); (V; X))\simeq \nabla(\nabla F)((Z; V); (W; X)). \qedhere\] 
\end{proof}

%%%%%%%%%%%%%%%%%%%%%%%%%%%%%%%%%%%%%%%%%%%%%%%%
\section{Higher order directional derivatives and the Fa\`{a} di Bruno formula}\label{sec:faa}
%%%%%%%%%%%%%%%%%%%%%%%%%%%%%%%%%%%%%%%%%%%%%%

The directional derivative $\nabla F \colon \cB \times \cB \kto \cA$ of a functor $F \colon \cB\kto \cA$ can be iterated to a higher order directional derivative in several different ways. The na\"{i}ve iteration $\nabla^{\times 2} F \defeq \nabla (\nabla F) \colon \cB^{  4} \kto \cA$ has some redundancy stemming from the fact that $\nabla F$ is linear in its first variable; see Theorem \ref{thm:diffcat}(ii) and Lemma \ref{lem:nablanabla}. This problem compounds for higher iterations: in some sense $\nabla^{\times n} F\colon \cB^{ 2n} \kto \cA$ has $n-1$ too many variables.

In this section, we'll consider two ``more efficient'' formulations of the higher directional derivative that bear a very interesting relationship to one another captured by Theorem \ref{thm:restatedgoal}. The first of these is a functor $\nabla^n F\colon \cB^{ n+1} \kto \cA$ introduced by Johnson--McCarthy, which we refer to as the $n$th \emph{iterated partial directional derivative}. The second of these is a new functor $\Delta_n \colon \cB^{ n+1} \kto \cA$ modeled after the higher directional derivatives of Huang, Marcantognini, and Young, which we call the $n$th \emph{higher order directional derivative}.

\begin{defn}[{\cite[Definition 5.8]{JM:Deriving}}]\label{def:tildenabla^n}  Let $F\colon\cB\kto \cA$ be a functor.  The  $n$th \emph{iterated partial directional derivative} of $F$ is defined recursively by 
 \begin{align*} {\nabla}^0F(X) &\defeq F(X),\\  {\nabla}^1 F(V;X)&\defeq\nabla F(V;X), \end{align*}
  and for $n\geq 2$ and objects $V_1, \ldots, V_n$ and $X$ in $\cB$, the $n$th iterated partial directional derivative of $F$ at $X$ in the directions $V_1, \ldots , V_n$ is 
\[ {\nabla}^n F(V_n, \ldots , V_1; X) \defeq \nabla \left({\nabla}^{n-1}F(V_{n-1}, V_{n-2},\ldots , V_1; -)\right)(V_n; X).\]
\end{defn}

Johnson--McCarthy prove that the $n$th iterated partial directional derivative can be understood as a multilinearization of the $n$th and $(n+1)$st cross effects. Recall that for a functor of  $n$-variables, $D_1^{(n)}$ indicates the linearization in each of its first $n$ variables sequentially. By Lemma \ref{lem:D1-commutativity}, the order in which these linearizations are performed does not matter.

\begin{prop}\cite{JM:Deriving} \label{prop:5.9} For any $F \colon \cB\kto \cA$ and variables $V_n,\ldots, V_1,X$
\begin{enumerate}
\item There is a natural isomorphism
\[ \nabla^n F(V_n,\ldots, V_1;X) \cong D^{(n)} \left(\CR_{n+1}F(V_n,\ldots, V_1,X) \oplus \CR_n F(V_n,\ldots, V_1)\right).\]
\item For any permutation $\sigma \in \Sigma_n$, there is a natural isomorphism
\[ \nabla^n F(V_n,\ldots, V_1;X) \cong \nabla^n F(V_{\sigma(n)},\ldots, V_{\sigma(1)};X).\]
\end{enumerate}
\end{prop}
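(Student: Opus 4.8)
The plan is to prove (1) by induction on $n$ and then read (2) off from the symmetry of the cross effects; this essentially recapitulates the argument of Johnson--McCarthy, which goes through verbatim here because every ingredient is an honest isomorphism. Throughout I take Definition \ref{defn:dir-derivative-JM} as the model of $\nabla$, so that the first isomorphism of Lemma \ref{lem:directional} gives, for any $G\colon\cB\kto\cA$,
\[ \nabla G(V;X)\;=\;D_1^V\ker\bigl(G(X\oplus V)\to G(X)\bigr)\;\cong\;D_1^V G(V)\oplus D_1^V\CR_2 G(V,X),\]
where symmetry of the cross effects has been used to reorder the arguments of $\CR_2 G$. The base case $n=1$ is exactly the claimed formula, once one rewrites $D_1^{V_1}F(V_1)\cong D_1^{V_1}\CR_1F(V_1)$ via Lemma \ref{lem:reduced-vs-unreduced}.

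For the inductive step, set $G\defeq\nabla^{n-1}F(V_{n-1},\ldots,V_1;-)$, regarded as a functor of one variable, so that $\nabla^n F(V_n,\ldots,V_1;X)=\nabla G(V_n;X)$. The inductive hypothesis together with additivity of the exact functor $D_1^{(n-1)}$ (Proposition \ref{p:Dnproperties}, Lemma \ref{lem:D1-linearity}(ii)) yields
\[ G(Y)\;\cong\;D_1^{(n-1)}\CR_nF(V_{n-1},\ldots,V_1,Y)\;\oplus\;D_1^{(n-1)}\CR_{n-1}F(V_{n-1},\ldots,V_1),\]
the second summand being constant in $Y$. Since $D_1^{(n-1)}$ acts only on the variables $V_{n-1},\ldots,V_1$ it is computed objectwise in $Y$, hence commutes with the cross-effect construction in the $Y$-slot; as $\CR_nF$ is reduced in its last slot and the second summand is constant in $Y$, we get $\CR_1^YG(Y)\cong D_1^{(n-1)}\CR_nF(V_{n-1},\ldots,V_1,Y)$ and then, by the recursive defining formula of Definition \ref{def:CR} applied in the $Y$-slot,
\[ \CR_2^YG(Y_1,Y_2)\;\cong\;D_1^{(n-1)}\CR_{n+1}F(V_{n-1},\ldots,V_1,Y_1,Y_2).\]
Feeding these into the displayed formula for $\nabla G(V_n;X)$, and using $D_1^{V_n}G(V_n)=D_1^{V_n}\CR_1^{V_n}G(V_n)$ (Lemma \ref{lem:reduced-vs-unreduced}), exhibits $\nabla^nF(V_n,\ldots,V_1;X)$ as the direct sum of $D_1^{V_n}D_1^{(n-1)}\CR_nF(V_{n-1},\ldots,V_1,V_n)$ and $D_1^{V_n}D_1^{(n-1)}\CR_{n+1}F(V_{n-1},\ldots,V_1,V_n,X)$. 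In both terms all $n$ linearizations $D_1^{V_1},\ldots,D_1^{V_n}$ are now present; by Lemma \ref{lem:D1-commutativity} they may be performed in any order, and by symmetry of the cross effects the $V_i$ may be reordered as $V_n,\ldots,V_1$. This identifies the two terms with $D_1^{(n)}\CR_nF(V_n,\ldots,V_1)$ and $D_1^{(n)}\CR_{n+1}F(V_n,\ldots,V_1,X)$, and additivity of $D_1^{(n)}$ recombines them into the asserted formula. Every isomorphism invoked is natural, so the result is natural in all variables, completing the induction.

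Part (2) then follows from (1): $\CR_{n+1}F$ and $\CR_nF$ are symmetric in their arguments, and $D_1^{(n)}$ (the sequential linearization in the first $n$ variables) carries a functor symmetric in those $n$ variables to one still symmetric in them. The latter reduces, transpositions generating $\Sigma_n$, to the adjacent case, which follows from Lemma \ref{lem:D1-commutativity}: linearization in a slot disjoint from a transposition commutes with it, while $D_1^j$ applied to a functor with slots $j,j+1$ swapped equals the swap of $D_1^{j+1}$ applied to the original, so combining these across the commuting factors of $D_1^{(n)}$ shows symmetry is preserved. Applying $D_1^{(n)}$ to the symmetry isomorphisms of $\CR_{n+1}F(V_n,\ldots,V_1,X)$ and $\CR_nF(V_n,\ldots,V_1)$ and summing gives (2).

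The crux is the cross-effect bookkeeping in the inductive step: verifying that $D_1^{(n-1)}$, the linearization in the ``partial'' variables, commutes with forming the second cross effect in the remaining variable, and that the second cross effect of the $n$th cross effect is the $(n+1)$st cross effect. Both reduce to the fact that cross effects are assembled objectwise from kernels and biproducts while $D_1$ in disjoint variables is exact, so the effort lies in setting up the identifications rather than in any single hard computation; the remainder is a routine application of the exactness and linearity of $D_1$ from Section \ref{sec:LinearApproximations} and the symmetry of the cross effects.
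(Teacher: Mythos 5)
Your proof is correct. The paper does not reprove part (1) at all — it simply cites \cite{JM:Deriving} for it — and it handles part (2) exactly as you do, via the symmetry of the cross effects together with Lemma \ref{lem:D1-commutativity}; your inductive argument for (1) is a faithful reconstruction of the cited Johnson--McCarthy proof, with the two delicate points (that the exact, additive $D_1^{(n-1)}$ in the fixed variables commutes with forming $\CR_1$ and $\CR_2$ in the remaining slot, and that the second cross effect of $\CR_nF$ in that slot is $\CR_{n+1}F$) handled correctly.
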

\begin{proof} The first statement is \cite[Proposition 5.9]{JM:Deriving}. The second statement, appearing as \cite[Corollary 5.10]{JM:Deriving}, follows immediately from the symmetry of the cross effects and Lemma \ref{lem:D1-commutativity}.
\end{proof}

On the other hand, the $n$th higher order directional derivative of \cite{HMY} suggests that the following definition should be considered.
\begin{defn} \label{def:delta_n}  For a functor $F\colon\cB\kto \cA$ the $n$th \emph{higher order directional derivatives} of $F$ are defined recursively by 
\begin{align*} \Delta_0F(X) &\defeq F(X),\\ \Delta_1F(V; X) &\defeq \nabla F(V; X), \end{align*}
and for $n\geq 2 $ and objects $V_1, \ldots , V_n$ and $X$ in $\cB$, the $n$th higher order directional derivative of $F$ at $X$ in the directions $V_1, \ldots , V_n$ is defined to be
\[ \Delta_n F(V_n, \ldots , V_1; X) \defeq \nabla(\Delta_{n-1} F)\left( (V_n, \ldots, V_2; V_1); (V_{n-1}, \ldots , V_1; X) \right).\]
\end{defn}

Note that $\nabla^1 F = \nabla (\nabla^0 F)$ and $\Delta_1 F = \nabla (\Delta_0 F)$, so the recursive formulas 
for ${\nabla}^n$ and $\Delta_n$ given in Definitions \ref{def:tildenabla^n} and \ref{def:delta_n} hold for all $n = 1$ as well.

\begin{rmk}\label{r:diagrammatically} 
Both the iterated partial directional derivative $\nabla^n$ and the higher order directional derivative $\Delta_n$ 
 can be thought of as an iteration of the directional derivative $\nabla$ where a new direction is introduced at each instance of differentiation.  The difference between these two is that in the first case, the $n$th iterated partial directional derivative is considered as  a functor of a single variable with its first $(n-1)$ ``directions'' (that is, $V_{n-1}, \ldots, V_1$)  fixed.  The new single variable functor of $X$ is then differentiated in a new direction $V_n$.  In the second case, the $n$-variable functor $\Delta_{n-1}F$ is differentiated in the ``direction'' $(V_n, \ldots ,V_2, V_1)$ and at the ``point'' $(V_{n-1}, \ldots , V_1, X)$.   This ``point'' should be thought of as having recorded the history of how the functor has been differentiated in previous iterations, that is, in the directions $V_{n-1}, \ldots , V_1$, respectively.  The effect of this is that the functor $\Delta_n$  can be understood as a restriction of $\nabla(\Delta_{n-1})$  along a diagonal functor
\[ L_n: \cB^{n+1}\to \cB^n\times \cB^n\]
defined by $L_n(V_n, \ldots , V_1, X) = \left((V_n, \ldots , V_1), (V_{n-1}, \ldots , V_1, X)\right)$.  Thus, as an operation on functors from $\cB$ to $\cA$, we have
\[ \Delta_n \defeq L_n^* \circ \nabla \circ \Delta_{n-1}\]
where $L_n^*$ is the functor which precomposes with $L_n$.  We can write this diagrammatically as
\[ \begin{tikzcd} {\Delta_n \defeq [\cB, \cA]}\arrow[r, "{\Delta_{n-1}}"] & {[\cB^{n}, \cA]}\arrow[r, "\nabla"] & {[\cB^{n}\times \cB^n, \cA]}\arrow[r,"{L_n^*}"] & {[\cB^{n+1}, \cA]}\end{tikzcd}\]
where $[ \cB, \cA]$ denotes the hom-set in $\cat{HoAbCat}_{\Ch}$. 
\end{rmk}

By definition, $\nabla^0 F = F = \Delta_0 F$ and $\nabla^1 F = \nabla F = \Delta_1 F$.  However, the two possible ways of iterating the directional derivative diverge for $n \geq 2$. Their precise relationship for $n=2$ is explained in the following proposition.
\begin{prop} \label{prop:n=2nablas} For a functor $F\colon\cB\kto \cA$ and objects $(Z, W, V, X)$ in $\cB^4$, there is a chain homotopy equivalence
\[ \nabla(\nabla F)\left((Z; W) ; (V; X)\right) \simeq \nabla F(Z; X) \oplus {\nabla}^2 F( W, V; X).\]
Specializing to the object $(V_2, V_1, V_1, X)$ in the image of $L_2$, we have a chain homotopy equivalence
\[ \Delta_2F(V_2, V_1; X) \simeq \nabla F(V_2; X) \oplus {\nabla}^2F(V_1, V_1; X).\]
\end{prop}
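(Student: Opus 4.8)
The plan is to recognize both sides of the claimed equivalence inside decompositions that have already been established, reducing the argument to bookkeeping with summands. The key input is Lemma~\ref{lem:nablanabla}, which furnishes a chain homotopy equivalence
\[ \nabla(\nabla F)((Z;W);(V;X)) \simeq D_1^ZF(Z)  \oplus D_1^Z\CR_2F(Z,X)  \oplus D_1^WD_1^V\CR_2F(V,W) \oplus D_1^WD_1^V\CR_3F(V,W,X),\]
so that it remains only to match the first two summands with $\nabla F(Z;X)$ and the last two with $\nabla^2F(W,V;X)$.

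For the first two summands, I would invoke Lemma~\ref{lem:directional}: since $\nabla F(Z;X)=D_1F(X\oplus-)(Z)$ by Definition~\ref{defn:dir-derivative}, that lemma gives $\nabla F(Z;X)\simeq D_1F(Z)\oplus D_1^Z\CR_2F(X,Z)$, and the symmetry of the cross effects \cite[Proposition 1.2]{JM:Deriving} rewrites $\CR_2F(X,Z)\cong\CR_2F(Z,X)$, producing a chain homotopy equivalence $\nabla F(Z;X)\simeq D_1^ZF(Z)\oplus D_1^Z\CR_2F(Z,X)$ (here $D_1^ZF(Z)$ is just the single-variable linearization of $F$ evaluated at $Z$). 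For the last two summands, I would invoke Proposition~\ref{prop:5.9}(i) with $n=2$, which gives $\nabla^2F(W,V;X)\cong D_1^{(2)}\bigl(\CR_3F(W,V,X)\oplus\CR_2F(W,V)\bigr)$; distributing the sequential linearization $D_1^{(2)}$ over the direct sum, reordering the two linearizations by Lemma~\ref{lem:D1-commutativity}, and relabelling the first two slots of the cross effects by their symmetry, this becomes $D_1^WD_1^V\CR_3F(V,W,X)\oplus D_1^WD_1^V\CR_2F(V,W)$. Since a direct sum of chain homotopy equivalences is again one, combining these two identifications with Lemma~\ref{lem:nablanabla} yields the first displayed equivalence.

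The statement about $\Delta_2$ is then the special case obtained by restricting the (pointwise) equivalence just proved along the diagonal $L_2\colon\cB^3\to\cB^2\times\cB^2$ of Remark~\ref{r:diagrammatically}, i.e.\ by evaluating it at $(V_2,V_1,V_1,X)\in\cB^4$: by Definition~\ref{def:delta_n} one has $\Delta_2F(V_2,V_1;X)=\nabla(\nabla F)((V_2;V_1);(V_1;X))$, so substituting $Z=V_2$ and $W=V=V_1$ gives $\Delta_2F(V_2,V_1;X)\simeq\nabla F(V_2;X)\oplus\nabla^2F(V_1,V_1;X)$. I do not expect a genuine obstacle here: all the homotopical content is already packaged into Lemma~\ref{lem:nablanabla} (and, through it, into Lemmas~\ref{lem:Kristine's} and~\ref{lem:directional} and Corollary~\ref{cor:product}), and the only point demanding care is the purely combinatorial one of tracking which object sits in which slot of each cross effect while applying the symmetry of cross effects and the commutativity of sequential linearization.
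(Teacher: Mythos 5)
Your proposal is correct and follows essentially the same route as the paper: decompose $\nabla(\nabla F)$ via Lemma \ref{lem:nablanabla}, identify the first two summands with $\nabla F(Z;X)$ via Lemma \ref{lem:directional}, and identify the remaining two with $\nabla^2F$ via Proposition \ref{prop:5.9}(i), then specialize along $L_2$. The only difference is that you spell out the bookkeeping (symmetry of cross effects, commutativity of sequential linearizations) that the paper leaves implicit in its appeal to Proposition \ref{prop:5.9}.
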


\begin{proof}  By Lemma \ref{lem:nablanabla}, we have a chain homotopy equivalence
 \begin{align*} \nabla(\nabla F)((Z;W)&;(V;X))   \\ &\simeq D_1^ZF(Z)  \oplus D_1^Z\CR_2F(Z,X)  \oplus D_1^WD_1^V\CR_2F(V,W) \oplus D_1^WD_1^V\CR_3F(V,W,X).\end{align*}
By the characterization of $\nabla F$ in Lemma \ref{lem:directional}, $D_1^ZF(Z) \oplus D_1^Z\CR_2F(Z,X)\cong \nabla F(Z; X)$.  We need only show that \[D_1^WD_1^V\CR_2F(V,W) \oplus D_1^WD_1^V\CR_3F(V,W,X)\cong {\nabla}^2 F(V, W; X),\] but this is precisely the natural isomorphism of Proposition \ref{prop:5.9}(i).
\end{proof}
Proposition \ref{prop:n=2nablas} can be generalized to explain the relationship between $\Delta_n$ and  ${\nabla}^n$ for all $n\geq 0$.  This relationship  is surprisingly rich.  In particular, its formulation is reminiscent of the Fa\`{a} di Bruno formula for the $n$th derivative of $f\circ g$ for composable {\it functions} $f$ and $g$.  The relationship is stated precisely in the next theorem.

For a partition $\pi=\{S_1, \ldots, S_k\}$ of the set $\{1,\ldots, n\}$, we use the following notation:
\[
\nabla^{\pi}F(V_n, \ldots, V_1;X)\defeq\nabla^{|\pi|}F(V_{|S_1|}, \ldots, V_{|S_k|};X)
\]
where $|S|$ denotes the cardinality of the set $S$.  Note that by Proposition \ref{prop:5.9}(ii), the order in which we write the $V_{|S_i|}$'s doesn't matter.

\begin{ex}\label{ex:base-case}
For the partition $\pi = \{ \{1\}, \{2\}\}$, $\nabla^\pi F(V_1,V_2;X) \defeq \nabla^2F(V_1,V_1;X)$, while for $\pi = \{\{1,2\}\}$, $\nabla^\pi F(V_1,V_2;X) \defeq \nabla F(V_2;X)$. Not coincidentally, these are the two terms appearing in the formula for $\Delta_2 F(V_2,V_1;X)$ proven in Proposition \ref{prop:n=2nablas}.
\end{ex}

We are now prepared to state the general result:

\begin{thm}\label{thm:restatedgoal} For any $F \colon \cB\kto \cA$ and objects $V_n,\ldots, V_1,X$, there is a chain homotopy equivalence
\[\Delta_nF(V_n,\ldots, V_1;X)\simeq  \bigoplus _{ \pi=\{S_1, \ldots, S_k\}}\nabla^{\pi}F(V_n, \ldots, V_1;X)\]
where the sum is taken over all partitions $\pi$ of the set $\{1,2,\ldots, n\}$.  
\end{thm}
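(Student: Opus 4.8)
The plan is to argue by induction on $n$, with the base cases $n=0,1$ holding by definition and $n=2$ being exactly Proposition~\ref{prop:n=2nablas} rewritten in the partition notation of Example~\ref{ex:base-case}. For the inductive step, assume the chain homotopy equivalence
\[ \Delta_{n-1}F(V_{n-1},\ldots,V_1;X) \simeq \bigoplus_{\pi'} \nabla^{\pi'}F(V_{n-1},\ldots,V_1;X),\]
where $\pi'$ ranges over partitions of $\{1,\ldots,n-1\}$. By the diagrammatic description in Remark~\ref{r:diagrammatically}, $\Delta_n F = L_n^*\circ\nabla\circ\Delta_{n-1}F$, so I would first apply $\nabla$ to both sides. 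Since $\nabla$ preserves chain homotopy equivalences (the Observation following Definition~\ref{defn:dir-derivative}) and distributes over direct sums up to isomorphism (Theorem~\ref{thm:diffcat}(i)), we get
\[ \nabla(\Delta_{n-1}F) \simeq \bigoplus_{\pi'} \nabla\bigl(\nabla^{\pi'}F\bigr),\]
and then precompose with $L_n$, which also manifestly commutes with direct sums. So the whole problem reduces to understanding, for a single partition $\pi' = \{S_1,\ldots,S_{k}\}$ of $\{1,\ldots,n-1\}$, what $L_n^*\nabla(\nabla^{\pi'}F)$ is, i.e.\ the second-order mixed directional derivative of an iterated partial directional derivative evaluated along the diagonal $L_n$.

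The key computational input will be a generalization of Lemma~\ref{lem:nablanabla}: I need a formula for $\nabla(\nabla^{m}F)$ in terms of multilinearized cross effects of $F$, and then to evaluate it along $L_n$. Concretely, $\nabla^{\pi'}F = \nabla^{k}F(V_{|S_1|},\ldots,V_{|S_k|};-)$ is, by Proposition~\ref{prop:5.9}(i), a sequential multilinearization of $\CR_{k+1}F \oplus \CR_{k}F$; applying $\nabla$ in a new direction and then restricting along $L_n$ (which repeats the variable $V_1$ in the direction-history slot) should, by the same bookkeeping used in the proof of Lemma~\ref{lem:nablanabla} — expand the relevant cross effect via Definition~\ref{def:CR}, kill the strictly multi-reduced terms in which a variable appears twice using Lemma~\ref{lem:Kristine's}/Corollary~\ref{cor:Kristine's}, and simplify the surviving terms using linearity of $D_1$ (Lemma~\ref{lem:D1-linearity}) and Corollary~\ref{cor:product} — produce exactly
\[ L_n^*\nabla(\nabla^{\pi'}F)(V_n,\ldots,V_1;X) \simeq \nabla^{\pi'_{+}}F(V_n,\ldots,V_1;X)\ \oplus\ \bigoplus_{j} \nabla^{\pi'_{\cup j}}F(V_n,\ldots,V_1;X),\]
where $\pi'_{+}$ is the partition of $\{1,\ldots,n\}$ obtained from $\pi'$ by adjoining $\{n\}$ as a new singleton block, and $\pi'_{\cup j}$ ($j=1,\ldots,k$) is obtained from $\pi'$ by inserting $n$ into the block $S_j$ (reindexing so that $\{1,\ldots,n-1\}$ becomes the bottom $n-1$ elements and $n$ becomes the new top). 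The cleanest way to package this is as a lemma, proven by the Lemma~\ref{lem:nablanabla} method applied to $\nabla^{\pi'}F$ in place of $F$: the direction variable $V_n$ enters either ``on its own'' (giving the singleton block) or ``merged into'' one of the existing directions of $\nabla^{\pi'}$ (giving the augmented blocks), with all cross-terms vanishing by multi-reducedness.

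Granting this per-partition formula, the inductive step closes by a combinatorial observation: as $\pi'$ ranges over all partitions of $\{1,\ldots,n-1\}$ and, for each, we take the partition $\pi'_{+}$ together with the $|\pi'|$ partitions $\pi'_{\cup j}$, we obtain each partition $\pi$ of $\{1,\ldots,n\}$ exactly once — namely, given $\pi$, delete the element $n$ from its block; if that block was a singleton $\{n\}$ the result is $\pi'$ with $\pi = \pi'_{+}$, and otherwise the result is a partition $\pi'$ with $\pi = \pi'_{\cup j}$ for the $j$ indexing the block that contained $n$. Assembling the direct sums over all $\pi'$ therefore gives $\bigoplus_{\pi}\nabla^{\pi}F(V_n,\ldots,V_1;X)$, as desired. \textbf{The main obstacle} I anticipate is the first step of the bookkeeping lemma: carefully expanding the cross effects of $\nabla^{\pi'}F$ — itself already a multilinearization of a sum of cross effects of $F$ in several variables — to see precisely which terms survive after applying $D_1$ in the new direction and restricting along $L_n$, and verifying that the surviving terms reassemble as iterated partial directional derivatives indexed exactly by $\pi'_{+}$ and the $\pi'_{\cup j}$. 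This is the natural $n$-variable generalization of Lemma~\ref{lem:nabla-cr2} and Lemma~\ref{lem:nablanabla}, and getting the indexing of variables and the identification via Proposition~\ref{prop:5.9}(i) exactly right (rather than merely up to permutation, which is all we need thanks to Proposition~\ref{prop:5.9}(ii)) is where the care is required; a concise formulation in terms of the partition-counting formula mentioned in \S\ref{sec:faa} may streamline the presentation.
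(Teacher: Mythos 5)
Your proposal is correct and follows essentially the same route as the paper: the per-partition formula you isolate as the key computational input is precisely the paper's Lemma \ref{l:nabla1nablatwiddle}, which the paper proves exactly by the method you describe (rewrite $\nabla^{\pi'}F$ via Proposition \ref{prop:5.9}(i), discard the terms in which a barred direction variable appears more than once using Corollary \ref{cor:Kristine's}, and reassemble via Proposition \ref{prop:5.9}(i)). The inductive/combinatorial closing step --- each partition of $\{1,\ldots,n\}$ arising exactly once from a partition of $\{1,\ldots,n-1\}$ by adjoining the top element as a singleton or inserting it into an existing block --- is also exactly the paper's argument.
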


Theorem \ref{thm:restatedgoal} is a consequence of the following lemma.

\begin{lem}\label{l:nabla1nablatwiddle} For a partition $\pi=\{S_1, \ldots, S_k\}$ of $\{1,\ldots, n\}$, there is a chain homotopy equivalence
\begin{align}\label{e:lemma} \nabla( \nabla^{\pi}F&)((V_{n+1}, \ldots; V_1);(V_n, \ldots, V_1; X)) \simeq \\ \nabla^{|\pi|+1}F(V_{|S_1|}, &\ldots, V_{|S_k|}, V_1; X)\oplus\bigoplus_{i=1}^k\nabla^{|\pi|}F(V_{|S_1|}, \ldots, V_{|S_{i-1}|}, V_{|S_i|+1}, V_{|S_{i+1}|},\ldots, V_{|S_k|}; X).\notag
\end{align}
\end{lem}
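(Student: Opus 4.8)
The plan is to prove Lemma~\ref{l:nabla1nablatwiddle} in two stages: first reduce its left-hand side to an ordinary directional derivative of $\nabla^{k}F$, where $k=|\pi|$, evaluated at an explicit pair of objects, and then establish a general formula for $\nabla(\nabla^{k}F)$ that specializes to the claimed direct sum. For the reduction, observe that, regarded as a functor of the $n+1$ variables $(V_n,\dots,V_1,X)$, the functor $\nabla^{\pi}F$ is by construction $\nabla^{k}F$ precomposed with the \emph{strictly linear} reindexing functor $p_\pi\colon\cB^{n+1}\to\cB^{k+1}$ sending $(V_n,\dots,V_1,X)$ to $(V_{|S_1|},\dots,V_{|S_k|},X)$; indeed $p_\pi$ is a composite of coordinate projections and diagonals, each of which preserves finite coproducts and the zero object on the nose. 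Since the directional derivative of a strictly linear functor $L$ is just $(\vec W;\vec Y)\mapsto L(\vec W)$ (argue as in the proof of Theorem~\ref{thm:diffcat}(iii), using that $L$ is its own linearization together with Lemma~\ref{lem:D1-linearity}), the chain rule of Theorem~\ref{thm:diffcat}(v) yields a chain homotopy equivalence
\[ \nabla(\nabla^{\pi}F)\bigl((V_{n+1},\dots;V_1);(V_n,\dots,V_1;X)\bigr)\;\simeq\;\nabla(\nabla^{k}F)\bigl((V_{|S_1|+1},\dots,V_{|S_k|+1};V_1);(V_{|S_1|},\dots,V_{|S_k|};X)\bigr),\]
since $p_\pi$ carries the direction tuple $(V_{n+1},V_n,\dots,V_2,V_1)$ to $(V_{|S_1|+1},\dots,V_{|S_k|+1},V_1)$.

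It therefore suffices to prove, for every $k\ge1$ and all objects $Y_1,\dots,Y_k,Y_0,W_1,\dots,W_k,W_0$ of $\cB$, the general formula
\[ \nabla(\nabla^{k}F)\bigl((W_k,\dots,W_1;W_0);(Y_k,\dots,Y_1;Y_0)\bigr)\;\simeq\;\nabla^{k+1}F(Y_1,\dots,Y_k,W_0;Y_0)\oplus\bigoplus_{i=1}^{k}\nabla^{k}F(Y_1,\dots,W_i,\dots,Y_k;Y_0), \]
where in the $i$-th summand $Y_i$ has been replaced by $W_i$; Lemma~\ref{l:nabla1nablatwiddle} then follows upon setting $W_i=V_{|S_i|+1}$, $W_0=V_1$, $Y_i=V_{|S_i|}$, $Y_0=X$ and invoking the symmetry of Proposition~\ref{prop:5.9}(ii). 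When $k=1$ this general formula is precisely Lemma~\ref{lem:nablanabla} rewritten via Proposition~\ref{prop:5.9}(i), and that computation is the template for the general case.

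To prove the general formula, apply Lemma~\ref{lem:directional} with $\cB^{k+1}$ in the role of the domain abelian category and $G\defeq\nabla^{k}F\colon\cB^{k+1}\kto\cA$, obtaining a chain homotopy equivalence $\nabla G(\vec W;\vec Y)\simeq D_1 G(\vec W)\oplus D_1^{\vec W}\CR_2 G(\vec Y,\vec W)$ in which $D_1$, $\CR_2$ and $D_1^{\vec W}$ are all computed in $\cB^{k+1}$ (so $D_1$ is the simultaneous linearization in all $k+1$ coordinates). By Proposition~\ref{prop:5.9}(i) write $G\cong G_1\oplus G_2$ with $G_1=D_1^{(k)}\CR_{k+1}F(Y_1,\dots,Y_k,Y_0)$ strictly multi-reduced in all $k+1$ variables and $G_2=D_1^{(k)}\CR_kF(Y_1,\dots,Y_k)$ strictly multi-reduced in $Y_1,\dots,Y_k$ and constant in $Y_0$. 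For the term $D_1 G(\vec W)$: the evident extension of Lemma~\ref{lem:Kristine's} to more than two variables annihilates $D_1 G_1$, and annihilates $D_1 G_2$ as well when $k\ge2$; when $k=1$, $G_2=D_1F(Y_1)$ is already linear in the product category, so $D_1 G(\vec W)\cong D_1F(W_1)$, which is exactly the reduced-linear summand of $\nabla^{1}F(W_1;Y_0)$. For the term $D_1^{\vec W}\CR_2 G(\vec Y,\vec W)=D_1^{\vec W}\CR_2 G_1(\vec Y,\vec W)\oplus D_1^{\vec W}\CR_2 G_2(\vec Y,\vec W)$, expand the two product-category second cross effects: using multilinearity of $G_1,G_2$ in $Y_1,\dots,Y_k$, reducedness of $G_1$ in $Y_0$, and the defining identities for higher cross effects of $F$ (Definition~\ref{def:CR}), $\CR_2 G_1$ and $\CR_2 G_2$ become direct sums of multilinearized cross-effect terms of $F$ evaluated at mixtures of the $Y_j$'s and $W_j$'s. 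Applying $D_1^{\vec W}$ and using Lemma~\ref{lem:Kristine's} (and Corollary~\ref{cor:Kristine's}) to kill every term in which two distinct variables among $W_0,\dots,W_k$ occur, the survivors are exactly: the terms whose unique $\vec W$-variable is $W_0$ (in the $Y_0$-slot), which reassemble — via Proposition~\ref{prop:5.9}(i) run backwards, with the $\CR_{k+1}F$- and $\CR_{k+2}F$-pieces coming from the $\CR_1$- and $\CR_2$-parts of $G_1$ in the $Y_0$-variable — into $\nabla^{k+1}F(Y_1,\dots,Y_k,W_0;Y_0)$; and, for each $i$, the terms whose unique $\vec W$-variable is $W_i$ (in the $Y_i$-slot), which reassemble (the $\CR_{k+1}F$-piece from $\CR_2 G_1$, the $\CR_kF$-piece from $\CR_2 G_2$) into $\nabla^{k}F(Y_1,\dots,W_i,\dots,Y_k;Y_0)$. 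The extra linearization in $W_0$ (respectively $W_i$) required to match the form of Proposition~\ref{prop:5.9}(i) is precisely what the outer $D_1^{\vec W}$ supplies.

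The step I expect to be the main obstacle is this last computation: identifying the product-category second cross effect $\CR_2(\nabla^{k}F)$ in terms of the cross effects of $F$, and then verifying that after applying $D_1^{\vec W}$ and discarding every term in which some $W_j$ occurs more than once or two distinct $W_j$'s occur, the remainder reorganizes into exactly the $k+1$ summands of the general formula. This is the natural generalization of Lemmas~\ref{lem:nabla-cr2} and~\ref{lem:nablanabla} from $k=1$ to arbitrary $k$, and I would isolate the identification of $\CR_2(\nabla^{k}F)$ as a standalone lemma — the analogue of Lemma~\ref{lem:nabla-cr2} — before taking $D_1^{\vec W}$.
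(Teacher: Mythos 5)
Your proposal is correct and follows essentially the same route as the paper's proof: both sides express $\nabla^{\pi}F$ via Proposition \ref{prop:5.9}(i) as a sum of multilinearized cross effects, expand the outer directional derivative, discard every summand in which two or more direction variables appear using Lemma \ref{lem:Kristine's} and Corollary \ref{cor:Kristine's}, and reassemble the surviving terms via Proposition \ref{prop:5.9}(i). The only organizational difference is your preliminary reduction to a generic-variable formula for $\nabla(\nabla^{k}F)$ by writing $\nabla^{\pi}F$ as $\nabla^{k}F$ precomposed with the reindexing functor $p_\pi$ and invoking the chain rule --- this is exactly the mechanism of Lemma \ref{lem:nabla-and-diagonals}, which the paper deploys only later in \S\ref{sec:chain-rule}; the paper's proof of the present lemma instead carries the partition-collapsed variables $\overline V_{|S_i|+1}$ through the same computation directly.
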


This lemma permits an expeditious proof of Theorem \ref{thm:restatedgoal}.

\begin{proof}[Proof of Theorem \ref{thm:restatedgoal}]
The right-hand side of \eqref{e:lemma} can be interpreted as  the sum of terms of the form $\nabla^{\sigma}F(V_{n+1}, \ldots, V_1; X)$ taken over all partitions $\sigma$ of $\{1, 2, \ldots, n+1\}$ that are obtained from the partition $\pi=\{S_1, \ldots, S_k\}$ of $\{1,\ldots, n\}$ by either adding $\{n+1\}$ as a separate set (contributing the first summand) or adding $n+1$ to one of the $S_i$'s (contributing the remaining $k$ summands).  Since all partitions of $\{1,\ldots, n+1\}$ can be obtained in this way from partitions of $\{1,\ldots, n\}$, Theorem \ref{thm:restatedgoal} is proven by a simple induction with Proposition \ref{prop:n=2nablas} as the base case, as explained by Example \ref{ex:base-case}, and Lemma \ref{l:nabla1nablatwiddle} as the inductive step. 
\end{proof}

 Next we prove the lemma.

\begin{proof}[Proof of Lemma \ref{l:nabla1nablatwiddle}]

Let $\pi = \{S_1,\ldots, S_k\}$ be a partition of $\{1,\ldots, n\}$. By Proposition \ref{prop:5.9}(i)
\begin{align*}
 \nabla ^{\pi}F(V_n,\ldots, V_1; X)&\defeq \nabla ^kF(V_{|S_1|}, \ldots, V_{|S_k|}; X)\\
&\cong D_1^{(k)}\CR_kF(V_{|S_1|}, \ldots, V_{|S_k|})\oplus D_1^{(k)}\CR_{k+1}F(V_{|S_1|}, \ldots, V_{|S_k|},X).
\end{align*}
By Definition \ref{defn:dir-derivative-JM}, $\nabla ( \nabla^{\pi}F)((\overline V_{n+1}, \ldots; \overline V_1);(V_n, \ldots, V_1; X))$ is equal to \begin{equation}\label{e:nablav}D_1^{\overline V_{n+1}\times \cdots \times \overline V_1}{\rm ker}( \nabla^{\pi}F(\overline V_{n+1}\oplus V_n, \ldots; \overline V_1\oplus X)\rightarrow  \nabla^{\pi}F(V_n, \ldots, V_1; X)),\end{equation} 
where we have added new variables $\overline V_i$ to keep track of where the linearizations are being applied.  
 Recalling Convention \ref{lem:multilinearization}, the notation $D_1^{\overline V_{n+1}\times \cdots \times \overline V_1}$ indicates the linearization $D_1H$ of a functor $H\colon\cB^{n+1}\to \cA$, where $H$ is the functor \[{\rm ker}( \nabla^{\pi}F(-\oplus V_n, \ldots; -\oplus X)\rightarrow  \nabla^{\pi}F(V_n, \ldots, V_1; X))\] 
and the $V_i$'s and $X$ are fixed values of $\cB$.  Because $H$ is a functor of several variables, this linearization involves the simultaneous linearization of the variables $\overline V_1, \ldots , \overline V_{n+1}$.

Our strategy will be to use Proposition 5.9 to rewrite \eqref{e:nablav} in terms of multilinearizations of cross effects, use Lemma \ref{lem:Kristine's} and Corollary \ref{cor:Kristine's} to eliminate many of the summands after taking the kernel and applying $D_1^{( V_{n+1}, \ldots,  V_1)}$, and use Proposition \ref{prop:5.9}(i) to verify that what is left is equivalent to (\ref{e:lemma}).

By Proposition \ref{prop:5.9}(i), \eqref{e:nablav} is isomorphic to $D_1^{\overline V_{n+1}\times \cdots \times \overline V_1}$ applied to the kernel of the map 
\[
\begin{tikzcd}
D_1^{(k)}\CR_kF(\overline V_{|S_1|+1}\oplus V_{|S_1|}, \ldots, \overline V_{|S_k|+1}\oplus V_{|S_k|})\oplus D_1^{(k)}\CR_{k+1}F(\overline V_{|S_1|+1}\oplus V_{|S_1|}, \ldots,\overline V_1\oplus X) \arrow[d] \\
D_1^{(k)}\CR_kF(V_{|S_1|}, \ldots, V_{|S_k|})\oplus D_1^{(k)}\CR_{k+1}F(V_{|S_1|}, \ldots, V_{|S_k|}, X)
\end{tikzcd}
\]
Since the domain of this map is linear in its first $k$ variables, it is chain homotopy equivalent to an expansion of this expression as illustrated for instance by:
\begin{align*}
&D_1^{(k)}\CR_kF(\overline V_{|S_1|+1}\oplus V_{|S_1|}, \ldots, \overline V_{|S_k|+1}\oplus V_{|S_k|})\\
&\simeq D_1^{(k)}\CR_kF(\overline V_{|S_1|+1}, \ldots, \overline V_{|S_k|+1}\oplus V_{|S_k|})\oplus D_1^{(k)}\CR_kF(V_{|S_1|}, \ldots, \overline V_{|S_k|+1}\oplus V_{|S_k|})\\
&\simeq D_1^{(k)}\CR_kF(\overline V_{|S_1|+1}, \overline V_{|S_2|+1}, \ldots, \overline V_{|S_k|+1}\oplus V_{|S_k|})\oplus D_1^{(k)}\CR_kF(\overline V_{|S_1|+1},V_{|S_2|}, \ldots, \overline V_{|S_k|+1}\oplus V_{|S_k|}) \\
&\quad\oplus D_1^{(k)}\CR_kF(V_{|S_1|},\overline V_{|S_2|+1}, \ldots, \overline V_{|S_k|+1}\oplus V_{|S_k|})\oplus D_1^{(k)}\CR_kF(V_{|S_1|}, V_{|S_2|}, \ldots, \overline V_{|S_k|+1}\oplus V_{|S_k|})\\
&\simeq \cdots 
\end{align*}
but rather than carrying out these expansions completely,  we note that because we are applying $D_1^{\overline V_{n+1}\times \cdots \times \overline V_1}$, Corollary \ref{cor:Kristine's} guarantees that any summand that has more than one $\overline V_i$ will be contractible.  
Hence we see that $D_1^{\overline V_{n+1}\times \cdots \times \overline V_1}( \nabla^{\pi}F)(\overline V_{n+1}\oplus V_n, \ldots; \overline V_1\oplus X)$ is chain homotopy equivalent to $D_1^{\overline V_{n+1}\times \cdots \times \overline V_1}$ of 
\begin{align*}
&\bigoplus _{i=1}^kD_1^{(k)}\CR_kF(V_{|S_1|}, \ldots, V_{|S_{i-1}|}, \overline V_{|S_i|+1}, V_{|S_{i+1}|},\ldots, V_{|S_k|})\\
&\oplus \bigoplus _{i=1}^k D_1^{(k)}\CR_{k+1}F(V_{|S_1|}, \ldots, V_{|S_{i-1}|}, \overline V_{|S_i|+1}, V_{|S_{i+1}|},\ldots, V_{|S_k|}, \overline V_1\oplus X)\\
&\oplus D_1^{(k)}\CR_{k+1}F(V_{|S_1|}, \ldots, V_{|S_k|}, \overline V_1\oplus X)\oplus D_1^k\CR_kF(V_{|S_1|}, \ldots, V_{|S_k|}).
\end{align*}
  That is, the only terms with a single variable of the form $\overline V_t$ appear in the kernel. 
A term of the form \[D_1^{(k)}\CR_{k+1}F(V_{|S_1|}, \ldots, V_{|S_{i-1}|}, \overline  V_{|S_i|+1}, V_{|S_{i+1}|},\ldots, V_{|S_k|}, \overline V_1\oplus X)\] can be expanded as 
\begin{align*}
&D_1^{(k)}\CR_{k+1}F(V_{|S_1|}, \ldots, V_{|S_{i-1}|}, \overline V_{|S_i|+1}, V_{|S_{i+1}|},\ldots, V_{|S_k|}, \overline V_1)\\
&\oplus D_1^{(k)}\CR_{k+1}F(V_{|S_1|}, \ldots, V_{|S_{i-1}|}, \overline V_{|S_i|+1}, V_{|S_{i+1}|},\ldots, V_{|S_k|}, X)\\ 
&\oplus D_1^{(k)}\CR_{k+2}F(V_{|S_1|}, \ldots, V_{|S_{i-1}|}, \overline V_{|S_i|+1}, V_{|S_{i+1}|},\ldots, V_{|S_k|}, \overline V_1,X).
\end{align*}
But, again by Corollary \ref{cor:Kristine's}, the first and third summands will be contractible after applying $D_1^{\overline V_{n+1}\times \cdots \times \overline V_1}$.  Hence, after computing the kernel and applying $D_1^{\overline V_{n+1}\times \cdots \times \overline V_1}$, we see that \[\nabla ( \nabla^{\pi}F)((V_{n+1}, \ldots; V_1);(V_n, \ldots, V_1; X))\] is chain homotopy equivalent to
\begin{align*}
&\bigoplus _{i=1}^k D_1^{(k)}\CR_kF(V_{|S_1|}, \ldots, V_{|S_{i-1}|}, \overline V_{|S_i|+1}, V_{|S_{i+1}|}, \ldots, V_{|S_k|})\\
&\oplus \bigoplus _{i=1}^k D_1^{(k)}\CR_{k+1}F(V_{|S_1|}, \ldots, V_{|S_{i-1}|}, \overline V_{|S_i|+1}, V_{|S_{i+1}|}, \ldots, V_{|S_k|}, X)\\
&\oplus D_1^{(k+1)}\CR_{k+1}F(V_{|S_1|}, \ldots, V_{|S_k|},\overline V_1)\oplus D_1^{(k)}\CR_{k+2}F(V_{|S_1|}, \ldots, V_{|S_k|}, \overline V_1, X).
\end{align*}
By Proposition \ref{prop:5.9}(i), this is isomorphic to:
\[
\nabla^{|\pi|+1}F(V_{|S_1|}, \ldots, V_{|S_k|}, V_1; X)\oplus\bigoplus_{i=1}^k\nabla^{|\pi|}F(V_{|S_1|}, \ldots, V_{|S_{i-1}|}, V_{|S_i|+1}, V_{|S_{i+1}|},\ldots, V_{|S_k|};X) \qedhere
\]
\end{proof}

%%%%%%%%%%%%%%%%%%%%%%%%%%%%%%%%%%%%%%%%%%%%%%%%%%%%%%%%%%%%%%%

\section{The higher order  chain rule}\label{sec:chain-rule}

This brings us to the original motivation for this paper, the higher order directional derivative chain rule.  Having established a relationship between the $n$th higher directional derivative and the $n$th iterated partial directional derivative,  our goal is to establish a higher order chain rule in the style of \cite{HMY} for the former of these two.  

\begin{thm}\label{t:higherchainrule}  For a composable pair $F\colon\cB\kto \cA$ and $G\colon\cC\kto \cB$ of functors of abelian categories, the $n$th directional derivative satisfies
\[ \Delta_n (F\circ G)(V_n, \ldots , V_1; X) \simeq \Delta_n F(\Delta_nG(V_n, \ldots , V_1; X), \ldots , \Delta_1 G(V_1; X); G(X)).\]
\end{thm}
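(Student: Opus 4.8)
The plan is to exploit the cartesian differential — equivalently, tangent — category structure on $\Ho\cat{AbCat}_{\Ch}$ established in Corollary \ref{cor:CDC}. Recall that a cartesian differential category carries an endofunctor $T$ sending an object $\cB$ to $\cB\times\cB$ and a morphism $F\colon\cB\kto\cA$ to $TF\colon\cB\times\cB\kto\cA\times\cA$, $TF(V;X)=(\nabla F(V;X);F(X))$. Properties (iii), (iv), (v) of Theorem \ref{thm:diffcat} — the value of $\nabla$ on identities, its compatibility with pairings, and the first-order chain rule — say precisely that $T$ is a functor: $T(\id)\cong\id$ and $T(F\circ G)\simeq TF\circ TG$. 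Since $\nabla$, and hence $T$, preserves pointwise chain homotopy equivalence (the Observation following Lemma \ref{lem:directional}), a routine induction gives, for every $n\geq 1$ and every composable pair $F,G$,
\[ T^n(F\circ G)\simeq T^nF\circ T^nG \]
as morphisms $\cC^{2^n}\kto\cA^{2^n}$.

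The second step is to recover $\Delta_n$ from $T^n$. Iterating (iv), the coordinate of $T^nH\colon\cB^{2^n}\kto\cA^{2^n}$ indexed by the ``all-tangent-directions'' leaf of the underlying binary tree is the iterated total directional derivative $\nabla^{\times n}H\defeq\nabla(\nabla(\cdots\nabla H))$; write $\mathrm{pr}\colon\cA^{2^n}\to\cA$ for the projection onto that coordinate. On the other hand, Remark \ref{r:diagrammatically} identifies $\Delta_nH$ as the restriction of $\nabla^{\times n}H$ along an iteration of the diagonals $L_j$. Each $L_j$ is a strictly reduced linear functor, so Theorem \ref{thm:diffcat}(iii)--(v) let us commute $\nabla$ past restriction along $L_j$ up to chain homotopy equivalence; collapsing all the diagonals into a single strictly reduced diagonal $\Lambda_n\colon\cB^{n+1}\to\cB^{2^n}$ yields $\Delta_nH\simeq\mathrm{pr}\circ T^nH\circ\Lambda_n$, naturally in $H$. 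Because $\Lambda_n$ is strictly reduced, precomposition with it is well-behaved with respect to Kleisli composition (cf.\ Proposition \ref{lem:5.7}).

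Finally, I would precompose the equivalence $T^n(F\circ G)\simeq T^nF\circ T^nG$ with $\Lambda_n\colon\cC^{n+1}\to\cC^{2^n}$ and postcompose with $\mathrm{pr}$. The left-hand side becomes $\Delta_n(F\circ G)(V_n,\ldots,V_1;X)$. For the right-hand side one checks, leaf by leaf, that $T^nG\circ\Lambda_n$ has $j$-th block of coordinates equal to $\Delta_jG(V_j,\ldots,V_1;X)$ for $j=0,1,\ldots,n$ — exactly the content of the recursion $\Delta_{j+1}G=L_{j+1}^*\nabla\Delta_jG$ together with the suffix-projection conventions baked into $\Lambda_n$ — so that $\mathrm{pr}\circ T^nF\circ(T^nG\circ\Lambda_n)$ is $\Delta_nF$ evaluated at $(\Delta_nG(V_n,\ldots,V_1;X),\ldots,\Delta_1G(V_1;X);G(X))$, giving the theorem. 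Alternatively, and perhaps more transparently, the statement can be proved in the form $\Delta_n(F\circ G)\simeq\Delta_nF\circ\langle\Delta_nG,\ldots,\Delta_1G,G\rangle$ by a direct induction on $n$: the base case $n=1$ is Theorem \ref{thm:diffcat}(v); for the inductive step apply $\nabla$ to $\Delta_{n-1}(F\circ G)\simeq\Delta_{n-1}F\circ\langle\Delta_{n-1}G,\ldots,\Delta_1G,G\rangle$, expand via Theorem \ref{thm:diffcat}(iv) and (v), restrict along $L_n$, and use $\Delta_{j+1}G=L_{j+1}^*\nabla\Delta_jG$ to recognize the resulting summands (cross-checking $n=2$ against Proposition \ref{prop:n=2nablas}).

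I expect the main obstacle to be the combinatorial bookkeeping in this last step: matching the iterated diagonal $\Lambda_n$ (equivalently, the nested $L_j$'s) with the suffix-projections so that the restriction of $T^nG$ produces precisely the list $(\Delta_nG(V_n,\ldots,V_1;X),\ldots,\Delta_1G(V_1;X),G(X))$ and the ``point'' fed to the outermost $\Delta_nF$ is $G(X)$. A secondary, purely technical point is verifying that Theorem \ref{thm:diffcat} and the cartesian differential structure apply with product abelian categories such as $\cB^n$ and $\cC^{n+1}$ as domains; this is immediate from the conventions of Section \ref{sec:kleisli}, since such categories have finite coproducts and a zero object.
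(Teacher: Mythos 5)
Your proposal is correct and follows essentially the same route as the paper: establish functoriality of the tangent endofunctor $T$ from Theorem \ref{thm:diffcat}(iii)--(v), iterate to get $T^n(F\circ G)\simeq T^nF\circ T^nG$, and then identify $\Delta_n$ as $\pi_L\circ T^n(-)\circ d_n^*$ for a diagonal $d_n^*\colon\cC^{n+1}\to\cC^{2^n}$ (your $\Lambda_n$), with the ``leaf by leaf'' bookkeeping you flag carried out in the paper via the power-set indexing of the coordinates of $T^n$ and Lemmas \ref{lem:Tn-component}--\ref{lem:Knd}.
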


For the case $n=2$, the third and fifth authors carried out a very careful computation verifying this theorem using only properties of linearization and cross effects.  A short paper containing this computation is in preparation.

We will explain how this chain rule can be derived as a natural consequence of structures associated to cartesian differential categories. Cockett and Cruttwell introduce a more general notion of a \emph{tangent category} \cite{CC} and show that any cartesian differential category gives an example. The key component of a tangent category structure is an endofunctor $T$. In our case, $T$ is defined as follows.  

\begin{defn}\label{def:tangent-functor} 
There is a functor $T \colon \Ho\cat{AbCat}_{\Ch} \to \Ho\cat{AbCat}_{\Ch}$ defined on objects by $\cA \mapsto \cA \times \cA$ and on morphisms $F \colon \cB \kto \cA$ by
\[TF \defeq \langle \nabla F, F\pi_R \rangle \colon \cB \times \cB \kto \cA \times \cA,\]
where $\pi_R\colon\cB \times \cB \to \cB$ denotes the projection onto the second, or right-hand, factor. That is
\[
TF(V,X) \defeq \langle\nabla F(V;X), F(X)\rangle.\]
\end{defn}

Importantly, Theorem \ref{thm:diffcat}(v) implies that $T$ is \emph{functorial} up to chain homotopy equivalence, as asserted in Definition \ref{def:tangent-functor}.

\begin{lem}\label{lem:tangent-functor}  For a composable pair $F\colon\cB\kto \cA$ and $G\colon\cC\kto \cB$ of functors of abelian categories,  $T(F\circ G)$ is chain homotopy equivalent to $TF \circ TG$.  
\end{lem}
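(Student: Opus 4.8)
The plan is to deduce this purely formally from Theorem~\ref{thm:diffcat}(v) together with the fact, recorded just before Corollary~\ref{cor:CDC}, that the coordinatewise product of abelian categories is a genuine product in $\Ho\cat{AbCat}_{\Ch}$. First I would unwind both sides as morphisms $\cC \times \cC \kto \cA \times \cA$. By definition $T(F \circ G) = \langle \nabla(F \circ G),\, (F \circ G)\pi_R \rangle$, where $\pi_R \colon \cC \times \cC \to \cC$ is the right projection. On the other side, since post-composition with a pairing distributes over it (the universal property of the product $\cA \times \cA$), we have
\[ TF \circ TG = \langle \nabla F,\, F\pi_R \rangle \circ TG = \langle\, \nabla F \circ TG,\ (F\pi_R) \circ TG \,\rangle, \]
so it suffices to compare the two sides one coordinate at a time.

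For the second (degree-zero) coordinate: the projection $\pi_R \colon \cB \times \cB \to \cB$ satisfies $\pi_R \circ TG = \pi_R \circ \langle \nabla G, G\pi_R \rangle \cong G\pi_R$, again by the universal property of products, and then associativity of Kleisli composition gives $(F\pi_R)\circ TG = F \circ (\pi_R \circ TG) \cong F \circ (G\pi_R) \cong (F \circ G)\pi_R$, matching the second component of $T(F\circ G)$. For the first coordinate: unwinding the definitions, $\nabla F \circ TG$ is exactly $\nabla F$ precomposed with $\langle \nabla G, G\pi_R \rangle$, that is, the functor $(V, X) \mapsto \nabla F(\nabla G(V;X); G(X))$, and Theorem~\ref{thm:diffcat}(v) provides a chain homotopy equivalence
\[ \nabla(F \circ G)(V;X) \simeq \nabla F(\nabla G(V;X); G(X)). \]
Since direct sums and chain homotopies in the product category $\cA \times \cA$ are computed coordinatewise (as in the proof of Lemma~\ref{lem:other-product}), these two coordinatewise comparisons — an isomorphism in the degree-zero slot and a chain homotopy equivalence in the directional-derivative slot — assemble into a chain homotopy equivalence $T(F \circ G) \simeq TF \circ TG$.

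The step I expect to require the most care, though not the most ideas, is justifying that pairings and projections behave as expected with respect to Kleisli composition, which is implemented by Dold--Kan prolongation followed by totalization; this is precisely the assertion that $\Ho\cat{AbCat}_{\Ch}$ admits coordinatewise products, and it holds because chain complexes, the Dold--Kan equivalence, and $\Tot$ are all formed coordinatewise in a product abelian category. Once that bookkeeping is in place every step above is formal, and the entire analytic content of the lemma is supplied by the first-order chain rule of Theorem~\ref{thm:diffcat}(v).
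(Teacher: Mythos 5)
Your proposal is correct and follows essentially the same route as the paper: both unwind $TF \circ TG$ and $T(F\circ G)$ coordinatewise as functors $\cC\times\cC \kto \cA\times\cA$, observe that the second coordinates agree and that the first coordinates are identified by the first-order chain rule of Theorem \ref{thm:diffcat}(v). Your extra care about pairings and projections interacting with Kleisli composition is a reasonable elaboration of bookkeeping the paper leaves implicit.
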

\begin{proof}  We use Theorem \ref{thm:diffcat}(v) to show that $T$ is a functor up to chain homotopy equivalence. By definition, $TF\circ TG$ is given by 
\[ 
\begin{tikzcd}[row sep=tiny]
\cC \times \cC \arrow[r, squiggly, "TG"] & \cB \times \cB \arrow[r, squiggly, "TF"] & \cA \times \cA \\
(V,X) \arrow[r, mapsto] & \langle\nabla G(V;X), G(X)\rangle \arrow[r, mapsto] & \langle \nabla F(\nabla G(V;X); G(X)), F(G(X))\rangle
\end{tikzcd}
\]
while $T(F\circ G)$ is given by
\[ 
\begin{tikzcd}[row sep=tiny]
\cC \times \cC \arrow[r, squiggly, "T(F\circ G)"] &  \cA \times \cA \\
(V,X) \arrow[r, mapsto] &  \langle \nabla(F\circ G)(V;X) , F\circ G(X) \rangle
\end{tikzcd}
\]
By Theorem \ref{thm:diffcat}(v) these outputs are chain homotopy equivalent:
\[  \langle \nabla(F\circ G)(V;X) , F\circ G(X) \rangle \simeq \langle \nabla F(\nabla G(V;X); G(X)), F(G(X)) \rangle.  \qedhere
\]
\end{proof}

Lemma \ref{lem:tangent-functor} shows that $T$ is an endofunctor of $\Ho\cat{AbCat}_{\Ch}$. As an immediate corollary, its iterates $T^n$ are again functorial: that is, $T^n(F\circ G) \simeq T^nF \circ T^n G$. Since chain homotopy equivalences are respected by pre- and post-composition in $\Ho\cat{AbCat}_{\Ch}$, the composite functors
\begin{equation}\label{eq:real-8.1}
\begin{tikzcd}
\cC^{n+1} \arrow[r, "{d_n^\ast}"] & \cC^{2^n} \arrow[r, squiggly, bend left, "T^n(F \circ G)"] \arrow[r, squiggly, bend right, "T^nF \circ T^nG"'] \arrow[r, phantom, "\simeq"] & \cA^{2^n} \arrow[r, "\pi_L"] & \cA
\end{tikzcd}
\end{equation} 
are again chain homotopy equivalent, where $\pi_L$ is the projection onto the leftmost component and $d_n^*$ is a diagonal functor to be described below. We claim that the top composite of \eqref{eq:real-8.1} is the functor $\Delta_n(F\circ G)$, while the bottom composite is the functor appearing on the right-hand side of the chain homotopy equivalence of Theorem \ref{t:higherchainrule}. Once we make these identifications, which will be achieved by the following series of combinatorial lemmas, we will be able to conclude that Theorem \ref{t:higherchainrule} is a consequence of the functoriality of $T$.

The diagonal functor $d_n^\ast$ is a special case of a ``reindexing functor'' in the sense of the following definition.

\begin{defn}\label{defn:reindex} Write ${\bf n} = \{ 0, 1, \cdots , n-1\}$ for the set with $n$ elements, and let $c\colon {\bf n} \to {\bf k}$ be any function. The product category $\cB^n$ can be thought of as the category of functors from the discrete category ${\bf n}$ to $\cB$, from which perspective pre-composition with $c \colon {\bf n} \to {\bf k}$ defines a \emph{reindexing functor} $c^*\colon\cB^{k} \to \cB^{n}$. Explicitly, $c^*$ is the functor 
\[ \langle \pi_{c(0)}, \ldots , \pi_{c(n-1)} \rangle \colon \cB^k \to \cB^n\] whose components
 $\pi_i\colon \cB^{k}\to \cB$ project onto the $i$th factor.
\end{defn}

Diagonal functors are reindexing functors which come from surjections of sets.  Importantly, reindexing functors commute with the directional derivative $\nabla$ in the sense of the following result.

\begin{lem}\label{lem:nabla-and-diagonals} Suppose $K \colon \cB^k \kto \cA$ factors as
\[ 
\begin{tikzcd}
\cB^k \arrow[rr, squiggly,  "K"] \arrow[dr, "c^*"'] &\arrow[d, phantom, "\simeq"]  & \cA \\
& \cB^n \arrow[ur, squiggly, "H"'] 
\end{tikzcd}
\] up to chain homotopy equivalence, where $c^* \colon \cB^k \to \cB^{ n}$ is the reindexing functor associated to some function $c \colon n \to k$.
Then $\nabla K$ factors as
\[ 
\begin{tikzcd}
\cB^k \times \cB^k \arrow[rr, squiggly,  "\nabla K"] \arrow[dr, "c^* \times c^*"'] & \arrow[d, phantom, "\simeq"]  & \cA \\
& \cB^n \times \cB^n \arrow[ur, squiggly, "\nabla H"'] 
\end{tikzcd}
\] 
up to chain homotopy equivalence
\end{lem}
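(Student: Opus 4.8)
The plan is to deduce the factorization directly from the first-order chain rule, Theorem~\ref{thm:diffcat}(v), together with the fact that $\nabla$ sends pointwise chain homotopy equivalent functors to pointwise chain homotopy equivalent functors (immediate from $\nabla F(V;X) = D_1F(X \oplus -)(V)$ and Proposition~\ref{p:Dnproperties}). First I would replace $K$ by $H \circ c^*$ up to chain homotopy equivalence: from the hypothesis $K \simeq H \circ c^*$ one gets $\nabla K \simeq \nabla(H \circ c^*)$ as functors $\cB^k \times \cB^k \kto \cA$. Since $c^* \colon \cB^k \kto \cB^n$ is (strictly) reduced, Theorem~\ref{thm:diffcat}(v) applies to the composable pair $(c^*, H)$ and gives
\[ \nabla(H \circ c^*)(V; X) \simeq \nabla H\bigl(\nabla c^*(V; X)\, ;\, c^*(X)\bigr). \]

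The one computation that requires attention is the identification of $\nabla c^*$. The reindexing functor $c^* = \langle \pi_{c(0)}, \ldots, \pi_{c(n-1)}\rangle$ is strictly additive, since the biproduct and zero object of $\cB^k$ and $\cB^n$ are formed coordinatewise; in particular $c^*$ is strictly reduced and $\CR_2 c^* \cong 0$, so Definition~\ref{def:D1} gives $D_1 c^* \cong c^*$. Moreover, for $V, X \in \cB^k$ the map $c^*(X \oplus V) \to c^*(X)$ induced by the projection $X \oplus V \to X$ is the split projection $c^*(X) \oplus c^*(V) \to c^*(X)$, whose kernel is $c^*(V)$; hence Definition~\ref{defn:dir-derivative-JM} yields $\nabla c^*(V; X) \cong D_1^V c^*(V) \cong c^*(V)$. (Alternatively this follows by iterating Theorem~\ref{thm:diffcat}(iv) and applying Lemma~\ref{lem:product} to each projection.)

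Substituting into the display above gives
\[ \nabla K(V; X) \simeq \nabla H\bigl(c^*(V)\, ;\, c^*(X)\bigr) = \bigl(\nabla H \circ (c^* \times c^*)\bigr)(V, X), \]
because by construction $(c^* \times c^*)(V, X) = (c^*(V), c^*(X))$ in $\cB^n \times \cB^n$. This is exactly the asserted factorization of $\nabla K$ through $c^* \times c^*$ up to chain homotopy equivalence. I do not anticipate a genuine obstacle here: everything reduces to Theorem~\ref{thm:diffcat}(v) and the homotopy-invariance of $\nabla$, and the only delicate point is confirming that the reindexing functor is strictly additive, so that both the kernel appearing in Definition~\ref{defn:dir-derivative-JM} and the linearization $D_1$ leave $c^*$ unchanged. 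One could instead package this via the tangent functor $T$ of Definition~\ref{def:tangent-functor}: the same computation shows $Tc^* \simeq c^* \times c^*$, so functoriality of $T$ up to chain homotopy equivalence (Lemma~\ref{lem:tangent-functor}) gives $TK \simeq TH \circ (c^* \times c^*)$, and reading off leftmost components recovers the claim.
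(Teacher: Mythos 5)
Your proposal is correct and follows essentially the same route as the paper: apply the first-order chain rule (Theorem \ref{thm:diffcat}(v)) to the pair $(c^*, H)$ and identify $\nabla c^*(V;X)\simeq c^*V$, which the paper does via Theorem \ref{thm:diffcat}(iii) and (iv) and you do by a direct (equivalent) computation with the strictly additive reindexing functor. Your extra care in invoking homotopy-invariance of $\nabla$ to pass from $K$ to $H\circ c^*$ is a reasonable elaboration of a step the paper leaves implicit.
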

\begin{proof} Let $V, X \in \cB^k$. By Theorem \ref{thm:diffcat}(v), $\nabla K(V;X) \simeq \nabla H (\nabla c^*(V;X); c^*(X))$. By Theorem \ref{thm:diffcat}(iii) and (iv), $\nabla c^*(V;X) \simeq c^* V$, so $\nabla K(V;X) \simeq \nabla H(c^*(V); c^*(X))$.
\end{proof}

This observation allows us to make the relationship between $T^nF$ and $\nabla^{\times k}F$ explicit, which we describe in the next lemma by examining the coordinates of $T^nF\colon\cB^{2^n} \to \cA^{2^n}$ one at a time.  For convenience, we identify the cardinal $2^n$ with the set ${\mathcal P}({\bf n})$, the power set of ${\bf n}$, which has cardinality $2^n$.  
Under this identification, $T^nF \colon \cB^{2^n} \kto \cA^{2^n}$ is a collection of functors $(T^nF)_S\colon\cB^{2^n}\kto \cA$, indexed by elements $S$ of ${\mathcal P}({\bf n})$, i.e., by subsets $S \subset {\bf n}$. 

\begin{lem}\label{lem:Tn-component} For any $S \subset {\bf n}$, there is a chain homotopy equivalence
\[ 
\begin{tikzcd}
\cB^{2^n} \arrow[rr, squiggly,  "(T^nF)_S"] \arrow[dr, "\iota_S^*"'] &  \arrow[d, phantom, "\simeq"] & \cA \\
& \cB^{2^{|S|}} \arrow[ur, squiggly, "\nabla^{\times |S|}F"'] 
\end{tikzcd}
\] 
where $\iota_S \colon \mathcal{P}(S)\hookrightarrow\mathcal{P}({\bf n})$ is the inclusion.
\end{lem}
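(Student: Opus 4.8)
The plan is to argue by induction on $n$, unwinding the recursion $T^nF = T(T^{n-1}F)$ and tracking how the index set $\mathcal{P}({\bf n})$ is assembled from $\mathcal{P}({\bf (n-1)})$. Throughout I would fix the following bookkeeping convention: when $T$ is applied to a functor valued in $\cA^{2^{n-1}} = \cA^{\mathcal{P}({\bf (n-1)})}$, the resulting codomain $\cA^{2^{n-1}}\times\cA^{2^{n-1}} = \cA^{\mathcal{P}({\bf n})}$ is identified so that the left-hand factor --- which by Definition \ref{def:tangent-functor} and Theorem \ref{thm:diffcat}(iv) is $\nabla(T^{n-1}F) = \bigl\langle \nabla\bigl((T^{n-1}F)_{S'}\bigr)\bigr\rangle_{S'\subseteq{\bf (n-1)}}$ --- is indexed by the subsets of ${\bf n}$ containing the top element $n-1$, via $S'\mapsto S'\cup\{n-1\}$, while the right-hand factor --- which is $(T^{n-1}F)\circ\pi_R$ --- is indexed by the subsets not containing $n-1$, via the standard inclusion $\mathcal{P}({\bf (n-1)})\subseteq\mathcal{P}({\bf n})$. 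The same doubling convention is applied to the domains appearing in the recursion $\nabla^{\times k}F = \nabla(\nabla^{\times(k-1)}F)$, so that the domain $\cB^{2^{|S|}}$ of $\nabla^{\times|S|}F$ is naturally $\cB^{\mathcal{P}(S)}$ once ${\bf |S|}$ is identified with $S$ by its order-isomorphism.

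The base case $n=0$ is immediate: $T^0F = F = \nabla^{\times 0}F$ and $\iota_\emptyset$ is the identity. For the inductive step I would fix $S\subseteq{\bf n}$ and split into two cases according to whether $n-1\in S$. If $n-1\notin S$, then $S\subseteq{\bf (n-1)}$ and $(T^nF)_S = (T^{n-1}F)_S\circ\pi_R$ by the convention above; applying the inductive hypothesis $(T^{n-1}F)_S\simeq\nabla^{\times|S|}F\circ\iota_S^*$ (with $\iota_S\colon\mathcal{P}(S)\hookrightarrow\mathcal{P}({\bf (n-1)})$) and noting that $\iota_S^*\circ\pi_R$ is exactly the reindexing functor along $\mathcal{P}(S)\hookrightarrow\mathcal{P}({\bf n})$ finishes this case, with Lemma \ref{lem:che-comp} ensuring the chain homotopy equivalence survives precomposition with $\pi_R$. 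If $n-1\in S$, write $S = S'\cup\{n-1\}$ with $S'\subseteq{\bf (n-1)}$ and $|S'| = |S|-1$. Then $(T^nF)_S = \nabla\bigl((T^{n-1}F)_{S'}\bigr)$, and the inductive hypothesis exhibits $(T^{n-1}F)_{S'}$ as a composite $\nabla^{\times|S'|}F\circ\iota_{S'}^*$ up to chain homotopy equivalence, with $\iota_{S'}^*\colon\cB^{\mathcal{P}({\bf (n-1)})}\to\cB^{\mathcal{P}(S')}$ a reindexing functor. Lemma \ref{lem:nabla-and-diagonals} then pushes $\nabla$ through this factorization, giving $(T^nF)_S\simeq\nabla(\nabla^{\times|S'|}F)\circ(\iota_{S'}^*\times\iota_{S'}^*)$. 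Since $\nabla(\nabla^{\times|S'|}F) = \nabla^{\times(|S'|+1)}F = \nabla^{\times|S|}F$, it remains only to identify $\iota_{S'}^*\times\iota_{S'}^*$ with the reindexing functor $\iota_S^*$ for $\mathcal{P}(S)\hookrightarrow\mathcal{P}({\bf n})$, which completes the induction once the conventions are checked to be compatible.

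I expect the only genuine obstacle to be this final compatibility check --- verifying that the ``doubling'' built into $T$ and the one built into $\nabla^{\times k}$ are identified in a matching way, so that $\iota_{S'}^*\times\iota_{S'}^*$ really is $\iota_S^*$. The way I would handle it is purely combinatorial: a subset $T\subseteq{\bf (n-1)}$ in the left (resp.\ right) summand of $\mathcal{P}({\bf (n-1)})\sqcup\mathcal{P}({\bf (n-1)})\cong\mathcal{P}({\bf n})$ lands in $\mathcal{P}(S)$ under $T\mapsto T\cup\{n-1\}$ (resp.\ $T\mapsto T$) if and only if $T\subseteq S'$, so the portion of each reindexing factor landing in $\mathcal{P}(S)$ is exactly $\mathcal{P}(S')$, and the induced bijection $\mathcal{P}(S')\sqcup\mathcal{P}(S')\cong\mathcal{P}(S)$ is precisely the one used to identify the domain of $\nabla(\nabla^{\times|S'|}F)$ with $\cB^{\mathcal{P}(S)}$ --- here one uses that the top element of $S$ is $n-1 = \max S$. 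All the homotopical content, namely that $\nabla$ and precomposition with reindexing functors respect chain homotopy equivalence, is already contained in Lemma \ref{lem:nabla-and-diagonals}, Lemma \ref{lem:che-comp}, and the Observation following Definition \ref{defn:dir-derivative-JM}, so no further work is needed there.
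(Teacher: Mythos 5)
Your proposal is correct and follows essentially the same route as the paper's proof: induction on $n$, splitting into the two cases $n-1\notin S$ (where $(T^nF)_S$ factors through $\pi_R$) and $n-1\in S$ (where one applies Lemma \ref{lem:nabla-and-diagonals} and then identifies $\iota_{S'}^*\times\iota_{S'}^*$ with $\iota_S^*$). Your treatment of the indexing conventions is if anything more explicit than the paper's, but the substance is identical.
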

\begin{proof}
We prove this by induction on $n$. The case $n=1$ is given by the definition of $T$, but we include it in detail here in order to begin to examine the definition of components of $T$ in terms of subsets $S$ of $\mathcal{P}({\bf n})$.  When $n=1$, the power set $\mathcal{P}({\bf 1})$ contains the two elements $\emptyset$ and $\{ 0\}$.  This corresponds to the two diagrams
\[ 
\begin{tikzcd}
\cB\times \cB \arrow[rr, squiggly,  "(TF)_{\{0\}}"] \arrow[dr, "\iota_{\{0\}}^*"'] &  \arrow[d, phantom, "\simeq"] & \cA & & \cB\times \cB\arrow[rr, squiggly,  "(T^nF)_\emptyset"] \arrow[dr, "\iota_\emptyset^*"'] &  \arrow[d, phantom, "\simeq"] & \cA \\
& \cB\times \cB \arrow[ur, squiggly, "\nabla F"'] & & & & \cB \arrow[ur, squiggly, "F"'] 
\end{tikzcd}
\] 
This in turn corresponds directly to the definition of $TF(V, X) = \langle \nabla F(V;X), F(X) \rangle$ where the second coordinate is indexed by $\emptyset$ and the first coordinate is indexed by $\{ 0 \}$.  In particular, $TF_{S}=\nabla F$ when $S$ contains the ``top'' element $0 \in {\bf 1}$, and $TF_S=F$ when $S$ does not contain $0$.

 For the inductive step, first consider those $S \subset {\bf n}$ that do not contain the top element. For these components, $(T^nF)_S = T(T^{n-1}F_S) = (T^{n-1}F)_S \circ \pi_R$, and here $\pi_R \colon \cC^{2^n} \to \cC^{2^{n-1}}$ is the map defined by restricting along the map $\mathcal{P}({\bf n-1})\subset \mathcal{P}({\bf n})$ induced by the inclusion of the first $n-1$ elements. By the inductive hypothesis, $(T^{n-1}F)_S \simeq \nabla^{\times |S|} F \circ \iota_S^*$ and the claim follows.

Now consider the components indexed by a subset $S \subset {\bf n}$ that does contain the top element $n-1 \in {\bf n}$. Here $(T^nF)_S =  \nabla ((T^{n-1}F)_{S\backslash n-1})$. By the inductive hypothesis, $(T^{n-1}F)_{S\backslash n-1} \simeq \nabla^{\times |S|-1}F \circ \iota_{S\backslash n-1}^*$, so applying Lemma \ref{lem:nabla-and-diagonals}
\[ (T^nF)_S \simeq \nabla ( \nabla^{\times |S|-1}F \circ \iota_{S\backslash n-1}^*) \simeq \nabla^{\times |S|} F \circ (\iota_{S\backslash n-1}^* \times \iota_{S\backslash n-1}^*).\]
The proof is completed by the observation that the map \[\iota_{S\backslash n-1} \times \iota_{S\backslash n-1} \colon \mathcal{P}(S\backslash n-1) \times \mathcal{P}(S\backslash n-1) \hookrightarrow \mathcal{P}({\bf n}) \times \mathcal{P}({\bf n})\] coincides with the map $\iota_S \colon \mathcal{P}(S) \hookrightarrow \mathcal{P}({\bf n})$.
\end{proof}

We now construct the function $d_n\colon \mathcal{P}({\bf n})\to {\bf n+1}$ inducing the diagonal functor of \eqref{eq:real-8.1}. The function $d_n\colon \mathcal{P}({\bf n})\to {\bf n+1}$ is defined by $d_n(S)=|S|$, where $|S|$ denotes the cardinality of $S$.
Equivalently, if we think of the elements of $2^n$ as $n$-tuples of binary digits (the vertices of a unit $n$-cube), 
there is a function $d_n \colon 2^n \to {\bf n+1}$ that counts the number of 1s. This can be thought of as an order-preserving projection onto the diagonal of the cube.

Recall Remark \ref{r:diagrammatically}, which decomposes the functor $\Delta_n\colon [\cB, \cA] \to [\cB^{n+1}, \cA]$ as a composite
\[ \begin{tikzcd}
{\Delta_n \defeq [\cB, \cA]}\arrow[r,"{\Delta_{n-1}}"] & {[\cB^{n}, \cA]}\arrow[r, "{\nabla}"] & {[\cB^{n}\times \cB^n, \cA]}\arrow[r, "{L_n^*}"] & {[\cB^{n+1}, \cA],}\end{tikzcd}\] first applying the directional derivative $\nabla$ to $\Delta_{n-1}$ and then 
 restricting the variables along the diagonal functor $L_n\colon\cB^{n+1} \to \cB^n\times \cB^n$. This diagonal functor is also defined by precomposing with a certain function $a_n \colon {\bf 2} \times {\bf n} \to {\bf n+1}$. Writing elements in the domain as ordered pairs of elements in ${\bf 2}= \{0,1\}$ and ${\bf n} = \{0,\ldots, n-1\}$, the function $a_n$ adds the two coordinates. 

\begin{lem}\label{lem:commuting-projections} When $2^n \cong 2 \times 2^{n-1}$ is decomposed into its left-most coordinate paired with its remaining coordinates,  the following diagram commutes
\[ 
\begin{tikzcd}[column sep=large] 2^n \arrow[r, "d_n"] \arrow[d, "\cong"']  & {\bf n+1} \\ 2 \times 2^{n-1} \cong 2^{n-1} + 2^{n-1}  \arrow[r, "d_{n-1} + d_{n-1}"] &  {\bf n}+{\bf n} \cong {\bf 2} \times {\bf n} \arrow[u, "a_n"'] 
\end{tikzcd}
\]
\end{lem}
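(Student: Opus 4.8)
The plan is to prove commutativity by tracing a single element around the diagram, using the explicit ``binary tuple'' description of all the maps. First I would pin down the chain of identifications on each side. An element of $2^n$ is an $n$-tuple $(v_0,v_1,\ldots,v_{n-1})$ of binary digits; the isomorphism $2^n\cong 2\times 2^{n-1}$ separates the leftmost digit $v_0$ from the truncated tuple $w=(v_1,\ldots,v_{n-1})\in 2^{n-1}$; and the further identification $2\times 2^{n-1}\cong 2^{n-1}+2^{n-1}$ places $w$ in the $v_0$-th of the two summands. On the right side, the isomorphism $\mathbf{n}+\mathbf{n}\cong\mathbf{2}\times\mathbf{n}$ sends an element $j$ lying in the $\epsilon$-th copy of $\mathbf{n}$ to the pair $(\epsilon,j)$, and $a_n$ is the function $(\epsilon,j)\mapsto \epsilon+j$. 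It is worth checking at this point that these conventions are consistent with the ones used just above in Lemma~\ref{lem:Tn-component}, where the left-hand coordinate of $2^n\cong 2\times 2^{n-1}$ is the one indexed by the top element and $\pi_R$ restricts along $\mathcal{P}(\mathbf{n-1})\subset\mathcal{P}(\mathbf{n})$; matching that bookkeeping is the only point requiring a modicum of care.

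Next I would compute the two composites on the element $v=(v_0,\ldots,v_{n-1})$. Along the top edge, $d_n(v)=\sum_{i=0}^{n-1}v_i$. Along the bottom, $v$ is sent first to $w=(v_1,\ldots,v_{n-1})$ sitting in the $v_0$-th summand of $2^{n-1}+2^{n-1}$; the map $d_{n-1}+d_{n-1}$ then carries it to $d_{n-1}(w)=\sum_{i=1}^{n-1}v_i$ in the $v_0$-th copy of $\mathbf{n}$; under $\mathbf{n}+\mathbf{n}\cong\mathbf{2}\times\mathbf{n}$ this becomes the pair $\bigl(v_0,\sum_{i=1}^{n-1}v_i\bigr)$; and finally $a_n$ sends this pair to $v_0+\sum_{i=1}^{n-1}v_i=\sum_{i=0}^{n-1}v_i$. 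The two outputs agree, so the square commutes. In the power-set language this is just the identity $\lvert S\rvert = [\,0\in S\,] + \lvert S\cap\{1,\ldots,n-1\}\rvert$, where $[\,0\in S\,]\in\{0,1\}$ records whether the top element belongs to $S$.

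I do not expect any genuine obstacle here: the statement is purely combinatorial and reduces, after unwinding the four identifications, to the elementary observation that counting the $1$s in a binary string equals the leading digit plus the number of $1$s in the remaining string. The only thing to be vigilant about is that every ``which copy'' / ``which coordinate'' choice is made compatibly with the surrounding conventions, so that the isomorphism $2^n\cong 2\times 2^{n-1}$ invoked here is literally the one appearing in \eqref{eq:real-8.1} and in Lemma~\ref{lem:Tn-component}; once that is fixed, the verification above is complete.
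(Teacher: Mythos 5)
Your proof is correct and takes essentially the same route as the paper's: both trace a binary $n$-tuple around the square and reduce commutativity to the identity that the total digit sum equals the leading digit plus the digit sum of the truncated tuple. The extra care you take in pinning down the coproduct/product identifications is fine but not needed beyond what the paper records.
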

\begin{proof}
For a binary $n$-tuple $(e_1,\ldots, e_n)$, by definition $d_n (e_1,\ldots, e_n) = \sum_{i=1}^n e_i$. The lower composite sends $(e_1,\ldots, e_n)$ first to $(e_1, (e_2,\ldots, e_n))$, then to $(e_1, \sum_{i=2}^n e_i)$ then to $e_1 + \sum_{i =2}^n e_i$.
\end{proof}

\begin{lem}\label{lem:redefineDeltan} The higher order directional derivative $\Delta_n$ is chain homotopy equivalent to the composite
\[
\begin{tikzcd}
{[\cB,\cA]} \arrow[r, "\nabla^{\times n}"] & {[\cB^{2^n},\cA]} \arrow[r, "{(d_n^*)^*}"] & {[\cB^{n+1},\cA]}
\end{tikzcd}
\] which restricts $\nabla^{\times n}$ along the diagonal $d_n^* \colon \cB^{n+1} \to \cB^{2^n}$.
\end{lem}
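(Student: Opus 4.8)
The plan is to induct on $n$, using the recursive description $\Delta_n \simeq L_n^* \circ \nabla \circ \Delta_{n-1}$ from Remark \ref{r:diagrammatically} to transport the claimed factorization through one more application of $\nabla$ (via Lemma \ref{lem:nabla-and-diagonals}), and then to match the diagonal that results against $d_n^*$ using the combinatorial identity of Lemma \ref{lem:commuting-projections}. The base case is trivial: when $n = 0$ we have $\Delta_0 = \nabla^{\times 0} = \id$ on $[\cB,\cA]$ and $d_0$ is the identity function of a one-element set, so $d_0^*$ is the identity functor.

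For the inductive step, I would assume that for every $F \colon \cB \kto \cA$ there is a chain homotopy equivalence $\Delta_{n-1}F \simeq \nabla^{\times(n-1)}F \circ d_{n-1}^*$, which exhibits $\Delta_{n-1}F \colon \cB^n \kto \cA$ as factoring, up to chain homotopy equivalence, through the reindexing functor $d_{n-1}^* \colon \cB^n \to \cB^{2^{n-1}}$ associated to the surjection $d_{n-1}\colon 2^{n-1} \to {\bf n}$, $S \mapsto |S|$. Applying Lemma \ref{lem:nabla-and-diagonals} to this factorization (with the role of the ``$\cB^k$'' of that lemma played by $\cB^n$, that of ``$\cB^n$'' by $\cB^{2^{n-1}}$, and $c = d_{n-1}$, $H = \nabla^{\times(n-1)}F$), I obtain a chain homotopy equivalence
\[ \nabla(\Delta_{n-1}F) \;\simeq\; \nabla\!\left(\nabla^{\times(n-1)}F\right) \circ \left(d_{n-1}^* \times d_{n-1}^*\right) \;=\; \nabla^{\times n}F \circ \left(d_{n-1}^* \times d_{n-1}^*\right) \]
of functors $\cB^n \times \cB^n \kto \cA$. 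Precomposing with $L_n \colon \cB^{n+1} \to \cB^n \times \cB^n$ — which is an ordinary exact functor and hence preserves chain homotopy equivalences (cf.\ the first sentence of the proof of Lemma \ref{lem:che-comp}) — and using Remark \ref{r:diagrammatically} to write $\Delta_n F = \nabla(\Delta_{n-1}F) \circ L_n$, I conclude $\Delta_n F \simeq \nabla^{\times n}F \circ \left(d_{n-1}^* \times d_{n-1}^*\right) \circ L_n$.

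It then remains to identify $\left(d_{n-1}^* \times d_{n-1}^*\right) \circ L_n$ with $d_n^*$. Under the identifications $\cB^n \times \cB^n \cong \cB^{{\bf n}+{\bf n}} \cong \cB^{{\bf 2}\times{\bf n}}$ and $2^{n-1} + 2^{n-1} \cong 2^n$, the functor $L_n$ is the reindexing functor $a_n^*$ associated to $a_n \colon {\bf 2}\times{\bf n} \to {\bf n+1}$, while $d_{n-1}^* \times d_{n-1}^*$ is the reindexing functor $(d_{n-1}+d_{n-1})^*$ associated to $d_{n-1}+d_{n-1}\colon 2^n \to {\bf 2}\times{\bf n}$. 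Since reindexing is contravariantly functorial in the underlying function, the composite is the reindexing functor attached to $a_n \circ (d_{n-1}+d_{n-1})$, which Lemma \ref{lem:commuting-projections} identifies with $d_n$. Hence $\left(d_{n-1}^* \times d_{n-1}^*\right) \circ L_n = d_n^*$, and therefore $\Delta_n F \simeq \nabla^{\times n}F \circ d_n^* = (d_n^*)^*\!\left(\nabla^{\times n}F\right)$, completing the induction. I expect the only real obstacle to be bookkeeping: keeping the contravariance of reindexing straight and lining up the set-level identifications so that Lemmas \ref{lem:nabla-and-diagonals} and \ref{lem:commuting-projections} apply verbatim; there is no substantive mathematical difficulty once this is arranged.
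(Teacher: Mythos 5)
Your proposal is correct and follows the paper's own argument essentially verbatim: induction on $n$, with Lemma \ref{lem:nabla-and-diagonals} transporting the inductive factorization through $\nabla$ and Lemma \ref{lem:commuting-projections} identifying $(d_{n-1}^*\times d_{n-1}^*)\circ L_n$ with $d_n^*$ via contravariance of reindexing. The only cosmetic difference is starting the induction at $n=0$ rather than $n=1$, which is immaterial.
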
 

\begin{proof} 
 We prove this by induction on $n$. For the base case $n=1$, 
\[
\nabla F\circ d_1^*(V_1; X)=\nabla F(V_1; X)=\Delta_1F(V_1;X).
\]
For $n>1$, consider the diagram below:
\[
\begin{tikzcd}[row sep=large]
{[\cB, \cA]} \arrow[r, "{\nabla^{\times {n-1}}}"] \arrow[dr, "{\Delta_{n-1}}"'] &{ [\cB^{2^{n-1}},\cA]}\arrow[r, "{\nabla}"] \arrow[d, "{(d_{n-1}^{\ast})^{\ast}}"'] & {[\cB^{2^n},\cA]}\arrow[r, "{(d_{n}^{\ast})^{\ast}}"] \arrow[d, "{(d_{n-1}^* \times d_{n-1}^\ast)^\ast}"']
 & {[\cB^{n+1},\cA]}\\
&{[\cB^{n},\cA]} \arrow[r, "\nabla"'] &{[\cB^{2n},\cA]} \arrow[ur,"{(a_n^\ast)^{\ast}}"'] &
\end{tikzcd}
\]
Composition from $[\cB,\cA]$ to $[\cB^{n+1},\cA]$ along the top is $(d_n^{\ast})^{\ast}\circ \nabla^{\times n}$, whereas composition along the bottom is $\Delta_n$, since $a_n^*=L_n$.  The right triangle commutes by Lemma \ref{lem:commuting-projections}. The middle square commutes up to chain homotopy equivalence by Lemma \ref{lem:nabla-and-diagonals}. By the inductive hypothesis, the left triangle commutes up to chain homotopy equivalence, so  the result  follows by induction.  
\end{proof}

Write $\pi_R$ (respectively $\pi_L$) for the projection from a product category onto the product formed by its $k$ rightmost (respectively leftmost) variables, where the correct arity $k$ is determined by the context. For example,  the diagonal functor $L_n \colon \cB^{n+1} \to \cB^{2n}$ can be understood as  the pairing of functors $\pi_{L} \colon \cB^{n+1}\to \cB^n$ and $\pi_R \colon \cB^{n+1} \to \cB^n$, i.e., \begin{align*} L_n(V_n, \ldots , V_1, X) &= (\pi_{L}(V_n, \ldots , V_1, X), \pi_{R}(V_n, \ldots , V_1, X))\\ &= ((V_n, \ldots , V_1), (V_{n-1}, \ldots, X)).\end{align*} 

Let
\[ K_n \colon [\cC,\cB] \to [\cC^{n+1},\cB^{n+1}]\] be the functor defined by
\[ K_nG \defeq (\Delta_nG, \Delta_{n-1}G \circ \pi_R, \ldots, G \circ \pi_R).\]
Note that this expression appears as the argument for the  $\Delta_n F$ appearing on the right-hand side of the chain homotopy equivalence of Theorem \ref{t:higherchainrule}. The next lemma tells us that $K_nG \colon \cC^{n+1}\kto \cB^{n+1}$ and  $T^nG \colon \cC^{2^n} \kto \cB^{2^n}$ are related by restricting the domain of $T^nG$ and the codomain of $K_nG$ along the diagonal functor $d_n^*$.

\begin{lem}\label{lem:Knd}
For $G \colon \cC \kto \cB$, $d_n^*\circ K_nG \simeq T^nG\circ d_n^*$ as functors $\cC^{n+1} \to \cB^{2^n}$.
\end{lem}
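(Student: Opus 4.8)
The plan is to verify the stated equivalence one coordinate at a time. Both $d_n^\ast\circ K_nG$ and $T^nG\circ d_n^\ast$ are functors $\cC^{n+1}\kto\cB^{2^n}$, and, identifying $2^n$ with $\mathcal P(\mathbf n)$ as in the setup of Lemma \ref{lem:Tn-component}, such a functor amounts to a family of functors $\cC^{n+1}\kto\cB$ indexed by subsets $S\subseteq\mathbf n$; it therefore suffices to show that the $S$-components agree up to chain homotopy equivalence for every $S$. Throughout I will use the coordinate convention exhibited in the base case of Lemma \ref{lem:redefineDeltan}: in $\cC^{n+1}$, written $(V_n,\dots,V_1,X)$, the object $X$ is coordinate $0\in\mathbf{n+1}$ and $V_i$ is coordinate $i$, so that the projection $\pi_R\colon\cC^{n+1}\to\cC^{k}$ onto the $k$ rightmost variables is exactly the reindexing functor associated to the initial-segment inclusion $\mathbf k\hookrightarrow\mathbf{n+1}$.

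First I would compute the $S$-component of $d_n^\ast\circ K_nG$. Since $d_n(S)=|S|$, it is $(K_nG)_{|S|}$; unwinding $K_nG=(\Delta_nG,\Delta_{n-1}G\circ\pi_R,\dots,G\circ\pi_R)$ and matching the left-to-right order of this list with the coordinates of $\cB^{n+1}$ gives $(K_nG)_{|S|}=\Delta_{|S|}G\circ\pi_R$ with $\pi_R\colon\cC^{n+1}\to\cC^{|S|+1}$. By Lemma \ref{lem:redefineDeltan}, $\Delta_{|S|}G\simeq\nabla^{\times|S|}G\circ d_{|S|}^\ast$, whence
\[ (d_n^\ast\circ K_nG)_S \simeq \nabla^{\times|S|}G\circ d_{|S|}^\ast\circ\pi_R. \]

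Next I would compute the $S$-component of $T^nG\circ d_n^\ast$, namely $(T^nG)_S\circ d_n^\ast$. By Lemma \ref{lem:Tn-component} this is chain homotopy equivalent to $\nabla^{\times|S|}G\circ\iota_S^\ast\circ d_n^\ast$, where $\iota_S\colon\mathcal P(S)\hookrightarrow\mathcal P(\mathbf n)$; and $\iota_S^\ast\circ d_n^\ast=(d_n\circ\iota_S)^\ast$. Here $d_n\circ\iota_S\colon\mathcal P(S)\to\mathbf{n+1}$ sends $T\subseteq S$ to $|T|\le|S|$, so after fixing a bijection $S\cong\mathbf{|S|}$ (which identifies $\mathcal P(S)$ with $2^{|S|}$) it factors as the cardinality map $d_{|S|}\colon\mathcal P(S)\to\mathbf{|S|+1}$ followed by the initial-segment inclusion $\mathbf{|S|+1}\hookrightarrow\mathbf{n+1}$. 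Since the reindexing functor of that inclusion is $\pi_R\colon\cC^{n+1}\to\cC^{|S|+1}$, we get $\iota_S^\ast\circ d_n^\ast=d_{|S|}^\ast\circ\pi_R$, and therefore $(T^nG\circ d_n^\ast)_S\simeq\nabla^{\times|S|}G\circ d_{|S|}^\ast\circ\pi_R$, which is precisely the expression obtained for $(d_n^\ast\circ K_nG)_S$. As $S$ was arbitrary, this proves $d_n^\ast\circ K_nG\simeq T^nG\circ d_n^\ast$.

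The content of this argument lies entirely in Lemmas \ref{lem:Tn-component} and \ref{lem:redefineDeltan}; everything else is bookkeeping, and it is precisely this bookkeeping that I expect to be the main obstacle. One must reconcile three different indexing conventions — the left-to-right order of the tuple defining $K_nG$, the $\mathcal P(\mathbf n)$-indexing of the codomain of $T^nG$, and the rightmost/leftmost conventions for the various projections $\pi_R$ — and verify in particular that $d_n\circ\iota_S$ factors through the initial segment in the way that corresponds to $\pi_R$ and not to some other projection. This is an elementary identity of reindexing functions in the same spirit as Lemma \ref{lem:commuting-projections}, and once it is nailed down the equivalence is immediate.
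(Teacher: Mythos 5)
Your proposal is correct and follows essentially the same route as the paper: both arguments compare $S$-components, invoke Lemma \ref{lem:Tn-component} and Lemma \ref{lem:redefineDeltan}, and hinge on the reindexing identity $\iota_S^\ast\circ d_n^\ast=d_{|S|}^\ast\circ\pi_R$ (which the paper asserts without proof and you justify by factoring $d_n\circ\iota_S$ through the initial-segment inclusion). The only cosmetic difference is that you meet in the middle at $\nabla^{\times|S|}G\circ d_{|S|}^\ast\circ\pi_R$ while the paper reduces one side all the way to $\Delta_{|S|}G\circ\pi_R$.
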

\begin{proof} 
Again it is convenient to identify $2^n$ with $\mathcal{P}(n)$. It suffices to prove that these functors $\cC^{n+1} \kto \cB^{2^n}$ have chain homotopy equivalent components indexed by each subset $S \subset n$.

Lemma \ref{lem:Tn-component} tells us that $(T^nG)_S \simeq \nabla^{\times |S|} G \circ \iota^*_S$. As functors from $\cC^{n+1}$ to $\cC^{2^{|S|}}$, $\iota^{\ast}_S\circ d_n^{\ast}=d_{|S|}^{\ast}\circ \pi_{R}$. Hence, 
\[(T^nG)_S\circ d_n^\ast \simeq \nabla^{\times |S|} G \circ \iota^*_S \circ d_n^\ast \simeq  \nabla^{\times |S|}G\circ d_{|S|}^{\ast}\circ \pi_{R}\simeq \Delta_{|S|}G\circ \pi_{R},
\]
the last chain homotopy equivalence by Lemma \ref{lem:redefineDeltan}. 

On the other hand, immediately from the definition of $K_nG$,
\[(d_n^{\ast}\circ K_nG)_S=\Delta_{|S|}G\circ \pi_{R}.  \qedhere
\]
 \end{proof}

 \begin{proof}[Proof of Theorem \ref{t:higherchainrule}]
Consider the diagram below.
 \[
\begin{tikzcd}
\cC^{2^n} \arrow[r, squiggly, "T^nG"]  \arrow[dr, phantom, "\simeq"] & \cB^{2^n} \arrow[r, squiggly,  "T^nF"] \arrow[dr, phantom, "\simeq"]  & \cA^{2^n} \arrow[d, "\pi_L"] \\ \cC^{n+1} \arrow[u, "d_n^*"] \arrow[r,  squiggly, "K_nG"'] & \cB^{n+1} \arrow[u,  "d_n^*"] \arrow[r, squiggly,  "\Delta_nF"'] & \cA
\end{tikzcd}
\]
 The left and right squares commute up to chain homotopy equivalence  by Lemmas \ref{lem:Knd} and  \ref{lem:redefineDeltan}, respectively.    Using this diagram and  Lemma \ref{lem:tangent-functor} we see that 
 \begin{align*}
 \Delta_nF(\Delta_nG, \Delta_{n-1}G\circ\pi_R, \dots, G\circ\pi_R)&=\Delta_nF\circ K_nG\\&\simeq\pi_L\circ T^nF\circ T^nG\circ d_n^*\\
 &\simeq \pi_L \circ T^n(FG)\circ d_n^*.
 \end{align*}
 By Lemma \ref{lem:redefineDeltan}, we have
\[
\pi_L \circ T^n(FG)\circ d_n^*\simeq \Delta_n(FG),
\]
which completes the proof.
 \end{proof}

%%%%%%%%%%%%%%%%%%%%%%%%%%%%%%%%%%%%%%%%%%%%%%%%%%%%%%%%%%%%%%%

\section{Derivatives}\label{sec:derivatives}

So far, we have considered the functors $\nabla F(V; X)$ as analogs of the directional derivatives.  In classical analysis, the derivative of a single variable function $f\colon\RR\to \RR$ is a special case of the directional derivative in which the canonical `direction' is the positive direction, or the direction $1$.  Indeed, we see that setting $v=1$ in the definition of the directional derivative yields the following formula:
\[ \nabla f(v; x) = \lim_{t\to 0} \frac{1}{t} [ f(x+t)-f(x)] \]
which recovers the usual single-variable derivative $f'(x)$.

In \cite{JM:Classification1}, this idea is extended from functions $f\colon\RR \to \RR$ to functors $F\colon\cC\to \cD$ by using generating objects of $\cC$ in place of the unit $1\in \RR$.  The canonical example of a generating object is the object $R$ in the category of $R$-modules $\mathcal{M}\mathrm{od}_R$, where $R$ is a commutative ring with unit.  The object $R$ is a ``generating object" in the sense that  all other $R$-modules can be obtained from  finitely generated free $R$-modules using free resolutions and colimits.  In this section, we consider functors $F\colon\mathcal{M}\mathrm{od}_R\kto \cA$ and use the generating object $R$ in $\mathcal{M}\mathrm{od}_R$ to define a derivative $\frac{d}{dR}F$.  We use this derivative to obtain analogs of the chain rules for compositions of functions involving at least one function of a single variable.  We note that these results could be generalized to functors $F\colon\cC\kto \cA$ by following the definition of $\frac{d}{dC}F$ in \cite[Definition 2.13]{JM:Classification1}, 
however we have chosen to work with the category $\mathcal{M}\mathrm{od}_R$ for the sake of exposition.

\begin{defn}\label{defn:derivatives} \cite[Definition 2.13]{JM:Classification1}  For a functor $F \colon \mathcal{M}\mathrm{od}_R\kto \cA$ and $n \geq 1$, the $n$th \emph{derivative} of $F$ at $X$ is
\[ \frac{d^n}{dR^n}F(X) \defeq \nabla^nF(R,\ldots, R;X).\]
\end{defn}

The following lemma indicates another way to define $ \frac{d^n}{dR^n}F(X)$.

\begin{lem}\label{lem:derivatives} For a functor $F \colon \mathcal{M}\mathrm{od}_R \kto \cA$, there is a chain homotopy equivalence
\[ \nabla^nF(R,\ldots, R;X) \simeq \Delta_nF(0,\ldots, 0,R;X).\]
\end{lem}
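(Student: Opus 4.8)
The plan is to deduce this directly from the Fa\`a di Bruno formula of Theorem~\ref{thm:restatedgoal}. First I would specialize that chain homotopy equivalence to the objects $(V_n,\ldots,V_1,X)=(0,\ldots,0,R,X)$, which expresses $\Delta_n F(0,\ldots,0,R;X)$ as a direct sum of terms $\nabla^\pi F(0,\ldots,0,R;X)$ indexed by the partitions $\pi=\{S_1,\ldots,S_k\}$ of $\{1,\ldots,n\}$. Unwinding the notation $\nabla^\pi F(V_n,\ldots,V_1;X)=\nabla^{|\pi|}F(V_{|S_1|},\ldots,V_{|S_k|};X)$ together with $V_1=R$ and $V_j=0$ for $j\geq 2$, the $\pi$-term becomes $\nabla^{k}F(W_1,\ldots,W_k;X)$ where $W_i=R$ when $|S_i|=1$ and $W_i=0$ when $|S_i|\geq 2$.

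Next I would argue that every summand except the one coming from the discrete partition is contractible. For this I would invoke Proposition~\ref{prop:5.9}(i), which writes $\nabla^k F$ as a sequential linearization $D_1^{(k)}$ applied to a direct sum of cross effects of $F$; since the cross effects are strictly multi-reduced and $D_1^{(k)}$ is exact (hence preserves the zero object), $\nabla^k F(W_1,\ldots,W_k;X)$ vanishes whenever one of the direction variables $W_i$ is zero. Thus the only surviving summand is the one for the partition $\pi_0=\{\{1\},\ldots,\{n\}\}$ into singletons, for which $k=n$ and every $W_i=R$, so that term is exactly $\nabla^n F(R,\ldots,R;X)$. Discarding the contractible summands then yields the asserted chain homotopy equivalence $\nabla^n F(R,\ldots,R;X)\simeq\Delta_n F(0,\ldots,0,R;X)$.

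I do not anticipate a genuine obstacle here: once Theorem~\ref{thm:restatedgoal} is in hand the argument is purely a bookkeeping exercise over partitions. The only point requiring a little care is the vanishing of the non-discrete terms, and even this follows at once from strict multi-reducedness of the cross effects plus exactness of linearization; alternatively one could deduce it from Theorem~\ref{thm:diffcat}(ii) (that $\nabla G(0;X)\cong 0$) combined with the symmetry of $\nabla^k$ in its direction variables, Proposition~\ref{prop:5.9}(ii). One should also record explicitly that direct summing with chain contractible complexes does not change the chain homotopy equivalence class, which is what licenses throwing away all but the discrete-partition term.
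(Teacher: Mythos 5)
Your proposal is correct and follows essentially the same route as the paper: specialize Theorem \ref{thm:restatedgoal} to $(0,\ldots,0,R,X)$ and observe that only the discrete partition survives because $\nabla^kF$ vanishes when any direction variable is zero. Your justification of that vanishing (via Proposition \ref{prop:5.9}(i) and strict multi-reducedness of the cross effects) is a correct elaboration of a step the paper simply asserts.
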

\begin{proof}
Theorem \ref{thm:restatedgoal} expresses $\Delta_nF(V_n,\ldots, V_1;X)$ as a direct sum, indexed by partitions $\pi = \{S_1,\ldots, S_k\}$ of $\{1,\ldots, n\}$,  of terms $\nabla^{k}F(V_{|S_1|},\ldots, V_{|S_k|};X)$. Because $\nabla^kF$ vanishes if any of its ``direction'' variables is zero, the only partition that contributes a non-zero summand when $V_2=\cdots = V_n = 0$ is $\{ \{1\},\ldots, \{n\}\}$, the partition into one-element subsets. Thus 
\[\Delta_nF(0,\ldots, 0, R;X)\simeq  \nabla^nF(R, \ldots, R;X)\]
as claimed.
\end{proof}

As a consequence of Theorem \ref{t:higherchainrule} we obtain an analog of \cite[Theorem 2]{HMY}, which is stated in the introduction in equation \eqref{eq:HMY1}.

\begin{thm}\label{thm:HMY2}
For a composable pair $F\colon\cB\kto \cA$ and $G\colon\mathcal{M}\mathrm{od}_R\kto \cB$ of functors of abelian categories there is a chain homotopy equivalence
\[ \frac{d^n}{dR^n}(F\circ G)(X) \simeq \Delta_nF\left( \frac{d^n}{dR^n}G(X),\ldots, \frac{d}{dR}G(X); G(X)\right).\]
\end{thm}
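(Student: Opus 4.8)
The plan is to obtain this as a direct consequence of the higher order chain rule (Theorem~\ref{t:higherchainrule}) together with Lemma~\ref{lem:derivatives}, which lets us pass between the $n$th derivative $\frac{d^n}{dR^n}$ and the $n$th higher order directional derivative evaluated on the tuple of directions $(0,\ldots,0,R)$.

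First I would rewrite the left-hand side. By Definition~\ref{defn:derivatives}, $\frac{d^n}{dR^n}(F\circ G)(X) = \nabla^n(F\circ G)(R,\ldots,R;X)$; since $F \circ G \colon \mathcal{M}\mathrm{od}_R \kto \cA$, Lemma~\ref{lem:derivatives} applies to $F\circ G$ and gives a chain homotopy equivalence
\[ \frac{d^n}{dR^n}(F\circ G)(X) \simeq \Delta_n(F\circ G)(0,\ldots,0,R;X). \]

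Next I would invoke Theorem~\ref{t:higherchainrule} with direction variables $(V_n,\ldots,V_2,V_1) = (0,\ldots,0,R)$ and point $X$, yielding
\[ \Delta_n(F\circ G)(0,\ldots,0,R;X) \simeq \Delta_nF\bigl(\Delta_nG(0,\ldots,0,R;X),\ldots,\Delta_2G(0,R;X),\Delta_1G(R;X);G(X)\bigr). \]
For each $1 \le j \le n$, Lemma~\ref{lem:derivatives} applied to $G$ gives $\Delta_jG(0,\ldots,0,R;X) \simeq \nabla^jG(R,\ldots,R;X) = \frac{d^j}{dR^j}G(X)$, where for $j=1$ this is just the defining identity $\Delta_1G(R;X) = \nabla G(R;X) = \frac{d}{dR}G(X)$. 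Substituting these equivalent functors into the direction slots of $\Delta_nF$ and composing the resulting equivalences produces exactly the claimed formula.

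The only point that needs a word of justification is this last substitution: one must know that feeding pointwise chain homotopy equivalent functors into the direction variables of $\Delta_nF$ yields a pointwise chain homotopy equivalent result. This holds because $\Delta_n$ is assembled by iterating $\nabla$ (together with restriction along a diagonal functor, as in Remark~\ref{r:diagrammatically}), and $\nabla$ preserves pointwise chain homotopy equivalences (the Observation following Lemma~\ref{lem:directional}) while Kleisli composition respects them (Lemma~\ref{lem:che-comp}); equivalently, $\Delta_n$ descends to an operation on morphisms of $\Ho\cat{AbCat}_{\Ch}$. Beyond this there is no genuine obstacle --- the theorem is a formal corollary of the machinery already in place --- so I expect the only real work to be carefully bookkeeping the indices in the substitution step.
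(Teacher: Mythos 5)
Your proposal is correct and follows essentially the same route as the paper: specialize Theorem \ref{t:higherchainrule} to the directions $(0,\ldots,0,R)$ and then translate both sides via Lemma \ref{lem:derivatives}. Your extra remark justifying the substitution of chain homotopy equivalent functors into the direction slots of $\Delta_nF$ is a point the paper leaves implicit, and it is handled correctly.
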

\begin{proof}
By Theorem \ref{t:higherchainrule}
\[ \Delta_n (F\circ G)(0, \ldots , 0,R; X) \simeq \Delta_n F(\Delta_nG(0, \ldots , 0,R; X), \ldots , \Delta_1  G(R; X); G(X)).\] 
Applying  Lemma \ref{lem:derivatives} this becomes
\[ \frac{d^n}{dR^n}(F\circ G)(X) \simeq \Delta_nF\left( \frac{d^n}{dR^n}G(X),\ldots, \frac{d}{dR}G(X); G(X)\right).\qedhere\]
\end{proof}

Combining Theorems \ref{thm:HMY2} and \ref{thm:restatedgoal} gives the next corollary, a Fa\`a di Bruno-style characterization of the derivatives.  We note that Arone and Ching have obtained similar results for functors of spaces and spectra, by expressing such characterizations of derivatives in terms of composition products of symmetric sequences of derivatives (see \cite[Theorem 0.2]{AC} and \cite[Theorem 1.15]{C}).  

\begin{cor}\label{cor:faachainrule}
For a composable pair $F\colon\cB\kto \cA$ and $G\colon\mathcal{M}\mathrm{od}_R\kto \cB$ of functors of abelian categories there is a chain homotopy equivalence
\[ \frac{d^n}{dR^n}(F\circ G)(X) \simeq \bigoplus _{ \pi=\{S_1, \ldots, S_k\}}\nabla^{\pi}F\left( \frac{d^n}{dR^n}G(X), \ldots , \frac{d}{dR}G(X);G(X) \right)\]
where the sum is taken over all partitions $\pi$ of $\{ 1, \ldots , n \}$.
\end{cor}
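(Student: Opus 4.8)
The plan is to derive this statement as an immediate combination of Theorem~\ref{thm:HMY2} and Theorem~\ref{thm:restatedgoal}, with essentially no new computation.

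First I would apply Theorem~\ref{thm:HMY2}, which supplies a chain homotopy equivalence
\[ \frac{d^n}{dR^n}(F\circ G)(X) \simeq \Delta_nF\left( \frac{d^n}{dR^n}G(X),\ldots, \frac{d}{dR}G(X); G(X)\right). \]
The right-hand side is the functor $\Delta_n F \colon \cB^{n+1}\kto\cA$ evaluated at the tuple $\bigl(\frac{d^n}{dR^n}G(X),\ldots,\frac{d}{dR}G(X),G(X)\bigr)$ of objects of $\cB$.

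Next I would apply Theorem~\ref{thm:restatedgoal} to this tuple. That theorem provides a pointwise chain homotopy equivalence of functors $\Delta_n F(V_n,\ldots,V_1;X)\simeq \bigoplus_{\pi}\nabla^{\pi}F(V_n,\ldots,V_1;X)$, with the sum ranging over all partitions $\pi$ of $\{1,\ldots,n\}$. Being pointwise, it holds in particular at the tuple above, so setting $V_i=\frac{d^i}{dR^i}G(X)$ and reading off the convention $\nabla^{\pi}F(V_n,\ldots,V_1;X)\defeq\nabla^{|\pi|}F(V_{|S_1|},\ldots,V_{|S_k|};X)$ identifies each summand with $\nabla^{\pi}F\bigl(\frac{d^n}{dR^n}G(X),\ldots,\frac{d}{dR}G(X);G(X)\bigr)$, exactly as in the statement. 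Composing the two chain homotopy equivalences by transitivity of $\simeq$ then finishes the argument.

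There is no real obstacle here: the only points worth a sentence are that $\simeq$ denotes pointwise chain homotopy equivalence in $\Ch\cA$, which is an equivalence relation, so the two steps compose cleanly; and that the summands $\nabla^{\pi}F$ are well defined independently of how the blocks of $\pi$ are ordered, by Proposition~\ref{prop:5.9}(ii). One could alternatively route the argument through Lemma~\ref{lem:derivatives} and Theorem~\ref{t:higherchainrule} directly, but invoking the already-packaged Theorem~\ref{thm:HMY2} is cleaner.
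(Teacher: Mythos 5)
Your proposal is correct and matches the paper's argument exactly: the paper presents this corollary as the immediate combination of Theorem~\ref{thm:HMY2} and Theorem~\ref{thm:restatedgoal}, which is precisely what you do. No further comment is needed.
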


The derivatives $\frac{d^n}{dR^n} F$ play an important role in functor calculus in that  they classify homogeneous degree $n$ functors, that is, degree $n$ functors $F$ with the property that $P_kF\simeq 0$ for $k<n$.   For a functor of abelian categories $F\colon\cB \kto \cA$, Johnson-McCarthy define the $n$th layer of its Taylor tower to be the homogeneous degree $n$ functor $D_nF:=\mathrm{hofiber}(P_nF\rightarrow P_{n-1}F)$, i.e., the mapping cone shifted down one degree.
This defines $D_nF$ up to quasi-isomorphism.  The next few results relate our work to their characterizations of this functor, which hold up to quasi-isomorphism and are denoted with the symbol $\simeq_{qi}$.
There is a quasi-isomorphism
\begin{align*} D_nF(X) & \simeq_{qi} (D_1^{(n)}\CR_nF(X, \ldots , X))_{h\Sigma_n}\\
					& \simeq_{qi} \nabla^nF(X, \ldots , X; 0)_{h\Sigma_n} \end{align*}
by \cite[Proposition 3.9, Corollary 5.11]{JM:Deriving}.  The $\Sigma_n$-action is the natural action that permutes the variables of $\CR_nF$.   These homogeneous layers are analogous to the homogeneous terms  $f^{(n)}(x)/n!$ of the Maclaurin series of a function $f$ in the calculus of functions.  In terms of the derivative of Definition \ref{defn:derivatives}, this yields the  following corollary.

\begin{cor} For $F \colon \mathcal{M}\mathrm{od}_R \kto \cA$ there is a quasi isomorphism:
\[ D_nF(R) \simeq  \left({\nabla}^nF(R,\ldots, R;0)\right)_{h\Sigma_n} \simeq_{qi} \left(\frac{d^n}{dR^n}F(0)\right)_{h\Sigma_n}.\]
\end{cor}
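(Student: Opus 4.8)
The plan is to deduce this corollary essentially immediately from the two quasi-isomorphisms displayed just before the statement, together with the defining formula for the derivative in Definition~\ref{defn:derivatives}. First I would take the chain of quasi-isomorphisms
\[ D_nF(X) \simeq_{qi} \bigl(D_1^{(n)}\CR_nF(X,\ldots,X)\bigr)_{h\Sigma_n} \simeq_{qi} \bigl(\nabla^nF(X,\ldots,X;0)\bigr)_{h\Sigma_n},\]
valid for an arbitrary functor by \cite[Proposition 3.9, Corollary 5.11]{JM:Deriving} (the second equivalence being the case $X=0$ of Proposition~\ref{prop:5.9}(i), where the $\CR_{n+1}$-summand vanishes since $\CR_{n+1}F$ is multi-reduced), and specialize it to $\cB=\mathcal{M}\mathrm{od}_R$ and $X=R$. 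This yields the first equivalence $D_nF(R)\simeq \bigl(\nabla^nF(R,\ldots,R;0)\bigr)_{h\Sigma_n}$ asserted in the corollary.

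The second equivalence is then a matter of unwinding notation: Definition~\ref{defn:derivatives} sets $\frac{d^n}{dR^n}F(X)\defeq\nabla^nF(R,\ldots,R;X)$, so evaluating at $X=0$ gives $\frac{d^n}{dR^n}F(0)=\nabla^nF(R,\ldots,R;0)$ literally, and hence the same after passing to $\Sigma_n$-homotopy orbits. Thus the two displayed expressions on the right are the homotopy orbits of the same chain complex, and the remaining relation $\bigl(\nabla^nF(R,\ldots,R;0)\bigr)_{h\Sigma_n}\simeq_{qi}\bigl(\frac{d^n}{dR^n}F(0)\bigr)_{h\Sigma_n}$ is an identity.

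The only point that requires genuine care — and the main, albeit minor, obstacle — is the compatibility of the $\Sigma_n$-actions. On $D_nF$ the action is the natural permutation of the variables of $\CR_nF$; under the isomorphism of Proposition~\ref{prop:5.9}(i) this transports to an action on $\nabla^nF(V_n,\ldots,V_1;X)$ that becomes a genuine $\Sigma_n$-action only after all direction variables are set equal to $R$ (cf. Proposition~\ref{prop:5.9}(ii)). I would check that the isomorphism of Proposition~\ref{prop:5.9}(i) is $\Sigma_n$-equivariant, so that at $X=0$ one obtains an equivariant identification $\nabla^nF(R,\ldots,R;0)\cong D_1^{(n)}\CR_nF(R,\ldots,R)$ under which the whole chain of quasi-isomorphisms above commutes equivariantly. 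Since homotopy orbits preserve quasi-isomorphisms, this completes the argument.
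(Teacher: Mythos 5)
Your proposal is correct and follows exactly the route the paper intends: the corollary is stated as an immediate consequence of the displayed quasi-isomorphisms $D_nF(X)\simeq_{qi}(D_1^{(n)}\CR_nF(X,\ldots,X))_{h\Sigma_n}\simeq_{qi}(\nabla^nF(X,\ldots,X;0))_{h\Sigma_n}$ specialized to $X=R$, combined with the definitional identity $\frac{d^n}{dR^n}F(0)=\nabla^nF(R,\ldots,R;0)$. Your added remark about equivariance of the identification is a reasonable extra check, but the paper treats this as already packaged into the cited results of \cite{JM:Deriving}.
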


The objects $D_nF(R)$ are classifying objects for homogeneous degree $n$ functors by \cite[Theorem 5.13]{JM:Classification2}.  Thus, combining these results with Theorem \ref{thm:HMY2} and Corollary \ref{cor:faachainrule}, we have the following formula for the homogeneous layers of a composition.

\begin{cor} \label{cor:layers} For functors $G\colon\mathcal{M}\mathrm{od}_R\to \cB$ and $F\colon\cB\to \cA$,  there is a quasi-isomorphism
\[ D_n(F\circ G)(R) \simeq_{qi} \left (\Delta_nF\left( \frac{d^n}{dR^n}G(0), \ldots , \frac{d}{dR}G(0); G(0)\right) \right)_{h\Sigma_n}.\]
Furthermore, there is a quasi-isomorphism
\[D_n(F\circ G)(R)\simeq_{qi} \left( \bigoplus _{ \pi=\{S_1, \ldots, S_k\}}\nabla^{\pi}F\left( \frac{d^n}{dR^n}G(0), \ldots , \frac{d}{dR}G(0);G(0)\right ) \right)_{h\Sigma_n}\]
where the sum is taken over all partitions $\pi$ of $\{ 1, \ldots , n \}$.
\end{cor}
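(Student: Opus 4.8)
The plan is to obtain both quasi-isomorphisms by splicing together results already in hand, with the last step being that homotopy orbits preserve quasi-isomorphisms. First I would apply the quasi-isomorphism $D_nH(X)\simeq_{qi}(\nabla^nH(X,\dots,X;0))_{h\Sigma_n}$ recalled just above from \cite[Proposition 3.9, Corollary 5.11]{JM:Deriving} to the composite $H=F\circ G\colon\mathcal{M}\mathrm{od}_R\kto\cA$ with $X=R$, obtaining
\[ D_n(F\circ G)(R)\simeq_{qi}\bigl(\nabla^n(F\circ G)(R,\dots,R;0)\bigr)_{h\Sigma_n}.\]
By Definition \ref{defn:derivatives} the chain complex being differentiated here is $\frac{d^n}{dR^n}(F\circ G)(0)$. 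Since Theorem \ref{thm:HMY2} is a pointwise chain homotopy equivalence of functors of $X$, evaluating it at $X=0$ gives a chain homotopy equivalence
\[ \frac{d^n}{dR^n}(F\circ G)(0)\simeq\Delta_nF\Bigl(\frac{d^n}{dR^n}G(0),\dots,\frac{d}{dR}G(0);G(0)\Bigr).\]

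Because homotopy orbits send quasi-isomorphisms to quasi-isomorphisms (as used in the example following Corollary \ref{cor:Kristine's}), applying $(-)_{h\Sigma_n}$ to this equivalence and composing with the previous quasi-isomorphism yields the first displayed formula of the corollary. The second formula follows in exactly the same way, replacing Theorem \ref{thm:HMY2} by its Fa\`a di Bruno refinement Corollary \ref{cor:faachainrule} evaluated at $X=0$; equivalently, one can extract it from the first formula by applying the decomposition of Theorem \ref{thm:restatedgoal} to $\Delta_nF\bigl(\frac{d^n}{dR^n}G(0),\dots,\frac{d}{dR}G(0);G(0)\bigr)$ and again passing to homotopy orbits, noting that $\nabla^\pi F$ applied to these arguments is precisely the summand named in Corollary \ref{cor:faachainrule}. (The observation that $D_n(F\circ G)(R)$ is a classifying object for the $n$th homogeneous layer via \cite[Theorem 5.13]{JM:Classification2} is context, not needed for the argument.)

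The step I expect to be the main obstacle is the $\Sigma_n$-equivariance that makes the homotopy-orbit step legitimate: the $\Sigma_n$-action on $\nabla^n(F\circ G)(R,\dots,R;0)$ is the natural one permuting the direction variables (equivalently, the arguments of $\CR_n(F\circ G)$), and one must check that the chain homotopy equivalence coming from Theorem \ref{thm:HMY2} (resp.\ Corollary \ref{cor:faachainrule}) intertwines it with the corresponding action on the right-hand side. I would handle this by noting that the equivalence in question is assembled entirely from natural transformations in the direction variables (via Theorem \ref{t:higherchainrule}, Lemma \ref{lem:derivatives}, and the linearity properties of $D_1$), so that the transported action on the Fa\`a di Bruno side is the expected natural one: it permutes the summands indexed by partitions of $\{1,\dots,n\}$ and acts on each $\nabla^\pi F$ through the symmetry isomorphisms of Proposition \ref{prop:5.9}(ii). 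Once equivariance is established, the quasi-isomorphism of homotopy orbits is automatic, completing the proof.
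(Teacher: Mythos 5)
Your proposal is correct and is exactly the argument the paper intends: the paper gives no explicit proof, saying only that the corollary follows by combining the preceding quasi-isomorphism $D_nH(R)\simeq_{qi}\bigl(\tfrac{d^n}{dR^n}H(0)\bigr)_{h\Sigma_n}$ (applied to $H=F\circ G$) with Theorem \ref{thm:HMY2} and Corollary \ref{cor:faachainrule} evaluated at $X=0$, and then passing to homotopy orbits. Your additional attention to the $\Sigma_n$-equivariance of the comparison maps is a point the paper leaves implicit, and your resolution of it is reasonable.
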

A more thorough investigation of the relationship between Corollary \ref{cor:layers}, the classification of functors of $\mathcal{M}\mathrm{od}_R $, and modules over these is warranted.  This will be the subject of future work.

\appendix

\section{A general bicomplex retraction}

Throughout this paper, and especially in Sections 4 and 5, we have replaced many quasi-isomorphisms related to properties of polynomial approximation and linearization by explicit chain homotopy equivalences.  In several cases (especially Proposition \ref{lem:5.7}) we need to have such equivalences between the total complexes of  bicomplexes whose rows are chain homotopy equivalent. In this appendix, we establish  general criteria under which the total complexes of  such bicomplexes are chain homotopy equivalent under weaker conditions than the ``chain homotopy of bicomplexes'' used elsewhere in this paper.    In Appendix B, we provide a proof of Proposition \ref{lem:5.7} as an application.

Let $A_{\bullet,\bullet}$ and $B_{\bullet,\bullet}$ be  first-quadrant bicomplexes. Contrary to the conventions in use elsewhere,  in this appendix we assume that bicomplexes have anti-commutative squares, as this convention will simplify the signs.

\begin{defn}\label{defn:row-wise-retraction} A morphism of first-quadrant bicomplexes $\iota \colon A_{\bullet, \bullet}\rightarrow B_{\bullet,\bullet}$ \emph{admits a row-wise strong deformation retraction} if  for all $p \geq 0$ there is a map of chain complexes $f_{p,\bullet}\colon B_{p,\bullet} \rightarrow A_{p,\bullet}$ 
\begin{enumerate} 
\item  that is a retraction for $\iota_{p,\bullet}$ and
\item so that $\iota_{p,\bullet}$ and $f_{p,\bullet}$ induce a strong chain homotopy equivalence between $A_{p,\bullet}$ and $B_{p,\bullet}$. That is,
there are morphisms $s\colon B_{p,q}\rightarrow B_{p,q+1}$ such that $ds+sd=1-\iota_{p,q}f_{p,q}$ and $s\iota_{p,q}=0$.
\end{enumerate} 
\end{defn}

Our aim is to prove:

\begin{thm}\label{thm:bicomplex-htpy-equiv}   Let $\iota\colon A_{\bullet, \bullet}\rightarrow B_{\bullet,\bullet}$ be a morphism of first-quadrant bicomplexes that admits a row-wise strong deformation retraction. 
Then $\iota$ induces a chain homotopy equivalence of total complexes $\Tot(A_{\bullet, \bullet})\rightarrow \Tot(B_{\bullet, \bullet})$.  \end{thm}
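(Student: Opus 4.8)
The plan is to produce an explicit chain homotopy inverse to $\Tot(\iota)$ together with explicit contracting homotopies, built row-by-row from the given data $f_{p,\bullet}$ and $s = s_{p,\bullet}$, but correcting for the failure of these to commute with the vertical differentials. First I would fix notation: write $d^h$ and $d^v$ for the horizontal and vertical differentials of both bicomplexes (anti-commuting, by the appendix's convention), so that $\Tot(A)_n = \bigoplus_{p+q=n} A_{p,q}$ with total differential $d = d^h + d^v$. The map $F \defeq \bigoplus f_{p,\bullet} \colon \Tot(B) \to \Tot(A)$ is a map of \emph{graded} objects and a retraction for $\Tot(\iota)$, and it is a chain map in the vertical direction by hypothesis, but it need not commute with $d^h$ because the $f_{p,\bullet}$ were only assumed to be chain maps row-wise, not natural in $p$. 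The key computation is to measure this defect: set $\phi \defeq F d^h - d^h F \colon A_{p,q} \to A_{p-1,q}$ (a degree $-1$ map on the total complex), and observe that since $\iota$ is an honest bicomplex map and $F\iota = 1$, one gets $\phi \iota = 0$ and that $\phi$ is a vertical chain map. The homotopy $s$ kills the image of $\iota$, so one can hope to use $s$ to "integrate away" $\phi$ and iterate.

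Concretely, I would define the corrected chain map $\tilde F \colon \Tot(B) \to \Tot(A)$ and the total homotopy $S \colon \Tot(B) \to \Tot(B)$ as finite sums (finite because the bicomplexes are first-quadrant, so only finitely many terms are nonzero on each $A_{p,q}$) of alternating composites of the form $(sd^h)^k s$ and $(-Fd^hs)^k F$, i.e.\ power-series-style corrections
\[ S \defeq \sum_{k\ge 0} (-1)^k\, s\,(d^h s)^k, \qquad \tilde F \defeq \sum_{k\ge 0} (-1)^k\, (s\, d^h)^k\, F \circ (\text{something}), \]
and then verify directly that $d S + S d = 1 - \Tot(\iota)\,\tilde F$ and that $\tilde F\, \Tot(\iota) = 1$ (the latter because $s\iota = 0$ forces every correction term to vanish on the image of $\iota$). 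The verification is a bookkeeping computation using the three identities $d^v s + s d^v = 1 - \iota f$, $s\iota = 0$, $\iota d^h = d^h \iota$, $f d^v = d^v f$, together with the anti-commutativity $d^h d^v + d^v d^h = 0$ and $(d^h)^2 = (d^v)^2 = 0$; one expands $dS + Sd = (d^h+d^v)S + S(d^h+d^v)$, uses the row-wise homotopy identity to replace $d^v s + s d^v$, and checks that the cross terms involving $d^h$ telescope thanks to the alternating signs, leaving exactly $1 - \iota f$-type terms that assemble into $1 - \Tot(\iota)\tilde F$.

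The main obstacle I anticipate is not the existence of the formulas but getting the signs and the telescoping right: because the appendix deliberately works with anti-commutative (rather than commutative) bicomplexes precisely to simplify signs, I would be careful that the total differential on $\Tot$ is just $d^h + d^v$ with no extra sign insertion, and that each application of $d^h$ past a block of $s$'s does not pick up a sign. A cleaner way to organize the argument — and the one I would actually write up — is to avoid infinite-sum gymnastics and instead induct on the number of nonzero columns: filter $\Tot(B)$ by the brutal truncation in the horizontal direction, note that $\iota$ restricts to a row-wise SDR on each filtration quotient (a single row, where the statement is exactly the row-wise hypothesis), and then use the standard fact that a map of filtered complexes inducing chain homotopy equivalences on associated graded pieces, with homotopies that can be chosen compatibly, is itself a chain homotopy equivalence; the compatibility is arranged by the inductive correction $S \mapsto S - S d^h S + \cdots$ described above, now truncated after finitely many steps. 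Either presentation reduces the theorem to the one-row case plus a finite convergent correction, with the sign/telescoping check being the only real content. As stated in the excerpt, Corollary \ref{cor:chain-htpy-equiv} then follows by taking $A_{\bullet,\bullet}$ to be the bicomplex with only the zeroth row of $B$ and $\iota$ the inclusion, since each higher row being contractible supplies the required row-wise SDR onto the zero complex.
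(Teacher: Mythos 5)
Your proposal is correct and follows essentially the same route as the paper: the homotopy $S=\sum_k(-1)^k s(d^h s)^k$ and the corrected retraction built from alternating powers of $s$ interleaved with the transverse differential are precisely the maps $\sigma$ and $\rho$ that the paper writes as lower-triangular matrices with entries $s(-es)^{i-j-1}$ and $f(-es)^{i-j}$, and the verification via the identities $ds+sd=1-\iota f$, $s\iota=0$, $f\iota=1$ is the content of the paper's Lemma \ref{lem:relations} and Propositions \ref{prop:generalretraction}--\ref{prop:generalchain-homotopy}. The only cosmetic differences are that your horizontal/vertical naming is transposed relative to the paper's (the hypothesis gives commutation with the within-row differential, and the corrections involve the transverse one) and that your filtration/perturbation reformulation is an equivalent repackaging of the same finite telescoping computation.
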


We prove Theorem \ref{thm:bicomplex-htpy-equiv} by constructing an explicit retraction in Proposition \ref{prop:generalretraction} and chain homotopy in Proposition \ref{prop:generalchain-homotopy}. We first establish our notation. 
Explicitly, the bicomplex $A_{\bullet, \bullet} = \{ A_{p,q} \mid p,q \geq 0\}$ has:
\begin{itemize}
\item horizontal differentials $d^A \colon A_{p,q} \to A_{p,q-1}$ so that $(d^A)^2=0$, and 
\item vertical differentials $e^A \colon A_{p,q} \to A_{p-1, q}$ so that $(e^A)^2=0$,
\item so that $e^Ad^A+d^Ae^A=0$ (squares anti-commute).
\end{itemize}
Similarly, the bicomplex $B_{\bullet, \bullet} = \{ B_{p,q} \mid p,q \geq 0\}$ has:
\begin{itemize}
\item horizontal differentials $d^B \colon B_{p,q} \to B_{p,q-1}$ so that $(d^B)^2=0$, and 
\item vertical differentials $e^B \colon B_{p,q} \to B_{p-1, q}$ so that $(e^B)^2=0$,
\item so that $e^Bd^B+d^Be^B=0$ (squares anti-commute).
\end{itemize}
We will omit superscripts and simply use $d$ and $e$ when the context is clear.   
In addition to the chain homotopies described explicitly in the statement of Theorem \ref{thm:bicomplex-htpy-equiv} it is convenient to allow  additional maps $d \colon B_{p,0} \to B_{p,-1}$ and $s \colon B_{p,-1} \to B_{p,0}$ all equal to zero. In particular, $ds \colon B_{p,0} \to B_{p,0}$ is the  map $1-\iota f$.

The next lemma records some commutativity relations that follow from the defining relations and conditions of \ref{thm:bicomplex-htpy-equiv}:
\[ d^2=0 \qquad e^2 = 0 \qquad de + ed = 0 \qquad ds + sd = 1-\iota f\qquad s\iota=0 \] \[ f\iota = 1 \qquad fd = df \qquad \iota d = d \iota \qquad \iota e = e\iota .\]

\begin{lem}\label{lem:relations}
For any $k\geq 0$, 
\begin{enumerate}
\item $ef(-es)^k+df(-es)^{k+1}=f(-es)^ke+f(-es)^{k+1}d,$
\item $es(-es)^k+ds(-es)^{k+1}=-\iota f(-es)^{k+1}-sd(-es)^{k+1}$, 
\item  $s(-es)^{k+1}d+s(-es)^ke=-(-se)^{k+1}ds$, and
\item $sd(-es)^{k+1}=-(-se)^{k+1}ds$.
\end{enumerate}
Here $(-es)^k=(-1)^k(es)^k.$
\end{lem}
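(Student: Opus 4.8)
The plan is to verify the four identities of Lemma~\ref{lem:relations} by direct computation, using only the defining relations
\[ d^2=0,\quad e^2=0,\quad de+ed=0,\quad ds+sd=1-\iota f,\quad s\iota=0,\quad f\iota=1,\quad fd=df,\quad \iota d = d\iota,\quad \iota e = e\iota\]
together with the conventions $d=0$ and $s=0$ in the auxiliary degrees $q=-1$. I would treat (i) and (ii) together, as they arise from the same bookkeeping, and then observe that (iv) is an immediate special case of (iii) once one knows $sd(-es)^{k+1} = s(-es)^{k+1}d + s(-es)^k e$, which in turn follows from the ``commuting $d$ past $es$'' move. So the real content is establishing (i), (ii), and (iii).

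First I would record the two basic ``shuffle'' lemmas that make the powers $(-es)^k$ tractable. Since $de+ed=0$ and $ds+sd=1-\iota f$, we get $d(es) = (de)s + e(sd)$... more usefully, I would compute how $d$ and $e$ commute past a single block $(-es)$: namely $d(-es) = -(des) = -(−ed)s = e(ds) = e(1-\iota f - sd)$, hmm — the cleaner route is to push $s$ and $e$ around using $sd+ds = 1-\iota f$ and $fd=df$. The intended proof almost certainly proceeds by induction on $k$, with the base case $k=0$ being a short manipulation: for (i), $ef + df(-es) = ef - dfes = ef - fdes = ef + fesd - f(1-\iota f)e$... one checks this collapses to $fe + fd$ using $f\iota=1$ and $fd=df$, $fe = ef$ (which itself follows from $\iota e = e\iota$ and $f\iota=1$: $fe\iota = f\iota e = e = ef\iota$, and since $\iota$ is a split mono this forces $fe = ef$ on the image, but actually we want it as an identity of maps $B\to A$ — here one must be slightly careful and use that $f$ is a chain map, so $fe^B = e^A f$, which is exactly what ``map of chain complexes'' in Definition~\ref{defn:row-wise-retraction} gives row-wise, and $e$ is the vertical differential). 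The inductive step for each of (i)--(iii) then amounts to multiplying the degree-$k$ identity on the appropriate side by $(-es)$ or $(-se)$ and substituting $es + se = $ (something controlled by $1-\iota f$) where it appears, absorbing the correction terms $\iota f(-es)^{k+1}$ and $sd(-es)^{k+1}$ that appear in (ii).

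The main obstacle I anticipate is sign and ordering discipline: the powers $(-es)^k = (-1)^k(es)^k$ and $(-se)^k = (-1)^k (se)^k$ are genuinely different operators (one ends in $s$ applied first, the other in $e$), and relations (iii)--(iv) relate a left-handed expression in $es$ to a right-handed expression in $se$, so the induction has to track which side new factors are appended on and keep the anticommutator $de+ed=0$ and the homotopy relation $ds+sd=1-\iota f$ applied on the correct side. I would set up the induction so that (i) and (ii) are proved simultaneously (they feed into each other: (ii) at level $k$ uses (i) at level $k$, roughly, to handle the $ef(-es)^k$ term), and then derive (iii) from (ii) and (iv) trivially from (iii); alternatively (iii) can be proved by a parallel induction using $s d = (1-\iota f) - ds$ and $s\iota = 0$ to kill the $\iota f$ terms, since every term in (iii) is pre-composed with $s$ on the left except the $ds$ at the far right, and $s(-es)^k$ starts with an $s$ so $s \cdot \iota f \cdots = 0$. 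That last observation — that $s\iota = 0$ wipes out all $\iota f$ corrections inside $s$-prefixed expressions — is, I expect, the trick that makes (iii) and (iv) come out cleanly from (ii), and I would highlight it as the key simplification rather than grinding each of the four identities independently from scratch.
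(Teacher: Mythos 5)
Your overall strategy -- induction on $k$, base case by direct manipulation with $ds+sd=1-\iota f$, inductive step by appending a factor of $(-es)$, and using $s\iota=0$ to annihilate the $\iota f$ correction terms inside $s$-prefixed expressions -- is exactly the route the paper takes for part (i) (the paper proves only (i) in detail and leaves (ii)--(iv) as exercises, which your reductions of (iii) and (iv) to one another handle reasonably).

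However, there is one concrete step in your sketch that would fail as written: the relation $fe=ef$ is \emph{not} among the available hypotheses, and your justification for it is a misreading of Definition \ref{defn:row-wise-retraction}. That definition makes $f_{p,\bullet}\colon B_{p,\bullet}\to A_{p,\bullet}$ a chain map \emph{row-wise}, i.e.\ with respect to the horizontal differential $d$ only; this gives $fd=df$ but says nothing about the vertical differential $e$. Indeed, the entire point of Theorem \ref{thm:bicomplex-htpy-equiv} (as emphasized in the introduction) is that $f$ and $s$ are \emph{not} assumed to be natural with respect to $e$; only the bicomplex morphism $\iota$ satisfies $\iota e = e\iota$. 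Your fallback argument that $fe\iota=ef\iota$ plus $\iota$ being a split mono ``forces $fe=ef$'' is also invalid: it only shows the two maps agree on the image of $\iota$. Fortunately the identity you actually need in the base case of (i) is not $fe=ef$ but rather $fe\iota f = ef$, which does follow from the legitimate relations $e\iota=\iota e$ and $f\iota=1$: one computes $ef+df(-es)=ef+feds=ef+fe(1-sd-\iota f)=ef+fe-fesd-ef=fe+f(-es)d$, with the two $ef$ terms cancelling. The same care is needed in the inductive step, where the term $f(es)^{k+1}e\iota f$ vanishes via $e\iota=\iota e$ and $s\iota=0$ rather than via any commutation of $f$ with $e$. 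With that correction the argument closes; without it, you would be assuming precisely the naturality that the theorem is designed to avoid.
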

\begin{proof}  We prove (i) and leave the others as exercises for the reader. 
We prove this by induction on $k$.  When $k=0$, the left hand side is
\begin{align*}
ef+df(-es)&=ef+feds\\
&=ef+fe(1-sd-\iota f)\\
&=ef+fe-fesd-fe\iota f.\\
\intertext{
Using the facts that $\iota$ and $e$ commute, and $f\iota=1$, this becomes}
&=ef+fe-fesd-ef\\
&=fe-fesd,
\end{align*}
as desired.

Assuming the statement holds for $k$, we have
\begin{align*}
ef(-es)^{k+1}+df(-es)^{k+2}&=(ef(-es)^k+df(-es)^{k+1})(-es)\\
&=(f(-es)^ke+f(-es)^{k+1}d)(-es).\\
\intertext{Since $e^2=0$ and $d$ and $e$ anticommute, this is }
&=(-1)^{k+1}f(es)^{k+1}eds\\
&=(-1)^{k+1}f(es)^{k+1}e(1-sd-\iota f).
\intertext{Since $e$ and $\iota$ commute and $s\iota=0$, this is simply}
&=f(-es)^{k+1}e+f(-es)^{k+2}d
\end{align*}
and the proof is complete.
\end{proof}

Recall the \emph{total complex} $\Tot(C)$ of a first-quadrant bicomplex $C_{\bullet,\bullet}$ is the chain complex with \[\Tot(C)_n \defeq C_{n,0} + \cdots + C_{0,n}\] (where ``$+$'' is less-cluttered notation for ``$\oplus$'') and with differential given by the matrix
\[
\begin{tikzcd}[column sep=100pt, ampersand replacement=\&]
\Tot(C)_n \defeq C_{n,0}+\cdots + C_{0,n}  \arrow[r, "{\left(\begin{array}{cccccc} e & d & 0 & \cdots & \cdots & 0 \\ 0 & e & d & 0 & \cdots& 0 \\ \vdots & \ddots & \ddots & \ddots & \ddots & \vdots \\   0 & \cdots&0 & e & d & 0 \\ 0 &  \cdots& \cdots & 0 & e & d \end{array}\right)}"] \& C_{n-1,0} + \cdots + C_{0,n-1} \eqdef \Tot(C)_{n-1}.
\end{tikzcd}
\]
See \cite[3.1.27]{riehl:context} for an explanation of the matrix notation for a map between finite direct sums.

\begin{rmk}\label{rmk:commutative-bicomplexes}
The total complex defined above assumes we have a bicomplex with anti-commutative squares.  When working with a bicomplex with commutative squares, one must introduce  signs to define the total complex.  A standard convention is to multiply all of the maps in every odd row by $-1$ to obtain a bicomplex with anti-commutative squares.  As this does not affect the conditions needed for Theorem \ref{thm:bicomplex-htpy-equiv}, we can apply it to bicomplexes with commutative squares as well.  
\end{rmk}

\begin{prop}\label{prop:generalretraction} For a morphism of first-quadrant bicomplexes $\iota:A_{\bullet, \bullet}\to B_{\bullet,\bullet}$ satisfying the conditions of Theorem \ref{thm:bicomplex-htpy-equiv}, the induced morphism of total complexes $\Tot(\iota):\Tot(A)_\bullet\to \Tot(B)_\bullet$ admits a retraction $\rho \colon \Tot(B)_\bullet \to \Tot(A)_\bullet$ defined in degree $n$ by

\[
\begin{tikzcd}[column sep=110pt, ampersand replacement=\&]
\Tot(B)_n \defeq B_{n,0}+\cdots + B_{0,n}  \arrow[r, "{\left(\begin{array}{cccccc} f & 0 & \cdots & \cdots & 0 & 0 \\ f(-es) & f &0 & \ddots & \ddots & 0 \\ f(-es)^2 & f(-es) & f & 0 & \ddots & \vdots \\ f(-es)^3 & f(-es)^2 & f(-es) & f & 0 & \vdots \\
 \vdots & \vdots & \vdots & \vdots & \ddots & \vdots \\
f(-es)^{n-1}&f(-es)^{n-2}&\cdots&\cdots&f &0\\ 
 f(-es)^{n} & f(-es)^{n-1} & \cdots & \cdots&\cdots & f \end{array}\right)}"] \& A_{n,0} + \cdots + A_{0,n} \eqdef \Tot(A)_{n}.
\end{tikzcd}
\]
That is, the $(n+1)\times (n+1)$ matrix defining $\rho_n$ is the lower triangular matrix whose  entry in the $i$th row, $j$th column, $j\leq i$,  is $f_{n-(i-1),i-1}(-es)^{i-j}$.
Here $(-es)^k = (-1)^k(es)^k$.
\end{prop}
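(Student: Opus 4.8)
The plan is to verify the two defining conditions for $\rho$ to be a retraction of the chain map $\Tot(\iota)$: that $\rho$ is itself a morphism of chain complexes, i.e.\ $d^{\Tot(A)}\circ\rho = \rho\circ d^{\Tot(B)}$, and that $\rho\circ\Tot(\iota) = \id_{\Tot(A)_\bullet}$. Both are checked entrywise in the matrix descriptions, ordering the summands of $\Tot(A)_n = \bigoplus_{p+q=n}A_{p,q}$ (and likewise $\Tot(B)_n$) by decreasing first index, so that the $i$th summand is $A_{n-(i-1),i-1}$ and the total differential is the bidiagonal matrix with $e$ along the diagonal and $d$ along the superdiagonal.

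The retraction identity is immediate. Since $\Tot(\iota)_n$ is the diagonal matrix with entries $\iota_{p,q}$, the $(i,j)$-entry of $\rho_n\circ\Tot(\iota)_n$ is $f(-es)^{i-j}\iota$ for $i\geq j$ and $0$ otherwise. On the diagonal this equals $f\iota = 1$, and for $i>j$ the relation $s\iota = 0$ kills the rightmost factor $s$ of $(-es)^{i-j}$; hence $\rho_n\circ\Tot(\iota)_n$ is the identity matrix.

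For the chain map condition I would multiply out both composites. Using that $d^{\Tot}$ has $e$ on the diagonal and $d$ on the superdiagonal, and that the $(a,b)$-entry of $\rho_m$ is $f_{m-(a-1),a-1}(-es)^{a-b}$ for $a\geq b$ (and $0$ otherwise), a direct computation of matrix products shows that the $(i,j)$-entry of $d^{\Tot(A)}_n\rho_n$ is $ef(-es)^{i-j} + df(-es)^{i+1-j}$, while the $(i,j)$-entry of $\rho_{n-1}d^{\Tot(B)}_n$ is $f(-es)^{i-j}e + f(-es)^{i-j+1}d$. Writing $k = i-j\geq 0$, the equality of these two expressions is precisely Lemma \ref{lem:relations}(i). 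It then remains only to treat the boundary columns: for $j=1$ the superdiagonal $d$-contribution to $\rho_{n-1}d^{\Tot(B)}_n$ is absent, but on the summand $B_{n,0}$ the map $d$ is (by our convention) the zero map into degree $-1$, so both sides collapse to $f(-es)^{i-1}e$; and for $j=n+1$ the matrix $\rho_{n-1}$ has no $(n+1)$st column and $(\rho_n)_{i,n+1}$ vanishes for $i\leq n$, so both sides collapse to the single term $df$ (equal to $fd$ since $f$ and $d$ commute). This establishes $d^{\Tot(A)}\rho = \rho d^{\Tot(B)}$.

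The main obstacle is the bookkeeping rather than any genuine difficulty: one must keep the indexing of the summands, the off-by-one shift between $\rho_n$ and $\rho_{n-1}$, and the sign convention $(-es)^k = (-1)^k(es)^k$ consistently aligned, and handle with care the corner entries where summands run off the edge of the first quadrant. Once the matrix products are written in the normalization above, everything reduces to the single identity Lemma \ref{lem:relations}(i), which already packages all of the interplay among $d$, $e$, $s$, $f$, and $\iota$ that is needed here.
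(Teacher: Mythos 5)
Your proposal is correct and follows essentially the same route as the paper: verify the retraction identity from $f\iota=1$ and $s\iota=0$, and verify the chain-map condition entrywise, reducing the subdiagonal entries ($j\le i$) to Lemma \ref{lem:relations}(i). The only presentational difference is that the paper isolates the superdiagonal case $j=i+1$ (where the needed identity is $fd=df$ rather than Lemma \ref{lem:relations}(i)) as a separate case for all $i$, whereas you make it explicit only at the corner $j=n+1$; your general entry formula with the ``$0$ otherwise'' convention does cover it.
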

\begin{proof}
We need to show that $\rho$ is both a chain map and a retraction for $\iota$.  We start by showing that $\rho$ is chain map.  For each $n\geq 1$, we must verify that $\partial_n\rho_n=\rho_{n-1}\partial_n$ where $\partial_n$ is the total complex differential.  We do so by verifying that corresponding entries in the $(n-1)\times n$ matrices defining $\partial_n\rho_n$ and $\rho_{n-1}\partial_n$ agree.  Using $i$ for the row number and $j$ for the column number of the matrix (starting from the upper left corner), we consider three cases: $j>i+1$, $j=i+1$, and $j\leq i$.   We note that the entry in row $i$, column $j$ of these matrices is the component of our map from $B_{n-j+1,j-1}$ to $A_{n-i,i-1}$.  For each case, we display a square diagram whose horizontal arrows are components of $\rho$ and vertical arrows are components of the total complex differential $\partial$. The upper-right composite is the component of the map $\partial\rho$ and the lower-left composite is the component of the map $\rho\partial$.

When $j>i+1$, the diagram is

\[
\begin{tikzcd}[column sep=180pt, row sep=large, ampersand replacement=\&]
B_{n-j+1,j-1} \arrow[d,"{\left(\begin{array}{c} d \\ e \end{array}\right)}"']  \arrow[r, "{\left(\begin{array}{c} 0\\ 0 \end{array}\right)}"] \& A_{n+1-i,i-1} +A_{n-i,i} \arrow[d, "{\left( \begin{array}{cc} e & d \end{array}\right)}"] \\
B_{n-j+1,j-2} + B_{n-j,j-1}  \arrow[r, "{\left(\begin{array}{cc} 0& 0  \end{array}\right)}"'] \& A_{n-i,i-1}.
\end{tikzcd}
\]
As this diagram clearly commutes, we have  $\partial_n\rho_n=\rho_{n-1}\partial_n$ in this case.  

When $j=i+1$, the diagram is
\[
\begin{tikzcd}[column sep=180pt, row sep=large, ampersand replacement=\&]
B_{n-i,i} \arrow[d,"{\left(\begin{array}{c} d \\ e \end{array}\right)}"']  \arrow[r, "{\left(\begin{array}{c} 0\\ f_{n-i,i} \end{array}\right)}"] \& A_{n+1-i,i-1} +A_{n-i,i} \arrow[d, "{\left( \begin{array}{cc} e & d \end{array}\right)}"] \\
B_{n-i,i-1} + B_{n-i-1,i}  \arrow[r, "{\left(\begin{array}{cc} f_{n-i,i-1}& 0  \end{array}\right)}"'] \& A_{n-i,i-1}.
\end{tikzcd}
\]
The composition through the upper right corner is $df_{n-i,i}$ and the composition through the lower left corner is $f_{n-i,i-1}d$.  Since  $f$ is a map of chain complexes between the rows of $B_{\bullet,\bullet}$ and $A_{\bullet, \bullet}$, these are equal and we have $\partial_n\rho_n=\rho_{n-1}\partial_n$ in this case as well.  

Finally, when $j\leq i$, we must verify that the diagram below commutes:
\[
\begin{tikzcd}[column sep=180pt, row sep=large, ampersand replacement=\&]
B_{n-j+1,j-1} \arrow[d,"{\left(\begin{array}{c} d \\ e \end{array}\right)}"']  \arrow[r, "{\left(\begin{array}{c} f(-es)^{i-j}\\ f(-es)^{i-j+1} \end{array}\right)}"] \& A_{n+1-i,i-1} +A_{n-i,i} \arrow[d, "{\left( \begin{array}{cc} e & d \end{array}\right)}"] \\
B_{n-j+1,j-2} + B_{n-j,j-1}  \arrow[r, "{\left(\begin{array}{cc} f(-es)^{i-j+1}& f(-es)^{i-j}  \end{array}\right)}"'] \& A_{n-i,i-1},
\end{tikzcd}
\]
that is, that 
\[
f(-es)^{i-j+1}d+f(-es)^{i-j}e=ef(-es)^{i-j}+df(-es)^{i-j+1}.
\]
But this is part (i) of  Lemma \ref{lem:relations}, so $\rho$ is a chain map.  

To confirm that $\rho_n\iota_n=1$ where 
\[
\iota_n=\left (\begin{matrix}\iota_{n,0}&0 &0&\cdots &0\\
0&\iota_{n-1,1}&0&\cdots&0\\
\vdots&\vdots&\ddots&\ddots&0\\
0&0&0&\cdots&\iota_{0,n}
\end{matrix}\right )
\]  we note that the entry in row $i$, column $j$ of $\rho_n\iota_n$ is 
\begin{enumerate}
\item $0$, if $j>i$, 
\item $f\iota$, if $i=j$, and
\item $f(-es)^{i-j}\iota$, if $i>j$.
\end{enumerate}
Since $f\iota=1$ and $s\iota=0$, $\rho_n$ is a retraction for $\iota_n$.  
\end{proof}

{\color{red}
}
\begin{prop}\label{prop:generalchain-homotopy}
The composite map $\iota\rho \colon \Tot(B)_\bullet \to \Tot(B)_\bullet$ is chain homotopic to the identity via the chain homotopy $\sigma \colon \Tot(B)_\bullet \to \Tot(B)_{\bullet +1}$ defined in degree $n$ by
\[
\begin{tikzcd}[column sep=110pt, ampersand replacement=\&]
\Tot(B)_n \defeq B_{n,0}+\cdots + B_{0,n}  \arrow[r, "{\left(\begin{array}{cccccc} 0 & 0 & \cdots & \cdots & 0 & 0 \\ s & 0 & \ddots & \ddots & \ddots & 0 \\ s(-es) & s & 0 & \ddots & \ddots & \vdots \\ s(-es)^2 & s(-es) & s & 0 & \ddots & \vdots \\
 \vdots & \vdots & \vdots & \ddots & \ddots & \vdots \\   s(-es)^{n-1} & s(-es)^{n-2} & \cdots & s(-es) & s & 0 \\ s(-es)^n & \cdots & \cdots & \cdots & s(-es) & s \end{array}\right)}"] \& B_{n+1,0} + \cdots + B_{0,n+1} \eqdef \Tot(B)_{n+1}.
\end{tikzcd}
\]
\end{prop}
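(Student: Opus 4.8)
The plan is to verify directly the chain homotopy identity $\partial_{n+1}\sigma_n+\sigma_{n-1}\partial_n=\id-\iota_n\rho_n$ by comparing the two $(n+1)\times(n+1)$ matrices entrywise, in exactly the style of the proof of Proposition \ref{prop:generalretraction}. Recall that $\partial$ is bidiagonal with $e$ on the diagonal and $d$ just above it, that $\rho_n$ is lower triangular with $(i,j)$-entry $f(-es)^{i-j}$ for $j\le i$, and that $\sigma_n$ is strictly lower triangular with $(i,j)$-entry $s(-es)^{i-1-j}$ for $i\ge j+1$. As before, I adopt the convention that the maps $d\colon B_{p,0}\to B_{p,-1}$ and $s\colon B_{p,-1}\to B_{p,0}$ are zero; together with the first-quadrant hypothesis this makes the boundary rows of all the matrices special cases of the generic formulas.

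First I would multiply out the two composites. Using the bidiagonal shape of $\partial$, the $(i,j)$-entry of $\partial_{n+1}\sigma_n$ is $e(\sigma_n)_{i,j}+d(\sigma_n)_{i+1,j}$ and that of $\sigma_{n-1}\partial_n$ is $(\sigma_{n-1})_{i,j}e+(\sigma_{n-1})_{i,j-1}d$. Substituting the formula for the entries of $\sigma$ and writing $k=i-1-j$, this gives, for $i>j$,
\[
(\partial_{n+1}\sigma_n)_{i,j}=es(-es)^{k}+ds(-es)^{k+1},\qquad (\sigma_{n-1}\partial_n)_{i,j}=s(-es)^{k}e+s(-es)^{k+1}d,
\]
while on the diagonal they are $ds$ and $sd$ respectively, and above the diagonal both vanish. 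On the other side, $(\id-\iota_n\rho_n)_{i,j}$ equals $1-\iota f$ on the diagonal, $-\iota f(-es)^{i-j}$ below it, and $0$ above it.

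The core of the argument is then three short appeals to Lemma \ref{lem:relations}. On the diagonal the identity reduces to the defining relation $ds+sd=1-\iota f$. Below the diagonal, adding the two displayed expressions and replacing $es(-es)^{k}+ds(-es)^{k+1}$ by Lemma \ref{lem:relations}(ii) produces $-\iota f(-es)^{k+1}-sd(-es)^{k+1}+s(-es)^{k}e+s(-es)^{k+1}d$; Lemma \ref{lem:relations}(iii) collapses the last two terms to $-(-se)^{k+1}ds$, which Lemma \ref{lem:relations}(iv) identifies with $sd(-es)^{k+1}$, so the final three terms sum to zero and what survives is $-\iota f(-es)^{k+1}=-\iota f(-es)^{i-j}$, exactly the corresponding entry of $\id-\iota_n\rho_n$. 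Above the diagonal everything is zero. Combining Proposition \ref{prop:generalretraction} with this shows $\iota$ is a chain homotopy equivalence, so Theorem \ref{thm:bicomplex-htpy-equiv} follows.

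I do not anticipate a genuine obstacle: given Lemma \ref{lem:relations} this is a mechanical computation. The one place demanding care is the index bookkeeping near the edge of the first quadrant — checking that the top row of $\partial_{n+1}\sigma_n$ (where $(\sigma_n)_{1,j}=0$) and the bottom rows really are instances of the generic formula once the spurious degree $-1$ maps are set to zero — together with keeping the signs straight when passing between $(-es)^{k}$ and $(-se)^{k}$ in invoking parts (iii) and (iv) of the lemma.
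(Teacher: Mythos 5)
Your proposal is correct and follows the paper's proof essentially verbatim: the same entrywise comparison of $\partial_{n+1}\sigma_n+\sigma_{n-1}\partial_n$ with $\id-\iota_n\rho_n$ split into the cases above, on, and below the diagonal, with the diagonal case reducing to $ds+sd=1-\iota f$ and the subdiagonal case handled by parts (ii), (iii), and (iv) of Lemma \ref{lem:relations}. Your explicit tracing of how those three identities cancel the non-$\iota f$ terms is if anything slightly more detailed than the paper's.
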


\begin{proof}
Writing $\partial_n \colon \Tot(B)_n \to \Tot(B)_{n-1}$ for the total complex differential, we must verify that $\partial_{n+1}\sigma_n + \sigma_{n-1}\partial_n \colon \Tot(B)_n \to \Tot(B)_n$ is the matrix
\[
\begin{tikzcd}[column sep=110pt, ampersand replacement=\&]
\Tot(B)_n \defeq B_{n,0}+\cdots + B_{0,n}  \arrow[r, "{\left(\begin{array}{ccccc} 1-\iota f & 0 &  \cdots & \cdots & 0 \\ -\iota f (-es) & 1-\iota f & 0  & \cdots& 0 \\
-\iota f(-es)^2&-\iota f(-es)&1-\iota f&\cdots  &0\\
 \vdots & \vdots & \vdots & \ddots & \vdots \\ -\iota f(-es)^{n-1}&\cdots&-\iota f(-es)&1-\iota f&0    \\  -\iota f(-es)^n &  \cdots& -\iota f(-es)^2 & -\iota f (-es) & 1-\iota f \end{array}\right)}"] \& B_{n,0} + \cdots + B_{0,n} \eqdef \Tot(B)_{n}.
\end{tikzcd}
\]
i.e., that 
\[\partial_{n+1}\sigma_n + \sigma_{n-1}\partial_n = \id_{\Tot(B)_n} - \iota_n\rho_n.\] For each pair $1 \leq i,j \leq n+1$, we must verify that the component of $\partial_{n+1}\sigma_n + \sigma_{n-1}\partial_n$ from $B_{n-j+1,j-1}$ to $B_{n-i+1, i-1}$  agrees with the matrix entry in the $j$th column and $i$th row, counting from the left and from the top. As in the proof of Proposition \ref{prop:generalretraction}, we do this in several cases. For each case, we display a square diagram whose horizontal arrows are components of $\sigma$ and vertical arrows are components of the total complex differential $\partial$. The upper-right composite is the component of the map $\partial\sigma$ and the lower-left composite is the component of the map $\sigma\partial$ whose sum we are computing.

We begin with the case where $j>i$.  In this case, the entries represent maps $B_{n-j+1,j-1} \to B_{n-i+1,i-1}$.  We must show that the sum of the two composites
\[
\begin{tikzcd}[column sep=180pt, row sep=large, ampersand replacement=\&]
B_{n-j+1,j-1} \arrow[d,"{\left(\begin{array}{c} d \\ e \end{array}\right)}"']  \arrow[r, "{\left(\begin{array}{c} 0 \\ 0 \end{array}\right)}"] \& B_{n-i+2,i-1} +B_{n-i+1,i} \arrow[d, "{\left( \begin{array}{cc} e & d \end{array}\right)}"] \\
B_{n-j+1,j-2} + B_{n-j,j-1}  \arrow[r, "{\left(\begin{array}{cc} 0 & 0  \end{array}\right)}"'] \& B_{n-i+1,i-1}
\end{tikzcd}
\]
is $0$, which is evident.

We next verify the diagonal entries $B_{n-j+1,j-1} \to B_{n-j+1,j-1}$ for $1\leq j\leq n+1$. Here we must show that the sum of the maps 
\[
\begin{tikzcd}[column sep=180pt, row sep=large, ampersand replacement=\&]
B_{n-j+1,j-1} \arrow[d,"{\left(\begin{array}{c} d \\ e \end{array}\right)}"']  \arrow[r, "{\left(\begin{array}{c} 0 \\ s \end{array}\right)}"] \& B_{n-j+2,j-1} +B_{n-j+1,j} \arrow[d, "{\left( \begin{array}{cc} e & d \end{array}\right)}"] \\
B_{n-j+1,j-2} + B_{n-j,j-1}  \arrow[r, "{\left(\begin{array}{cc} s & 0  \end{array}\right)}"'] \& B_{n-j+1,j-1}
\end{tikzcd}
\]
is $1-\iota f \colon B_{n-j+1,j-1} \to B_{n-j+1,j-1}$, which is so because $ds + sd = 1-\iota f$.

It remains to verify the lower diagonal entries. Here we wish to show that for $i > j > 0$ the sum of the maps
\[
\begin{tikzcd}[column sep=180pt, row sep=large, ampersand replacement=\&]
B_{n-j+1,j-1} \arrow[d,"{\left(\begin{array}{c} d \\ e \end{array}\right)}"']  \arrow[r, "{\left(\begin{array}{c} s(-es)^{i-j-1} \\ s(-es)^{i-j} \end{array}\right)}"] \& B_{n-i+2,i-1} +B_{n-i+1,i} \arrow[d, "{\left( \begin{array}{cc} e & d \end{array}\right)}"] \\
B_{n-j+1,j-2} + B_{n-j,j-1}  \arrow[r, "{\left(\begin{array}{cc} s(-es)^{i-j} & s(-es)^{i-j-1}  \end{array}\right)}"'] \& B_{n-i+1,i-1}
\end{tikzcd}
\]
is the  map $-\iota f(-es)^{i-j}\colon B_{n-j+1,j-1} \to B_{n-i+1,i-1}$. Here the sum of the composites is
\[es(-es)^{i-j-1}+ds(-es)^{i-j}+s(-es)^{i-j}d+s(-es)^{i-j-1}e.\]
Parts (ii),  (iii), and (iv) of Lemma \ref{lem:relations} guarantee that this is equal to $-\iota f(-es)^{i-j}$.  
\end{proof}

As an immediate corollary of \ref{thm:bicomplex-htpy-equiv}, we have the following.

\begin{cor}\label{cor:chain-htpy-equiv} Let $A_{\bullet, \bullet}$ be a first-quadrant bicomplex so that every row except the zeroth row $A_{0,\bullet}$ is contractible. Then the natural inclusion $A_{0,\bullet}\hookrightarrow\Tot(A)_\bullet$ is a chain homotopy equivalence.
\end{cor}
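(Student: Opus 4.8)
The plan is to deduce this from Theorem \ref{thm:bicomplex-htpy-equiv} by comparing $A_{\bullet,\bullet}$ with the sub-bicomplex supported on its zeroth row. First I would introduce the first-quadrant bicomplex $\tilde{A}_{\bullet,\bullet}$ with $\tilde{A}_{0,q}\defeq A_{0,q}$, horizontal differentials equal to those of the chain complex $A_{0,\bullet}$, and $\tilde{A}_{p,q}\defeq 0$ for all $p\geq 1$; the evident inclusion $\iota\colon\tilde{A}_{\bullet,\bullet}\to A_{\bullet,\bullet}$ is a morphism of first-quadrant bicomplexes. Since $\Tot(\tilde{A})_n=\tilde{A}_{n,0}\oplus\cdots\oplus\tilde{A}_{0,n}$ has every summand but the last equal to zero, one reads off that $\Tot(\tilde{A})_\bullet=A_{0,\bullet}$ and that $\Tot(\iota)$ is exactly the natural inclusion $A_{0,\bullet}\hookrightarrow\Tot(A)_\bullet$ whose homotopy type is in question.

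Next I would verify that $\iota$ admits a row-wise strong deformation retraction in the sense of Definition \ref{defn:row-wise-retraction}, so that Theorem \ref{thm:bicomplex-htpy-equiv} applies with $\tilde{A}_{\bullet,\bullet}$ and $A_{\bullet,\bullet}$ in the roles of the source and target bicomplexes. In the zeroth row $\iota_{0,\bullet}$ is the identity on $A_{0,\bullet}$, so I take $f_{0,\bullet}\defeq\id$ and $s\defeq 0$, for which the required relations $ds+sd=1-\iota_{0,\bullet}f_{0,\bullet}=0$ and $s\iota_{0,\bullet}=0$ hold trivially. In a row $p\geq 1$ the target $\tilde{A}_{p,\bullet}$ is the zero complex, which forces $f_{p,\bullet}=0$ and $\iota_{p,\bullet}=0$; then $f_{p,\bullet}$ is (vacuously) a retraction for $\iota_{p,\bullet}$, the condition $s\iota_{p,\bullet}=0$ is automatic, and the remaining requirement $ds+sd=1-\iota_{p,\bullet}f_{p,\bullet}=\id_{A_{p,\bullet}}$ is precisely the assertion that the chain complex $A_{p,\bullet}$ admits a contracting homotopy. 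This is guaranteed by the hypothesis that every row other than the zeroth is contractible (equivalently, chain homotopy equivalent to the zero complex, as in Definition \ref{def:degree-n}). Theorem \ref{thm:bicomplex-htpy-equiv} then yields that $\Tot(\iota)$ is a chain homotopy equivalence, completing the argument.

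There is no real obstacle in this proof: the content is entirely contained in Theorem \ref{thm:bicomplex-htpy-equiv}. The only points demanding a moment's care are the elementary bookkeeping identifying $\Tot(\tilde{A})_\bullet$ with $A_{0,\bullet}$ together with its inclusion into $\Tot(A)_\bullet$, and the observation that ``contractible'' is exactly the hypothesis needed to supply the homotopies $s$ required by Definition \ref{defn:row-wise-retraction}(ii).
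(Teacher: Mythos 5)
Your proposal is correct and is essentially the paper's argument: the paper also deduces the corollary from Theorem \ref{thm:bicomplex-htpy-equiv} by viewing $A_{0,\bullet}$ as a bicomplex concentrated in the zeroth row, with the contractibility of the rows $p\geq 1$ supplying the row-wise strong deformation retraction. The paper merely records the resulting explicit retraction $\bigl((-es)^n,\ (-es)^{n-1},\ \ldots,\ (-es),\ 1\bigr)$, which is the specialization of the matrix from Proposition \ref{prop:generalretraction} to this case.
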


\begin{proof}
In this case, the evident inclusion $\iota \colon A_{0,\bullet} \hookrightarrow \Tot(A)_\bullet$ of chain complexes admits a retraction $\rho \colon \Tot(A)_\bullet \to A_{0,\bullet}$ defined in degree $n$ by
\[
\begin{tikzcd}[column sep=180pt, ampersand replacement=\&]
\Tot(A)_n \defeq A_{n,0}+\cdots + A_{0,n}  \arrow[r, "{\left(\begin{array}{cccccc} (-es)^n & (-es)^{n-1} & \cdots & (-es)^2 & (-es) & 1  \end{array}\right)}"] \& A_{0,n}.
\end{tikzcd} \qedhere
\]
\end{proof}

\section{Proof of Proposition \ref{lem:5.7}}\label{a:AppB}

We now apply Theorem \ref{thm:bicomplex-htpy-equiv} to prove: 

{
\renewcommand{\thethm}{\ref{lem:5.7}}
\begin{prop} For any composable pair of functors $F \colon \cB \kto \cA$ and $G \colon \cC \kto \cB$ with $G$ reduced, there is a chain homotopy equivalence
\[ D_1(F \circ G) \simeq D_1F \circ D_1G.\]
\end{prop}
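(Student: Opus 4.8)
The plan is to first reduce to the case where $F$ is reduced, and then to exhibit both $D_1(F\circ G)$ and $D_1F\circ D_1G$ as total complexes of bicomplexes interpolated by a common ``tricomplex'', so that the desired chain homotopy equivalence follows by applying Theorem \ref{thm:bicomplex-htpy-equiv} along the collapsed directions.

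For the reduction, I would replace $F$ by $\CR_1F$. Lemma \ref{lem:reduced-vs-unreduced} gives $D_1F\cong D_1\CR_1F$, hence $D_1F\circ D_1G\cong D_1\CR_1F\circ D_1G$. On the other side, because $G$ is reduced we have $\CR_1G\cong G$ and $G(0)\cong 0$, so Lemma \ref{lem:CR_1chainrule} collapses to $\CR_1(F\circ G)\cong \CR_1F\circ G$ (the remaining summand vanishes since $\CR_2F$ is strictly multi-reduced), and another application of Lemma \ref{lem:reduced-vs-unreduced} gives $D_1(F\circ G)\cong D_1\CR_1(F\circ G)\cong D_1(\CR_1F\circ G)$. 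As $\CR_1F$, $G$, and hence $\CR_1F\circ G$ are all reduced, Remark \ref{rmk:D1}(2) identifies each of these $D_1$'s with the corresponding $P_1$, so the statement reduces to producing a chain homotopy equivalence $P_1(F\circ G)\simeq P_1F\circ P_1G$ for reduced $F$ and $G$.

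Next, following the strategy of \cite[Lemma 5.7]{JM:Deriving} but tracking explicit homotopies, I would build a first-quadrant tricomplex whose three directions are, respectively: the $C_2$-resolution of $F$ (terms $C_2^{\times p}F$), the $C_2$-resolution of $G$ (terms $C_2^{\times q}G$), and the simplicial direction arising from the Dold--Kan prolongation used to form the Kleisli composite. Collapsing this tricomplex in two different orders recovers $P_1F\circ P_1G$ on the one hand and (a complex naturally chain homotopy equivalent to) $P_1(F\circ G)$ on the other; equivalently, each of these two functors maps to, or receives a canonical map from, the total complex of a common bicomplex obtained by partially totalizing. The homotopies witnessing these equivalences come from Lemma \ref{lem:natural-contraction} applied to the adjunctions of Corollary \ref{cor:comonad} and Proposition \ref{prop:coreflective}: for each fixed value of the remaining indices, this furnishes a strong deformation retraction in the direction being collapsed, the identity $s\iota=0$ being guaranteed by the explicit formula $s_k=\eta R(LR)^{\times k}$. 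Crucially, these homotopies are natural only in the collapsed direction and need not commute with the remaining (vertical) differentials, because the prolongation of $F$ and the comonad counits interact only up to coherent isomorphism; hence the ``chain homotopy of bicomplexes'' criterion of \cite[5.7.3]{Weibel} does not apply, and this is exactly the situation handled by Theorem \ref{thm:bicomplex-htpy-equiv} (and, in the degenerate cases where every row away from the zeroth is already contractible, by Corollary \ref{cor:chain-htpy-equiv}). Composing the resulting chain homotopy equivalences yields $P_1(F\circ G)\simeq P_1F\circ P_1G$.

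\textbf{Main obstacle.} The hard part, which is why this proof belongs in an appendix, is the bookkeeping of the previous paragraph: unwinding the Dold--Kan prolongation inside the Kleisli composite $P_1F\circ P_1G$ precisely enough to see the tricomplex, and then checking that the comparison maps genuinely admit \emph{row-wise strong} deformation retractions in the sense of Definition \ref{defn:row-wise-retraction} --- honest retractions $f$ together with homotopies $s$ satisfying $ds+sd=1-\iota f$ and $s\iota=0$ at the level of each individual row, rather than mere homotopy inverses. Once that data is assembled, Theorem \ref{thm:bicomplex-htpy-equiv} converts it mechanically into a chain homotopy equivalence of total complexes.
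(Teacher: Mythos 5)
Your reduction to the case where $F$ is also reduced is correct and matches the paper's Lemma \ref{lem:CR1} and Corollary \ref{cor:just-reduce}, and you have correctly identified Theorem \ref{thm:bicomplex-htpy-equiv} and Corollary \ref{cor:chain-htpy-equiv} as the engine. But the tricomplex you describe is not the right one, and this is a genuine gap. With directions given by the $C_2$-resolutions of $F$ and of $G$ together with the Dold--Kan simplicial direction, every term of your tricomplex is built from some $(LR)^pF$ applied to some $(LR)^qG$; the terms $C_2^{\times r}(F\circ G)$ of $D_1(F\circ G)$ simply do not occur in it, so ``collapsing in a different order'' cannot recover $P_1(F\circ G)$ --- all orders of totalization yield the same complex up to isomorphism, namely (a model of) $P_1F\circ P_1G$. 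The paper's tricomplex is $T(F,G)_{p,q,r}=(LR)^r\bigl((LR)^pF\circ(LR)^qG\bigr)$: the third direction is the $C_2$-resolution of the \emph{composite}, applied to the entire $(p,q)$-array. Its partial totalization $A(F,G)_{n,r}$ has $D_1(F\circ G)$ as its $n=0$ column and something chain homotopy equivalent to $D_1F\circ D_1G$ as its $r=0$ row, and Corollary \ref{cor:chain-htpy-equiv} is applied twice, once in each direction. The Dold--Kan comparison you mention is real but is handled separately (Lemma \ref{lem:prolong}), identifying the Kleisli composite with the naive degreewise composite; the relevant row-wise retraction there comes from the contractibility of $\CR_n(D_1H)$ for $n\geq 2$ (Lemma \ref{lem:xfx-contract}), not directly from Lemma \ref{lem:natural-contraction}.

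Second, your claim that all the required contractions come from Lemma \ref{lem:natural-contraction} covers only half of the argument. The columns $n>0$ of $A(F,G)$ are indeed comonad resolutions of functors of the form $LH$ (via Lemma \ref{lem:diagonal-commutativity}) and are split by the unit $\eta$. But the rows $r>0$ are $C^r(D_1F\circ D_1G)$, and their contractibility is not an adjunction splitting: one must show that the composite $D_1F\circ D_1G$ is degree $1$. The paper deduces this from the degree-$1$-ness of $D_1F$ and $D_1G$ separately (Proposition \ref{prop:poly-facts}(i)), the fact that Kleisli composition preserves chain homotopy equivalences (Lemma \ref{lem:che-comp}), and Lemma \ref{lem:B-is-contractible} applied to $C(D_1F\circ D_1G)(X)$ as the direct-sum complement of the equivalence $D_1F(D_1G(X))\oplus D_1F(D_1G(X))\hookrightarrow D_1F(D_1G(X\oplus X))$. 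Without this step you have no reason for the inclusion of $D_1F\circ D_1G$ into $\Tot(A(F,G))$ to be a chain homotopy equivalence, so the two sides are never actually connected.
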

\addtocounter{thm}{-1}
}

In fact, it suffices to assume that $F \colon\cB \kto \cA$ is also reduced. To see this, note that by  direct computation: 

\begin{lem}\label{lem:CR1} Given $F \colon \cB \kto \cA$ and $G \colon \cC \kto \cB$ with $G$ reduced, then
$\CR_1(FG) \cong \CR_1F\circ G$.
\end{lem}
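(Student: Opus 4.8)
The plan is to deduce Lemma \ref{lem:CR1} directly from the unreduced chain rule for first cross effects, Lemma \ref{lem:CR_1chainrule}. That lemma provides, naturally in $X$, an isomorphism
\[ \CR_1(F\circ G)(X) \cong (\CR_1 F \circ \CR_1 G)(X) \oplus \CR_2 F(G(0), \CR_1 G(X)), \]
so the entire content of Lemma \ref{lem:CR1} is the observation that, once $G$ is reduced, the right-hand side collapses to $\CR_1 F \circ G$.

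Concretely, I would argue in two steps. First, since $G$ is reduced the counit $\CR_1 G \hookrightarrow G$ is an isomorphism (the characterization of reduced functors recorded in the remark following Proposition \ref{prop:coreflective}), hence $\CR_1 F \circ \CR_1 G \cong \CR_1 F \circ G$ as functors $\cC \kto \cA$. Second, reducedness gives $G(0) \cong 0$, and since $\CR_2 F$ is strictly multi-reduced — it vanishes as soon as one of its inputs is the basepoint — the correction term $\CR_2 F(G(0), \CR_1 G(X)) \cong \CR_2 F(0, \CR_1 G(X))$ is the zero object of $\Ch\cA$. Substituting these two facts into the displayed isomorphism yields $\CR_1(F\circ G) \cong \CR_1 F \circ G$, naturally in $X$. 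Alternatively, one can reprove this by specializing the computation in the proof of Lemma \ref{lem:CR_1chainrule}: starting from $\CR_1(F\circ G)(X) \oplus F(G(0)) \cong F(G(X)) \cong F(G(0)\oplus \CR_1 G(X))$, using $G(0)\cong 0$ and $\CR_1 G \cong G$ collapses the right-hand side to $F(0) \oplus \CR_1 F(G(X))$, while $F(G(0)) \cong F(0)$, so taking direct-sum complements gives $\CR_1(F\circ G)(X)\cong \CR_1 F(G(X)) = (\CR_1 F\circ G)(X)$.

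There is no serious obstacle here; the only points requiring care are (a) that all the isomorphisms above are natural in $X$, which holds because every isomorphism used — the cross-effect decomposition, the counit $\CR_1 G \cong G$, and the vanishing of $\CR_2 F(0,-)$ — is natural, and (b) that throughout, ``$\circ$'' denotes the Kleisli composition of Section \ref{sec:kleisli} (prolongation followed by totalization), which is already the composition under which Lemma \ref{lem:CR_1chainrule} is stated, so no additional compatibility with $\Tot$ or with the Dold--Kan prolongation needs to be verified separately.
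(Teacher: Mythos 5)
Your proposal is correct, and it matches the paper's argument: the paper's proof is exactly your ``alternative'' computation, observing in one line that when $G$ is reduced both $\CR_1(FG)$ and $\CR_1F\circ G$ are direct sum complements of $FG(0)\cong F(0)$ in $FG$. Your primary route via Lemma \ref{lem:CR_1chainrule} is just a repackaging of the same computation (that lemma's proof is the identical complement argument), so there is no substantive difference and no circularity, since Lemma \ref{lem:CR_1chainrule} is proved independently of Proposition \ref{lem:5.7}.
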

\begin{proof}
Because $G$ is reduced, both terms are direct sum complements of $FG(0)$ in $FG$.
\end{proof}

Lemma \ref{lem:CR1} has the following corollary:

\begin{cor}\label{cor:just-reduce} For $G$ reduced and $F$ not necessarily reduced $D_1(F\circ G) \simeq D_1(\CR_1F \circ G)$. In particular, to prove that $D_1F \circ D_1G \simeq D_1(F\circ G)$ it suffices to assume that $F$ is also reduced.
\end{cor}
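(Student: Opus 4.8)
The plan is to assemble Corollary~\ref{cor:just-reduce} from Lemma~\ref{lem:reduced-vs-unreduced} and Lemma~\ref{lem:CR1}, with no serious computation involved. For the first assertion, I would apply the natural isomorphism $D_1\CR_1 H \cong D_1 H$ of Lemma~\ref{lem:reduced-vs-unreduced} to the functor $H = F\circ G \colon \cC \kto \cA$, obtaining $D_1(F\circ G) \cong D_1(\CR_1(F\circ G))$. Since $G$ is reduced, Lemma~\ref{lem:CR1} supplies a natural isomorphism $\CR_1(F\circ G) \cong \CR_1 F\circ G$; applying $D_1$ and composing with the previous isomorphism yields the claimed isomorphism $D_1(F\circ G)\cong D_1(\CR_1 F\circ G)$, which is in fact stronger than the chain homotopy equivalence asserted.

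For the ``in particular'' clause, I would argue as follows. Suppose Proposition~\ref{lem:5.7} has been proved under the additional hypothesis that the outer functor is reduced, i.e.\ that $D_1(F'\circ G)\simeq D_1F'\circ D_1G$ whenever $F'$ is reduced (and $G$ reduced). Applying this with $F' = \CR_1 F$, which is reduced by construction, yields $D_1(\CR_1 F\circ G)\simeq D_1(\CR_1 F)\circ D_1 G$. The left-hand side is isomorphic to $D_1(F\circ G)$ by the first assertion. On the right-hand side, Lemma~\ref{lem:reduced-vs-unreduced} gives $D_1(\CR_1 F)\cong D_1 F$, and since Kleisli composition in $\cat{AbCat}_{\Ch}$ carries natural isomorphisms---and more generally (natural) chain homotopy equivalences, by Lemma~\ref{lem:che-comp}---to chain homotopy equivalences, we get $D_1(\CR_1 F)\circ D_1 G\simeq D_1 F\circ D_1 G$. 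Chaining these equivalences gives $D_1(F\circ G)\simeq D_1 F\circ D_1 G$, as desired.

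The argument is entirely formal, so there is no genuine obstacle; the only place that warrants care is the transport of the isomorphism $D_1(\CR_1 F)\cong D_1 F$ through post-composition with $D_1 G$ in the Kleisli category, which is exactly what Lemma~\ref{lem:che-comp} (together with the functoriality of the Dold--Kan prolongation used to define the composition) is designed to license. With that in hand, Corollary~\ref{cor:just-reduce} reduces the proof of Proposition~\ref{lem:5.7} to the case of reduced $F$, which is carried out in the remainder of Appendix~\ref{a:AppB}.
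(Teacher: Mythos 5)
Your proposal is correct and follows essentially the same route as the paper: both derive the first assertion (in fact as an isomorphism) by combining Lemma \ref{lem:reduced-vs-unreduced} applied to $F\circ G$ with Lemma \ref{lem:CR1}, and both reduce the composite statement to the reduced case via $D_1F\circ D_1G\cong D_1\CR_1F\circ D_1G$. Your extra care in justifying the transport of $D_1(\CR_1F)\cong D_1F$ through Kleisli composition is harmless; the paper treats this as immediate.
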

\begin{proof}
By Lemma \ref{lem:reduced-vs-unreduced}, $D_1F \cong D_1(\CR_1F)$ for all $F$. So \[ D_1(F \circ G) \cong D_1(\CR_1(F\circ G)) \cong D_1(\CR_1F \circ G),\] as claimed. Since $D_1F \circ D_1G \cong D_1\CR_1F \circ D_1G$ it suffices to consider the case when both functors are reduced.
\end{proof}

On account of Corollary \ref{cor:just-reduce}, for the remainder of this section, we restrict our consideration to functors that are  reduced. 

In the Kleisli category, the composition of $G \colon \cC\kto\cB$ with $F \colon\cB\kto\cA$ is defined by prolonging the functor $F \colon \cB\to \Ch\cA$ to a functor $\Ch(F) \colon \Ch\cB \to \Ch\Ch\cA$ via the Dold-Kan equivalence and then applying the totalization $\Tot \colon \Ch\Ch\cA \to \Ch\cA$. When $F$ is strictly reduced, there is a functor $\tilde F \colon \Ch\cB \to \Ch\Ch\cA$ defined by simply applying the functor directly to each term in the chain complex. When $F$ is also linear,  \cite[Lemma 5.4]{JM:Linearization} show that the two prolongations $\Tot(\tilde F)$ and $\Tot(\Ch(F))$ are quasi-isomorphic.  Lemma \ref{lem:prolong} shows that when $F$ is a linearization $D_1H$ of some functor $H \colon \cB \kto \cA$, in which case Lemma \ref{lem:D1-linearity}(i) shows that $F$ is strictly reduced and linear in the sense of Definition \ref{defn:linear} (preserving finite direct sums up to natural chain homotopy equivalence), these procedures are in fact  chain homotopy equivalent.  We will use this in the proof of Proposition \ref{lem:5.7} to construct the composite $D_1F \circ D_1G$ using the simpler form of the prolongation. 

We note that the definition of $\tilde F$ only makes sense when $F$ is strictly reduced.  Otherwise, applying $F$ degreewise to the objects and maps in a chain complex may not produce a chain complex, as $F(\partial\circ\partial)$ may no longer be $0$.  

The proof of Lemma \ref{lem:prolong} will proceed in several steps, corresponding to the verification of the hypotheses of Theorem \ref{thm:bicomplex-htpy-equiv}.  Write $K \colon \Ch\cB\to \cB^{\DDelta^\op}$ and $N \colon \cB^{\DDelta^\op} \to \Ch\cB$ for the functors in the Dold-Kan equivalence.   The functor $N$ is naturally chain homotopy equivalent to another functor $M\colon\cB^{\DDelta^\op}\rightarrow \Ch\cB$ with a simpler definition: for a simplicial object $Y \in \cB^{\DDelta^\op}$, $M(Y)_n = Y_n$ with differentials defined to be the alternating sum of the face maps; see \cite[\S 8.4]{Weibel} and \cite[III.2.4]{GJ} for definitions of these functors and the chain homotopy equivalence.
Let $F_\ast\colon \cB^{\DDelta^\op} \to \Ch{\cA}^{\DDelta^\op}$ be the functor defined by post-composition with $F \colon \cB \to \Ch\cA$.  Recall that $\Ch(F)=N\circ F_{\ast}\circ K$ defines the prolongation. 

\begin{rmk}\label{rem:iota} For a chain complex $X \in \Ch\cB$, there is a canonical comparison morphism $\iota_*: K\tilde F(X)\rightarrow F_{\ast}(KX)$ in $\Ch\cA^{\DDelta^\op}$ whose component in simplicial degree $n$ is the natural map 
\begin{equation*} \label{e:DK-inclusion}
\iota \colon \bigoplus_{[n]\twoheadrightarrow [k]}F(X_k)\rightarrow F\left (\bigoplus_{[n]\twoheadrightarrow [k]} X_k\right)
\end{equation*}
where the sums are over all surjections $[n]\twoheadrightarrow[k]$ in $\DDelta$.   
Composing with the functor $M$, this defines a canonical map of bicomplexes $\iota: =M(\iota_*)\colon MK\tilde F(X)\rightarrow MF_{\ast}(KX)$, where the latter functor is chain homotopy equivalent to $\Ch F$.  
\end{rmk}

The first step is to show that this map is a chain homotopy equivalence in each row.  This follows from the next two lemmas.

\begin{lem}\label{lem:B-is-contractible} 
The inclusion of chain complexes $\iota\colon A\rightarrow A\oplus B$ is a chain homotopy equivalence if and only if $B$ is contractible.
\end{lem}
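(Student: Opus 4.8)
The plan is to prove both implications directly, using that a chain homotopy equivalence is an isomorphism in the homotopy category of chain complexes.

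First I would treat the ``if'' direction. Suppose $B$ is contractible, say via a contracting homotopy $h \colon B_\bullet \to B_{\bullet+1}$ with $dh + hd = \id_B$. Let $\pi \colon A \oplus B \to A$ be the projection onto the first summand; this is a chain map and $\pi\iota = \id_A$. It remains to produce a chain homotopy $\iota\pi \simeq \id_{A\oplus B}$. Writing elements of $A \oplus B$ as column vectors, define $s \colon (A\oplus B)_\bullet \to (A \oplus B)_{\bullet + 1}$ by $s = \left(\begin{smallmatrix} 0 & 0 \\ 0 & h\end{smallmatrix}\right)$. A direct computation shows $ds + sd = \left(\begin{smallmatrix} 0 & 0 \\ 0 & dh+hd\end{smallmatrix}\right) = \left(\begin{smallmatrix} 0 & 0 \\ 0 & \id_B\end{smallmatrix}\right) = \id_{A\oplus B} - \iota\pi$, so $\iota$ is a chain homotopy equivalence with homotopy inverse $\pi$.

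Next I would treat the ``only if'' direction. Suppose $\iota \colon A \to A \oplus B$ is a chain homotopy equivalence, with homotopy inverse $g \colon A \oplus B \to A$, so that $g\iota \simeq \id_A$ and $\iota g \simeq \id_{A\oplus B}$. Let $j \colon B \to A \oplus B$ and $q \colon A \oplus B \to B$ be the canonical inclusion and projection, which are chain maps satisfying $qj = \id_B$ and $q\iota = 0$. Consider the chain map $q \circ \iota \circ g \circ j \colon B \to B$. On one hand, using $\iota g \simeq \id$, we get $q\iota g j \simeq q j = \id_B$. On the other hand, $q\iota = 0$ forces $q\iota g j = 0$. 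Hence $\id_B \simeq 0$ as a self-map of $B$, which is exactly the statement that $B$ is contractible.

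I do not expect a serious obstacle here; the only mild subtlety is bookkeeping with the $2\times 2$ matrix form of the homotopy in the first direction and keeping straight that ``homotopy inverse'' compositions may only be computed up to homotopy in the second direction — but since $q\iota = 0$ strictly, the zero map drops out cleanly and the argument closes.
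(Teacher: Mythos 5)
Your proof is correct and follows essentially the same route as the paper: the ``if'' direction uses the identical matrix homotopy $s=\left(\begin{smallmatrix}0&0\\0&h\end{smallmatrix}\right)$, and your ``only if'' argument via $q\iota gj$ is just the homotopy-category phrasing of the paper's extraction of the $BB$-component of the homotopy for $\iota g\simeq\id$ (your implicit contracting homotopy $qsj$ is exactly the paper's $s_{BB}$).
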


\begin{proof}

If $\iota$ is a chain homotopy equivalence, then there is a map $\rho\colon A\oplus B\rightarrow A$ such that $\iota\circ \rho\simeq \mathrm{id}_{A\oplus B}$.  In other words, there is a chain homotopy 
\[
\begin{tikzcd}[ampersand replacement=\&, column sep=huge]
A_n \oplus B_n \arrow[r, "{s=\left(\begin{array}{cc} s_{AA} & s_{BA} \\ s_{AB} & s_{BB} \end{array}\right)}"] \& A_{n+1} \oplus B_{n+1}
\end{tikzcd}
\]
so that 
\begin{equation*}\label{eq:chain-relation}
ds+sd=\mathrm{id}_{A\oplus B}-\iota\circ \rho \colon A_n \oplus B_n \to A_n \oplus B_n.
\end{equation*}
The component of the map $\iota \circ \rho \colon A_n \oplus B_n \to A_n \oplus B_n$ from $B_n$ to $B_n$ is the zero map. Thus, the component from $B_n$ to $B_n$ of the relation \eqref{eq:chain-relation} asserts that $d_B s_{BB} + s_{BB} d_B = \id_B$, where $d_B$ is the differential in $B$. Thus, the component $s_{BB} \colon B_n \to B_{n+1}$ of $s$ provides the desired contracting homotopy.

On the other hand, if $B$ is contractible then there is a chain homotopy equivalence $h:B_n\to B_{n+1}$ so that
\[d_B h + hd_B = id_B.\]
Then, the map defined by 
\[
\begin{tikzcd}[ampersand replacement=\&, column sep=huge]
A_n \oplus B_n \arrow[r, "{s=\left(\begin{array}{cc} 0 & 0 \\ 0 & h=s_{BB} \end{array}\right)}"] \& A_{n+1} \oplus B_{n+1}
\end{tikzcd}
\]
is a chain homotopy equivalence between $id_{A\oplus B}$ and $\iota\pi$ where $\pi$ is the canonical projection map.  By direct verification, 
\[ ds+sd=d_Bh+hd_B= id_{A\oplus B}-\iota\pi. \qedhere \]
\end{proof}

\begin{lem}\label{lem:xfx-contract} Let $F=D_1H$.  Then the chain complex $\CR_nF$ is chain contractible for all $n\geq 2$. \end{lem}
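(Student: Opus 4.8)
The plan is to deduce this from the degree-one property of the polynomial approximation established earlier. First I would recall, via Remark~\ref{rmk:D1}, that the chain complex $D_1 H$ agrees with $P_1(\CR_1 H)$. Applying Proposition~\ref{prop:poly-facts}(i) to the functor $\CR_1 H$ shows that $F = D_1 H \cong P_1(\CR_1 H)$ is degree one; by Definition~\ref{def:degree-n} this says precisely that $\CR_2 F$ is contractible (pointwise chain homotopy equivalent to zero).

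It then remains only to pass from $\CR_2 F$ to $\CR_n F$ for arbitrary $n \geq 2$. This is exactly the implication recorded immediately after Definition~\ref{def:degree-n}: since each higher cross effect $\CR_n F$ is, by iterating the recursive formula of Definition~\ref{def:CR}, a direct summand of $\CR_2 F$ evaluated at suitable objects of $\cB^n$, Lemma~\ref{lem:B-is-contractible} forces $\CR_n F$ to be contractible whenever $\CR_2 F$ is. I do not anticipate any real obstacle here; the only point requiring a little care is correctly identifying $D_1 H$ with $P_1(\CR_1 H)$ and then invoking the degree-one statement for $P_1$ applied to the reduced functor $\CR_1 H$ rather than to $H$ itself, as well as remembering that Proposition~\ref{prop:poly-facts}(i) (proven using the natural contractions of Lemma~\ref{lem:natural-contraction} together with the fact that totalization preserves chain homotopy equivalences) delivers genuine contractibility rather than mere acyclicity.
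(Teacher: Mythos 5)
Your proposal is correct and follows essentially the same route as the paper: the base case $\CR_2F \simeq 0$ comes from the identification $D_1H \cong P_1(\CR_1H)$ together with Proposition~\ref{prop:poly-facts}(i), and the passage to $\CR_nF$ for $n>2$ is the direct-summand argument via Lemma~\ref{lem:B-is-contractible} (which the paper carries out as an explicit induction on $n$, while you invoke the remark following Definition~\ref{def:degree-n} that records the same induction). No gaps.
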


\begin{proof}  We proceed by induction on $n$.  When $n=2$, the lemma is a consequence of Lemma \ref{lem:natural-contraction} and Proposition 4.5(i), since $D_1H\cong P_1(\CR_1 H)$ as in Remark 5.2(ii).  Now suppose that $\CR_n F$ is chain contractible.  By definition of the cross effects, $\CR_{n+1}F$ is a direct summand of $\CR_nF$:
\[ \CR_{n}F(X\oplus Y, Z_2, \ldots , Z_n)\cong \CR_nF(X, Z_2, \ldots , Z_n)\oplus \CR_nF(Y, Z_2, \ldots , Z_n) \oplus \CR_{n+1}F(X,Y,Z_2, \ldots , Z_n).\]
Since $\CR_nF(X, Z_2, \ldots , Z_n)$ and $\CR_nF(Y, Z_2, \ldots , Z_n)$ are contractible by hypothesis, the inclusion $\CR_{n+1}F(X,Y,Z_2, \ldots , Z_n)\hookrightarrow \CR_{n}F(X\oplus Y, Z_2, \ldots , Z_n)$ is a chain homotopy equivalence by Lemma \ref{lem:B-is-contractible}.  The result follows since $\CR_{n}F(X\oplus Y,Z_2, \ldots , Z_n)$ is contractible by hypothesis.
\end{proof}

An immediate consequence of Lemmas \ref{lem:xfx-contract} and \ref{lem:B-is-contractible}  is that the $n$th row of the bicomplex obtained by applying $F$ degreewise to a chain complex $X$ is chain homotopy equivalent to the $n$th row of the bicomplex  obtained by prolonging $F$ when evaluating on $X$.  This, together with Theorem \ref{thm:bicomplex-htpy-equiv}, proves the following lemma.

\begin{lem}\label{lem:prolong}
Let $H\colon\cB\kto \cA$ be a functor and $F=D_1H$.  Then $\mathrm{Tot}(\Ch(F))$ and $\mathrm{Tot}(\tilde F)$ are chain homotopy equivalent.  
\end{lem}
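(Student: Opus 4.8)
The plan is to realize (mild modifications of) the two bicomplexes $\tilde F(X)$ and $\Ch F(X)$ as the source and target of the comparison morphism $\iota = M(\iota_\ast)$ of Remark~\ref{rem:iota}, and then to feed $\iota$ into Theorem~\ref{thm:bicomplex-htpy-equiv}. Fix $X\in\Ch\cB$; since the homotopy relation in play is pointwise it suffices to produce a chain homotopy equivalence $\Tot(\tilde F(X))\simeq\Tot(\Ch F(X))$. First I would dispose of two ``Dold--Kan'' reductions. On the one hand $\Ch F(X) = N F_\ast(KX)$, and the normalized chain functor $N$ is naturally chain homotopy equivalent to the unnormalized one $M$ (the degenerate subcomplex is contractible; see \cite[\S 8.4]{Weibel}); as this equivalence is a chain homotopy of bicomplexes in the sense of \cite[5.7.3]{Weibel}, totalization carries it to a chain homotopy equivalence $\Tot(\Ch F(X))\simeq\Tot(MF_\ast KX)$. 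On the other hand, $MK\simeq NK\cong\id$ on $\Ch(\Ch\cA)$ by the same comparison together with the Dold--Kan equivalence, and again this descends to a chain homotopy of bicomplexes, so $\Tot(\tilde F(X))\simeq\Tot(MK\tilde F(X))$. It therefore remains to show that $\Tot(\iota)\colon\Tot(MK\tilde F(X))\to\Tot(MF_\ast KX)$ is a chain homotopy equivalence.

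To do this I would verify that $\iota$ admits a row-wise strong deformation retraction in the sense of Definition~\ref{defn:row-wise-retraction}, where a ``row'' is taken to be a fixed simplicial degree $p$ with the internal chain-complex degree varying; converting the commutative squares to anti-commutative ones as in Remark~\ref{rmk:commutative-bicomplexes} changes none of the relevant hypotheses. In simplicial degree $p$ the map $\iota_{p,\bullet}$ is the natural map $\bigoplus_{[p]\twoheadrightarrow[k]}F(X_k)\to F\bigl(\bigoplus_{[p]\twoheadrightarrow[k]}X_k\bigr)$; iterating Definition~\ref{def:CR} and using that $F=D_1H$ is strictly reduced (Lemma~\ref{lem:D1-linearity}(i)), the target decomposes as the direct sum of the $\CR_1F$-summands, onto which $\iota_{p,\bullet}$ is the canonical inclusion, together with a finite direct sum of chain complexes of the form $\CR_jF(\,\cdots)$ with $j\geq 2$. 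By Lemma~\ref{lem:xfx-contract} every such summand is chain contractible, so the complement of $\iota_{p,\bullet}$ is contractible, and (the proof of) Lemma~\ref{lem:B-is-contractible} then furnishes the required retraction $f_{p,\bullet}$ --- the projection onto the $\CR_1F$-summands --- together with a chain homotopy $s$, namely the direct sum of the chosen contracting homotopies on the complementary summands, which visibly satisfies $ds+sd = 1-\iota_{p,\bullet}f_{p,\bullet}$ and $s\iota_{p,\bullet}=0$. Theorem~\ref{thm:bicomplex-htpy-equiv} then yields the desired chain homotopy equivalence of total complexes, and splicing the three equivalences together gives $\Tot(\tilde F(X))\simeq\Tot(MK\tilde F(X))\xrightarrow{\ \simeq\ }\Tot(MF_\ast KX)\simeq\Tot(\Ch F(X))$.

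I expect the main obstacle to live in the second paragraph: recognizing each row of $\iota$ as the inclusion of a direct summand whose complement is assembled from higher cross effects of $D_1H$ and is therefore contractible, and then checking that the contracting homotopies organize into data satisfying the (weakened) hypotheses of Theorem~\ref{thm:bicomplex-htpy-equiv}. The essential subtlety --- and the reason the more familiar ``chain homotopy of bicomplexes'' of \cite[5.7.3]{Weibel} is unavailable here --- is that $F$ does not commute with the simplicial structure maps of $KX$, so the row-wise retraction $f_{p,\bullet}$ and homotopy $s$ fail to be compatible with the vertical (alternating face sum) differential; accommodating exactly this failure is the purpose of Appendix~A. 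Everything else is bookkeeping.
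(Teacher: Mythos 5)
Your proof is correct and follows essentially the same route as the paper's: the same comparison map $\iota\colon MK\tilde F\to MF_\ast K$, the same row-wise splitting of $F(\bigoplus X_k)$ into $\bigoplus F(X_k)$ plus higher cross effects made contractible by Lemma \ref{lem:xfx-contract}, the same homotopy $s$ built as in Lemma \ref{lem:B-is-contractible} with $s\iota=0$, and the same appeal to Theorem \ref{thm:bicomplex-htpy-equiv}. The only (immaterial) difference is that you perform the $N\simeq M$ and $NK\cong\id$ reductions up front, whereas the paper defers them to a commuting square at the end.
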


\begin{proof}  
We first show that $\Tot\ M\circ F_* \circ K$ and $\Tot\ \tilde{F}$ are chain homotopy equivalent.  In particular, we show that the totalization of the canonical comparison map $\iota\colon MK\tilde{F} \to MF_*K$ of Remark \ref{rem:iota} is a chain homotopy equivalence.  Note that $\iota$ is induced by the inclusion maps $X_k\to \oplus_{[n]\twoheadrightarrow [k]} X_k$; for this reason, $\iota_*$ commutes with the face maps of the simplicial objects $K\tilde{F}$ and $F_*K$.  Thus, $\iota$ itself is a map of bicomplexes.  The map $\iota$ has a retraction, $\rho$, in each row (where a row of $MK\tilde{F}$ or $MF_*K$ corresponds to a fixed  simplicial degree in $K\tilde{F}$ or $F_*K$, respectively), given by projection.  In particular, since 
\[F\left (\bigoplus_{[n]\twoheadrightarrow [k]} X_k\right) \cong \bigoplus_{[n]\twoheadrightarrow [k]} F(X_k) \oplus_{k=1}^n\left(\oplus_{j_1< ... < j_k} \CR_kF(X_{j_1}, \ldots , X_{j_k}) \right),\]
there is a canonical projection map $\rho_*\colon F_*KX \to K\tilde{F}$ given by mapping each of the $\CR_kF$ terms to $0$.  
This induces a map $\rho := M(\rho_*)$ of chain complexes on each row.  Since $\iota$ and $\rho$ are inclusion and projection of direct summands, the composite $\rho\circ\iota$ is the identity on each row of $MK\tilde{F}$.
 
 We must show that $\iota\circ \rho$ is chain homotopic to the identity map $1_{MF_*K}$ in each row.  Let $B=\oplus_{k=1}^n\left(\oplus_{j_1< ... < j_k} \CR_kF(X_{j_1}, \ldots , X_{j_k})\right)$.  By Lemma \ref{lem:xfx-contract}, in each row there is a chain contraction $h\colon B\to B$ with $sd_B+d_Bs=1_B$.  As in the proof of Lemma \ref{lem:B-is-contractible}, the map 
 \[ s\colon \bigoplus_{[n]\twoheadrightarrow [k]} F(X_k)\oplus B \to \bigoplus_{[n]\twoheadrightarrow [k]} F(X_k) \oplus B\]
 defined by 
 \[ s=\left(\begin{array}{cc} 0 & 0 \\ 0 & h \end{array}\right) \]
 is the desired chain homotopy equivalence.  Note that $s$ satisfies $s\iota=0$ since $s$ is defined to be $0$ on $\bigoplus_{[n]\twoheadrightarrow [k]} F(X_k)$.  
 
 By Theorem \ref{thm:bicomplex-htpy-equiv}, $s$ induces a chain homotopy equivalence between the total complexes $ \Tot\ M\circ F_* \circ K$ and $\Tot\ \tilde{F}$.

To complete the proof, we use the fact that $N$ and $M$ are chain homotopy equivalent. We have the following commuting square in which the left, right, and bottom arrows are chain homotopy equivalences:
\[
\begin{tikzcd}
\Tot NK\tilde F(X) \arrow[r] \arrow[d, "\simeq"'] &\Tot NF_{\ast}(KX) \arrow[d, "\simeq"]\\ 
  \Tot MK\tilde{F}(X)\arrow[r, "\simeq"]&\Tot MF_{\ast}(KX)
\end{tikzcd}
\]
This implies that the top arrow is a chain homotopy equivalence.  The result follows by noting that $NK$ is isomorphic to the identity, and $NF_\ast (KX) = \Ch(F)(X)$.  
\end{proof}

Since we can safely assume that both $F$ and $G$ are reduced, the chain complexes $D_1F$, $D_1G$ and $D_1(FG)$ are defined as comonad resolutions for the second cross effect comonad $C_2$.  For readability, we simplify our notation and write ``$C$'' for the comonad $C_2$ and ``$L\dashv R$'' for the adjoint functors of Corollary \ref{cor:comonad} defining the comonad $C=LR$. 
Recall $L$ is the diagonal functor, which has an important property:

\begin{lem}\label{lem:diagonal-commutativity} For any $G \colon \cC \kto \cB$ and $F \colon \cB \kto \cA$
\[
\begin{tikzcd}
\Fun_*(\cC^2, \cB) \arrow[r, "L"] \arrow[d, "F_*"'] & \Fun_*(\cC,\cB) \arrow[d, "F_*"] & \Fun_*(\cB^2, \cA) \arrow[r, "L"] \arrow[d, "G^*"'] & \Fun_*(\cB,\cA) \arrow[d, "G^*"] \\ \Fun_*(\cC^2,\cA) \arrow[r, "L"] & \Fun_*(\cC,\cA) & \Fun_*(\cC^2,\cA) \arrow[r, "L"] & \Fun_*(\cC,\cA)
\end{tikzcd}
\]
That is the diagonal functor commutes with both pre- and post-composition.
\end{lem}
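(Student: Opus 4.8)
The plan is to verify each of the two commuting squares directly from the definitions, the only structural input being that $L$ is precomposition with a genuine (and additive) diagonal functor $\Delta\colon\cC\to\cC^2$ (resp.\ $\Delta\colon\cB\to\cB^2$) between abelian categories, so that $L$ modifies only the \emph{domain} of a functor while $F_*$ modifies only its \emph{codomain}. Since precomposition on the domain commutes with postcomposition on the codomain, the first square should be immediate, and the second will reduce to the naturality of the diagonal.

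For the first square I would observe that for $H\colon\cC^2\kto\cB$ the Kleisli postcomposite $F_*H$ is the functor $\Tot\circ\Ch F\circ H\colon\cC^2\to\Ch\cA$, with the prolongation $\Ch F$ and the totalization acting only on the output. Because $L$ is honest precomposition with $\Delta_\cC$, occurring outermost on the domain side, one has on the nose
\[ L(F_*H)=(\Tot\circ\Ch F\circ H)\circ\Delta_\cC=\Tot\circ\Ch F\circ(H\circ\Delta_\cC)=F_*(LH).\]
Under our standing assumption that $F$ is reduced, all functors appearing remain strictly reduced, so the diagram typechecks.

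For the second square I would take $H\colon\cB^2\kto\cA$ and compute both composites, where $G^*$ on two-variable functors means precomposition with the product morphism $G\times G\colon\cC^2\kto\cB^2$. On one side, $L(G^*H)=(G^*H)\circ\Delta_\cC=\Tot\circ\Ch H\circ(G\times G)\circ\Delta_\cC$. On the other, $G^*(LH)=\Tot\circ\Ch(H\circ\Delta_\cB)\circ G$; applying the coherence isomorphism $\Ch(H\circ\Delta_\cB)\cong\Ch H\circ\Ch\Delta_\cB$ of the pseudomonad $\Ch(-)$, together with the observation that $\Delta_\cB$ is additive so that $\Ch\Delta_\cB$ is the diagonal $\Delta_{\Ch\cB}$ of $\Ch\cB$, this becomes $\Tot\circ\Ch H\circ\Delta_{\Ch\cB}\circ G$. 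The two sides then agree because the diagonal is a natural family: $\Delta_{\Ch\cB}\circ G=(G\times G)\circ\Delta_\cC$ as functors $\cC\to\Ch(\cB^2)$, both sending $X$ to $(G(X),G(X))$.

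The only delicacy I anticipate is keeping track of the pseudomonad's coherence isomorphisms in the second square; but since every construction in $\cat{AbCat}_\Ch$ is only defined up to natural isomorphism this is entirely routine, and I expect the genuine content of the lemma --- naturality of the diagonal --- to present no real obstacle.
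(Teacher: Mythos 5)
Your verification is correct, and it fills in exactly what the paper leaves implicit: the lemma is stated in Appendix B without proof, the intended content being precisely that precomposition with a strict diagonal commutes strictly with Kleisli postcomposition (your first square) and, via naturality of the diagonal and the pseudomonad's coherence isomorphisms, with precomposition by $G\times G$ (your second square). Since morphisms of $\cat{AbCat}_{\Ch}$ are natural isomorphism classes of functors, the coherence isomorphisms you flag are indeed harmless, so there is no gap.
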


In particular, $LR(F) \circ G \cong L(RF \circ G)$ and $F \circ LR(G) \cong L(F \circ RG)$. The left adjoint $L$ also preserves direct sums of functors as does the comonad $C=LR$, by Proposition \ref{p:cr is exact}. We are now prepared to embark upon the proof. 

\begin{proof}[Proof of Proposition \ref{lem:5.7}]
Our strategy, following the  proof of \cite[Lemma 5.7]{JM:Deriving}, is to show that the natural inclusions of $D_1(F \circ G)$ and $D_1F \circ D_1G$ into the double totalization of a first octant tricomplex $T(F,G)$ are chain homotopy equivalences.\footnote{Associativity of the chain complex monad asserts the the double totalization of a tricomplex is well-defined up to isomorphism.} Consider the tricomplex $T(F,G)$ defined by
\[ T(F,G)_{p,q,r} := (LR)^r( (LR)^pF\circ (LR)^qG)\] and its levelwise totalization
\[ A(F,G)_{n,r} := \bigoplus_{p+q=n}(LR)^r\left( (LR)^pF\circ (LR)^qG\right) \cong (LR)^r \left(\bigoplus_{p+q=n}  (LR)^pF\circ (LR)^qG\right).\]
The $n=0$ column of $A(F,G)_{n,r}$ is $D_1(FG)$ while the $r=0$ row is chain homotopy equivalent to  $D_1F \circ D_1G$ by Lemma \ref{lem:prolong}. The $r$th row is $(LR)^r(D_1F \circ D_1G)$.

Lemma \ref{lem:diagonal-commutativity} and the fact that $L$ preserves sums implies that the columns indexed by each $n >0$ are each the comonad resolution chain complexes for $LR$ applied to a functor of the form $LH$ for some functor $H$.

 Thus, our bicomplex $A(F,G)$ is of the form:
\[
\begin{tikzcd}
\vdots \arrow[d] & \vdots \arrow[d] & \vdots \arrow[d] & \vdots \arrow[d] & \\
C^3H_0 \arrow[d, "\epsilon-C\epsilon+C^2\epsilon"'] & C^3LH_1 \arrow[l, "C^3h_0"'] \arrow[d, "\epsilon-C\epsilon+C^2\epsilon"]& C^3LH_2 \arrow[l, "C^3h_1"'] \arrow[d, "\epsilon-C\epsilon+C^2\epsilon"]& C^3LH_3 \arrow[l, "C^3h_2"']\arrow[d, "\epsilon-C\epsilon+C^2\epsilon"]& \cdots \arrow[l]\\
C^2H_0 \arrow[d, "\epsilon-C\epsilon"'] & C^2LH_1 \arrow[l, "C^2h_0"] \arrow[d, "\epsilon-C\epsilon"]\arrow[u, dashed, bend left, "C^2L\eta"] & C^2LH_2 \arrow[l, "C^2h_1"] \arrow[d, "\epsilon-C\epsilon"]\arrow[u, dashed, bend left, "C^2L\eta"] & C^2LH_3 \arrow[l, "C^2h_2"]\arrow[d, "\epsilon-C\epsilon"]\arrow[u, dashed, bend left, "C^2L\eta"] & \cdots \arrow[l]\\
CH_0 \arrow[d, "\epsilon"'] & CLH_1 \arrow[l, "Ch_0"] \arrow[d, "\epsilon"] \arrow[u, dashed, bend left, "CL\eta"] &  \arrow[u, dashed, bend left, "CL\eta"]  CLH_2 \arrow[l, "Ch_1"] \arrow[d, "\epsilon"]&  \arrow[u, dashed, bend left, "CL\eta"] CLH_3 \arrow[l, "Ch_2"]\arrow[d, "\epsilon"]& \cdots \arrow[l]\\
H_0 & LH_1 \arrow[l, "h_0"] \arrow[u, dashed, bend left, "L\eta"] & LH_2 \arrow[l, "h_1"]  \arrow[u, dashed, bend left, "L\eta"] & LH_3 \arrow[l, "h_2"]  \arrow[u, dashed, bend left, "L\eta"] & \cdots \arrow[l]
\end{tikzcd}
\]
In particular, each column except for the 0th is contractible, using the unit $\eta$ of the adjoint $L \dashv R$. Applying Corollary \ref{cor:chain-htpy-equiv} it follows immediately that the canonical inclusion $D_1(FG) \hookrightarrow \Tot(A(F,G))$ is a chain homotopy equivalence.

By Corollary \ref{cor:chain-htpy-equiv}, to show that $D_1F\circ D_1G\hookrightarrow \Tot(A(F,G))$ is a chain homotopy equivalence it suffices to prove that the rows $C^r(D_1F \circ D_1G)$ of $A(F,G)$ are also contractible for each $r >0$. By exactness of $C$, it suffices to prove that the first row $C(D_1F \circ D_1G)$ is contractible.

By Proposition \ref{prop:poly-facts}\eqref{itm:degree}, the chain complexes $CD_1G(X)$ and $CD_1F(Y)$ are contractible. It follows that the inclusions
\[ D_1G(X) + D_1G(X) \hookrightarrow D_1G(X + X) \qquad D_1F(Y) + D_1F(Y) \hookrightarrow D_1F(Y+Y),\] 
which admit $CD_1G(X)$ and $CD_1F(Y)$ as direct sum complements, are chain homotopy equivalences.

By Lemma \ref{lem:che-comp}, these chain homotopy equivalences are preserved by pre- and post-composition. In particular, there is a composite chain homotopy equivalence
\[ D_1F(D_1G(X)) + D_1F(D_1G(X)) \hookrightarrow D_1F(D_1G(X) + D_1G(X)) \to D_1F(D_1G(X +X)).\]
It follows that the direct sum complement $C(D_1F\circ D_1G)(X)$ is contractible. \end{proof}

\end{document}